\documentclass[leqno,11pt]{article}
\usepackage[utf8]{inputenc}
\usepackage[T1]{fontenc}
\usepackage{microtype}

\usepackage[dvipsnames]{xcolor}
\usepackage{xcolor-solarized}

\usepackage{calc}
\usepackage[a4paper]{geometry}

\ifdefined\screenLayout
  \pagecolor{solarized-base3}
  \color{solarized-base03}
  \usepackage{fancyhdr}
\fi

\usepackage{amsmath}
\usepackage{amsthm}

\usepackage{tikz-cd}
\usetikzlibrary{arrows} 
\tikzset{
  commutative diagrams/.cd, 
  arrow style=tikz, 
  diagrams={>=stealth}
}
\usetikzlibrary{matrix,decorations.pathreplacing,calc}
\usetikzlibrary{graphs,graphs.standard}
\usepackage{pgfplots}
\usepgfplotslibrary{external}
\usepackage{afterpage}
\usepackage{pdflscape}

\usepackage{colortbl}
\usepackage{adforn}
\usepackage{nameref}

\usepackage{textcomp}
\usepackage[sb]{libertine}
\usepackage[varqu,varl]{zi4}%
\usepackage[libertine,bigdelims,vvarbb]{newtxmath}

\IfPackageAtLeastTF{superiors}{2.0}{%
  \usepackage[supsfam=LibertinusSerif-Sup,supscaled=1.2,raised=-.13em]{superiors}
}{%
  \usepackage[supstfm=libertinesups,supscaled=1.2,raised=-.13em]{superiors}
}

\useosf
\usepackage[scr=boondox,cal=euler]{mathalfa}
\usepackage{marvosym}
\usepackage{slashed}
\usepackage{esint} % SLASHED INTEGRAL

\usepackage[ngerman,english]{babel}
\usepackage{imakeidx}
\makeindex[intoc]
\usepackage{csquotes}
\usepackage[
  backend=biber,
  hyperref=true,
  backref=true,
  isbn=false,
  doi=true,
  natbib=true,
  eprint=true,
  useprefix=true,
  maxcitenames=99,
  maxbibnames=99,  
  maxalphanames=99, 
  minalphanames=99,
  safeinputenc,
  style=alphabetic,
  citestyle=alphabetic,
  block=space,
  datamodel=preamble/ext-eprint,
  sorting=nyt
]{biblatex}
\usepackage[
  bookmarksnumbered = true,
  hypertexnames = false,
  colorlinks    = true,
  citecolor     = solarized-blue,
  linkcolor     = solarized-blue,
  urlcolor      = solarized-blue,
  breaklinks
]{hyperref}

\DeclareFieldFormat{url}{%
  \href{#1}{\ComputerMouse}
}
\DeclareFieldFormat{doi}{%
  \mkbibacro{DOI}\addcolon\space\href{https://doi.org/#1}{#1}
}
\makeatletter
\DeclareFieldFormat{arxiv}{%
  arXiv\addcolon\space\href{http://arxiv.org/\abx@arxivpath/#1}{#1}
}
\makeatother
\DeclareFieldFormat{mr}{%
  MR\addcolon\space\href{http://www.ams.org/mathscinet-getitem?mr=MR#1}{#1}
}
\DeclareFieldFormat{zbmath}{%
  zbMATH\addcolon\space\href{http://zbmath.org/?q=an:#1}{#1}
}
\renewbibmacro*{eprint}{%
  \printfield{arxiv}%
  \newunit\newblock
  \printfield{mr}%
  \newunit\newblock
  \printfield{zbmath}%
  \newunit\newblock
  \iffieldundef{eprinttype}
  {\printfield{eprint}}
  {\printfield[eprint:\strfield{eprinttype}]{eprint}}
}

\AtEveryBibitem{%
  \clearlist{address}%
}
\DeclareFieldFormat[article,inproceedings,inbook,incollection,phdthesis,thesis]{title}{\textit{#1}}
\renewbibmacro{in:}{}
\newcommand{\printreferences}{\raggedright\printbibliography}

\usepackage[inline,shortlabels]{enumitem}

\usepackage{subcaption}

\usepackage[yyyymmdd]{datetime}

\usepackage{etoolbox}
\ifundef{\abstract}{}{\patchcmd{\abstract}%
    {\quotation}{\quotation\noindent\ignorespaces}{}{}}

\usepackage[super]{nth}

\usepackage{thmtools}
\usepackage[framemethod=TikZ]{mdframed}

\numberwithin{equation}{section}

\renewcommand{\qedsymbol}{$\blacksquare$}

\newcommand{\CorollaryQED}{\qedsymbol}
\newcommand{\ConjectureQED}{$\square$}
\newcommand{\SituationQED}{$\times$}
\newcommand{\DefinitionQED}{$\bullet$}
\newcommand{\NotationQED}{$\circ$}
\newcommand{\ExampleQED}{$\spadesuit$}
\newcommand{\RemarkQED}{$\clubsuit$}
\newcommand{\ExerciseQED}{?!}

\ifdefined\screenLayout
  \declaretheoremstyle[
  bodyfont=\itshape,
  mdframed={    
    backgroundcolor=solarized-base3!90!solarized-blue,
    linewidth=0,
    innerleftmargin=.5em,
    innerrightmargin=.5em,
    innertopmargin=.5em,
    innerbottommargin=.5em,
    leftmargin=-.5em,
    rightmargin=-.5em,
  }
]{theorem}

\declaretheoremstyle[
mdframed={
  backgroundcolor=solarized-base3!90!solarized-green,
  linewidth=0,
  innerleftmargin=.5em,
  innerrightmargin=.5em,
  innertopmargin=.5em,
  innerbottommargin=.5em,
  leftmargin=-.5em,
  rightmargin=-.5em,
  }
]{definition}

\declaretheoremstyle[
  mdframed={
    backgroundcolor=solarized-base3!90!solarized-yellow,
    linewidth=0,
    innerleftmargin=.5em,
    innerrightmargin=.5em,
    innertopmargin=.5em,
    innerbottommargin=.5em,
    leftmargin=-.5em,
    rightmargin=-.5em,
  }
]{example}

\declaretheoremstyle[
  mdframed={
    backgroundcolor=solarized-base3!90!solarized-orange,
    linewidth=0,
    innerleftmargin=.5em,
    innerrightmargin=.5em,
    innertopmargin=.5em,
    innerbottommargin=.5em,
    leftmargin=-.5em,
    rightmargin=-.5em,
  }
]{remark}
\else
  \declaretheoremstyle[
  bodyfont=\itshape
  ]{theorem}
  \declaretheoremstyle[]{definition}
  \declaretheoremstyle[]{example}
  \declaretheoremstyle[]{remark}
\fi

\declaretheorem[numberlike=equation,style=theorem]{theorem}
\declaretheorem[numbered=no,name=Theorem,style=theorem]{theorem*}
\declaretheorem[numberlike=equation,name=Lemma,style=theorem]{lemma}
\declaretheorem[numberlike=equation,name=Proposition,style=theorem]{prop}
\declaretheorem[numberlike=equation,name=Corollary,qed=\CorollaryQED,style=theorem]{cor}

\declaretheorem[numberlike=equation,name=Definition,style=definition,qed=\DefinitionQED]{definition}
\declaretheorem[numbered=no,name=Definition,style=definition,qed=\DefinitionQED]{definition*}

\declaretheorem[numberlike=equation,style=definition,qed=\ExampleQED]{example}

\declaretheorem[numberlike=equation,style=remark,qed=\RemarkQED]{remark}
\declaretheorem[numbered=no,style=remark,name=Remark,qed=\RemarkQED]{remark*}

\def\makeautorefname#1#2{\AtBeginDocument{\expandafter\def\csname#1autorefname\endcsname{#2}}}
\makeautorefname{table}{Table}        
\makeautorefname{chapter}{Chapter}
\makeautorefname{section}{Section}
\makeautorefname{subsection}{Section}
\makeautorefname{subsubsection}{Section}
\makeautorefname{footnote}{Footnote}
\AtBeginDocument{\def\itemautorefname~#1\null{(#1)\null}}
\AtBeginDocument{\def\equationautorefname~#1\null{(#1)\null}}

\newtheorem{step}{Step}

\numberwithin{substep}{step}
\makeautorefname{step}{Step}
\makeautorefname{substep}{Step}

\makeautorefname{case}{Case}
\makeautorefname{substep}{Step}
\AddToHook{env/proof/begin}{%
  \setcounter{step}{0}%
  \setcounter{substep}{0}%
}

\setlist[description]{leftmargin=!,labelindent=1em}
\setlist[enumerate]{label={\rm (\arabic*)},ref=\arabic*}
\setlist[enumerate,2]{label={\rm (\alph*)},ref=\theenumi.\alph*}
\setlist[enumerate,3]{label={\rm (\roman*)},ref=\theenumii.\roman*}

\let\C\undefined

\usepackage{bm}
\usepackage{mathtools} % FOR PAIREDDELIMITERS
\usepackage{stmaryrd} % FOR SUPER LIE BRACKET

\DeclareFontFamily{U}{mathx}{\hyphenchar\font45}
\DeclareFontShape{U}{mathx}{m}{n}{
      <5> <6> <7> <8> <9> <10>
      <10.95> <12> <14.4> <17.28> <20.74> <24.88>
      mathx10
      }{}
\DeclareSymbolFont{mathx}{U}{mathx}{m}{n}
\DeclareFontSubstitution{U}{mathx}{m}{n}
\DeclareMathAccent{\widecheck}{0}{mathx}{"71}
\DeclareMathAccent{\wideparen}{0}{mathx}{"75}

\DeclareMathOperator{\Aut}{Aut}

\DeclareMathOperator{\Diff}{Diff}
\DeclareMathOperator{\End}{End}

\DeclareMathOperator{\ext}{ext}

\DeclareMathOperator{\Fr}{Fr}

\DeclareMathOperator{\HF}{\HF}

\DeclareMathOperator{\Hom}{Hom}

\DeclareMathOperator{\Lie}{Lie}

\DeclareMathOperator{\PD}{PD}

\DeclareMathOperator{\Sym}{Sym}

\DeclareMathOperator{\coker}{coker}

\DeclareMathOperator{\im}{im}
\DeclareMathOperator{\ind}{index}

\DeclareMathOperator{\res}{res}
\DeclareMathOperator{\rk}{rk}

\DeclareMathOperator{\spec}{spec}
\DeclareMathOperator{\supp}{supp}

\DeclareMathOperator{\tr}{tr}

\DeclarePairedDelimiter\paren{\lparen}{\rparen}

\DeclarePairedDelimiter{\Abs}{\|}{\|}

\DeclarePairedDelimiter{\Inner}{\langle}{\rangle}

\DeclarePairedDelimiter{\abs}{\lvert}{\rvert}
\DeclarePairedDelimiter{\bracket}{\langle}{\rangle}

\DeclarePairedDelimiter{\set}{\lbrace}{\rbrace}
\def\({\left(}
\def\){\right)}
\def\<{\left\langle}
\def\>{\right\rangle}

\newcommand{\CP}{{\C P}} 

\newcommand{\Cl}{\mathrm{C\ell}}
\newcommand{\C}{{\mathbf{C}}}

\newcommand{\N}{{\mathbf{N}}}

\newcommand{\Or}{\mathrm{or}}

\newcommand{\R}{\mathbf{R}}

\newcommand{\Span}[1]{\bracket{#1}}

\newcommand{\Uh}{\mathrm{Uh}}

\newcommand{\Vect}{\mathrm{Vect}}

\newcommand{\Z}{\mathbf{Z}}

\newcommand{\ch}{\mathrm{ch}}

\newcommand{\co}{\mskip0.5mu\colon\thinspace}

\newcommand{\dR}{\mathrm{dR}}
\newcommand{\defined}[2][\key]{\def\key{#2}\textbf{#2}\index{#1}}

\newcommand{\del}{\partial}
\newcommand{\ev}{\mathrm{ev}}

\newcommand{\id}{\mathrm{id}}
\newcommand{\incl}{\hookrightarrow}

\newcommand{\into}{\hookrightarrow}
\newcommand{\iso}{\cong}

\newcommand{\loc}{\mathrm{loc}}

\newcommand{\ob}{\mathrm{ob}}

\newcommand{\one}{\mathbf{1}}
\newcommand{\onto}{\twoheadrightarrow}

\newcommand{\pr}{\mathrm{pr}}
\newcommand{\qandq}{\quad\text{and}\quad}

\newcommand{\qand}{\quad\text{and}}

\newcommand{\qwithq}{\quad\text{with}\quad}
\newcommand{\qwith}{\quad\text{with}}

\newcommand{\vol}{\mathrm{vol}}

\renewcommand{\H}{\mathbf{H}}
\renewcommand{\Im}{\operatorname{Im}}
\renewcommand{\O}{\mathrm{O}}
\renewcommand{\P}{\mathbf{P}}
\renewcommand{\Re}{\operatorname{Re}}

\renewcommand{\emptyset}{\varnothing}
\renewcommand{\epsilon}{\varepsilon}
\renewcommand{\setminus}{{\backslash}}
\renewcommand{\sp}{\mathfrak{sp}}

\renewcommand{\leq}{\leqslant}
\renewcommand{\geq}{\geqslant}

\newcommand{\Wedge}{\Lambda}

\makeatletter
\renewcommand*\env@matrix[1][*\c@MaxMatrixCols c]{%
  \hskip -\arraycolsep
  \let\@ifnextchar\new@ifnextchar
  \array{#1}}

\renewcommand\xleftrightarrow[2][]{%
  \ext@arrow 9999{\longleftrightarrowfill@}{#1}{#2}}
\newcommand\longleftrightarrowfill@{%
  \arrowfill@\leftarrow\relbar\rightarrow}
\makeatother

\newcommand{\tw}{\mathrm{tw}}

\newcommand{\Stab}{\mathrm{Stab}}

\newcommand{\rd}{{\rm d}}

\newcommand{\rC}{{\rm C}}

\newcommand{\rH}{{\rm H}}

\newcommand{\rR}{{\rm R}}

\newcommand{\bp}{{\mathbf{p}}}

\newcommand{\bt}{{\mathbf{t}}}

\newcommand{\bA}{{\mathbf{A}}}
\newcommand{\bB}{{\mathbf{B}}}

\newcommand{\bD}{{\mathbf{D}}}
\newcommand{\bE}{{\mathbf{E}}}
\newcommand{\bF}{{\mathbf{F}}}

\newcommand{\bH}{{\mathbf{H}}}

\newcommand{\bK}{{\mathbf{K}}}
\newcommand{\bL}{{\mathbf{L}}}

\newcommand{\bN}{{\mathbf{N}}}

\newcommand{\bP}{{\mathbf{P}}}
\newcommand{\bQ}{{\mathbf{Q}}}
\newcommand{\bR}{{\mathbf{R}}}
\newcommand{\bS}{{\mathbf{S}}}

\newcommand{\bU}{{\mathbf{U}}}
\newcommand{\bV}{{\mathbf{V}}}

\newcommand{\bX}{{\mathbf{X}}}
\newcommand{\bY}{{\mathbf{Y}}}
\newcommand{\bZ}{{\mathbf{Z}}}

\newcommand{\sA}{\mathscr{A}}

\newcommand{\sF}{\mathscr{F}}
\newcommand{\sG}{\mathscr{G}}
\newcommand{\sH}{\mathscr{H}}

\newcommand{\sL}{\mathscr{L}}
\newcommand{\sM}{\mathscr{M}}

\newcommand{\sO}{\mathscr{O}}
\newcommand{\sP}{\mathscr{P}}

\newcommand{\sS}{\mathscr{S}}

\newcommand{\sU}{\mathscr{U}}
\newcommand{\sV}{\mathscr{V}}
\newcommand{\sW}{\mathscr{W}}

\newcommand{\fa}{{\mathfrak a}}

\newcommand{\fe}{{\mathfrak e}}

\newcommand{\fl}{{\mathfrak l}}

\newcommand{\fo}{{\mathfrak o}}

\newcommand{\fs}{{\mathfrak s}}
\newcommand{\ft}{{\mathfrak t}}
\newcommand{\fu}{{\mathfrak u}}

\newcommand{\fL}{{\mathfrak L}}

\newcommand{\fU}{{\mathfrak U}}

\newcommand{\slS}{\slashed S}

\newcommand{\BG}{\mathrm{BG}}

\newcommand{\dom}{\mathrm{dom}}
\newcommand{\Dmin}{D_{\mathrm{min}}}
\newcommand{\Dmax}{D_{\mathrm{max}}}

\newcommand{\APS}{\mathrm{APS}}
\newcommand{\GelfandRobbinQuotient}{\check\bH}

\newcommand{\Spectrum}{\sigma}
\newcommand{\Br}{\mathrm{Br}}
\newcommand{\UniversalBr}{\mathbf{Br}}
\newcommand{\SpaceOfBranchingLoci}{\mathbf{Sub}}
\newcommand{\SpaceOfMetrics}{\mathbf{Met}}
\newcommand{\SpaceOfDiracBundles}{\mathbf{Dir}}
\newcommand{\SpaceOfRamifiedLineBundles}{\mathbf{Ram}}
\newcommand{\SpaceOfSingularRamifiedLineBundles}{\mathfrak{Ram}}
\newcommand{\ProjectiveSpaceOfHarmonicSpinors}{\bP\mathbf{Harm}}
\newcommand{\BundleOfNonDegenerateAdmissibleZTwoZSpinors}{\BundleOfAdmissibleZModTwoZSpinors^*}
\newcommand{\BundleOfNonDegenerateAdmissibleChiralZTwoZSpinors}{\BundleOfAdmissibleZModTwoZSpinors^{\pm,*}}
\newcommand{\SpaceOfNonDegenerateHarmonicSpinors}{\mathbf{Harm}^*}
\newcommand{\SpaceOfHarmonicSpinors}{\mathbf{Harm}}
\newcommand{\ProjectiveSpaceOfNonDegenerateHarmonicSpinors}{\bP\mathbf{Harm}^*}
\newcommand{\ProjectiveSpaceOfNonDegenerateHarmonicOneForms}{\bP\mathbf{Harm}_{\Omega^1}^*}
\newcommand{\SpaceOfNonDegenerateHarmonicOneForms}{\mathbf{Harm}_{\Omega^1}^*}
\newcommand{\SpaceOfNonDegenerateHarmonicSelfDualTwoForms}{\mathbf{Harm}_{\Omega^+}^*}
\newcommand{\ProjectiveSpaceOfNonDegenerateHarmonicSelfDualTwoForms}{\bP\mathbf{Harm}_{\Omega^+}^*}
\newcommand{\ProjectiveThickenedSpaceOfNonDegenerateHarmonicSpinors}{\bP\mathbf{Harm}^\ft}
\newcommand{\ProjectiveSpaceOfNonDegenerateEigenSpinors}{\bP\mathbf{Eig}^*}
\newcommand{\SpaceOfNonDegenerateEigenSpinors}{\mathbf{Eig}^*}
\newcommand{\ThickenedSpaceOfNonDegenerateEigenSpinors}{\mathbf{Eig}^\ft}
\newcommand{\ProjectiveSpaceOfNonDegeneratePositiveHarmonicSpinors}{\bP\mathbf{Harm}^{+,*}}
\newcommand{\ProjectiveSpaceOfNonDegenerateNegativeHarmonicSpinors}{\bP\mathbf{Harm}^{-,*}}
\newcommand{\ProjectiveSpaceOfChiralHarmonicSpinors}{\bP\mathbf{Harm}^{\pm}}
\newcommand{\ProjectiveSpaceOfNonDegenerateChiralHarmonicSpinors}{\bP\mathbf{Harm}^{\pm,*}}
\newcommand{\SpaceOfNonDegenerateChiralHarmonicSpinors}{\mathbf{Harm}^{\pm,*}}

\newcommand{\BundleOfSingularSpinors}{\mathbf{F}}
\newcommand{\BundleOfAdmissibleSingularSpinors}{\mathbf{E}}
\newcommand{\BundleOfZModTwoZSpinors}{\bE_{0}}
\newcommand{\BundleOfAdmissibleZModTwoZSpinors}{\bE_{00}}
\newcommand{\BundleOfCoAdmissibleZModTwoZSpinors}{\bF_{00}}
\newcommand{\BundleOfResidues}{\mathbf{G}}
\newcommand{\DeformationBundle}{\mathrm{Def}}
\newcommand{\ObstructionBundle}{\mathrm{Ob}}
\newcommand{\ObstructionMap}{\mathrm{ob}}
\newcommand{\ThickenedSpaceOfNonDegenerateHarmonicSpinors}{\SpaceOfHarmonicSpinors^\bt}

\newcommand{\GaugeGroup}[1]{\sG(#1)}

\newcommand{\ExtendedGaugeGroup}[1]{\widehat\sG(#1)}

\newcommand{\Slice}{\mathbf{Sl}}
\newcommand{\DiffOp}{\mathrm{DiffOp}}
\newcommand{\lift}{\mathrm{lift}}
\newcommand{\VerticalTangentBundle}[2]{T#1/#2}
\newcommand{\UniversalDiracOperator}{\bD}
\newcommand{\UniversalResidueMap}{\mathbf{res}}
\newcommand{\UniversalExtensionMap}{\mathbf{ext}}
\newcommand{\HdR}{\iota_{\mathrm{HdR}}}

\newcommand{\WeightLog}{w_{\log}}
\newcommand{\Riem}{\mathrm{Riem}}

\newcommand{\AlgebraBundle}{\mathbb{A}}
\newcommand{\ResidueCondition}{R}
\newcommand{\ResidueBundle}{\check S}
\newcommand{\UniversalResidueBundle}{\check \bS}
\newcommand{\PeriodMap}{\mathbf{Per}}
\newcommand\fakeqed{\pushQED{\qed}\qedhere}
\renewcommand{\Z}{\mathbb{Z}}
\renewcommand{\R}{\mathbb{R}}
\renewcommand{\C}{\mathbb{C}}
\renewcommand{\H}{\mathbb{H}}
\newcommand{\RegVal}{\mathrm{RegVal}}

\author{
  Siqi He
  \and
  Gregory J.~Parker
  \and
  Thomas Walpuski
}
\title{%
  The universal moduli space of non-degenerate $\Z/2\Z$~harmonic spinors
}
\hypersetup{
  pdfauthor={Siqi He, Gregory J.~Parker, Thomas Walpuski},
  pdftitle={The universal moduli space of non-degenerate $\Z/2\Z$~harmonic spinors},
  pdflang={English}}
\date{2026-09-23}

\begin{document}

\maketitle

\begin{abstract}
  This article equips the universal moduli spaces of non-degenerate $\Z/2\Z$ harmonic spinors and eigenspinors with tame Fréchet manifold structures and proves that their natural projections to the corresponding parameter spaces are uniformly Fredholm.
  A unified construction yields the deformation theories of harmonic spinors and eigenspinors in dimension three and of chiral harmonic spinors in dimension four,
  recovering earlier work of \citet{Donaldson2021,Parker2023:Deformation,Takahashi2015}.
  As an application, a deformation theory for non-degenerate $\Z/2\Z$ harmonic self-dual $2$--forms in dimension four is established.
\end{abstract}

\tableofcontents

\section{Introduction}
\label{Sec_Introduction}

More than a decade ago,
Taubes and his disciples began to observe that the failure of compactness for a wide variety of ``new'' low-dimensional gauge theories leaves behind evidence in the form of a \defined{$\Z/2\Z$ harmonic spinor}
\cite{Taubes2012,Taubes2013,Taubes2016:SeibergWittenCompactness,Taubes2017,Taubes2022:NahmPoleKapustinWitten,Haydys2014,Walpuski2019}.
The latter have since appeared in related questions in gauge theory and calibrated geometry \cite{Taubes2014,Donaldson2016,,Doan2017c,He2022:DeformBranchedSLags},
and might play a role in the conjectural construction of invariants
of manifolds with special holonomy \cite{Donaldson2009,Haydys2017,Joyce2016,Doan2017d,Doan2024}.

In light of the above it seems pressing to understand what kind of a phenomenon the existence of $\Z/2\Z$ harmonic spinors is;
in particular, whether it persists under (small) deformations of the background parameters.
This question of the deformation theory of $\Z/2\Z$ harmonic spinors has been approached before in the work of  \citet{Donaldson2021,Parker2023:Deformation,Takahashi2015}.
The purpose of this article is to develop this deformation theory in a unified framework, building upon \cite{BeraWalpuski2025} and a suggestion by \citet{Donaldson2021}.
This framework recovers the existing deformation theories through a single universal construction and also yields a deformation theory for non-degenerate $\Z/2\Z$ harmonic self-dual $2$--forms in dimension four.

\medskip

Throughout the remainder of this article,
let $X$ be a connected closed manifold of dimension $d$,
and $S$ a Euclidean vector bundle of rank $r$ over $X$.
Denote by $\SpaceOfMetrics$ the tame Fréchet manifold of Riemannian metrics $g$ on $X$, by $\SpaceOfDiracBundles$ the tame Fréchet manifold of Dirac bundle structures $\bp = (g;\gamma,\nabla)$ on $S$ (as in \cite[Part II Definition 5.2]{Lawson1989} and \autoref{Sec_DiracOperatorsTwistedByRamifiedLineBundles}),
and by
$\SpaceOfSingularRamifiedLineBundles$ the set of isometry classes of ramified Euclidean line bundles $[\fl]$ over $X$.
The goal is to equip the universal \emph{set} of (non-zero) $\Z/2\Z$ harmonic spinors (up to scaling)
\begin{equation*}
  \ProjectiveSpaceOfHarmonicSpinors
  \coloneq
  \coprod_{(\bp,[\fl]) \in \SpaceOfDiracBundles \times \SpaceOfSingularRamifiedLineBundles}
  \bP\ker\paren[\big]{D_\bp^\fl \co H^1\Gamma\paren{X\setminus \Br(\fl),S \otimes \fl} \to L^2\Gamma\paren{X\setminus \Br(\fl),S \otimes \fl} }
\end{equation*}
with some geometric structure and understand the nature of the projection map $\pr_\SpaceOfDiracBundles \co \ProjectiveSpaceOfHarmonicSpinors \to \SpaceOfDiracBundles$.
Here $D_\bp^\fl$ denotes the Dirac operator associated with $\bp$ twisted by $\fl$, and $\bP$ denotes the real projectivisation;
that is: $\P V \coloneq (V \setminus \set{0})/\R^\times$.
This structure would upgrade the set $\ProjectiveSpaceOfHarmonicSpinors$ to the \defined{universal moduli space of $\Z/2\Z$ harmonic spinors}.

The present article does not deal with $\ProjectiveSpaceOfHarmonicSpinors$ itself,
but only with the subset
\begin{equation*}
  \ProjectiveSpaceOfNonDegenerateHarmonicSpinors \subseteq \ProjectiveSpaceOfHarmonicSpinors
\end{equation*}
consisting of $\Z/2\Z$ harmonic spinors $(\bp,[\fl];[\Phi])$ which are \defined{non-degenerate} in the following sense:
\begin{enumerate}
\item
  \label{It_NonDegenerate_SmoothBranchingLocus}
  the branching locus $\Br(\fl) \subseteq X$ is a submanifold, henceforth also denoted by $Z$, and
\item
  \label{It_NonDegenerate_VanishingOrder}
  $\abs{\Phi}$ vanishes exactly to order $\frac12$ at every $x \in \Br(\fl)$;
  that is: $\abs{\Phi} \asymp_\Phi r^{1/2}$.
  Here $r$ denotes the distance to $\Br(\fl)$.
\end{enumerate}
The first restriction is imposed because $\SpaceOfSingularRamifiedLineBundles$ is rather wild and it is far from obvious what structure to put on it,
unless $d = 2$, see \cite[§3.1]{TaubesWu2021:Z2ZEigenfunctionsTopologicalAspects}.
The subset $\SpaceOfRamifiedLineBundles \subseteq \SpaceOfSingularRamifiedLineBundles$ of isometry classes of ramified Euclidean line bundles $[\fl]$ with smooth branching locus $\Br(\fl)$, however, can be equipped with the structure of a tame Fréchet manifold in a straightforward manner.
With this in mind, it is natural to try to equip $\ProjectiveSpaceOfNonDegenerateHarmonicSpinors$ with the structure of a tame 
Fréchet manifold.
As explained above, the crucial difficulty is that $D_\bp^\fl \co H^1\Gamma\paren{X\setminus \Br(\fl), S\otimes \fl} \to L^2\Gamma\paren{X\setminus \Br(\fl), S\otimes \fl}$ is left semi-Fredholm but has an $\infty$-dimensional cokernel, and the naive expectation is that this can be compensated by allowing $\Br(\fl)$ to vary.
Indeed, provided $\abs{\Phi}$ vanishes exactly to order $\frac12$ along $\Br(\fl)$,
this additional variation essentially corresponds to passing to a closed extension of $D_\bp^\fl$.
If $\rk S = 4$,
then this extension is Fredholm.
This observation and a suitable framework enable Nash--Moser theory to prove the following.

\begin{theorem}
  \label{Thm_ProjectiveSpaceOfNonDegenerateHarmonicSpinors}
  If $\dim X=3$ and $\rk S = 4$,
  then $\ProjectiveSpaceOfNonDegenerateHarmonicSpinors$ can be given the structure of a tame Fréchet manifold such that $\pr_\SpaceOfDiracBundles \co \ProjectiveSpaceOfNonDegenerateHarmonicSpinors \to \SpaceOfDiracBundles$ is uniformly Fredholm of index $-1$.
\end{theorem}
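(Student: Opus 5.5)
We realise $\ProjectiveSpaceOfNonDegenerateHarmonicSpinors$ as the zero set of a tame Fréchet section and apply the Nash--Moser implicit function theorem. We first trivialise the dependence on the ramified line bundle. Near a fixed $(\bp_0,[\fl_0])$ with $Z_0 = \Br(\fl_0)$ a union of embedded circles, every nearby $[\fl]\in\SpaceOfRamifiedLineBundles$ is parametrised by a section $\eta$ of the normal bundle $NZ_0$, with $Z_\eta = \exp(\eta)$. We fix a family of diffeomorphisms $F_\eta$ of $X$, supported in a tubular neighbourhood, with $F_\eta(Z_0)=Z_\eta$. Pulling back by $F_\eta$ turns $D^{\fl}_{\bp}$ into an operator $D_{\bp,\eta}$ acting on a fixed space of sections of $S\otimes\fl_0$ over $X\setminus Z_0$. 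The unknowns are $(\bp,\eta,\Phi)$, and the equation is $\mathfrak F(\bp,\eta,\Phi)=D_{\bp,\eta}\Phi=0$, together with the normalisation $\Abs{\Phi}_{L^2}=1$ modulo $\pm1$. The non-degeneracy condition says that $\Phi$ lies in the open subset of the domain where the leading $r^{1/2}$ coefficient $\res(\Phi)$ is nowhere vanishing along $Z_0$. To make this a tame smooth map we use the scale of spaces adapted to the edge structure from \cite{BeraWalpuski2025}: polyhomogeneous/edge Sobolev spaces whose elements have expansions $\Phi\sim r^{1/2}(\dots)$ with smooth coefficients.

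Next we compute the linearisation at a non-degenerate solution $(\bp_0,0,\Phi_0)$:
\begin{equation*}
  \rd\mathfrak F(\dot\bp,\dot\eta,\dot\Phi) = D_{\bp_0}^{\fl_0}\dot\Phi + \dot D_\bp\Phi_0 + \mathcal B_{\Phi_0}\dot\eta .
\end{equation*}
The key computation is that the variation in $\eta$ contributes, modulo smoother terms, $-(\nabla_{\dot\eta}\Phi_0)$ for the cut-off extension. Because $\Phi_0\sim r^{1/2}$, this derivative is of order $r^{-1/2}$, so it lies exactly in the part of the range which $H^1$ sections miss. In other words, $\dot\eta\mapsto\mathcal B_{\Phi_0}\dot\eta$ composed with projection onto $\coker D^{\fl_0}_{\bp_0}$ should be identified with the map $\dot\eta\mapsto \res(\Phi_0)\cdot\dot\eta$ into the space of "residues" (the $r^{-1/2}$ coefficients of the extension). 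Here $\rk S=4$ and $\dim X=3$ give that the residue bundle has rank $2$, like $NZ_0$, and that multiplication by the nowhere vanishing $\res(\Phi_0)$ is an isomorphism up to a first-order (elliptic on $Z_0$) error. Hence the operator $(\dot\eta,\dot\Phi)\mapsto \rd\mathfrak F(0,\dot\eta,\dot\Phi)$ is Fredholm. In fact it is a compact perturbation of the closed extension $D_{\max}$ restricted to the domain with prescribed residue structure, i.e. an elliptic pseudodifferential operator on $Z_0$ glued to the $H^1$ theory.

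Its index is computed by deformation: the closed extension is self-adjoint-type, with index $0$ as in the APS/Gelfand--Robbin setup. Accounting for the real rescaling of $\Phi$ and the loss of one dimension from fixing the norm gives index $-1$ for $\pr_{\SpaceOfDiracBundles}$; equivalently, the fibre dimension is $\dim\ker-\dim\coker-1=-1$.

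Finally we verify the Nash--Moser hypotheses. The map $\mathfrak F$ is tame smooth, and $\rd\mathfrak F$ has a tame family of Fredholm inverses (parametrices) on a neighbourhood. We then invoke the Fredholm version of the Nash--Moser theorem (as in \cite{BeraWalpuski2025}) to conclude that the zero set is a tame Fréchet manifold and that the projection is uniformly Fredholm. The main obstacle is the uniform tame estimates for the parametrix of the linearisation: the $\eta$-variation loses one derivative along $Z_0$, since $\nabla_{\dot\eta}\Phi_0$ involves $\dot\eta$ but the inverse recovers $\dot\eta$ from residues only through an elliptic operator on $Z_0$. We would handle this by working in a graded scale where $\eta$ carries half a derivative more than $\Phi$, and by proving edge-calculus estimates uniform in $(\bp,\eta)$. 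Those estimates are the technical heart of the argument.
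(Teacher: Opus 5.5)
The main gap is transversality in the parameter directions. Without it, your conclusion that the zero set is a tame Fréchet manifold does not follow. Fredholmness of the $(\dot\eta,\dot\Phi)$--part of $\rd\mathfrak F$, fed into a Nash--Moser/Lyapunov--Schmidt argument, only gives a local Kuranishi model $\set{F(\bp,z)=0}$ with $F\colon\SpaceOfDiracBundles\times\R^k\to\R^\ell$. Its zero set is a manifold only if $\rd F$ is onto.

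The $(\dot\eta,\dot\Phi)$--part is never onto at a solution. After the reparametrisation $(\dot\Phi,v)\mapsto\dot\Phi-\sL^K_{\bar v}\Phi_0$ it becomes $D_{R_{\Phi_0}}$. This operator is self-adjoint and has $\Phi_0$ in its kernel, so its cokernel is non-zero; with your normalisation, the fibrewise index is even $-1$. Everything therefore hinges on showing that $\dot\bp\mapsto\dot D_\bp\Phi_0$ surjects onto this cokernel, and your proposal never addresses this.

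The paper handles it as follows:
\begin{enumerate}
\item It takes the obstruction space supported in a compact $K\subseteq X\setminus Z$ on which $\Phi\neq0$, which is possible by unique continuation.
\item It varies only the spin connection by $\dot\nabla\in\Omega^1(X,\fo_{\Cl}(S))$, which contributes $\sum_i\gamma(e_i)\dot\nabla_{e_i}\Phi$.
\item Since $\dim X=3$ and $\rk S=4$, $S$ is pointwise $\H$, with Clifford multiplication by left multiplication by imaginary units and $\fo_{\Cl}(S)\iso\Im\H$ acting on the right. The equation $i\phi\xi+j\phi\eta+k\phi\zeta=\psi$ is solvable for every $\psi$ once $\phi\neq0$.
\end{enumerate}
Only after this does the regular value theorem give the manifold structure and identify the index of $\pr_\SpaceOfDiracBundles$ with $\ind D_{R_\Phi}=0$ before projectivising.

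Your Fredholm step is also mis-argued, and you miss the real analytic obstacle.
\begin{itemize}
\item The residue bundle $\ResidueBundle=S|_Z\otimes_\AlgebraBundle NZ^{-1/2}$ has real rank $4$, not $2$. The map $\bL_{\Phi}\colon NZ\to\ResidueBundle$, which encodes your $r^{1/2}$ coefficient (note that $\res\Phi=0$), is an injection onto a half-rank subbundle $V$, not an isomorphism.
\item What makes the linearisation Fredholm is that, after the reparametrisation above, it is $D_{R_V}$ for the local residue condition $R_V$. This condition is elliptic because $V\oplus J\gamma(v)V=\ResidueBundle$. That holds since $V$ is $\AlgebraBundle$--linear of $\AlgebraBundle$--rank one while $J\gamma(v)$ is skew and $\AlgebraBundle$--anti-linear.
\item Index zero additionally needs $R_V$ to be Lagrangian, which you assume without argument.
\item Nash--Moser needs tame inverses on a neighbourhood, hence also at non-harmonic $\Phi$. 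There the same change of variables leaves the term $\sL^K_{\bar v}D\Phi$. Since $\bar v$ is normal to $Z$ along $Z$, this term is not even in $L^2$ when $D\Phi\sim r^{-1/2}$, which is the generic behaviour in your polyhomogeneous space.
\end{itemize}
The last point, not the tangential half-derivative bookkeeping, is the genuine difficulty; the bookkeeping is handled by $\res\colon H_a^{k+1}\to H^{k+1/2}$ and the $1$--jet extension estimate for $\bar v$. The paper overcomes it in four steps:
\begin{enumerate}
\item It restricts to admissible $\Z/2\Z$ spinors with $D\Phi\in\WeightLog^{-1}\ker\res$.
\item It proves this log-weighted scale is tame using Mellin-convolution smoothing operators.
\item It uses the borderline Hardy inequality to show that $\sL^K_{\bar v}D\Phi$ is a small perturbation in the sense of Donaldson--Lehmann.
\item Rather than invoking a Fredholm Nash--Moser theorem, it applies the Nash--Moser submersion theorem to a problem thickened by a finite-rank obstruction bundle supported away from $Z$.
\end{enumerate}
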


The notion of a uniform Fredholm map between tame Fréchet manifolds is defined in \autoref{Def_UniformlyFredholmMap}.
Crucially, by \autoref{Prop_UniformFredholmMapsAreStableUnderTransverseBaseChange}, it is stable under transverse base change.
In particular,
if $B$ is a (finite-dimensional) manifold and $\Gamma \co B \to \SpaceOfDiracBundles$ is a smooth map which is transverse to $\pr_\SpaceOfDiracBundles$,
then the pullback $\Gamma^*\ProjectiveSpaceOfNonDegenerateHarmonicSpinors$ is a manifold of dimension $\dim B - 1$.
In this sense,
the appearance of $\Z/2\Z$ harmonic spinors is a codimension one phenomenon in dimension three.

\begin{remark}
  \label{Rmk_PriorAndRelatedWork_2D}
  If $\dim X = 2$,
  then $\SpaceOfRamifiedLineBundles$ is finite-dimensional,
  which drastically simplifies the issue.
  The analogue of  \autoref{Thm_ProjectiveSpaceOfNonDegenerateHarmonicSpinors} in this setting is essentially \cite[Theorem 3.40]{Doan2024}.    
\end{remark}

\begin{remark}
  \label{Rmk_PriorAndRelatedWork_3D}
  \autoref{Thm_ProjectiveSpaceOfNonDegenerateHarmonicSpinors} is essentially \cite[Theorem 1.5]{Parker2023:Deformation}; see also \cite[Theorem 1.5]{Takahashi2015}.
  The proof of \autoref{Thm_ProjectiveSpaceOfNonDegenerateHarmonicSpinors} presented in \autoref{Sec_Z2ZHarmonicSpinorsInDimensionThree} is based on a suggestion by \citet{Donaldson2021:Talk} and conceptually cleaner and more direct than the argument in \cite{Parker2023:Deformation}.
\end{remark}

\begin{remark}
  \label{Rmk_RelaxingNonDegeneracy}
  The restriction to $\ProjectiveSpaceOfNonDegenerateHarmonicSpinors$ is not unproblematic,
  since it prevents $\pr_\SpaceOfDiracBundles$ from being proper;
  cf.~\cite[§1.4]{TaubesWu2021:Z2ZEigenfunctionsTopologicalAspects}.
  Therefore,
  it would be interesting to analyse what happens if the non-degeneracy condition is relaxed either by allowing $\Br(\fl)$ to be singular or $\abs{\Phi}$ to vanish to a higher order at certain points in $\Br(\fl)$.
  \cite{HaydysTakahashiMazzeo2023:Index} contains some initial work in the direction of allowing $\Br(\fl)$ to be a graph.
\end{remark}

\begin{remark}
  \label{Rmk_BetterThanTameFréchetManifoldStructure}
  The tame Fréchet manifold structure on $\ProjectiveSpaceOfNonDegenerateHarmonicSpinors$ is obtained by exhibiting it as a tame Fréchet submanifold of the projectivisation $\bP\BundleOfAdmissibleZModTwoZSpinors$ of a tame Fréchet space bundle $\BundleOfAdmissibleZModTwoZSpinors$ over $\SpaceOfDiracBundles \times \SpaceOfRamifiedLineBundles$.
  Moreover, locally, $\ProjectiveSpaceOfNonDegenerateHarmonicSpinors$ can be thickened to a submanifold $\ProjectiveThickenedSpaceOfNonDegenerateHarmonicSpinors \subseteq \bP\BundleOfAdmissibleZModTwoZSpinors$ such that $\sV \coloneq \pr_\SpaceOfDiracBundles(\ProjectiveThickenedSpaceOfNonDegenerateHarmonicSpinors)$ is open and $\pr_\sV \co \ProjectiveThickenedSpaceOfNonDegenerateHarmonicSpinors \to \sV$ is a uniform submersion.
  This additional information can be useful to determine pullbacks of $\ProjectiveSpaceOfNonDegenerateHarmonicSpinors$ along tame smooth maps that fail to be transverse to $\pr_\SpaceOfDiracBundles$.
\end{remark}

The argument in the proof of \autoref{Thm_ProjectiveSpaceOfNonDegenerateHarmonicSpinors} is somewhat robust to the addition of mild lower-order terms.
As an application one obtains the following result regarding
\begin{equation*}
  \ProjectiveSpaceOfNonDegenerateEigenSpinors
  \coloneq
  \coprod_{(\bp,[\fl],\lambda) \in \SpaceOfDiracBundles \times \SpaceOfRamifiedLineBundles \times \R}
  \bP \set[\big]{
    \Phi \in H^1\Gamma\paren{X\setminus\Br(\fl), S \otimes \fl} : \Phi ~\text{satisfies \autoref{It_NonDegenerate_VanishingOrder} and~} D_\bp^\fl\Phi = \lambda \Phi
  },
\end{equation*}
the \defined{universal moduli space of non-degenerate $\Z/2\Z$ eigenspinors}.

\begin{theorem}[{cf.~\cite[Corollary 1.6]{Parker2023:Deformation}}]
  \label{Thm_ProjectiveSpaceOfNonDegenerateEigenSpinors}
  If $\dim X=3$ and $\rk S = 4$,
  then $\ProjectiveSpaceOfNonDegenerateEigenSpinors$ can be given the structure of a tame Fréchet manifold such that $\pr_\SpaceOfDiracBundles \co \ProjectiveSpaceOfNonDegenerateEigenSpinors \to \SpaceOfDiracBundles$ is uniformly Fredholm of index $0$ and the projection map $\lambda \co \ProjectiveSpaceOfNonDegenerateEigenSpinors \to \R$ is smooth.
\end{theorem}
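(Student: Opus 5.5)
The plan is to run the same universal construction as for \autoref{Thm_ProjectiveSpaceOfNonDegenerateHarmonicSpinors}, but with the eigenvalue as an extra parameter. Enlarge the parameter space to $\SpaceOfDiracBundles \times \R$ and, for each $(\bp,\lambda)$, consider the operator $D_\bp^\fl - \lambda$. The perturbation $-\lambda$ is a bounded zeroth-order self-adjoint term, so it changes neither the leading asymptotics near $\Br(\fl)$ nor the indicial roots. Hence:
\begin{enumerate*}[(i)]
\item the tame Fréchet space bundle $\BundleOfAdmissibleZModTwoZSpinors$ over $\SpaceOfDiracBundles \times \SpaceOfRamifiedLineBundles$;
\item the notion of non-degenerate vanishing order $\tfrac12$;
\item the closed extension of the Dirac operator
\end{enumerate*}
all carry over verbatim. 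In particular, the residue/extension description of the cokernel and its compensation by varying $\Br(\fl)$ go through unchanged.

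Concretely, I would realise $\ProjectiveSpaceOfNonDegenerateEigenSpinors$ as the zero set of the tame smooth section
\[
(\bp,\lambda,[\fl];[\Phi]) \mapsto (D_\bp^\fl-\lambda)\Phi
\]
of the appropriate bundle over $\bP\BundleOfAdmissibleZModTwoZSpinors \times \R$. Its linearisation in the $([\fl],\Phi)$ directions is the linearisation from the harmonic case plus the compact term $-\lambda$. That linearisation is the closed extension of $D^\fl_\bp$ together with the variation of $\Br(\fl)$, which via the residue of $\Phi$ is identified with adding the $r^{-1/2}$ modes. Since compact perturbations preserve the Fredholm property and the index, for fixed $\lambda$ this operator is Fredholm of index $-1$, exactly as in \autoref{Thm_ProjectiveSpaceOfNonDegenerateHarmonicSpinors}. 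Allowing $\lambda$ to vary adds one dimension to the domain, so the index becomes $0$ relative to $\SpaceOfDiracBundles$. This explains the shift from $-1$ to $0$.

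The steps, in order, are as follows.
\begin{enumerate}
\item Check that the tame estimates for the closed extension and its uniform Fredholm property hold uniformly in $\lambda$ on compact sets. This is immediate, because the $\lambda$-term contributes only lower-order, tame-bounded contributions.
\item Verify that the local thickening and stabilisation argument (\autoref{Rmk_BetterThanTameFréchetManifoldStructure}) produces a uniform submersion onto an open subset of $\SpaceOfDiracBundles \times \R$. This is done by adding a finite-dimensional cokernel stabilisation, built from smooth sections compactly supported away from $Z$, exactly as before.
\item Apply the Nash--Moser implicit function theorem to obtain the tame Fréchet submanifold structure.
\item Smoothness of $\lambda$ is then tautological, since it is the restriction of the projection to the $\R$ factor.
\item Composing $\ProjectiveSpaceOfNonDegenerateEigenSpinors \to \SpaceOfDiracBundles\times\R \to \SpaceOfDiracBundles$, and using that projection along the finite-dimensional factor $\R$ raises the index by one, gives index $0$. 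Here I would invoke the index bookkeeping for uniformly Fredholm maps underlying \autoref{Def_UniformlyFredholmMap}.
\end{enumerate}

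The main obstacle I expect is making sure that the tame right inverses (modulo finite rank) of the linearised operator, constructed in the harmonic case, remain tame with constants uniform in $\lambda$ and in the eigen-perturbation. The subtle point is that in the $\Z/2\Z$ setting the family of domains itself varies with $\Br(\fl)$. The $\lambda\Phi$ term must therefore be transported by the same diffeomorphisms that straighten $\Br(\fl)$, and one must check that it introduces no loss of derivatives beyond what the Nash--Moser scheme tolerates.

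My first attempt would be to treat $\lambda\,\id$ as part of the ``mild lower-order terms'' already allowed in the general framework. I would then verify that its pullback under the branching-locus diffeomorphisms is a zeroth-order operator depending tamely on the diffeomorphism, so that the quadratic error estimates in the Nash--Moser iteration are unchanged.
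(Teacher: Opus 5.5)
There is a genuine gap at your Step~3. The local thickening only shows that $\ThickenedSpaceOfNonDegenerateEigenSpinors$, the set where $(D_\bp^\fl-\lambda)\Phi\in\ObstructionBundle$, is a tame Fréchet manifold submersing onto an open set of parameters. The eigenspinor space is the zero locus of the finite-dimensional obstruction map $\ObstructionMap(\bp,[\fl];\Phi,\lambda)=(D_\bp^\fl-\lambda)\Phi\in\ObstructionBundle$ on this manifold. That zero locus is a submanifold, with $\pr_\SpaceOfDiracBundles$ uniformly Fredholm, only if $\ObstructionMap$ is transverse to the zero section.

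Equivalently, the full linearisation, including the $\SpaceOfDiracBundles$ directions, must be surjective. Concretely, with $\Pi$ the $L^2$ projection onto $\ker(D_{R_\Phi}-\lambda)$, the map $(\dot\bp,t)\mapsto\Pi\paren{\nabla_{\dot\bp}\bD(\bp,[\fl];\Phi)-t\Phi}$ must be onto $\ker(D_{R_\Phi}-\lambda)$. Fredholmness of the vertical linearisation says nothing about this, and the cokernel is never trivial. At an eigenspinor, the vertical linearisation composed with $\Theta_K^{-1}$ is exactly the self-adjoint operator $D_{R_\Phi}-\lambda$, because the perturbation term $\sL^K_{\bar v}(D_\bp^\fl-\lambda)\Phi$ vanishes there. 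Its kernel, hence its cokernel, always contains $\Phi$. The $\lambda$-direction only hits the $\Phi$-component, so whenever $\dim\ker(D_{R_\Phi}-\lambda)>1$ you need further parameter variations. Your proposal never supplies them, so Nash--Moser cannot be invoked at that step.

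The paper fills this gap with \autoref{Prop_TransversalityOfTheObstructionMap_Eigen}, proved exactly like \autoref{Prop_TransversalityOfTheObstructionMap}:
\begin{itemize}
\item Vary only the spin connection by $\dot\nabla\in\Omega^1(X,\fo_\Cl(S))$. This stays inside the slice, so neither $\Br(\fl)$ nor the residue condition moves, and by \autoref{Prop_SliceDerivativeOfUniversalDiracOperator} it changes $\bD\Phi$ by $\sum_i\gamma(e_i)\dot\nabla_{e_i}\Phi$.
\item Since $\dim X=3$ and $\rk S=4$, one has $S_x\cong\H$ and $\fo_\Cl(S_x)\cong\Im\H$. \autoref{Lem_AlgebraInTransversality} then shows that every $\psi$ supported in a compact $K\subseteq X\setminus\Br(\fl)$ on which $\Phi$ does not vanish is of this form. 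Such a $K$ exists by unique continuation.
\item Choose the obstruction bundle supported in such a $K$, which \autoref{Prop_TotallyReal=>UniformlyFredholm} allows. This gives transversality.
\item Hamilton's regular value theorem then yields the manifold structure. The index is $\rk\DeformationBundle-\rk\ObstructionBundle+1=1$ before projectivising, using that the residue condition is Lagrangian, and $0$ after.
\end{itemize}
This is where the quaternionic structure forced by the hypotheses is used a second time, beyond ellipticity of $R_\Phi$.

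By contrast, the obstacle you single out is harmless. The term $-\lambda$ is an order-zero perturbation explicitly allowed in \autoref{Prop_TotallyReal=>UniformlyFredholm}, and $[\sL^K_{\bar v},\lambda]=0$. Hence the Kosmann-commutator formula and the perturbation estimates of \autoref{Prop_K_PerturbationEstimate} go through with only notational changes.
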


Of course, $\ProjectiveSpaceOfNonDegenerateHarmonicSpinors = \lambda^{-1}(0) \subseteq \ProjectiveSpaceOfNonDegenerateEigenSpinors$.

\medskip %%% 4D

The hypothesis $\rk S = 4$ is essential in the above.
As is explained at the end of \autoref{Sec_UniversalBundleOfAdmissibleZModTwoZSpinors},
the infinitesimal deformation theory of non-degenerate $\Z/2\Z$ harmonic spinors at $(\bp,[\fl];\Phi)$ is governed by the Dirac operator $D_\bp^\fl$ subject to a residue condition arising from the inclusion $NZ \incl \ResidueBundle$
of the normal bundle of $Z$ into the bundle of possible residue values,
encoding the leading order behaviour of $\Phi$ at $Z$.
For this residue condition to be elliptic it is necessary that $2 = \rk NZ = \frac12 \rk \ResidueBundle = \frac12 \rk S$.

If $\dim X \geq 4$,
then $\rk S \geq 8$ for every Dirac bundle $S$;
however, if $X$ is oriented and $\dim X = 0 \pmod 4$,
then every choice of $\bp \in \SpaceOfDiracBundles$ orthogonally decomposes $S$ into
\begin{equation*}
  S = S^+ \oplus S^-
  \qwithq
  S^\pm \coloneq \ker\paren{\epsilon \mp \one} \qandq \epsilon \coloneq \gamma(\vol_g);
\end{equation*}
moreover, for every $[\fl] \in \SpaceOfSingularRamifiedLineBundles$,
$D_\bp^\fl$ decomposes accordingly as
\begin{equation*}
  D_\bp^\fl
  =
  \begin{pmatrix}
    0 &  D_\bp^{\fl,-} \\
    D_\bp^{\fl,+} & 0
  \end{pmatrix}.
\end{equation*}
The universal set of (non-zero) chiral $\Z/2\Z$ harmonic spinors (up to scaling)
\begin{equation*}
  \ProjectiveSpaceOfChiralHarmonicSpinors
  \coloneq
  \coprod_{(\bp,[\fl]) \in \SpaceOfDiracBundles \times \SpaceOfSingularRamifiedLineBundles}
  \bP\ker\paren[\big]{D_\bp^{\fl,\pm} \co H^1\Gamma\paren{X\setminus \Br(\fl),S^\pm \otimes \fl} \to L^2\Gamma\paren{X\setminus \Br(\fl),S^\mp \otimes \fl} }
\end{equation*}
contains the subset 
\begin{equation*}
  \ProjectiveSpaceOfNonDegenerateChiralHarmonicSpinors \subseteq  \ProjectiveSpaceOfChiralHarmonicSpinors
\end{equation*}
of non-degenerate chiral $\Z/2\Z$ harmonic spinors (up to scaling).
The restriction of the closed extension of $D_\bp^\fl$ considered above leads to a closed extension of $D_\bp^{\fl,\pm}$ which is Fredholm if $\rk S = 8$.
The proof of \autoref{Thm_ProjectiveSpaceOfNonDegenerateHarmonicSpinors} adapts with minor cosmetic modifications to establish the following result.

\begin{theorem}
  \label{Thm_ProjectiveSpaceOfNonDegenerateChiralHarmonicSpinors}
  If $X$ is oriented, $\dim X = 4$, and $\rk S = 8$,
  then $\ProjectiveSpaceOfNonDegenerateChiralHarmonicSpinors$ can be given the structure of a tame Fréchet manifold such that $\pr_{\SpaceOfDiracBundles} \co \ProjectiveSpaceOfNonDegenerateChiralHarmonicSpinors \to \SpaceOfDiracBundles$ is uniformly Fredholm of index
  \begin{equation*}
    \mp\frac14 \Inner{p_1(S),[X]} \pm \sigma(X) + \frac12\chi(Z) \mp \frac54\Inner{e(NZ),[Z]} - 1.
  \end{equation*}
\end{theorem}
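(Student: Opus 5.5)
The plan follows the proof of \autoref{Thm_ProjectiveSpaceOfNonDegenerateHarmonicSpinors}, replacing $S$ by $S^\pm$ throughout. First, I will realise $\ProjectiveSpaceOfNonDegenerateChiralHarmonicSpinors$ inside the projectivisation of the chiral subbundle $\BundleOfAdmissibleZModTwoZSpinors^\pm \subseteq \BundleOfAdmissibleZModTwoZSpinors$ over $\SpaceOfDiracBundles \times \SpaceOfRamifiedLineBundles$. Its fibres consist of admissible $\Z/2\Z$ spinors with values in $S^\pm\otimes\fl$, meaning their leading asymptotics at $Z$ are $r^{-1/2}$-type with residue in $\ResidueBundle^\pm$. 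Since $\epsilon = \gamma(\vol_g)$ depends tamely smoothly on $\bp$, this chiral subbundle is a tame Fréchet space subbundle. Next, I will define a tame smooth section of the bundle with fibre $L^2$-type sections of $S^\mp \otimes \fl$ (in the tame graded scale) by $\Phi \mapsto D_\bp^{\fl,\pm}\Phi$. The zero locus of this section, restricted to the non-degenerate locus, is $\ProjectiveSpaceOfNonDegenerateChiralHarmonicSpinors$.

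The key linear step is as follows. Varying $(\bp,[\fl])$ along $\SpaceOfRamifiedLineBundles$ moves $Z$, and at a non-degenerate $\Phi$ the derivative in the normal directions produces a term whose residue lies in the image of $NZ \incl \ResidueBundle^\pm$, with the inclusion given by Clifford multiplying the leading coefficient of $\Phi$. Hence the linearisation in the directions $T\SpaceOfRamifiedLineBundles \oplus T_\Phi$ is the closed extension of $D_\bp^{\fl,\pm}$ with residue condition in $NZ \subseteq \ResidueBundle^\pm$. Because $\rk S^\pm = 4$, we have $\rk \ResidueBundle^\pm = 4 = 2\rk NZ$, so exactly as in dimension three this residue condition is elliptic. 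I will invoke the same Fredholm and tame-estimate results proved for the three-dimensional case, with uniform constants in families. This gives a tame family of Fredholm operators with tame inverses on complements. Nash--Moser, or the uniform-Fredholm implicit function machinery used for \autoref{Thm_ProjectiveSpaceOfNonDegenerateHarmonicSpinors}, together with \autoref{Def_UniformlyFredholmMap}, then yields the tame Fréchet manifold structure and shows that $\pr_\SpaceOfDiracBundles$ is uniformly Fredholm. Its index equals the index of this extension minus $1$, the $-1$ accounting for projectivisation.

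The main obstacle is computing the index. I will write the extended operator as the unramified chiral Dirac operator $D^{\pm}_\bp$ on $S^\pm$, corrected by two terms. The first is the index change from twisting by the ramified line bundle $\fl$, which is computed on the double cover $\tilde X \to X$ branched along $Z$ as an anti-invariant index, or equivalently by an excision and gluing argument near $Z$. The second is the contribution of the residue condition, computed by a local model on the normal disc bundle of $Z$. The Atiyah--Singer term $\pm(-\tfrac14\Inner{p_1(S),[X]} + \sigma(X))$ should come from the Clifford-module index density of $S^\pm$, up to the sign conventions for $S^\pm$. The terms $\tfrac12\chi(Z) \mp \tfrac54\Inner{e(NZ),[Z]}$ should arise from the $G$-signature and $G$-spin formula fixed-point contributions for the involution of $\tilde X$. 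Equivalently, they arise from a local index computation along $Z$, using the Gauss--Bonnet and self-intersection terms $\chi(Z)$ and $Z\cdot Z$ together with the contribution of the chosen residue subbundle $NZ$. I will first try the branched-double-cover route, since the signature and $\hat A$ defects for involutions with fixed surface are classical. I will then check normalisations on a model example, such as the product of a surface with $T^2$ or $\CP^2$ with $Z$ a conic, to fix the coefficients $\tfrac12$ and $\tfrac54$.
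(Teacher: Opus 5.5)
Your construction of the manifold structure matches the paper's. The proof of \autoref{Thm_ProjectiveSpaceOfNonDegenerateHarmonicSpinors} carries over once the chiral version of the uniform Fredholm proposition is used, and the index of $\pr_\SpaceOfDiracBundles$ is $\ind D^{\fl,\pm}_{\ResidueCondition^\pm_{V^\pm}} - 1$ with $V^\pm = \im \bL^\pm_\Phi$.

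The gap is in the index computation, which is where the substance of the statement lies. The index of a closed extension of $D^{\fl,\pm}$ depends on the residue condition: for an $\AlgebraBundle$--linear line subbundle $V^\pm \subseteq \ResidueBundle^\pm$ it shifts by $\pm\Inner{e(V^\pm),[Z]}$. Nothing in your plan sees this dependence.

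An anti-invariant index on $\tilde X$ computes the index of one specific extension at best, and setting it up is itself nontrivial. Since $\pi^*g$ degenerates along the ramification locus, $\pi^*S$ is not a Clifford module for a smooth metric there. You would first have to build a $\tau$--equivariant Dirac bundle on $\tilde X$ and identify which residue condition its anti-invariant part corresponds to. You would then still need a relative index formula to pass to $V^\pm = \im\bL^\pm_\Phi$.

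That relative formula is the key lemma of the paper's appendix:
$\ind D^{\fl,+}_{\ResidueCondition^+_{V^+}} - \ind D^{\fl,+}_{\ResidueCondition^+_{W^+}} = \Inner{e(V^+)-e(W^+),[Z]}$.
It is proved in three moves:
\begin{enumerate}
\item compare each local condition with the APS condition;
\item rewrite the resulting spectral-projection index as $\frac12(\dim\ker A^+ + \ind B)$, where $B$ is the off-diagonal part of $A^+$ with respect to $V^+\oplus (V^+)^\perp$;
\item evaluate $\ind B = 2\Inner{e(V^+),[Z]}+\chi(Z)$ by Riemann--Roch, since $B$ has the symbol of a Cauchy--Riemann operator on $V^+$.
\end{enumerate}
The paper also avoids a $G$--spin theorem for general $S$. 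Because $S$ and the Hodge--de Rham Clifford module are isomorphic off finitely many points away from $Z$, excision reduces everything to two pieces. The first is Atiyah--Singer for the unramified $D^+$. The second is the Hodge--de Rham operator with the Dirichlet condition $\ResidueBundle^+_D$. Its kernel and cokernel are $L^2$ harmonic $1$-- and self-dual $2$--forms, identified with $\rH^1_\dR(\tilde X)^-$ and $\rH^+_\dR(\tilde X)^-$ by Teleman's Lipschitz Hodge theory. The branched cover enters only through $\chi(\tilde X) = 2\chi(X)-\chi(Z)$ and $\sigma(\tilde X) = 2\sigma(X) - \frac12[Z]\cdot[Z]$.

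Fixing the coefficients $\frac12$ and $\frac54$ by checking model examples presupposes the shape of the answer; it is not a derivation. The coefficient $\frac54$ also hinges on a point your description misses. $\bL_\Phi$ is $\AlgebraBundle$--\emph{anti}-linear, so $V^\pm = \im\bL^\pm_\Phi \iso \overline{NZ}$ and $\Inner{e(V^\pm),[Z]} = -\Inner{e(NZ),[Z]}$. Substituting this into the term $\mp\frac14\Inner{e(NZ),[Z]} \pm \Inner{e(V^\pm),[Z]}$ of the general formula (\autoref{Thm_IndexFormula}) gives $\mp\frac54\Inner{e(NZ),[Z]}$. If you instead take the residue subbundle to be $NZ$ with its own $\AlgebraBundle$--structure, as your wording suggests, you get $\pm\frac34$, which is wrong.
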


\begin{remark}
  \label{Rmk_UniquenessOfChirality}
  If $X$ is oriented, $\dim X = 4$, and $\rk S = 8$,
  then $\epsilon \coloneq \gamma(\vol_g)$ is the unique chirality operator on $S$ up to sign.
  Indeed,
  if $\delta$ is another chirality operator,
  then it commutes with $\epsilon$;
  therefore, $\delta\epsilon = \epsilon\delta$ is Clifford linear, self-adjoint, and parallel;
  hence, by Schur's Lemma $\delta = \pm \epsilon$.
  In particular, there is no additional loss of generality in the above setup.
\end{remark}

\medskip

The above results (and their proof) can be brought to bear on $\Z/2\Z$ harmonic $1$--forms as follows.

Assume that $\dim X = 3$, $X$ is oriented, and $S = \R \oplus T^*X$.
For every $g \in \SpaceOfMetrics$ there is a Dirac bundle structure on $S$ whose associated Dirac operator is
\begin{equation*}
  D_g
  \coloneq
  \begin{pmatrix}
    0 & \rd^* \\
    \rd & *\rd
  \end{pmatrix}
  \co \Omega^0(X) \oplus \Omega^1(X) \to \Omega^0(X) \oplus \Omega^1(X),
\end{equation*}
a truncation of the \defined{Hodge--de Rham operator} $\rd + \rd^* \co \Omega^\bullet(X) \to \Omega^\bullet(X)$.
This defines a tame smooth map $\HdR \co \SpaceOfMetrics \to \SpaceOfDiracBundles$.
By integration by parts,
if $(f,\alpha) \in \Omega^0(X,\fl) \oplus \Omega^1(X,\fl)$ is $\Z/2\Z$ harmonic, then $f = 0$---except in the edge case $\Br(\fl) = \emptyset$, which shall be disregarded in the remaining discussion.
Therefore,
the pullback
\begin{equation*}
  \ProjectiveSpaceOfNonDegenerateHarmonicOneForms \coloneq \HdR^*\ProjectiveSpaceOfNonDegenerateHarmonicSpinors
\end{equation*}
is the \defined{universal moduli space of non-degenerate $\Z/2\Z$ harmonic $1$--forms}.
Although $\HdR$ is \emph{not transverse} to $\pr \co \SpaceOfNonDegenerateHarmonicSpinors \to \SpaceOfDiracBundles$,
the theory underlying the proof of \autoref{Thm_ProjectiveSpaceOfNonDegenerateHarmonicSpinors}, see \autoref{Rmk_BetterThanTameFréchetManifoldStructure}, provides detailed information on $\ProjectiveSpaceOfNonDegenerateHarmonicOneForms$ and the projection map $\pr_\SpaceOfMetrics \co \ProjectiveSpaceOfNonDegenerateHarmonicOneForms \to \SpaceOfMetrics$.

There is a finite rank vector bundle $\sH^1 \to \SpaceOfMetrics\times\SpaceOfRamifiedLineBundles$ whose fibre over $(g,[\fl])$ is $\sH^1(g,\fl)$, the space of $L^2$ harmonic $1$--forms $\alpha \in L^2\Omega^1(X,\fl)$.
A singular version of Hodge theory \cite{Teleman1983:SignatureLipschitz} induces an isomorphism $\sH^1(g,\fl) \iso \rH_{\dR}^1(\tilde X)^-$.
Here $\tilde X \to X$ denotes the branched double cover induced by $\fl$ and the superscript indicates taking the anti-invariant part with respect to the sheet-swapping involution.
Evidently, there is a canonical inclusion map $\jmath \co \ProjectiveSpaceOfNonDegenerateHarmonicOneForms \incl \bP\sH^1$.

\begin{theorem}
  \label{Thm_ProjectiveSpaceOfNonDegenerateHarmonicOneForms}
  If $\dim X=3$ and $X$ is oriented,
  then $\ProjectiveSpaceOfNonDegenerateHarmonicOneForms$ can be given the structure of a tame Fréchet manifold such that:
  \begin{enumerate}
  \item
    \label{Thm_ProjectiveSpaceOfNonDegenerateHarmonicOneForms_Submersion}
    $\pr_\SpaceOfMetrics \co \ProjectiveSpaceOfNonDegenerateHarmonicOneForms \to \SpaceOfMetrics$ is a uniform submersion, and
  \item
    \label{Thm_ProjectiveSpaceOfNonDegenerateHarmonicOneForms_VerticalTangenBundle}
    $\jmath$ is smooth and induces an isomorphism $\VerticalTangentBundle{\ProjectiveSpaceOfNonDegenerateHarmonicOneForms}{\SpaceOfMetrics} \iso \jmath^* \VerticalTangentBundle{\bP\sH^1}{\paren{\SpaceOfMetrics \times \SpaceOfRamifiedLineBundles}}$
    of the vertical tangent bundles.
  \end{enumerate}
  In particular,
  the index of $\pr_\SpaceOfMetrics$ at $(g,[\fl];[\alpha]) \in \ProjectiveSpaceOfNonDegenerateHarmonicOneForms$ agrees with $\dim \rH_{\dR}^1(\tilde X)^- - 1$.
\end{theorem}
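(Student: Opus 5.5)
The plan is to realise $\ProjectiveSpaceOfNonDegenerateHarmonicOneForms$ as a submanifold of the local thickening from \autoref{Rmk_BetterThanTameFréchetManifoldStructure}, pulled back along $\HdR$. Near a point $(g_0,[\fl_0];[\alpha_0])$, \autoref{Thm_ProjectiveSpaceOfNonDegenerateHarmonicSpinors} provides a thickening $\ProjectiveThickenedSpaceOfNonDegenerateHarmonicSpinors \subseteq \bP\BundleOfAdmissibleZModTwoZSpinors$ with $\pr_\sV$ a uniform submersion. The genuine moduli space is then cut out by an obstruction map $\ObstructionMap$ with values in a finite rank obstruction bundle $\ObstructionBundle$, whose fibre is the cokernel of the residue-extended Dirac operator at the given point. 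For the Hodge--de Rham Dirac bundle structure I will identify this cokernel explicitly. By integration by parts, and because $S=\R\oplus T^*X$ carries the extra grading by form degree, the cokernel of the linearisation at $(\HdR(g),[\fl];\alpha)$ should split into a part coming from the $\Omega^0$-component and the Clifford structure, and a part spanned by deformations of $\bp$ that are not of Hodge--de Rham type. The key algebraic claim is the following: after pulling back along $\HdR$, the composite $\ObstructionMap\circ\HdR$ vanishes identically to the relevant order. Equivalently, every harmonic spinor for the pulled-back operator that is close to $\alpha$ automatically has $f=0$ and is a genuine $\Z/2\Z$ harmonic $1$--form, as the integration-by-parts argument in the text shows.

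Concretely, I will run the following steps. First, I will show that for $\bp=\HdR(g)$ the set $\HdR^*\ProjectiveThickenedSpaceOfNonDegenerateHarmonicSpinors$ is the graph of a tame smooth map. This will follow from base change along $\HdR$ of the uniform submersion $\pr_\sV$, using \autoref{Prop_UniformFredholmMapsAreStableUnderTransverseBaseChange} applied to a submersion, which is trivially transverse. The result is a tame Fréchet manifold $\mathcal T$ that submerses onto a neighbourhood in $\SpaceOfMetrics$. Second, I will show that $\ProjectiveSpaceOfNonDegenerateHarmonicOneForms \cap \mathcal T$ is the zero set of a section of a finite rank bundle over $\mathcal T$, and that this section is identically zero along the fibres in a controlled way. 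To see this, note that the thickened equation reads $D_g(f,\alpha)=\text{(finite-dimensional correction)}$. Pairing with $(f,\alpha)$ and $(1,0)$-type test sections, and using that the residue condition is compatible with the form degree, forces the correction to vanish. This step makes $\ProjectiveSpaceOfNonDegenerateHarmonicOneForms$ locally equal to $\mathcal T$ intersected with a smooth subbundle condition of constant corank, hence a tame Fréchet manifold, and makes $\pr_\SpaceOfMetrics$ a uniform submersion, which gives \autoref{Thm_ProjectiveSpaceOfNonDegenerateHarmonicOneForms_Submersion}.

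For \autoref{Thm_ProjectiveSpaceOfNonDegenerateHarmonicOneForms_VerticalTangenBundle}, I will fix $g$ and compute the vertical tangent space. Its elements are pairs consisting of a variation $\dot Z$ of the branching locus and a spinor variation $\dot\alpha$ satisfying the linearised equation with residue determined by $\dot Z$ via $NZ\incl\ResidueBundle$. I will map such a pair to the class of the $L^2$ harmonic form $\dot\alpha + \text{(correction from moving }\fl)$ in $\sH^1(g,\fl)$ modulo $\alpha$, which is the derivative of $\jmath$. Injectivity holds because a variation that is trivial in $\sH^1/\R\alpha$ must have $\dot\alpha\in\R\alpha$ together with a residue-free variation, and non-degeneracy (order exactly $\tfrac12$) then forces $\dot Z=0$. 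For surjectivity, the closed form $\dot\alpha$ has a well-defined class in $\rH^1_\dR(\tilde X)^-$, and the Hodge isomorphism of \cite{Teleman1983:SignatureLipschitz}, which varies smoothly with $[\fl]$ and so realises $\sH^1$ as a flat bundle over $\SpaceOfRamifiedLineBundles$, identifies this tangent space with $\VerticalTangentBundle{\bP\sH^1}{(\SpaceOfMetrics\times\SpaceOfRamifiedLineBundles)}$. The index statement then follows, since $\dim\bP\sH^1$-fibres equal $\dim \rH^1_\dR(\tilde X)^- - 1$.

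I expect the main obstacle to be the second step: showing that the finite-dimensional obstruction automatically vanishes after restricting to Hodge--de Rham structures, uniformly enough to obtain tame estimates. The naive count from \autoref{Thm_ProjectiveSpaceOfNonDegenerateHarmonicSpinors} gives index $-1$, but the true answer is $\dim\rH^1(\tilde X)^- -1$, so the cokernel must be absorbed exactly. I would try first to exhibit a Weitzenböck/integration-by-parts identity showing that the $\Omega^0$-component of any solution of the thickened equation is zero and that the obstruction pairs trivially with $\rd$-closed, $\rd^*$-closed variations.
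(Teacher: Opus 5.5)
Your Step~1 (pulling the local thickening back along $\HdR$, which is harmless because $\pr_\sV$ is a uniform submersion) is what is needed. Step~2, which carries all the content, would not go through as you describe it, and your description of the cokernel is off.

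\textbf{The cokernel and the missing injectivity.} At $(\HdR(g_0),[\fl_0];\alpha_0)$ the cokernel of $D_{R_{\alpha_0}}$ is neither an $\Omega^0$--part nor anything involving non-Hodge--de Rham deformations of $\bp$: it is exactly $\sH^1(g_0,\fl_0)$. Indeed, $\im\bL_{\alpha_0}=\ResidueBundle_D$ by \autoref{Prop_SpinorResidueConditon=DirichletResidueCondition}, so $\ker D_{R_{\alpha_0}}=\ker D_{g_0,R_D}^{\fl_0}=\sH^1(g_0,\fl_0)$ by \autoref{Prop_KerDRD}. Since the residue condition is Lagrangian, this is also the cokernel. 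Consequently $\ObstructionBundle$ can be chosen so that the $L^2$--orthogonal projection $\ObstructionBundle_{g,[\fl];\alpha}\to\sH^1(g,[\fl])$ is an isomorphism at the base point. Because $\sH^1$ is a finite-rank bundle (\autoref{Cor_H1Bundle}), this projection stays injective nearby. This is the missing idea.

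\textbf{The right test sections.} Your proposed test sections cannot replace it. Pairing $D_g(f,\alpha)=o\in\ObstructionBundle$ with $(f,\alpha)$ yields a single scalar identity. Sections $(u,0)$ are useless, since $D_g(u,0)=(0,\rd u)$ and an $\fl$--twisted $u$ with $\rd u=0$ vanishes. The right test sections are $(0,h)$ with $h\in\sH^1(g,[\fl])$. As $(f,\alpha)\in\dom(\Dmin)$, $(0,h)\in\dom(\Dmax)$, and $D_g(0,h)=(\rd^*h,*\rd h)=0$, one has $\Inner{D_g(f,\alpha),(0,h)}_{L^2}=\Inner{(f,\alpha),D_g(0,h)}_{L^2}=0$. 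Your closing remark gestures at this, but only combined with the injectivity above does it give $o=0$, i.e.\ $\ObstructionMap\circ\HdR\equiv0$ on $\HdR^*\ThickenedSpaceOfNonDegenerateHarmonicSpinors$.

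\textbf{Consequences.} Every point of the pulled-back thickening is then harmonic, hence has $f=0$. So the pulled-back thickening is itself an open neighbourhood in $\SpaceOfNonDegenerateHarmonicOneForms$. There is no residual ``subbundle condition of constant corank'', and no further tame estimates are needed: all Nash--Moser input is already in the thickening. Note also that your ``equivalently'' is not an equivalence. That genuine solutions have $f=0$ says nothing about points of the thickening with $o\neq0$.

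\textbf{Part~(2).} The same fact $\im\bL_\alpha=\ResidueBundle_D$ is what your surjectivity argument lacks. By \autoref{Rmk_TangentSpaces}, the vertical tangent space is identified with $\ker D_{R_\alpha}$ via $(\dot\alpha,v)\mapsto\dot\alpha-\sL_{\bar v}^K\alpha$. To realise a given $h\in\sH^1(g,[\fl])$ you must take $v=-\bL_\alpha^{-1}\res(h)$. This requires $\res(h)\in\im\bL_\alpha$, which is precisely \autoref{Prop_SpinorResidueConditon=DirichletResidueCondition}. Knowing that $\dot\alpha$ defines a class in $\rH_{\dR}^1(\tilde X)^-$ does not supply this. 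Your injectivity argument (taking residues forces $\bL_\alpha v=0$, hence $v=0$) is fine.
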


\begin{remark}
  \label{Rmk_ProjectiveSpaceOfNonDegenerateHarmonicOneForms_PriorWork}
  \autoref{Thm_ProjectiveSpaceOfNonDegenerateHarmonicOneForms} recovers \cite[Theorem 1.1]{Donaldson2021} in dimension three;
  see also \cite{HeParker2024} for an adaptation of \cite{Parker2023:Deformation} to $\Z/2\Z$ harmonic $1$--forms.  
  The proof of \autoref{Thm_ProjectiveSpaceOfNonDegenerateHarmonicOneForms} presented in \autoref{Sec_Z2ZHarmonicOneForms} is different from the proof in \cite{Donaldson2021},
  but, of course, related to the alternative proof of \cite[Theorem 1.1]{Donaldson2021} mentioned in \cite{Donaldson2021:Talk}.
\end{remark}

\begin{remark}
  \label{Rmk_ProjectiveSpaceOfNonDegenerateHarmonicOneForms_PeriodMap}
  The vector bundle $\sH^1$ has a flat Gauß--Manin connection.  
  In particular, if $\sU \subseteq \SpaceOfMetrics \times \SpaceOfRamifiedLineBundles$ is a simply-connected neighbourhood of $(g,[\fl])$,
  then it induces a trivialisation $\sH^1|_\sU \iso \rH_{\dR}^1(\tilde X)^-$.
  As a consequence of \autoref{Thm_ProjectiveSpaceOfNonDegenerateHarmonicOneForms},
  every $(g,[\fl];[\alpha]) \in \ProjectiveSpaceOfNonDegenerateHarmonicOneForms$ has an open neighbourhood $\sU \subseteq \ProjectiveSpaceOfNonDegenerateHarmonicOneForms$ such that there is a well-defined \defined{local period map} $\PeriodMap \co \sU \to \bP\rH_{\dR}^1(\tilde X)^-$ and $\pr_\SpaceOfMetrics \times \PeriodMap \co \sU \to \SpaceOfMetrics \times \bP\rH_{\dR}^1(\tilde X)^-$ is an open embedding.
  This formulation is closer to the one found in \cite[Theorem 1.1]{Donaldson2021}.
\end{remark}

There is a variant of the preceding discussion assuming that $\dim X = 4$, $X$ is oriented, and $S = \R \oplus \Wedge^{+} T^*X \oplus T^*X$.
For every $g \in \SpaceOfMetrics$ there is a Dirac bundle structure on $S$ whose associated Dirac operator is
\begin{equation*}
  D_g
  \coloneq
  \begin{pmatrix}
    0 & \rd & \sqrt{2}\rd^* \\
    \rd^* & 0 & 0 \\
    \sqrt{2}\rd^+ & 0 & 0
  \end{pmatrix}
  \co \Omega^1(X) \oplus \Omega^0(X) \oplus \Omega^+(X) \to \Omega^1(X) \oplus \Omega^0(X) \oplus \Omega^+(X).
\end{equation*}
This defines a tame smooth map $\HdR \co \SpaceOfMetrics \to \SpaceOfDiracBundles$.
A brief computation shows that
\begin{equation*}
   \ProjectiveSpaceOfNonDegenerateHarmonicOneForms \coloneq \HdR^*\ProjectiveSpaceOfNonDegeneratePositiveHarmonicSpinors
   \qandq
   \ProjectiveSpaceOfNonDegenerateHarmonicSelfDualTwoForms \coloneq 
   \HdR^*\ProjectiveSpaceOfNonDegenerateNegativeHarmonicSpinors
\end{equation*}
are the \defined{universal moduli spaces of non-degenerate $\Z/2\Z$ harmonic $1$--forms} and \defined{self-dual $2$--forms} respectively.
There are finite rank vector bundles $\sH^1 \to \SpaceOfMetrics\times\SpaceOfRamifiedLineBundles$ and $\sH^+ \to \SpaceOfMetrics\times\SpaceOfRamifiedLineBundles$ whose fibres over $(g,[\fl])$ are $\sH^1(g,\fl)$, as above, and $\sH^+(g,\fl)$, the space of $L^2$ harmonic self-dual $2$--forms $\alpha \in L^2\Omega^+(X,\fl)$.
As before,
$\sH^1(g,\fl) \iso \rH_{\dR}^1(\tilde X)^-$;
moreover,
$\sH^+(g,\fl)$ is isomorphic to $\rH_{\dR}^+(\tilde X)^-$,
a maximal subspace of $\rH_{\dR}^2(\tilde X)^-$ on which the intersection form is positive definite.
There are canonical inclusions
$\jmath \co \ProjectiveSpaceOfNonDegenerateHarmonicOneForms \incl \bP\sH^1$ and
$\jmath \co \ProjectiveSpaceOfNonDegenerateHarmonicSelfDualTwoForms \incl \bP\sH^+$.

\begin{theorem}
  \label{Thm_ProjectiveSpaceOfNonDegenerateHarmonicSelfDualTwoAndOneForms}
  If $\dim X=4$ and $X$ is oriented,
  then:
  \begin{enumerate}
  \item
    \label{Thm_ProjectiveSpaceOfNonDegenerateHarmonicSelfDualTwoAndOneForms_1}
    $\ProjectiveSpaceOfNonDegenerateHarmonicOneForms$ can be given the structure of a tame Fréchet manifold such that:
    \begin{enumerate}
    \item 
      $\pr_\SpaceOfMetrics \co \ProjectiveSpaceOfNonDegenerateHarmonicOneForms \to \SpaceOfMetrics$ is a uniform submersion, and
    \item
      $\jmath$ is smooth and induces an isomorphism $\VerticalTangentBundle{\ProjectiveSpaceOfNonDegenerateHarmonicOneForms}{\SpaceOfMetrics} \iso \jmath^* \VerticalTangentBundle{\bP\sH^1}{\paren{\SpaceOfMetrics \times \SpaceOfRamifiedLineBundles}}$.
    \end{enumerate}
    In particular, the index of $\pr_\SpaceOfMetrics$ at $(g,[\fl];[\alpha])$ agrees with $\dim \rH_{\dR}^1(\tilde X)^- - 1$.
  \item
    \label{Thm_ProjectiveSpaceOfNonDegenerateHarmonicSelfDualTwoAndOneForms_+}
    $\ProjectiveSpaceOfNonDegenerateHarmonicSelfDualTwoForms$ can be given the structure of a tame Fréchet manifold such that:
    \begin{enumerate}
    \item 
      $\pr_\SpaceOfMetrics \co \ProjectiveSpaceOfNonDegenerateHarmonicSelfDualTwoForms \to \SpaceOfMetrics$ is a uniform submersion, and
    \item
      $\jmath$ is smooth and induces an isomorphism $\VerticalTangentBundle{\ProjectiveSpaceOfNonDegenerateHarmonicSelfDualTwoForms}{\SpaceOfMetrics} \iso \jmath^* \VerticalTangentBundle{\bP\sH^+}{\paren{\SpaceOfMetrics \times \SpaceOfRamifiedLineBundles}}$.
    \end{enumerate}
    In particular, the index of $\pr_\SpaceOfMetrics$ at $(g,[\fl];[\alpha])$ agrees with $\dim \rH_{\dR}^+(\tilde X)^- - 1$.
  \end{enumerate}  
\end{theorem}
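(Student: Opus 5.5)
The plan is to follow the strategy sketched for \autoref{Thm_ProjectiveSpaceOfNonDegenerateHarmonicOneForms}, replacing \autoref{Thm_ProjectiveSpaceOfNonDegenerateHarmonicSpinors} by \autoref{Thm_ProjectiveSpaceOfNonDegenerateChiralHarmonicSpinors}. We use the local thickening from \autoref{Rmk_BetterThanTameFréchetManifoldStructure}: near a point $(g,[\fl];[\alpha])$ with $\alpha$ either a $1$--form or a self-dual $2$--form, $\ProjectiveSpaceOfNonDegenerateChiralHarmonicSpinors$ is locally cut out inside a thickened submanifold $\ProjectiveThickenedSpaceOfNonDegenerateHarmonicSpinors \subseteq \bP\BundleOfAdmissibleZModTwoZSpinors^\pm$. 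That thickened space submerses uniformly onto an open set $\sV \subseteq \SpaceOfDiracBundles$. Along it, the residual equation $D^{\fl,\pm}_\bp\Phi = 0$ becomes a map to a finite rank obstruction bundle $\ObstructionBundle$, modelled on the cokernel of the Fredholm closed extension of $D^{\fl,\pm}_\bp$ with the residue condition. Since $\HdR$ is not transverse to $\pr_\SpaceOfDiracBundles$, we pull everything back along $\HdR$. The thickened space pulls back to a uniform submersion over $\HdR^{-1}(\sV) \subseteq \SpaceOfMetrics$. Then $\ProjectiveSpaceOfNonDegenerateHarmonicOneForms$ and $\ProjectiveSpaceOfNonDegenerateHarmonicSelfDualTwoForms$ are the zero sets of the pulled-back obstruction sections $\ObstructionMap \co \HdR^*\ProjectiveThickenedSpaceOfNonDegenerateHarmonicSpinors \to \ObstructionBundle$.

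The key step is to identify the obstruction precisely in the form case, using the Hodge--de Rham structure. For $S^+ = T^*X$ the operator is $(\sqrt2\rd^+, \rd^*)$. For $S^- = \R\oplus\Wedge^+T^*X$ it is $(f,\omega)\mapsto \rd f + \sqrt2 \rd^*\omega$. Integration by parts, together with the $r^{1/2}$ asymptotics from non-degeneracy, shows two things. First, the cokernel of the twisted operator with residue condition splits into a part that is always unobstructed and a part that is canonically dual to the harmonic space $\sH^1(g,\fl)$, respectively $\sH^+(g,\fl)$. Second, on the $\HdR$-pullback, a Stokes-type argument forces the obstruction components outside this harmonic part to vanish identically; this is the same mechanism that kills $f$ in the three-dimensional discussion. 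The result is that, after shrinking, $\ObstructionMap$ is a section of a bundle of rank $\operatorname{rk}\ObstructionBundle$ whose transverse part is surjective, while its nontransverse part vanishes along the whole thickened pullback. The zero set is therefore a tame Fréchet submanifold, by the tame implicit function theorem for uniform submersions from earlier in the paper. Its projection to $\SpaceOfMetrics$ is a uniform submersion.

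For the vertical tangent bundle statement, we compute the linearisation at fixed $g$. Vertical deformations are pairs $(\dot Z,\dot\alpha)$ solving the linearised equation with the residue condition. The map $\jmath$ sends a deformation to the class of $\dot\alpha$ in $\sH^\bullet$, modulo $\alpha$ and modulo the variation of $\sH^\bullet$ along $\SpaceOfRamifiedLineBundles$. We must show this map is an isomorphism onto $T(\bP\sH^\bullet)$ relative to $\SpaceOfMetrics\times\SpaceOfRamifiedLineBundles$. Injectivity comes from uniqueness in the Fredholm extension: a deformation with zero harmonic part is generated by diffeomorphisms moving $Z$. Surjectivity comes from the previous paragraph's computation that every harmonic direction is realised. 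Via Teleman's singular Hodge theory, $\sH^1(g,\fl)\iso\rH^1_\dR(\tilde X)^-$ and $\sH^+(g,\fl)\iso\rH^+_\dR(\tilde X)^-$, and the index formula follows immediately. The main obstacle is the Stokes/integration-by-parts identification of the obstruction on the pullback in the singular setting. The boundary terms at $Z$ must vanish exactly because of the order-$\frac12$ asymptotics and the residue condition. I would first try to carry it out on a tubular neighbourhood, using the explicit model expansion of $\alpha$ near $Z$.
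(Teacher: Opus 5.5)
\paragraph{A genuine gap: the key step is inverted.}
Your framework matches the paper: pull the local thickening back along $\HdR$, note that the pullback submerses uniformly onto $\HdR^{-1}(\sV)$, and cut out the moduli space by the pulled-back obstruction map. The failure is in the key step. On $\HdR^*\ThickenedSpaceOfNonDegenerateHarmonicSpinors$ the obstruction map vanishes \emph{identically}, so there is no ``transverse part''.

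Suppose some component of $\ObstructionMap$ of rank $m>0$ were surjective. Then the zero set would have vertical rank $\rk\DeformationBundle-1-m$ over $\SpaceOfMetrics$. If that surjectivity used metric directions, $\pr_\SpaceOfMetrics$ would not even be a submersion. Either way you could not conclude that the vertical tangent bundle is $\jmath^*T(\bP\sH^\bullet)$, nor that the index is $\dim\rH_\dR^\bullet(\tilde X)^- - 1$. Your sentence asserting the uniform submersion property has nothing behind it.

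The confusion comes from misidentifying the cokernel. By \autoref{Prop_SpinorResidueConditon=DirichletResidueConditionTwoForms}, $\im\bL_\alpha=\ResidueBundle_D^+$ and $\im\bL_\beta=\ResidueBundle_D^-$, and $\ResidueBundle_D^+\oplus\ResidueBundle_D^-$ is Lagrangian. Hence:
\begin{itemize}
\item the cokernel of $D^+_{R^+_\alpha}$ is $\ker D^-_{R^-_D}=\sH^+(g,\fl)$;
\item the cokernel of $D^-_{R^-_\beta}$ is $\ker D^+_{R^+_D}=\sH^1(g,\fl)$.
\end{itemize}
These are the \emph{other} harmonic spaces. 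The spaces you name ($\sH^1$, respectively $\sH^+$) are the \emph{kernels}, i.e.\ the vertical tangent spaces, not the obstruction spaces.

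\paragraph{The missing argument.}
This is the chiral version of the proof of \autoref{Thm_ProjectiveSpaceOfNonDegenerateHarmonicOneForms}.
\begin{enumerate}
\item Choose $\ObstructionBundle$ so that at the base point the $L^2$ orthogonal projection $\ObstructionBundle\to\sH^+$ (respectively $\to\sH^1$) is an isomorphism.
\item These harmonic spaces have constant rank and form a vector bundle over $\SpaceOfMetrics\times\SpaceOfRamifiedLineBundles$ (Teleman, \autoref{Cor_H1+Bundle}). So after shrinking, the projection stays injective. You need this constant-rank input; nothing in your proposal supplies it.
\item At every point of the pullback, $\Phi$ has vanishing residue, so $\Phi\in\dom\Dmin$, while the harmonic forms lie in $\dom\Dmax$. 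Hence $\Inner{D\Phi,\kappa}_{L^2}=\Inner{\Phi,D\kappa}_{L^2}=0$. Concretely, $\Inner{\rd^+\alpha,\beta}=\Inner{\alpha,\rd^*\beta}=0$ for $\beta\in\sH^+$, and $\Inner{\rd f+\sqrt2\rd^*\beta,a}=0$ for $a\in\sH^1$.
\item So $D\Phi\in\ObstructionBundle$ has zero projection, hence $D\Phi=0$. The entire pullback is therefore an open subset of the moduli space, and the uniform submersion property is inherited from $\pr_\sV$.
\end{enumerate}

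\paragraph{Vertical tangent bundle.}
The statement follows from
\[
\ker T\pr=\ker D_{R_\Phi}=\ker D_{R_D^\pm}=\sH^\bullet(g,\fl),
\]
using \autoref{Rmk_TangentSpaces}, $R_\Phi=R^\pm_D$, and \autoref{Prop_KerDRD+-}. Injectivity there is simply injectivity of $\bL_\Phi$. There is no nonzero deformation with vanishing harmonic part ``generated by diffeomorphisms moving $Z$''.

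\paragraph{Boundary terms.}
The boundary terms you single out as the main obstacle are harmless. They vanish because $\Phi$ has zero residue, independently of non-degeneracy.
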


\begin{remark}
  The assertion about $\ProjectiveSpaceOfNonDegenerateHarmonicOneForms$ is again nothing but \cite[Theorem 1.1]{Donaldson2021} in dimension four.
  However,
  the assertion about $\ProjectiveSpaceOfNonDegenerateHarmonicSelfDualTwoForms$ is novel.
  In fact,
  for self-dual $2$--forms the reduction to a scalar equation used in \cite{Donaldson2021} is not available.
\end{remark}

\paragraph{Preliminaries}
This article is based on the theory of tame Fréchet manifolds as developed by \citet{Hamilton1982:NashMoser}.
Readers are assumed either to be familiar with this subject or to be willing to accept it as a black box.
It should be pointed out that, as a small deviation from \cite{Hamilton1982:NashMoser},
instead of \cite[Part II Definition 1.3.2]{Hamilton1982:NashMoser} the following definition is in effect.
\begin{definition}
  \label{Def_SmoothingOperatorProperty}
  A graded Fréchet space $\paren{F,\paren{\Abs{-}_k}_{k \in \N_0}}$ is \defined{tame} if it has the \defined{smoothing operator property};
  \label{It_SmoothingOperatorProperty}
  that is:
  there is a family of smoothing operators $\paren{ S_\epsilon : \epsilon \in (0,1) }$ on $F$ such that:
  \begin{enumerate}[label=\rm{(S\arabic*)},ref=S\arabic*]
  \item
    \label{It_S1}
    For every $\epsilon \in (0,1)$, $k,\ell \in \N_0$ with $k \geq \ell$, and $x \in F$
    \begin{equation*}
      \Abs{S_\epsilon x}_k \lesssim \epsilon^{\ell-k}\Abs{x}_\ell.
    \end{equation*}
  \item
    \label{It_S2}
    For every $\epsilon \in (0,1)$, $k,\ell \in \N_0$ with $k \leq \ell$, and $x \in F$
    \begin{equation*}
      \Abs{\paren{S_\epsilon-\one} x}_k \lesssim \epsilon^{\ell-k}\Abs{x}_\ell.
    \end{equation*}
  \item
    \label{It_S3}
    For every $\epsilon \in (0,1)$, $k,\ell \in \N_0$, and $x \in F$
    \begin{equation*}
      \epsilon\Abs{\del_\epsilon S_\epsilon x}_k \lesssim \epsilon^{\ell-k}\Abs{x}_\ell.
      \qedhere
    \end{equation*}
  \end{enumerate}
\end{definition}
This is a weaker condition than \cite[Part II Definition 1.3.2]{Hamilton1982:NashMoser},
but sufficient for the proof of the Nash--Moser inverse function theorem \cite[Part III Theorem 1.1.1]{Hamilton1982:NashMoser}.

\paragraph{Acknowledgements}
The authors thank
Fabian Lehmann for his explanations of \cite{DonaldsonLehmann2025:CY3Boundary},
Thibault Langlais for discussions concerning \cite{Teleman1983:SignatureLipschitz} and the proof of \autoref{Prop_KerDRD}, and
both Thibault Langlais and Jacek Rzemieniecki for numerous helpful comments and suggestions on an earlier version of this article.
%%%
This material is based in part upon work carried out while the authors were in residence at the Simons Laufer Mathematical Sciences Institute (previously known as MSRI) in Berkeley, California, during the Fall 2022 semester.
%%%
G.P. is partially supported by an NSF Mathematical Sciences Postdoctoral Research Fellowship (Award No.~2303102).

\paragraph{Statement on the use of AI}
The main mathematical ideas and arguments of this paper were developed by the authors without AI assistance.
The first arXiv version was also prepared without AI assistance.
This project began at MSRI in 2022 and has been developed over the past four years, and the manuscript was written by the authors.
In preparing the second arXiv version, the authors used GPT-6 Astra (OpenAI) to speed up the process of proving the index formula in \autoref{Sec_IndexFormula} and to proofread the entire article.
The authors take full responsibility for the mathematical correctness and final content of the article.

%%% Local Variables:
%%% mode: latex
%%% TeX-master: "UniversalModuliSpaceOfZ2ZHarmonicSpinors"
%%% ispell-local-dictionary: "british"
%%% End:

\section{Dirac operators twisted by ramified line bundles}
\label{Sec_DiracOperatorsTwistedByRamifiedLineBundles}

The technical foundation of this article is \cite{BeraWalpuski2025}.
The latter is, of course, inspired by \cite{BaerBallmann2012:BVP} and most of its results could plausibly be cobbled together from the powerful machinery developed, e.g., in \cite{Mazzeo1991,MazzeoVertman2014,AlbinGellRedman2016,AlbinGellRedman2023}.

For the readers' convenience this section provides an \emph{informal} overview of some of the material in \cite{BeraWalpuski2025}.
Readers who are familiar with it can skim this section or skip it entirely.
%%% 
Throughout this section,
the following data are assumed to be fixed:
\begin{enumerate}
\item
  a Riemannian metric $g$ on $X$;
\item
  a \defined{Dirac bundle structure} $(\gamma,\nabla)$ on $S$ with respect to $g$;
  that is:
  \begin{enumerate}
  \item
    a linear map $\gamma \co TX \to \fo(S)$, the \defined{Clifford multiplication}, such that for every $v \in TX$
    \begin{equation*}      
      \gamma(v)^2 = -\abs{v}^2 \one_S,
    \end{equation*}
    and
  \item
    \label{Def_DiracBundle_SpinConnection}
    a covariant derivative $\nabla \co \Gamma\paren{X,S} \to \Omega^1\paren{X,S}$,
    the \defined{spin connection},
    such that for every $v,w \in \Vect(X)$ and $\phi \in \Gamma\paren{X,S}$
    \begin{equation*}
      \nabla_v\paren{\gamma(w)\phi} = \gamma(\nabla_v w)\phi + \gamma(w)\nabla_v\phi;
    \end{equation*}
  \end{enumerate}
  and
\item    
  a \defined{ramified Euclidean line bundle} over $X$;
  that is:
  \begin{enumerate}
  \item
    a closed subset $Z \eqcolon \Br(\fl) \subseteq X$, the \defined{branching locus}, and
  \item
    a Euclidean line bundle $\fl$ over $X \setminus Z$
  \end{enumerate}
  such that
  \begin{enumerate}[resume]
  \item
    if $W \subseteq Z$ is closed and $\fl$ extends over $X \setminus W$,
    then $W = Z$.
  \end{enumerate}  
  Moreover,
  it is \emph{assumed} that $Z$ is a submanifold,
  necessarily of codimension two.
\end{enumerate}
Associated with these data is a \defined{Dirac operator}
\begin{equation*}
  D \co \Gamma\paren{X\setminus Z, S\otimes\fl} \to \Gamma\paren{X\setminus Z, S\otimes\fl}
\end{equation*}
defined by
\begin{equation*}
  D \phi \coloneq \sum_{i=1}^d \gamma(e_i) \nabla_{e_i} \phi.
\end{equation*}
Here $(e_1,\ldots,e_d)$ is an arbitrary local orthonormal frame and $\nabla$ is the twist of the spin connection by the unique orthogonal connection on $\fl$.

\subsection{The Gelfand--Robbin quotient}
\label{Sec_GelfandRobbinQuotient}

To begin putting the Dirac operator into a functional analytic context,
consider the \defined{minimal extension}
\begin{equation*}
  \Dmin = D \co \dom(\Dmin) \coloneq H^1\Gamma\paren{X\setminus Z, S\otimes \fl} \to L^2\Gamma\paren{X\setminus Z, S \otimes \fl}.
\end{equation*}
$\Dmin$ is left semi-Fredholm but its cokernel is $\infty$--dimensional \cite[Proposition 2.3 and Lemma 3.3]{BeraWalpuski2025}.
Its adjoint, in the sense of unbounded operators, is the \defined{maximal extension}
\begin{equation*}
  \Dmax = \Dmin^* \co \dom(\Dmax) \to L^2\Gamma\paren{X\setminus Z, S\otimes \fl};
\end{equation*}
in fact, by \cite[Remark 2.10]{BeraWalpuski2025},
\begin{equation*}
  \dom(\Dmax)
  =
  \set[\big]{
    \phi \in H_\loc^1\Gamma\paren{X\setminus Z, S\otimes\fl}
    :
    \phi,D\phi \in L^2\Gamma\paren{X\setminus Z, S\otimes \fl}
  }.
\end{equation*}
Of course, $\Dmax$ is right semi-Fredholm but its kernel is $\infty$--dimensional.

The intermediate extensions of $\Dmin$ are controlled by the \defined{Gelfand--Robbin quotient}
\begin{equation*}
  \GelfandRobbinQuotient
  \coloneq
  \frac{\dom(\Dmax)}{\dom(\Dmin)}.
\end{equation*}
This is a symplectic Hilbert space;
its symplectic form, the \defined{Green's} form, is defined by
\begin{equation*}
  G([\phi],[\psi]) \coloneq \Inner{D\phi,\psi}_{L^2} - \Inner{\phi,D\psi}_{L^2}.
\end{equation*}
Every closed extension of $\Dmin$ is of the form
\begin{equation*}
  D_R \coloneq \Dmax|_{\dom(D_R)}
  \qwithq
  \dom(D_R) = \set{ \phi \in \dom(\Dmax) : [\phi] \in R };
\end{equation*}
where $R \subseteq \GelfandRobbinQuotient$ is a closed subspace \cite[Proposition 2.14]{BeraWalpuski2025};
moreover,
\begin{equation*}
  D_R^* = D_{R^G};
\end{equation*}
that is: the adjoint of $D_R$ is obtained by forming the symplectic complement $R^G$ of $R$ \cite[Proposition 2.17]{BeraWalpuski2025}.
As in \cite[Definition 2.13]{BeraWalpuski2025},
a \defined{residue condition} is a closed subspace $R \subseteq \GelfandRobbinQuotient$.

%%% Local Variables:
%%% mode: latex
%%% TeX-master: "UniversalModuliSpaceOfZ2ZHarmonicSpinors"
%%% ispell-local-dictionary: "british"
%%% End:

\subsection{The residue map}
\label{Sec_ResidueMap}

In an abstract way,
$[\phi] \in \GelfandRobbinQuotient$ encodes the leading-order behaviour of $\phi \in \dom(\Dmax)$ near $Z$.
This leads to the following more geometric realisation of the Gelfand--Robbin quotient.

It is instructive to begin by considering an untwisted spinor $\phi \in \Gamma\paren{X,S}$ vanishing along $Z$.
The Taylor expansion of $\phi$ at $Z$ has a leading-order term $\phi^{(1)} \in \Gamma\paren{Z,\Hom\paren{NZ,S|_Z}}$.
A brief computation shows that if $D\phi = 0$, then $\phi^{(1)}$ satisfies the algebraic constraint
\begin{equation*}
  \phi^{(1)} \circ \Or = - \gamma(\Or) \circ \phi^{(1)}
\end{equation*}
for every $x \in Z$ and $\Or \in \Wedge^2 N_xZ$.
Here $\Wedge^2 N_xZ$ is identified with $\fo(N_xZ)$ using the Riemannian metric $g$ so that $(u \wedge v)(w) = \Inner{u,w}v - \Inner{v,w}u$ as in \cite[Part I §6 (6.3)]{Lawson1989}.

If $Z$ is cooriented,
then there is a preferred $\Or \in \Wedge^2 NZ$ of unit length which defines complex structures on $S|_Z$ and $NZ$.
The algebraic constraint is then simply that $\phi^{(1)}$ is complex anti-linear.
However,
$Z$ might not be coorientable.
Nevertheless,
the Riemannian metric defines an isomorphism $(\Wedge^2NZ)^{\otimes 2} \iso \R$ and this produces a flat bundle
\begin{equation*}
  \AlgebraBundle = \R \oplus i\Wedge^2NZ
\end{equation*}
of normed $\R$--algebras over $Z$ whose fibres are isomorphic to $\C$,
but canonically only up to complex conjugation.
The preceding discussion exhibits both $NZ$ and $S|_Z$ as $\AlgebraBundle$--modules.
If $D\phi = 0$, then $\phi^{(1)}$ is $\AlgebraBundle$--anti-linear.

The analogous construction for $\phi \in \dom(\Dmax)$ is quite a bit more involved.
Its outcome is very briefly stated as follows.
The Dirac bundle structure on $S$ restricts to a Dirac bundle structure on $S|_Z$ \cite[Definition 3.9]{BeraWalpuski2025}.
As explained in \cite[Definition 3.21 and Remark 3.22]{BeraWalpuski2025},
the Euclidean line bundle $\fl$ defines an $\AlgebraBundle$--module $NZ^{-1/2}$ together with an $\AlgebraBundle$--linear orthogonal connection, and an isomorphism
\begin{equation*}
  NZ^{-1/2} \otimes_\AlgebraBundle NZ^{-1/2} \iso \Hom_\AlgebraBundle(NZ,\AlgebraBundle).
\end{equation*}
This endows the \defined{residue bundle}
\begin{equation*}
  \ResidueBundle \coloneq S|_Z \otimes_\AlgebraBundle NZ^{-1/2}
\end{equation*}
with a Dirac bundle structure;
in particular,
$\ResidueBundle$ has a Dirac operator $D_{\ResidueBundle}$.
Moreover, the restriction of the Clifford multiplication to $NZ$ induces a $\AlgebraBundle$--anti-linear almost complex structure
\begin{equation*}
  J \in \Gamma\paren{Z,\overline{\End}_{\AlgebraBundle}(\ResidueBundle)}.
\end{equation*}
This together with the Euclidean inner product induces a symplectic structure $\check\Omega$ on $\ResidueBundle$.
Roughly speaking, after identifying $N_xZ = \C$, an element of $S|_Z \otimes_\AlgebraBundle NZ^{-1/2}$ can be viewed as a term of the form $\phi \bar z^{-1/2}$.
This should make it plausible that the Gelfand--Robbin quotient $\GelfandRobbinQuotient$ can be realised as a suitable completion of $\Gamma\paren{Z,\ResidueBundle}$.

The norm with respect to which $\Gamma\paren{Z,\ResidueBundle}$ has to be completed is defined in terms of the \defined{branching locus operator}
\begin{equation*}
  A \coloneq -JD_{\ResidueBundle}.
\end{equation*}
Since $J$ is skew-adjoint, $D_{\ResidueBundle}$ is (formally) self-adjoint, and $J$ and $D_{\ResidueBundle}$ anti-commute,
$A$ is (formally) self-adjoint.
Denote by $\one_{(-\infty,0)}(A)$ and $\one_{[0,\infty)}(A)$ the orthogonal projections to the negative and non-negative eigenspaces of $A$ respectively.
Define the norm $\Abs{-}_{\check H} \co \Gamma\paren{Z,\ResidueBundle} \to [0,\infty)$ by
\begin{equation*}
  \Abs{\phi}_{\check H}
  \coloneq
  \Abs{\one_{(-\infty,0)}(A)\phi}_{H^{1/2}}
  +
  \Abs{\one_{[0,\infty)}(A)\phi}_{H^{-1/2}},
\end{equation*}
and denote by $\check H\Gamma\paren{Z,\ResidueBundle}$ the completion of $\Gamma\paren{Z,\ResidueBundle}$ with respect to $\Abs{-}_{\check H}$.
The symplectic form $\check\Omega$ on $\ResidueBundle$ induces a symplectic form on $\check H\Gamma\paren{Z,\ResidueBundle}$ which shall also be denoted by $\check \Omega$.

\cite[Definition 3.40 (1)]{BeraWalpuski2025} constructs the \defined{residue map}
\begin{equation*}
  \res \co \dom(\Dmax) \to \check H\Gamma\paren{Z,\ResidueBundle}
\end{equation*}
which satisfies
\begin{equation*}
  \ker \res = \dom(\Dmin)
\end{equation*}
and is surjective;
in fact, \cite[Definition 3.40 (2)]{BeraWalpuski2025} constructs a right inverse,
the \defined{extension map}
\begin{equation*}
  \ext \co \check H\Gamma\paren{Z,\ResidueBundle} \to \dom(\Dmax).
\end{equation*}
As a consequence of this,
$\res$ induces an isomorphism
\begin{equation*}
  \GelfandRobbinQuotient \iso \check H\Gamma\paren{Z,\ResidueBundle};
\end{equation*}
moreover, through this isomorphism the Green's form corresponds to $\check\Omega$.
In combination with the discussion in \autoref{Sec_GelfandRobbinQuotient},
this shows that every closed extension of $\Dmin$ corresponds to a residue condition considered as a closed subspace $R \subseteq \check H\Gamma\paren{Z,\ResidueBundle}$.

Although the residue map is constructed analytically in \cite[§3]{BeraWalpuski2025},
it can be seen to arise from a suitable restriction map as follows.
\emph{Choose} a tubular neighbourhood of $Z$ and use it to construct a blow-up $\beta \co \hat X \to X$ of $X$ along $Z$.
The restriction of $\beta$ to the boundary $\del \hat X$ is identified with the frame bundle
\begin{equation*}
  \pi \co F \coloneq \set{ v \in NZ : \abs{v} = 1 } \to Z.
\end{equation*}
Set $\hat S \coloneq \beta^*S$.
Since the inclusion $X\setminus Z \subseteq \hat X$ is a homotopy equivalence,
$\fl$ extends to a Euclidean line bundle $\hat \fl$ over $\hat X$.
\cite[Proposition 3.23]{BeraWalpuski2025} constructs an isomorphism
\begin{equation*}
  \pi^*NZ^{-1/2} \iso \hat \fl|_{\del \hat X} \otimes \pi^*\AlgebraBundle.
\end{equation*}
In particular, this defines an inclusion
\begin{equation*}
  \pi^* \co \Gamma\paren{Z,\ResidueBundle} \incl \Gamma\paren{\del \hat X,\hat S \otimes \hat \fl}
\end{equation*}
whose image should be understood to correspond to the fibrewise $-\frac12$ Fourier modes.
If
\begin{equation*}
  r \co X \setminus Z \to (0,\infty)
\end{equation*}
denotes a \defined{regularised distance} to $Z$,
that is: $r$ is smooth, positive, and agrees with the distance to $Z$ on a neighbourhood of $Z$,
then it turns out that $\dom(\Dmax) \cap r^{-1/2}\Gamma\paren{\hat X,\hat S \otimes \hat\fl}$ is dense in $\dom(\Dmax)$ and that for every $r^{-1/2}\phi \in \dom(\Dmax) \cap r^{-1/2}\Gamma\paren{\hat X,\hat S \otimes \hat\fl}$
\begin{equation*}
  \phi|_{\del \hat X} = \pi^*\res\paren{r^{-1/2}\phi}.
\end{equation*}

%%% Local Variables:
%%% mode: latex
%%% TeX-master: "UniversalModuliSpaceOfZ2ZHarmonicSpinors"
%%% ispell-local-dictionary: "british"
%%% End:

\subsection{Conormal and adapted Sobolev spaces}
\label{Sec_ConormalAndAdaptedSobolevSpaces}

A vector field $v \in \Vect(\hat X)$ is \defined{conormal} if $v|_{\del \hat X} \in \Vect(\del \hat X)$.
Denote the subspace of conormal vector fields by $\Vect_b(\hat X)$.
Denote by $\DiffOp^\bullet(S\otimes\fl)$ the $\N_0$--filtered ring of differential operators acting on $S\otimes\fl$.
The space $\DiffOp_b^\bullet(S \otimes \fl) \subseteq \DiffOp^\bullet(S \otimes \fl)$ of \defined{conormal}
differential operators is the filtered subring generated by $\Gamma\paren{\hat X,\End(\hat S \otimes \hat\fl)}$ and differential operators of the form $\nabla_v$ with $v \in \Vect_b(\hat X)$.

For every $k \in \N_0$ the \defined{conormal Sobolev space} $H_b^k\Gamma\paren{X\setminus Z, S\otimes\fl}$ is defined by
\begin{equation*}
  H_b^k\Gamma\paren{X\setminus Z, S\otimes\fl}
  \coloneq
  \set*{
    \phi \in H_\loc^k\Gamma\paren{X\setminus Z, S\otimes\fl}
    :
    \begin{aligned}
      & P \phi \in L^2\Gamma\paren{X\setminus Z, S\otimes\fl} \text{ for} \\
      & \text{every }
        P \in \DiffOp_b^k(S\otimes\fl)
    \end{aligned}
  }.
\end{equation*}
\emph{Choose} a finite subset $\sP_b^k \subseteq \DiffOp_b^k(S\otimes\fl)$ which spans $\DiffOp_b^k(S\otimes\fl)$ over $\Gamma\paren{\hat X,\End(\hat S\otimes\hat\fl)}$.
Define the norm $\Abs{-}_{H_b^k} \co  H_b^k\Gamma\paren{X\setminus Z, S\otimes\fl} \to [0,\infty)$ by
\begin{equation*}
  \Abs{\phi}_{H_b^k}^2
  \coloneq
  \sum_{P \in \sP_b^k} \Abs{P\phi}_{L^2}^2.
\end{equation*}
Of course, these are the norms that appear in \citeauthor{Melrose1991:Kyoto}'s $b$-calculus;
see, e.g., \cite{Melrose1991:Kyoto,Melrose1993}.
They should be regarded as higher regularity analogues of $L^2\Gamma\paren{X\setminus Z,S\otimes \fl}$.
The core of the scale of conormal Sobolev spaces $\paren{H_b^k\Gamma\paren{X\setminus Z,S\otimes \fl},\paren{\Abs{-}_{H_b^k}}_{k \in \N_0}}$ is a tame Fréchet space \cite[Proposition 4.32]{BeraWalpuski2025}.

For every $k \in \N_0$ the \defined{adapted Sobolev space} $H_a^{k+1}\Gamma\paren{X\setminus Z, S\otimes\fl}$ is defined by
\begin{equation*}
  H_a^{k+1}\Gamma\paren{X\setminus Z, S\otimes\fl}
  \coloneq
  \set*{
    \phi \in H_\loc^{k+1}\Gamma\paren{X\setminus Z, S\otimes\fl}
    :
    \begin{aligned}
      &\phi \in H_b^{k+1}\Gamma\paren{X\setminus Z, S\otimes\fl} ~\text{and} \\
      &D\phi \in H_b^k\Gamma\paren{X\setminus Z, S\otimes\fl} 
    \end{aligned}
  }.
\end{equation*}
Define the norm $\Abs{-}_{H_a^{k+1}} \co  H_a^{k+1}\Gamma\paren{X\setminus Z, S\otimes\fl} \to [0,\infty)$ by
\begin{equation*}
  \Abs{\phi}_{H_a^{k+1}}
  \coloneq
  \Abs{\phi}_{H_b^{k+1}} + \Abs{D\phi}_{H_b^k}.
\end{equation*}
These should be regarded as higher regularity analogues of $\dom(\Dmax)$.
Again, the core of the scale of adapted Sobolev spaces $\paren{H_a^{k+1}\Gamma\paren{X\setminus Z,S\otimes \fl},\paren{\Abs{-}_{H_a^{k+1}}}_{k \in \N_0}}$ is a tame Fréchet space \cite[Proposition 4.37]{BeraWalpuski2025}.

By \cite[Lemma 4.12 and Lemma 4.13]{BeraWalpuski2025},
for every $k \in \N_0$
the residue map $\res \co \dom(\Dmax) \to \check H\Gamma\paren{Z,\ResidueBundle}$ restricts to a bounded surjective linear map
\begin{equation*}
  \res \co H_a^{k+1}\Gamma\paren{X\setminus Z,S\otimes\fl} \to H^{k+1/2}\Gamma\paren{Z,\ResidueBundle}.
\end{equation*}
The extension map $\ext \co \check H\Gamma\paren{Z,\ResidueBundle} \to \dom(\Dmax)$ restricts to a right-inverse
\begin{equation*}
  \ext \co H^{k+1/2}\Gamma\paren{Z,\ResidueBundle} \to H_a^{k+1}\Gamma\paren{X\setminus Z,S\otimes\fl}.
\end{equation*}
Moreover,
if $\phi \in H_a^{k+1}\Gamma\paren{X\setminus Z,S\otimes\fl}$ satisfies $\res(\phi) = 0$,
then
\begin{equation*}
  \nabla_v\phi, r^{-1}\phi \in H_b^k\Gamma\paren{X\setminus Z,S\otimes\fl}
\end{equation*}
for every $v \in \Vect(X)$ \cite[Proposition 4.27]{BeraWalpuski2025}.
Therefore,
$\ker \res \subseteq H_a^{k+1}\Gamma\paren{X\setminus Z,S\otimes\fl}$ should be regarded as a higher regularity analogue of $\dom(\Dmin) = H^1\Gamma\paren{X\setminus Z,S\otimes \fl}$.

%%% Local Variables:
%%% mode: latex
%%% TeX-master: "UniversalModuliSpaceOfZ2ZHarmonicSpinors"
%%% ispell-local-dictionary: "british"
%%% End:

\subsection{Elliptic residue conditions}
\label{Sec_ElliticResidueConditions}

For every residue condition $R \subseteq \check H\Gamma\paren{Z,\ResidueBundle}$ and every $k \in \N_0$ set
\begin{equation*}
  H_a^{k+1}\Gamma\paren{X\setminus Z,S\otimes\fl;R} \coloneq H_a^{k+1}\Gamma\paren{X\setminus Z,S\otimes\fl}
  \cap \dom(D_R)
\end{equation*}
and consider the restriction of $D_R$ to
\begin{equation*}
  D_R^{(k)} \co H_a^{k+1}\Gamma\paren{X\setminus Z,S\otimes\fl;R} \to H_b^k\Gamma\paren{X\setminus Z,S\otimes\fl}.
\end{equation*}

A residue condition $R \subseteq \check H\Gamma\paren{Z,\ResidueBundle}$ is \defined{$\infty$--regular} if for every $k \in \N_0$ and every $\phi \in R$
\begin{equation*}
  \Abs{\phi}_{H^{k+1/2}}
  \lesssim_{R,k}
  \Abs{\one_{(-\infty,0)}(A)\phi}_{H^{k+1/2}}
  +
  \Abs{\phi}_{\check H}.
\end{equation*}
It is \defined{elliptic} if both $R$ and $R^G$ are $\infty$--regular.
In this case, $\smash{D_R^{(k)}}$ and $D_{R^G}^{(k)}$ satisfy elliptic estimates and elliptic regularity \cite[Theorem 4.17]{BeraWalpuski2025};
in particular,
$D_R^{(k)}$ is Fredholm, and
$\smash{\ker D_R^{(k)}}$ and $\smash{\coker D_R^{(k)}}$ are independent of $k \in \N_0$ \cite[Theorem 4.17 and Proposition 4.21]{BeraWalpuski2025}.

Evidently,
the \defined{APS residue condition} defined by
\begin{equation*}
  R_\APS
  \coloneq
  \one_{(-\infty,0)}(A)H^{1/2}\Gamma\paren{Z,\ResidueBundle}
  \subset
  \check H\Gamma\paren{Z,\ResidueBundle}
\end{equation*}
is elliptic.
For a \defined{local residue condition}, that is: a residue condition of the form
\begin{equation*}
  R_V
  \coloneq
  \check H\Gamma(Z,V) \subseteq \check H\Gamma\paren{Z,\ResidueBundle}
\end{equation*}
for a subbundle $V \subseteq \ResidueBundle$,
there is the following symbolic criterion for ellipticity \cite[Theorem 4.40]{BeraWalpuski2025}:
$R_V$ is elliptic if for every $v \in TZ \setminus \set{0}$
\begin{equation*}
  V \oplus J\gamma(v) V = \ResidueBundle.
\end{equation*}
If $\rk_\AlgebraBundle \ResidueBundle = 2$ and $V \subseteq \ResidueBundle$ is an $\AlgebraBundle$--linear subbundle with $\rk_\AlgebraBundle V = 1$,
then $R_V$ satisfies this criterion because $J\gamma(v)$ is skew-adjoint and $\AlgebraBundle$--anti-linear;
in particular, $R_V$ is elliptic \cite[Example 4.44]{BeraWalpuski2025}.

%%% Local Variables:
%%% mode: latex
%%% TeX-master: "UniversalModuliSpaceOfZ2ZHarmonicSpinors"
%%% ispell-local-dictionary: "british"
%%% End:

\subsection{Once more, with chirality}
\label{Sec_OnceMoreWithChirality}

Suppose that $(S,\gamma,\nabla)$ is equipped with a \defined{chirality operator};
that is: a self-adjoint parallel isometry $\epsilon \in \Gamma\paren{X,\O(S)}$ such that
\begin{equation*}
  \gamma\epsilon + \epsilon\gamma = 0.
\end{equation*}
This induces an orthogonal decomposition of $S$ into $\nabla$--parallel subbundles
\begin{equation*}
  S = S^+ \oplus S^-
  \qwithq
  S^\pm \coloneq \ker\paren{\epsilon \mp \one}.
\end{equation*}
Consequently, the Dirac operator $D$ decomposes as
\begin{equation*}
  D
  =
  \begin{pmatrix}
    0 & D^- \\
    D^+ & 0
  \end{pmatrix}
\end{equation*}
with $D^\pm \co \Gamma\paren{X\setminus Z, S^\pm \otimes\fl} \to \Gamma\paren{X\setminus Z, S^\mp \otimes\fl}$ denoting the \defined{chiral Dirac operators}.

The minimal and maximal extensions decompose analogously.
Therefore,
the Gelfand--Robbin quotient $\GelfandRobbinQuotient$ orthogonally decomposes as
\begin{equation*}
  \GelfandRobbinQuotient = \GelfandRobbinQuotient^+ \oplus \GelfandRobbinQuotient^-
  \qwithq
  \GelfandRobbinQuotient^\pm \coloneq \frac{\dom(\Dmax^\pm)}{\dom(\Dmin^\pm)};
\end{equation*}
moreover, $\GelfandRobbinQuotient^\pm \subseteq \GelfandRobbinQuotient$ are Lagrangian.
A \defined{chiral residue condition} is a closed subspace $\ResidueCondition^\pm \subseteq \GelfandRobbinQuotient^\pm$.
For every chiral residue condition $\ResidueCondition^\pm \subseteq \GelfandRobbinQuotient^\pm$ there is a unique complementary chiral residue condition $\ResidueCondition^\mp \subseteq \GelfandRobbinQuotient^\mp$ such that $\ResidueCondition \coloneq \ResidueCondition^+ \oplus \ResidueCondition^- \subseteq \GelfandRobbinQuotient$ is a Lagrangian residue condition.
As before, the chiral residue conditions $R^\pm \subseteq \GelfandRobbinQuotient^\pm$ precisely correspond to the closed extensions $D_{R^\pm}^{\pm}$ of $\Dmin^\pm$.

The chirality operator also induces a parallel orthogonal decomposition $\ResidueBundle = \ResidueBundle^+ \oplus \ResidueBundle^-$.
With respect to it the branching locus operator $A$ decomposes into $A^\pm \coloneq -JD_{\ResidueBundle}^\pm \co \Gamma\paren{Z,\ResidueBundle^\pm} \to \Gamma\paren{Z,\ResidueBundle^\pm}$;
consequently,
\begin{equation*}
  \check H\Gamma\paren{Z,\ResidueBundle}
  =
  \check H\Gamma\paren{Z,\ResidueBundle^+} \oplus \check H\Gamma\paren{Z,\ResidueBundle^-}.
\end{equation*}
The residue map $\res$ correspondingly decomposes into $\res^\pm$, and these induce isomorphisms
\begin{equation*}
  \GelfandRobbinQuotient^\pm \iso \check H\Gamma\paren{Z,\ResidueBundle^\pm}.
\end{equation*}

Of course,
the conormal and adapted Sobolev spaces also decompose according to the chirality operator.
In particular, for every chiral residue condition $R^\pm \subseteq \check H\Gamma\paren{Z,\ResidueBundle^\pm}$ and $k \in \N_0$ there are the following higher regularity versions of $D_{R^\pm}^\pm$:
\begin{equation*}
  D_{R^\pm}^{\pm,(k)} \co H_a^{k+1}\Gamma\paren{X\setminus Z,S^\pm\otimes\fl;R^\pm} \to H_b^k\Gamma\paren{X\setminus Z,S^\mp\otimes\fl}.
\end{equation*}
If the Lagrangian residue condition $R = R^+ \oplus R^-$ is elliptic,
then both operators $D_{R^\pm}^{\pm,(k)}$ satisfy elliptic estimates and elliptic regularity;
in particular, each is Fredholm.

A \defined{local chiral residue condition} is a chiral residue condition of the form
\begin{equation*}
  R_{V^\pm}^\pm
  \coloneq
  \check H\Gamma(Z,V^\pm) \subseteq \check H\Gamma\paren{Z,\ResidueBundle^\pm}
\end{equation*}
for some subbundle $V^\pm \subseteq \ResidueBundle^\pm$.
For every subbundle $V^\pm \subseteq \ResidueBundle^\pm$ there is a unique subbundle $V^\mp \subseteq \ResidueBundle^\mp$ such that $V^+ \oplus V^- \subseteq \ResidueBundle = \ResidueBundle^+ \oplus \ResidueBundle^-$ is Lagrangian.
Of course, $V^\mp$ induces the complementary chiral residue condition:
$R_V \coloneq R_{V^+}^+ \oplus R_{V^-}^-$ is Lagrangian.
A moment's thought shows that $R_V$ is elliptic if $V^\pm$ satisfies the symbolic criterion,
namely for every $v \in TZ\setminus\set{0}$
\begin{equation*}
  V^\pm \oplus J\gamma(v)V^\pm = \ResidueBundle^\pm.
\end{equation*}
This holds in particular if $\rk_\AlgebraBundle \ResidueBundle^\pm = 2$ and $V^\pm \subseteq \ResidueBundle^\pm$ is an $\AlgebraBundle$--linear subbundle with $\rk_\AlgebraBundle V^\pm = 1$.

%%% Local Variables:
%%% mode: latex
%%% TeX-master: "UniversalModuliSpaceOfZ2ZHarmonicSpinors"
%%% End:

%%% Local Variables:
%%% mode: latex
%%% TeX-master: "UniversalModuliSpaceOfZ2ZHarmonicSpinors"
%%% ispell-local-dictionary: "british"
%%% End:

\section{The universal Dirac operator and residue map}
\label{Sec_UniversalDiracOperatorAndResidueMap}

This section sets up the family version of the theory developed in \cite{BeraWalpuski2025} and surveyed in \autoref{Sec_DiracOperatorsTwistedByRamifiedLineBundles}.
After describing
$\SpaceOfDiracBundles$,
the space of Dirac bundle structures on $S$,
and $\SpaceOfRamifiedLineBundles$,
the moduli space of ramified line bundles (with smooth branching locus) over $X$,
the spaces
$H_a^\infty\Gamma\paren{X\setminus \Br(\fl),S\otimes\fl}$,
$H_b^\infty\Gamma\paren{X\setminus \Br(\fl),S\otimes\fl}$, and
$\Gamma\paren{\Br(\fl),\ResidueBundle}$
are assembled into
tame Fréchet space bundles
$\BundleOfAdmissibleSingularSpinors$,
$\BundleOfSingularSpinors$, and
$\BundleOfResidues$ over $\SpaceOfDiracBundles \times \SpaceOfRamifiedLineBundles$.
The Dirac operators and residue maps assemble into the universal Dirac operator $\UniversalDiracOperator \co \BundleOfAdmissibleSingularSpinors \to \BundleOfSingularSpinors$ and the universal residue map $\UniversalResidueMap \co \BundleOfAdmissibleSingularSpinors \to \BundleOfResidues$.
These bundles are equipped with partial covariant derivatives, and the corresponding infinitesimal variation of $\UniversalDiracOperator$ is determined.
Within this framework it is possible to consider residue conditions in families.
Finally, it is explained what additional structure arises from the chirality operator $\epsilon \coloneq \gamma(\vol_g)$ if $X$ is oriented and $\dim X = 0 \pmod 4$.

\subsection{The space of Dirac bundles}
\label{Sec_UniversalSpaceOfDiracBundles}

For the purposes of this article,
the following is the relevant parameter space and half of the base of the tame Fréchet space bundles $\BundleOfAdmissibleSingularSpinors$, $\BundleOfSingularSpinors$, and $\BundleOfResidues$.

\begin{definition}
  \label{Def_UniversalSpaceOfDiracBundles}
  Denote by $\SpaceOfMetrics \subseteq \Gamma\paren{X,\Hom(S^2TX,\R)}$ the space of Riemannian metrics on $X$.
  The \defined{space of Dirac bundle structures} on $S$ is the tame Fréchet submanifold
  \begin{equation*}
    \SpaceOfDiracBundles
    \coloneq
    \set*{
      (g;\gamma,\nabla) \in \SpaceOfMetrics \times \Gamma\paren{X,\Hom(TX,\fo(S))} \times \sA(S)
      :
      \begin{aligned}
        & (\gamma,\nabla)
        ~\text{is a Dirac bundle structure} \\
        & 
        ~\text{with respect to}~
        g              
      \end{aligned}
    }.
  \end{equation*}
  In the following a triple $(g;\gamma,\nabla) \in \SpaceOfDiracBundles$ is often abbreviated to $\bp$
  (which, of course, stands for parameter).
  \qedhere
\end{definition}

The projection map $p \co \SpaceOfDiracBundles \to \SpaceOfMetrics$ exhibits $\SpaceOfDiracBundles$ as a tame smooth Fréchet fibre bundle over $\SpaceOfMetrics$.
\citet{Bourguignon1992} construct an Ehresmann connection on $\SpaceOfDiracBundles$;
see also
\cites[§3.2]{AmmannWeissWitt2016}[Definition 2.9]{MullerNowaczyk2017}[§2.4,§2.5]{AmmannDahl2025}.
Moreover, the \defined{vertical tangent bundle}
\begin{equation*}
  \VerticalTangentBundle{\SpaceOfDiracBundles}{\SpaceOfMetrics}
  \coloneq
  \ker \paren*{Tp \co T\SpaceOfDiracBundles \to T\SpaceOfMetrics}
\end{equation*}
further decomposes as follows.

\begin{prop}
  \label{Prop_UniversalSpaceOfDiracBundles_TangentBundle}
  For every $\bp = (g;\gamma,\nabla) \in \SpaceOfDiracBundles$ the tangent space $T_\bp\SpaceOfDiracBundles$ decomposes into the tame Fréchet subspaces
  \begin{equation*}
    T_\bp\SpaceOfDiracBundles = H_{\mathrm{BG},\bp} \oplus V_{\sG,\bp} \oplus V_{\nabla,\Uh;\bp}
  \end{equation*}
  defined as follows:  
  \begin{enumerate}
  \item
    \label{Prop_UniversalSpaceOfDiracBundles_TangentBundle_HBG}
    $H_{\BG,\bp}$ is the horizontal distribution at $\bp \in \SpaceOfDiracBundles$ of the \defined{Bourguignon--Gauduchon connection} \cite{Bourguignon1992};
    that is:
    the unique Ehresmann connection on $\SpaceOfDiracBundles \to \SpaceOfMetrics$ which horizontally lifts $\dot g \in T_g\SpaceOfMetrics$ to $(\dot g;\dot\gamma^\BG,\dot\nabla^\BG) \in T_\bp\SpaceOfDiracBundles$ defined by
    \begin{equation*}
      \dot\gamma^\BG
      \coloneq
      \frac12 \sum_{i=1}^d \dot g(-,e_i) \gamma(e_i)
      %%%
      \qandq
      %%%
      \dot\nabla^\BG      
      \coloneq \frac18 \sum_{i,j=1}^d \paren{\nabla_{e_i} \dot g}(e_j,-) [\gamma(e_i),\gamma(e_j)].
    \end{equation*}
    Here and throughout $(e_1,\ldots,e_d)$ denotes a local $g$--orthonormal frame.
  \item
    \label{Prop_UniversalSpaceOfDiracBundles_TangentBundle_VG}
    $V_{\sG,\bp}$ is generated by the action of the \defined{gauge group} $\sG(S) \coloneq \Gamma\paren{X,\O(S)}$ on $\SpaceOfDiracBundles$;
    that is:
    \begin{equation*}
      V_{\sG,\bp}
      \coloneq
      \set[\big]{
        (0;[\xi,\gamma],-\nabla\xi)
        :
        \xi \in \Gamma\paren{X,\fo(S)}
      }.
    \end{equation*}    
  \item
    \label{Prop_UniversalSpaceOfDiracBundles_TangentBundle_VNabla}
    $V_{\nabla,\Uh;\bp}$ consists of infinitesimal variations of the spin connection in Uhlenbeck gauge;
    that is:
    \begin{equation*}
      V_{\nabla,\Uh;\bp}
      \coloneqq
      \set[\big]{
        (0;0,\dot\nabla)
        :
        \dot\nabla \in \Omega^1\paren{X,\fo_{\Cl}(S)} ~\textnormal{satisfying}~
        \rd_\nabla^*\dot\nabla = 0
      }.
    \end{equation*}
    Here $\fo_\Cl(S)$ denotes the bundle of skew-adjoint endomorphisms of $S$ which commute with the Clifford multiplication $\gamma$.
  \end{enumerate}
  In fact,
  the vertical tangent space $T_\bp{\SpaceOfDiracBundles}/{\SpaceOfMetrics}$ contains every infinitesimal variation of the spin connection;
  that is:
  \begin{equation*}
    V_{\nabla,\bp}
    \coloneq
    \set*{
      (0;0,\dot\nabla) : \dot\nabla \in \Omega^1\paren{X,\fo_{\Cl}(S)}
    }
    \subseteq V_{\sG,\bp} \oplus V_{\nabla,\Uh;\bp}.
    \qedhere
  \end{equation*}
\end{prop}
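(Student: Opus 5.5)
The plan is to linearise the Dirac bundle condition at $\bp$ and split the resulting tangent space step by step. A tangent vector $(\dot g;\dot\gamma,\dot\nabla)\in T_\bp\SpaceOfDiracBundles$ is characterised by the linearised Clifford relation
\begin{equation*}
  \dot\gamma(v)\gamma(v)+\gamma(v)\dot\gamma(v) = -\dot g(v,v)\one_S
\end{equation*}
together with the linearised compatibility condition
\begin{equation*}
  \dot\nabla_v(\gamma(w)) - \gamma(w)\dot\nabla_v = \gamma(\dot\nabla^{LC}_v w) + \dot\gamma(\nabla_v w) - \nabla_v(\dot\gamma(w)) + \dot\gamma(w)\nabla_v ,
\end{equation*}
where $\dot\nabla^{LC}$ is the variation of the Levi-Civita connection. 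The first step is to check directly that $(\dot g;\dot\gamma^\BG,\dot\nabla^\BG)$ satisfies both equations. The Clifford relation is immediate from the definition of $\dot\gamma^\BG$. The compatibility condition reduces to the standard formula for $\dot\nabla^{LC}$ in terms of $\nabla\dot g$, which is essentially \cite{Bourguignon1992}. Since $Tp$ is the projection to $\dot g$, subtracting the horizontal lift shows
\begin{equation*}
  T_\bp\SpaceOfDiracBundles = H_{\BG,\bp}\oplus T_\bp\SpaceOfDiracBundles/\SpaceOfMetrics .
\end{equation*}
The map $\dot g\mapsto(\dot g;\dot\gamma^\BG,\dot\nabla^\BG)$ is a tame linear differential operator, so this splitting is tame.

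Next I would describe the vertical part, where $\dot g=0$. Then $\dot\gamma(v)$ anticommutes with $\gamma(v)$ for all $v$ after polarisation, so
\begin{equation*}
  \dot\gamma(v)\gamma(w)+\gamma(w)\dot\gamma(v)+\dot\gamma(w)\gamma(v)+\gamma(v)\dot\gamma(w)=0 ,
\end{equation*}
that is, $\dot\gamma$ is an infinitesimal deformation of a $\Cl(TX)$--module structure on the fixed Euclidean bundle $S$. Since Clifford modules over a simple or semisimple algebra are rigid, $\dot\gamma=[\xi,\gamma]$ for some $\xi\in\Gamma(X,\fo(S))$. I would take $\xi$ to be the projection
\begin{equation*}
  \xi = \tfrac{c}{d}\sum_{i}\gamma(e_i)\dot\gamma(e_i)
\end{equation*}
suitably normalised and made skew-adjoint, with $\xi$ chosen orthogonal to $\fo_\Cl(S)$ to ensure uniqueness. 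This formula is algebraic and pointwise, hence tame.

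Subtracting the gauge vector $(0;[\xi,\gamma],-\nabla\xi)$ leaves a vertical vector $(0;0,\dot\nabla')$. The compatibility condition then says that $\dot\nabla'$ commutes with $\gamma$, so $\dot\nabla'\in\Omega^1(X,\fo_\Cl(S))$. Conversely every such form is tangent. This proves
\begin{equation*}
  T_\bp\SpaceOfDiracBundles/\SpaceOfMetrics = \set{(0;[\xi,\gamma],0)} + V_{\nabla,\bp}
\end{equation*}
and that $V_{\sG,\bp}\subseteq T_\bp\SpaceOfDiracBundles/\SpaceOfMetrics$. The element $\xi\in\Gamma(X,\fo_\Cl(S))$ of the gauge algebra contributes $(0;0,-\nabla\xi)\in V_{\nabla,\bp}$.

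It remains to prove $V_{\nabla,\bp}= V_{\sG,\bp}\cap V_{\nabla,\bp}+V_{\nabla,\Uh;\bp}$ with trivial intersection, and that the whole sum is direct. Here I would use Hodge theory for the connection $\nabla$ on the bundle $\fo_\Cl(S)$, which is preserved because $\nabla$ respects $\gamma$. Given $\dot\nabla\in\Omega^1(X,\fo_\Cl(S))$, solve
\begin{equation*}
  \rd_\nabla^*\rd_\nabla\eta=\rd_\nabla^*\dot\nabla
\end{equation*}
for $\eta\in\Gamma(X,\fo_\Cl(S))$. This is solvable because $\rd_\nabla^*\dot\nabla$ is $L^2$-orthogonal to $\ker\rd_\nabla$, the parallel sections. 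The decomposition is then
\begin{equation*}
  \dot\nabla=-\nabla(-\eta)+(\dot\nabla-\rd_\nabla\eta) .
\end{equation*}
Tameness follows from \citeauthor{Hamilton1982:NashMoser}'s result on tame inverses for elliptic operators on the orthogonal complement of the kernel. For directness, note first that a vector in $H_{\BG}$ with vanishing vertical projection has $\dot g=0$ and is therefore zero. Next, suppose $(0;[\xi,\gamma],-\nabla\xi)=(0;0,\dot\nabla)$ with $\rd^*_\nabla\dot\nabla=0$. Then $[\xi,\gamma]=0$, so $\xi\in\fo_\Cl$, and $\rd_\nabla^*\rd_\nabla\xi=0$ forces $\nabla\xi=0$, so the vector vanishes. (If $\xi$ is not normalised, the subspace $V_{\sG}$ is still well defined as an image, so this is all that is needed.)

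I expect the main obstacle to be the algebraic step showing that a vertical $\dot\gamma$ is a commutator $[\xi,\gamma]$ with $\xi$ depending tamely on $\dot\gamma$, and the related rigidity of Clifford module structures. In even dimension $\Cl_d$ is simple; in odd dimension it is a sum of two simple algebras, and the module type is locally constant, so rigidity still holds. I would first try an averaging formula over the finite Clifford group $\set{\pm e_I}$, namely
\begin{equation*}
  \xi=\tfrac{1}{2^{d}}\sum_I \gamma(e_I)^{-1}\,\delta_I ,
\end{equation*}
where $\delta_I$ is the derivative of $\gamma(e_I)$. This is the standard vanishing of $H^1$ for finite groups, and it gives an explicit frame-independent pointwise formula.
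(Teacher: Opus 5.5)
Your proposal is correct and follows the paper's proof. Both arguments verify that the Bourguignon--Gauduchon lift solves the linearised Dirac bundle equations, use infinitesimal rigidity of Clifford modules to write a vertical $\dot\gamma$ as $[\xi,\gamma]$ (your averaging over $\set{\pm e_I}$ is, up to an overall sign, exactly the paper's explicit $\xi = 2^{-d}\sum_I \xi_I$), and split the remaining $\fo_{\Cl}(S)$--valued part of $\dot\nabla$ by Hodge theory for $\rd_\nabla$, a step the paper leaves implicit and you spell out correctly.

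Do discard your first candidate $\tfrac{c}{d}\sum_i\gamma(e_i)\dot\gamma(e_i)$, though. The map $\xi\mapsto\sum_i\gamma(e_i)[\xi,\gamma(e_i)]$ multiplies the degree-$\abs{I}$ Clifford component of $\xi$ by $2\abs{I}$ or $2d-2\abs{I}$, according as $\abs{I}$ is even or odd, so a single normalisation inverts it only in special dimensions such as $d=3$.
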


The above turns out to be useful in the following to organise various computations.

\begin{proof}[Proof of \autoref{Prop_UniversalSpaceOfDiracBundles_TangentBundle}]
  To prove that $H_{\mathrm{BG}}$ is an Ehresmann connection on $\SpaceOfDiracBundles \to \SpaceOfMetrics$,
  it suffices to verify the formulae
  \begin{align}
    \label{Eq_DotGamma}
    \dot\gamma^\BG(v)\gamma(v) + \gamma(v)\dot\gamma^\BG(v)
    &=
      -\dot g(v,v) \qandq \\
      %%% 
    \label{Eq_DotNabla}
    [\dot\nabla^\BG_v,\gamma(w)]
    &=
      \dot\gamma^\BG(\nabla_v w) + \gamma(\dot\nabla_v w) -[\nabla_v,\dot\gamma^\BG(w)]
  \end{align}
  with $\dot\nabla$ denoting the infinitesimal deformation of the Levi--Civita connection corresponding to $\dot g \in T_g\SpaceOfMetrics$.
  %%%
  The verification of \autoref{Eq_DotGamma} is straightforward from the formula in \autoref{Prop_UniversalSpaceOfDiracBundles_TangentBundle_HBG}.
  %%%
  By differentiating Koszul's formula,
  \begin{equation*}
    2g(\dot\nabla_v w,u)
    =
    (\nabla_v \dot g)(w,u) + (\nabla_w \dot g)(u,v) - (\nabla_u \dot g)(v,w).
  \end{equation*}
  Therefore,
  \begin{align*}
    \gamma(\dot \nabla_v w)
    =
    \frac12  \sum_{j=1}^d \paren[\big]
    {(\nabla_v \dot g)(w,e_j) + (\nabla_w \dot g)(v,e_j) - (\nabla_{e_j} \dot g)(v,w)} \gamma(e_j).
  \end{align*}
  %%%
  A direct computation
  using the identity
  \begin{equation}
    \label{Eq_DoubleCommutator}
    [[\gamma(u),\gamma(v)],\gamma(w)] =  4\paren{\gamma(v)g(u,w)-\gamma(u)g(v,w)}
  \end{equation}
  yields
  \begin{align*}
    [\dot\nabla^\BG_v,\gamma(w)]
    &=
      \frac12 \sum_{j=1}^d \paren[\big]{(\nabla_w \dot g)(e_j,v) - (\nabla_{e_j} \dot g)(v,w)} \gamma(e_j), \\
    %%%
    \dot\gamma^\BG(\nabla_v  w)
    &=
      \frac12 \sum_{j=1}^d \dot g(\nabla_v w,e_j) \gamma(e_j), \qandq \\
    %%%
    [\nabla_v,\dot \gamma^\BG(w)]
    &=
      \frac12\sum_{j=1}^d \paren[\big]{(\nabla_v \dot g)(w,e_j) + \dot g(\nabla_v w,e_j)} \gamma(e_j).
  \end{align*}
  \autoref{Eq_DotNabla} follows directly from the preceding identities.

  Evidently,
  $V_{\sG,\bp}$ and $V_{\nabla,\bp}$ are contained in the vertical tangent bundle $\VerticalTangentBundle{\SpaceOfDiracBundles}{\SpaceOfMetrics}$.
  The Uhlenbeck gauge fixing condition in \autoref{Prop_UniversalSpaceOfDiracBundles_TangentBundle_VNabla} ensures that $V_{\sG,\bp} \cap V_{\nabla,\Uh;\bp} = 0$.
  To prove that $V_{\sG,\bp} \oplus V_{\nabla,\Uh;\bp}$ is the entire vertical tangent space $T_\bp{\SpaceOfDiracBundles}/{\SpaceOfMetrics}$,
  it suffices to show that every infinitesimal deformation $\dot\gamma$ of a Clifford multiplication $\gamma$ is of the form $[\xi,\gamma]$ with $\xi \in \Gamma\paren{X,\fo(S)}$.
  This, however, is a consequence of the fact that Clifford algebras are separable finite-dimensional $\R$--algebras and, therefore,
  their representations are infinitesimally rigid;
  see, e.g., \cite[Theorem 5]{Hochschild1946} or \cite[Lemma 9.1.9 and Theorem 9.2.11]{Weibel1994}.
  In fact, the arguments found loc.~cit.~reveal that
  $\dot\gamma = [\xi,\gamma]$ holds for
  \begin{equation*}
    \xi \coloneq 2^{-d}\sum_{I \subseteq \set{1,\ldots,d}} \xi_I \in \Gamma(X,\fo(S))
  \end{equation*}
  with
  \begin{equation*}
    \xi_I \coloneq \sum_{\ell=1}^k (-1)^\ell \gamma(e_{i_1}) \cdots \gamma(e_{i_{\ell-1}})\dot\gamma(e_{i_\ell}) \gamma(e_{i_\ell}) \gamma(e_{i_{\ell-1}}) \cdots \gamma(e_{i_1})
  \end{equation*}
  for $I = \set{ i_1,\ldots,i_k} \subseteq \set{1,\ldots,d}$ with $i_1 < \ldots < i_k$.
  Of course, this can also be verified by direct computation.
\end{proof}

\begin{remark}
  \label{Rmk_VGNotSubbundle}
  The $V_{\sG,\bp}$ and $V_{\nabla,\Uh;\bp}$ need not form subbundles because there can be $\bp \in \SpaceOfDiracBundles$ for which the map $\Gamma\paren{X,\fo(S)} \to V_{\sG,\bp}$ has a non-trivial kernel.
  Of course,
  the $H_{\BG,\bp}$ and $V_{\nabla,\bp}$ do form subbundles.
\end{remark}

\begin{remark}
  \label{Rmk_OClS}
  Here is how to understand the Lie algebra bundle $\fo_{\Cl}(S)$.
  For every $x \in X$ and choice of orthogonal frame of $T_xX$,
  the fibre $S_x$ is a representation of the Clifford algebra $\Cl_d$.
  The representation $S_x$ decomposes into irreducible representations, the \defined{pinor representations} \cite[Definition 11.10]{Harvey1990}, which can be read off from the classification \cite[Theorem 11.3]{Harvey1990}:
  \begin{enumerate}
  \item
    If $d = 0,1,2 \pmod 4$,
    then there is a unique pinor representation $\bP$ and, therefore,
    $S_x = \bP^m$ for some $m \in \N_0$.
    The commuting algebra $\mathbb{K} \coloneq \End_{\Cl}(\bP)$ of $\bP$ is isomorphic to $\R$; $\C$; $\H$ depending on whether $d = 0,6$; $1,5$; $2,4 \pmod 8$, respectively.
    Therefore,
    \begin{equation*}
      \fo_{\Cl}(S_x) \iso
      \begin{cases}
        \fo(m) & \textnormal{if}~ d = 0,6 \pmod 8 \\
        \fu(m) & \textnormal{if}~ d = 1,5 \pmod 8 \\
        \sp(m) & \textnormal{if}~ d = 2,4 \pmod 8.
      \end{cases}
    \end{equation*}
  \item
    If $d = 3 \pmod 4$,
    then there are two pinor representations $\bP^\pm$, distinguished by whether the volume element acts as $\pm \one$.
    Therefore, $S_x = (\bP^+)^{m_+} \oplus (\bP^-)^{m_-}$.
    The commuting algebra $\mathbb{K} \coloneq \End_{\Cl}(\bP^\pm)$ of $\bP^\pm$ is isomorphic to $\H$, $\R$, depending on whether $d = 3$, $7 \pmod 8$, respectively.
    Therefore,
    \begin{equation*}
      \fo_{\Cl}(S_x) \iso
      \begin{cases}
        \sp(m_+) \oplus \sp(m_-) & \textnormal{if}~ d = 3 \pmod 8 \\
        \fo(m_+) \oplus \fo(m_-) & \textnormal{if}~ d = 7 \pmod 8.
      \end{cases}
    \end{equation*} 
  \end{enumerate}  
  If $X$ is spin and a spin structure $\fs$ on $(X,g)$ is chosen,
  then the pinor representations give rise to pinor bundles $\bP$ or $\bP^+,\bP^-$ over $X$ and,
  consequently,
  $S = M \otimes_{\mathbb{K}} \bP$ or $S = M^+ \otimes_{\mathbb{K}} \bP^+ \oplus M^- \otimes_{\mathbb{K}} \bP^-$ with $M$ or $M^+,M^-$ encoding the multiplicities.
  \qedhere
\end{remark}

\begin{remark}
  \label{Rmk_TwistingCurvature}
  For every $(g;\gamma,\nabla) \in \SpaceOfDiracBundles$ the curvature $F_\nabla \in \Omega^2\paren{X,\fo(S)}$ decomposes into \defined{Riemannian spin curvature} $\Riem_g^S \in \Omega^2\paren{X,\fo(S)}$ and the \defined{twisting curvature} $F_\nabla^\tw \in \Omega^2\paren{X,\fo_\Cl(S)}$ defined by
  \begin{align*}
    \Riem_g^S(u,v)
    &\coloneq \frac18 \sum_{k,\ell=1}^d \Inner{\Riem_g(u,v)e_k,e_\ell} [\gamma(e_k),\gamma(e_\ell)] \qand \\
    %%%
    F_\nabla^\tw
    &\coloneq
    F_\nabla - \Riem_g^S;
  \end{align*}
  see \cite[Proposition 3.43]{BerlineGetzlerVergne1992}.
  This decomposition is useful, because the contribution of $\Riem_g^S$ to various formulae often simplifies quite notably.
\end{remark}

%%% Local Variables:
%%% mode: latex
%%% TeX-master: "UniversalModuliSpaceOfZ2ZHarmonicSpinors"
%%% ispell-local-dictionary: "british"
%%% End:

\subsection{The space of ramified Euclidean line bundles}
\label{Sec_ModuliSpaceOfRamifiedEuclideanLineBundles}

Here is the other half of the base of the tame Fréchet space bundles $\BundleOfAdmissibleSingularSpinors$, $\BundleOfSingularSpinors$, and $\BundleOfResidues$.

\begin{definition}[{cf.~\cite[§3.4.3]{Doan2024}}]
  \label{Def_ModuliSpaceOfRamifiedEuclideanLineBundles}
  The \defined{moduli space of ramified Euclidean line bundles} over $X$ is constructed as follows:
  \begin{enumerate}
  \item
    \label{Def_ModuliSpaceOfRamifiedEuclideanLineBundles_BranchingLoci}
    Denote by $\SpaceOfBranchingLoci$ the \emph{set} of closed codimension-two submanifolds $Z \subseteq X$.
    Equip $\SpaceOfBranchingLoci$ with the tame Fréchet manifold structure described in \cite[Part I Example 4.1.7 and Example 4.4.7, and Part II Corollary 2.3.7]{Hamilton1982:NashMoser}.
    
    Given $Z \in \SpaceOfBranchingLoci$ and a tubular neighbourhood $\jmath \co NZ \supseteq U \to X$ of $Z$,
    define the map $\phi_\jmath^{-1} \co \Gamma\paren{Z,NZ} \supseteq \Gamma\paren{Z,U} \to \SpaceOfBranchingLoci$ by
    \begin{equation*}
      \phi_\jmath^{-1}(v) \coloneq \im \paren{\jmath \circ v}.
    \end{equation*}
    Denote by $\phi_\jmath$ the inverse of $\phi_\jmath^{-1} \co \Gamma\paren{Z,U} \to \im \phi_\jmath^{-1}$.
    Equip $\Gamma\paren{Z,NZ}$ with the usual structure of a tame Fréchet space;
    see \cite[Corollary II.1.3.9]{Hamilton1982:NashMoser}.
    The charts constructed in \cite[Example I.4.1.7]{Hamilton1982:NashMoser} are of the form $\phi_\jmath$. 
  \item
    \label{Def_ModuliSpaceOfRamifiedEuclideanLineBundles_Set}
    Denote by $\SpaceOfRamifiedLineBundles$ the \emph{set} of isometry classes of ramified Euclidean line bundles $(Z,\fl)$ over $X$ with smooth $Z$.
    Consider the map
    \begin{equation*}
      \Br \co \SpaceOfRamifiedLineBundles \to \SpaceOfBranchingLoci
    \end{equation*}
    assigning to $(Z,[\fl])$ the branching locus $Z$.
  \item
    \label{Def_ModuliSpaceOfRamifiedEuclideanLineBundles_Topology}
    For every open subset $U \subseteq X$ denote by $\SpaceOfBranchingLoci_U \subseteq \SpaceOfBranchingLoci$ the open subset of those $Z \in \SpaceOfBranchingLoci$ for which:
    \begin{enumerate}
    \item
      $Z \subseteq U$, and
    \item
      the restriction $\rH^1\paren{X\setminus Z,\set{\pm 1}} \to \rH^1\paren{X\setminus U,\set{\pm 1}}$ is an isomorphism.
    \end{enumerate}
    The subset $\SpaceOfBranchingLoci_U$ is open and the monodromy representations assemble into an injection
    \begin{equation*}
      \tau_U \co \Br^{-1}(\SpaceOfBranchingLoci_U) \to \SpaceOfBranchingLoci_U \times \rH^1\paren{X\setminus U,\set{\pm 1}}.
    \end{equation*}
    Equip $\SpaceOfRamifiedLineBundles$ with the coarsest \emph{topology} with respect to which the maps $\tau_U$ are continuous.
  \item
    \label{Def_ModuliSpaceOfRamifiedEuclideanLineBundles_Manifold}
    With respect to the above topology $\Br \co \SpaceOfRamifiedLineBundles \to \SpaceOfBranchingLoci$ is a locally finite covering map; cf.~\cite[proof of Theorem 3.3.2]{tomDieck2008:AlgebraicTopology}.
    In particular, $\SpaceOfRamifiedLineBundles$ inherits the structure of a tame Fréchet manifold from $\SpaceOfBranchingLoci$.
    Moreover, for every $(Z,[\fl]) \in \SpaceOfRamifiedLineBundles$,
    \begin{equation*}
      T_{(Z,[\fl])}\SpaceOfRamifiedLineBundles = T_Z\SpaceOfBranchingLoci = \Gamma\paren{Z,NZ}.
    \end{equation*}
  \end{enumerate}
  In the following $(Z,[\fl])$ is frequently abbreviated to $[\fl]$.
  This is harmless, because $Z$ can be recovered as $\Br(\fl)$.
  \qedhere
\end{definition}

\begin{remark}
  \label{Rmk_ModuliSpaceOfRamifiedEuclideanLineBundles_Topology}
  Here are some further comments regarding \autoref{Def_ModuliSpaceOfRamifiedEuclideanLineBundles_Topology}.
  Euclidean line bundles $\fl$ over $X\setminus Z$ are classified by their Stiefel--Whitney class $w_1(\fl) \in \Hom\paren{\pi_1(X\setminus Z),\set{\pm 1}} = \rH^1\paren{X\setminus Z,\set{\pm 1}}$.
  The latter cohomology group fits into the exact sequence
  \begin{align*}
    \rH^1\paren{X;\set{\pm 1}} \xhookrightarrow{i^*} \rH^1\paren{X\setminus Z;\set{\pm 1}} \xrightarrow{\delta}  \rH^2\paren{X, X \setminus Z;\set{\pm 1}} \xrightarrow{j^*} \rH^2\paren{X,\set{\pm 1}}.
  \end{align*}
  Excision and the Thom isomorphism identify $\rH^2\paren{X, X \setminus Z;\set{\pm 1}} \iso \rH^0\paren{Z,\set{\pm 1}}$.
  The monodromy condition means that $\delta(w_1(\fl)) = -\one \in \rH^0\paren{Z,\set{\pm 1}}$.
  In particular, its image under the final map vanishes.
  But this is exactly the Poincaré dual of $[Z] \in \rH_{d-2}\paren{X,\set{\pm 1}}$.
  Therefore, the latter must vanish for $Z$ to arise as the branching locus;
  moreover, if it vanishes, then the space of isometry classes of ramified Euclidean line bundles with branching locus $Z$ is an $\rH^1\paren{X,\set{\pm 1}}$--torsor.
\end{remark}

\begin{prop}[{cf.~\cite[Proposition 3.60]{Doan2024}}]
  \label{Prop_TwistedUniversalRamifiedEuclideanLineBundle} 
  $\SpaceOfRamifiedLineBundles$ carries a \defined{twisted universal ramified Euclidean line bundle} in the following sense:
  \begin{enumerate}
  \item
    \label{Prop_TwistedUniversalRamifiedEuclideanLineBundle_NonBranchingLocus} 
    Consider the \defined{universal non-branching locus}
    \begin{equation*}
      \mathring\bX
      \coloneq
      \set[\big]{
        ([\fl],x) \in \SpaceOfRamifiedLineBundles \times X : x \notin \Br(\fl)
      }
      \subseteq
      \SpaceOfRamifiedLineBundles\times X.
    \end{equation*}
    The projection map ${p} \co \mathring\bX \to \SpaceOfRamifiedLineBundles$ is a fibre bundle.
  \item
    \label{Prop_TwistedUniversalRamifiedEuclideanLineBundle_Bundle} 
    Denote by $\fU$ the set of those open subsets $\sU \subseteq \SpaceOfRamifiedLineBundles$ for which ${p} \co \mathring\bX|_\sU \to \sU$ is trivial.
    For every $\sU \in \fU$ there is a Euclidean line bundle $\fL_\sU$ over $\mathring\bX|_\sU$ and for every $[\fl] \in \sU$ an isomorphism
    \begin{equation*}
      \fL_\sU|_{[\fl] \times \paren{X\setminus \Br(\fl)}} \iso \fl
    \end{equation*}
    unique up to $\Aut(\fl) = \set{\pm 1}$.
  \item
    \label{Prop_TwistedUniversalRamifiedEuclideanLineBundle_Cocycle} 
    There is a Čech $2$--cocycle $\lambda \in \check\rC^2(\fU,\underline{\set{\pm 1}})$
    together with, for every $\sU,\sV \in \fU$, an isomorphism
    $\phi_\sV^\sU \co \fL_\sU|_{\mathring\bX|_{\sU\cap \sV}} \iso \fL_\sV|_{\mathring\bX|_{\sU\cap \sV}}$
    such that for every $\sU,\sV,\sW \in \fU$
    \begin{equation*}
      {p}^*\lambda_{\sU\sV\sW} \coloneq \phi_\sU^\sW \phi_\sW^\sV \phi_\sV^\sU
      \in \check\rC^0\paren[\big]{\mathring\bX|_{\sU \cap \sV \cap \sW},\set{\pm 1}}.
    \end{equation*}
    Here $\underline{\set{\pm 1}}$ is the constant sheaf associated with the abelian group $\set{\pm 1}$.
  \end{enumerate}  
\end{prop}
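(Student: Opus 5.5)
For \autoref{Prop_TwistedUniversalRamifiedEuclideanLineBundle_NonBranchingLocus}, I would work locally on $\SpaceOfRamifiedLineBundles$ using the charts from \autoref{Def_ModuliSpaceOfRamifiedEuclideanLineBundles}. Fix $(Z,[\fl])$ and a tubular neighbourhood $\jmath \co NZ \supseteq U \to X$. The neighbourhood $\sU$ of $[\fl]$ is the preimage under $\Br$ of $\phi_\jmath^{-1}$ applied to small sections $v \in \Gamma\paren{Z,U}$. For each such $v$, I would construct a diffeomorphism $\Psi_v \co X \to X$ with $\Psi_v(Z) = \im(\jmath\circ v)$. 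Concretely, take $\Psi_v(\jmath(w)) \coloneq \jmath\paren{w + \chi(\abs{w})v(\pi w)}$ inside $U$, with $\chi$ a cutoff equal to $1$ near $0$, and $\Psi_v = \id$ outside $U$. For $v$ small in $C^1$ this is a diffeomorphism, and it depends tamely and smoothly on $v$. The map $(v,x) \mapsto (v,\Psi_v(x))$ then trivialises $\mathring\bX|_\sU \iso \sU \times (X\setminus Z)$, which shows that $p$ is a fibre bundle.

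For \autoref{Prop_TwistedUniversalRamifiedEuclideanLineBundle_Bundle}, given a trivialisation $\Theta \co \mathring\bX|_\sU \iso \sU\times(X\setminus Z_0)$ over a connected $\sU$, I would set $\fL_\sU \coloneq \Theta^*\pr_2^*\fl_0$ for a fixed $\fl_0$ in the class $[\fl_0]$ over some basepoint of $\sU$. The claim $\fL_\sU|_{[\fl]\times(X\setminus\Br(\fl))} \iso \fl$ reduces to a comparison of monodromies. Both sides are Euclidean line bundles over $X\setminus \Br(\fl)$, so by \autoref{Rmk_ModuliSpaceOfRamifiedEuclideanLineBundles_Topology} they are isomorphic if and only if their classes in $\rH^1\paren{X\setminus\Br(\fl),\set{\pm1}}$ agree. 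The class of $\fL_\sU$ restricted to the fibre is locally constant along $\sU$ under the identifications furnished by $\Theta$. The class of $\fl$ is locally constant by the definition of the topology via the maps $\tau_U$, which is exactly the statement that $\Br$ is a covering map. Hence the two agree on the connected component of the basepoint, after shrinking $\sU$ to be connected if necessary, which should be harmless for defining the cover $\fU$. Uniqueness up to $\set{\pm1}$ holds because $X\setminus\Br(\fl)$ is connected (codimension two in a connected $X$), so automorphisms of a Euclidean line bundle are $\pm1$.

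For \autoref{Prop_TwistedUniversalRamifiedEuclideanLineBundle_Cocycle}, on $\sU\cap\sV$ the bundles $\fL_\sU$ and $\fL_\sV$ restrict to isomorphic bundles on every fibre. I would argue that they are globally isomorphic over $\mathring\bX|_{\sU\cap\sV}$, at least after refining the cover to contractible or sufficiently small opens. Then I would choose $\phi^\sU_\sV$. The composite $\phi_\sU^\sW\phi_\sW^\sV\phi_\sV^\sU$ is an automorphism of $\fL_\sU$ over $\mathring\bX|_{\sU\cap\sV\cap\sW}$, hence a locally constant $\set{\pm1}$-valued function. Since each fibre is connected, this function is pulled back from the base, which gives $\lambda_{\sU\sV\sW}$. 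The cocycle identity follows formally, as usual for gerbe data.

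I expect the main obstacle to be the global isomorphism $\fL_\sU \iso \fL_\sV$ over $\mathring\bX|_{\sU\cap\sV}$, since $\rH^1$ of the total space might contain classes coming from the base. I would handle this by refining $\fU$, if permitted, to opens on which the base is simply connected, for instance chart balls in the tame Fréchet space $\Gamma\paren{Z,NZ}$. Then $\rH^1(\mathring\bX|_{\sU\cap\sV}) = \rH^1(\text{fibre})$, and the fibrewise isomorphism extends. If $\fU$ must literally consist of all trivialising opens, I would instead use the fact that the local trivialisations $\Theta$ are canonical up to isotopy, so that the pullbacks agree.
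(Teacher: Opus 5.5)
The overall strategy is the natural one, and your step (1) works: trivialise $\mathring\bX$ by isotopies moving $\Br(\fl)$, pull back a reference bundle, and read off $\lambda$ from the $\set{\pm1}$--ambiguity. The paper omits its proof and refers to \cite[Proposition 3.60]{Doan2024}. Step (2), however, needs two repairs.

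First, the basepoint fibre $\Theta_{[\fl_0]}$ of an arbitrary trivialisation is some diffeomorphism of $X\setminus Z_0$, and it need not preserve $w_1(\fl_0)$. Think of a diffeomorphism of $X$ that fixes a neighbourhood of $Z_0$ but acts non-trivially on $\rH^1(X;\set{\pm1})$. So $\Theta^*\pr_2^*\fl_0$ can already be wrong at the basepoint, and you must normalise $\Theta$ to be the identity there.

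Second, ``both classes are locally constant'' refers to two different identifications: pullback by $\Theta_{[\fl]}$, versus restriction to $X\setminus U$ in the definition of $\tau_U$. To compare them, use your $\Psi_v$ from (1), which is the identity off $U$:
\begin{enumerate}
\item near $[\fl_1]$, the definition of the topology gives $\Psi_v^*\fl\iso\fl_1$;
\item homotopy invariance along the continuous family $\Theta_{[\fl]}\circ\Psi_v$ gives $\Psi_v^*\Theta_{[\fl]}^*\fl_0\iso\Theta_{[\fl_1]}^*\fl_0$.
\end{enumerate}
Together these show that $\set{[\fl]\in\sU : \Theta_{[\fl]}^*\fl_0\iso\fl}$ is open and closed. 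Also, since $\fU$ is prescribed, treat connected components separately rather than shrinking $\sU$.

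The genuine gap is in (3). Over each component of $\sU\cap\sV$, $\fL_\sU$ and $\fL_\sV$ differ by a class in $\rH^1(\sU\cap\sV;\set{\pm1})$, and $\phi_\sV^\sU$ exists if and only if this class vanishes. Neither of your remedies ensures that.

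Your fallback fails because trivialisations are not canonical up to isotopy. They can be changed by any map $\sU\to\Diff(X\setminus Z_0)$. If such a map sends a loop in $\sU$ to a loop of diffeomorphisms along which a point winds once around $Z_0$ (e.g.\ rotations of $S^2$ about the axis through $Z_0=\set{N,S}$), then $\fL_\sU$ gets tensored with a non-trivial class from $\rH^1(\sU;\set{\pm1})$.

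Refining to simply connected opens (chart balls) does not work either. It kills $\rH^1(\sU)$ and $\rH^1(\sV)$, not $\rH^1(\sU\cap\sV)$, so your claim $\rH^1(\mathring\bX|_{\sU\cap\sV})=\rH^1(\text{fibre})$ is false in general. In that situation $\fL_\sU$ and $\fL_\sV$ are unique up to isomorphism. Hence $\phi_\sV^\sU$ exists if and only if there is a universal ramified line bundle over $\mathring\bX|_{\sU\cup\sV}$: glue along $\phi_\sV^\sU$ in one direction, and restrict in the other. That is, it exists exactly when the obstruction the proposition is meant to encode vanishes on $\sU\cup\sV$, which is not automatic.

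What you need is a condition on the overlaps, not on the members. Choose the cover, or coordinate the choices of $\fL_\sU$, so that these difference classes vanish; for instance, require $\rH^1(\sU\cap\sV;\set{\pm1})=0$ componentwise. With that in place, your argument that the triple composite descends to a $\set{\pm1}$--valued cocycle goes through verbatim.
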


\begin{proof}
  The proof is identical to that of \cite[Proposition 3.60]{Doan2024} and, therefore, omitted.
\end{proof}

The \v Cech cohomology class $\smash{[\lambda] \in \check\rH^2\paren{\SpaceOfRamifiedLineBundles,\underline{\set{\pm1}}}}$ is the obstruction to gluing $\set{ \fL_\sU : \sU \in \fU }$ to an untwisted universal ramified Euclidean line bundle $\fL$ over $\mathring\bX$.
Unfortunately, it does not always vanish.
A pragmatic solution to this problem is to instead work with the Euclidean line bundle
\begin{equation*}
  \fL \coloneq \coprod_{\sU \in \fU} \fL_\sU
  \quad\textnormal{and}\quad
  \SpaceOfRamifiedLineBundles^0 \coloneq \coprod_{\sU \in \fU} \sU.
\end{equation*}
The significance of $\fL$ is that for every $(\sU;[\fl]) \in \SpaceOfRamifiedLineBundles^0$ it selects a choice of representative of $[\fl]$.
Henceforth, this choice shall be denoted by $\fl$,
abusing but simplifying notation.
Moreover, in the upcoming discussion, $(\sU;[\fl])$ is abbreviated to $[\fl]$ and $\SpaceOfRamifiedLineBundles^0$ to $\SpaceOfRamifiedLineBundles$.
This abuse of notation is inconsequential,
because ultimately $\Z/2\Z$ harmonic spinors are considered up to the action of $\R^\times$ and $\set{\pm 1} \leq \R^\times$:
forming the quotient by $\set{\pm 1}$ removes the ambiguity.

\begin{remark}
  The origin of the above issue is the stabiliser $\Aut(\fl)=\set{\pm 1}$
  and,
  in principle,
  one ought to work with orbifolds to deal with it.  
  The Čech $2$--cocycle $\lambda$ produces a proper étale tame Fréchet Lie groupoid which presents a tame Fréchet orbifold $\underline{\SpaceOfRamifiedLineBundles}$:
  the \defined{fine moduli space of ramified Euclidean line bundles};
  cf.~\cite[§3]{Moerdijk2002}.
  The corresponding orbifold counterpart $\underline{\mathring\bX}$ of $\mathring\bX$ does admit a \defined{universal ramified Euclidean line bundle} $\underline{\fL}$;
  cf.~\cite[§5]{Moerdijk2002}.
\end{remark}

%%% Local Variables:
%%% mode: latex
%%% TeX-master: "UniversalModuliSpaceOfZ2ZHarmonicSpinors"
%%% ispell-local-dictionary: "british"
%%% End:

\subsection{The extended gauge group and the Kosmann lift}
\label{Sec_ExtendedGaugeGroup}

The following symmetries of $\SpaceOfDiracBundles \times \SpaceOfRamifiedLineBundles$ are crucial for the upcoming discussion.

\begin{definition}
  \label{Def_ExtendedIsometry}
  The \defined{extended gauge group} of $S$ is the tame Fréchet Lie group $\ExtendedGaugeGroup{S}$ constructed as follows:
  \begin{enumerate}  
  \item
    \label{Def_ExtendedIsometry_Set}
    An \defined{extended isometry} of $S$ is a diffeomorphism $u \in \Diff(S)$ for which there exists a diffeomorphism $\check u \in \Diff(X)$ such that $u$ defines an isometry $S \iso \check u^*S$.
    These form a subgroup $\ExtendedGaugeGroup{S} \leq \Diff(S)$.    
  \item
    A moment's thought shows that extended isometries lift to $\O(r)$--equivariant diffeomorphisms of $\Fr(S)$,
    the orthogonal frame bundle of $S$;
    in fact, $\ExtendedGaugeGroup{S} = \Diff(\Fr(S))^{\O(r)}$, abusing notation.
    By \cite[Proposition 2.6]{Molitor2010:UnimodularAutomorphisms},
    $\ExtendedGaugeGroup{S} \leq \Diff(\Fr(S))$ is a tame Fréchet Lie subgroup.
  \item
    From the above perspective,
    $\ExtendedGaugeGroup{S}$ acts smoothly and tamely on $S = \Fr(S) \times_{\O(r)} \R^r$ and $X = \Fr(S) \times_{\O(r)} \set{*}$ from the left
    by
    \begin{equation*}
      u \cdot [p,\phi] \coloneq [u(p),\phi]
      \qandq
      u \cdot [p,*] \coloneq [u(p),*],
    \end{equation*}
    respectively.
    As a consequence,
    it acts smoothly and tamely on $\Gamma\paren{X,S}$, $\SpaceOfDiracBundles$, and $\SpaceOfRamifiedLineBundles$ from the right by pullback.%
    \footnote{%
      \label{Footnote_ActionLift}
      The reader should be warned that $\ExtendedGaugeGroup{S}$ does not lift to an action on $\SpaceOfRamifiedLineBundles^0$;
      however, the action lifts locally.
      This is sufficient for \autoref{Prop_TransformToNormalForm}.
    }
    \qedhere
  \end{enumerate}
\end{definition}

The subset $\Diff_S(X) \subseteq \Diff(X)$ of those diffeomorphisms that lift to $\ExtendedGaugeGroup{S}$ forms an open subgroup.
In fact,
$\ExtendedGaugeGroup{S}$ fits into the exact sequence of tame Fréchet Lie groups
\begin{equation*}
  \GaugeGroup{S} \into \ExtendedGaugeGroup{S} \onto \Diff_S(X).
\end{equation*}
This induces a corresponding exact sequence of tame Fréchet Lie algebras
\begin{equation*}
  \Gamma\paren{X,\fo(S)} \into \Lie(\ExtendedGaugeGroup{S}) \coloneq T_\one\ExtendedGaugeGroup{S} \onto \Vect(X).
\end{equation*}
As a sequence of tame Fréchet \emph{spaces},
this splits, but not canonically so.
For example,
for every orthogonal connection $\nabla$ on $S$ the \defined{horizontal lift}
\begin{equation*}
  \lift_\nabla \co \Vect(X) \to \Lie(\ExtendedGaugeGroup{S}) \subseteq \Vect(S)
\end{equation*}  
defines a right splitting.
The right Lie algebra action of $\lift_\nabla(v)$ on $\bp = (g;\gamma,\nabla) \in \SpaceOfDiracBundles$ yields $(\sL_vg;\dot \gamma,\dot\nabla) \in T_\bp\SpaceOfDiracBundles$ with $\dot\gamma(w) \coloneq \gamma(\nabla_w v)$.
For the purposes of this article it is convenient to use the following lift that fits better with the Bourguignon--Gauduchon connection defined in \autoref{Prop_UniversalSpaceOfDiracBundles_TangentBundle}~\autoref{Prop_UniversalSpaceOfDiracBundles_TangentBundle_HBG}.

\begin{definition}[cf.~{\cites[Definition III.2.1]{Kosmann1972}[Proposition 17]{Bourguignon1992}}]
  \label{Def_KosmannLift}
  For every $\bp = (g;\gamma,\nabla) \in \SpaceOfDiracBundles$ the \defined{Kosmann lift} is the tame linear map $\lift_\bp^K \co \Vect(X) \to \Lie(\ExtendedGaugeGroup{S})$ defined by
  \begin{align*}
    \lift_\bp^K(v)
    &\coloneq
      \lift_\nabla(v) + \kappa_v
      \qwith \\
    \kappa_v
    &\coloneq
      -\frac18 \sum_{i,j=1}^d g(\nabla_{e_i}v,e_j) [\gamma(e_i),\gamma(e_j)]
      \in
      \Gamma\paren{X,\fo(S)}.
      \qedhere
  \end{align*}
\end{definition}

\begin{prop}[{cf.~\cite[§V.1.2]{Kosmann1972}}]
  \label{Prop_LieAlgebraActionOfKosmannLift}
  Let
  $\bp \in \SpaceOfDiracBundles$ and
  $v \in \Vect(X)$.
  \begin{enumerate}
  \item
    \label{Prop_LieAlgebraActionOfKosmannLift_Spinor}
    The right Lie algebra action of $\lift_\bp^K(v) \in \Lie(\ExtendedGaugeGroup{S})$ on $\phi \in \Gamma\paren{X,S}$ is given by the \defined{Kosmann Lie derivative};
    that is:
    \begin{equation*}
      \phi \cdot \lift_\bp^K(v) = \sL_v^K \phi
      \qwithq
      \sL_v^K \coloneq \nabla_v + \kappa_v.
      \qedhere
    \end{equation*}
  \item
    \label{Prop_LieAlgebraActionOfKosmannLift_DiracBundle}
    The right Lie algebra action of $\lift_\bp^K(v) \in \Lie(\ExtendedGaugeGroup{S})$ on $\bp \in \SpaceOfDiracBundles$ is related to the Bourguignon--Gauduchon lift $(\dot g;\dot\gamma^\BG, \dot\nabla^\BG) \in T_\bp\SpaceOfDiracBundles$ of $\dot g \coloneq \sL_vg \in T_g\SpaceOfMetrics$,
    defined in \autoref{Prop_UniversalSpaceOfDiracBundles_TangentBundle}~\autoref{Prop_UniversalSpaceOfDiracBundles_TangentBundle_HBG},
    by
    \begin{equation}
      \label{Eq_KosmannLift}
      \bp \cdot \lift_\bp^K(v)
      =
      \paren[\big]{\dot g;\dot\gamma^\BG, \dot\nabla^\BG + F_\nabla^\tw(v,-)}.
    \end{equation}
  \end{enumerate}
\end{prop}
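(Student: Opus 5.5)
The plan is to compute both actions by differentiating the pullback along the flow of $\lift_\bp^K(v)$, and then to match the result with the formulae of \autoref{Prop_UniversalSpaceOfDiracBundles_TangentBundle} by direct tensor calculus.

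\textbf{Action on spinors.} Since the action on $\Gamma\paren{X,S}$ is by pullback, the Lie algebra action of an element $V \in \Lie(\ExtendedGaugeGroup{S})$ is the Lie derivative of sections along the flow of $V$. The horizontal lift $\lift_\nabla(v)$ has a flow given by $\nabla$--parallel transport along the flow lines of $v$, so differentiating the pullback yields $\nabla_v\phi$. The vertical part $\kappa_v \in \Gamma\paren{X,\fo(S)}$ generates pointwise gauge transformations, and these contribute $\kappa_v\phi$. Therefore $\phi\cdot\lift_\bp^K(v) = \nabla_v\phi + \kappa_v\phi = \sL_v^K\phi$. This fixes the sign conventions used in the second part.

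\textbf{Action on $\bp$.} The metric component is $\sL_vg$. For $\gamma$, I regard it as a section of $\Hom(TX,\fo(S))$. Its derivative along the flow is
\begin{equation*}
  \dot\gamma(w) = [\sL_v^K,\gamma(w)] - \gamma([v,w]).
\end{equation*}
Using $[\nabla_v,\gamma(w)] = \gamma(\nabla_vw)$ and torsion-freeness, this equals $\gamma(\nabla_wv) + [\kappa_v,\gamma(w)]$. Write $a_{ij} \coloneq g(\nabla_{e_i}v,e_j)$. Applying \autoref{Eq_DoubleCommutator} gives
\begin{equation*}
  [\kappa_v,\gamma(w)] = -\tfrac12\sum_j g(\nabla_wv,e_j)\gamma(e_j) + \tfrac12\sum_i g(\nabla_{e_i}v,w)\gamma(e_i).
\end{equation*}
Adding $\gamma(\nabla_wv)$ gives $\tfrac12\sum_j (\sL_vg)(w,e_j)\gamma(e_j)$, which is exactly $\dot\gamma^\BG(w)$.

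For the connection, the analogous formula is
\begin{equation*}
  \dot\nabla_w = [\sL_v^K,\nabla_w] - \nabla_{[v,w]} = F_\nabla(v,w) - \nabla_w\kappa_v,
\end{equation*}
where $\nabla_w\kappa_v$ denotes the covariant derivative of the endomorphism $\kappa_v$, since $[\kappa_v,\nabla_w] = -(\nabla_w\kappa_v)$. Splitting $F_\nabla = \Riem_g^S + F_\nabla^\tw$ as in \autoref{Rmk_TwistingCurvature}, the claim \autoref{Eq_KosmannLift} reduces to the identity
\begin{equation*}
  \Riem_g^S(v,w) - \nabla_w\kappa_v = \dot\nabla_w^\BG
  \qwithq \dot g = \sL_vg.
\end{equation*}
Both sides are $\tfrac18\sum_{i,j} c_{ij}[\gamma(e_i),\gamma(e_j)]$ for antisymmetric coefficients $c_{ij}$, because $\gamma$ is parallel. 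It therefore suffices to check a scalar identity among the $2$--forms:
\begin{equation*}
  \Inner{\Riem_g(v,w)e_k,e_\ell} + g(\nabla^2_{w,e_k}v,e_\ell)
  \quad\text{versus}\quad
  \text{the antisymmetrisation in } k,\ell \text{ of } (\nabla_{e_k}\sL_vg)(e_\ell,w).
\end{equation*}
Expanding $\nabla\sL_vg$ in terms of $\nabla^2v$ and using the Ricci identity $\nabla^2_{a,b}v - \nabla^2_{b,a}v = \Riem_g(a,b)v$ together with the first Bianchi identity should produce this.

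The main obstacle is this last identity. It is the only place where the precise normalisations ($\tfrac18$, the sign in $\kappa_v$, and the orientation convention for $\Riem_g$) must all match. My approach would be to verify it first in normal coordinates at a point, and to use the spinor formula from the first part to pin down the sign of the gauge contribution unambiguously.
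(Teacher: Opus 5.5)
Your proposal is correct and follows the paper's proof essentially step for step. You compute $\dot\gamma(w)=[\sL_v^K,\gamma(w)]-\gamma([v,w])$ and $\dot\nabla_w=[\sL_v^K,\nabla_w]-\nabla_{[v,w]}$, apply \autoref{Eq_DoubleCommutator}, split $F_\nabla=\Riem_g^S+F_\nabla^\tw$, and reduce to a curvature identity; that identity closes as you expect: after the Ricci identity and pair symmetry, the antisymmetrised remainder is proportional to $\Inner{\Riem_g(v,w)e_i,e_j}+\Inner{\Riem_g(v,e_i)e_j,w}+\Inner{\Riem_g(v,e_j)w,e_i}$, which vanishes by the first Bianchi identity exactly as in the paper.
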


\begin{remark}
  \label{Rmk_KosmannLift}
  The following should help to make the Kosmann lift more palatable:
  \begin{enumerate}
  \item
    The requirement that the right Lie algebra action of the lift $\tilde v \in \Lie(\ExtendedGaugeGroup{S})$ of $v \in \Vect(X)$ reproduces $\dot\gamma^\BG$ determines $\tilde v$ up to the addition of an arbitrary
    $\xi_v \in \Gamma\paren{X,\fo_{\Cl}(S)}$.
  \item
    The requirement \autoref{Eq_KosmannLift} determines $\lift_\bp^K(v)$ uniquely up to the addition of an arbitrary $\xi_v\in \ker\paren[\big]{\nabla \co \Gamma\paren{X,\fo_{\Cl}(S)} \to \Omega^1\paren{X,\fo_{\Cl}(S)}}$.
    The latter vanishes for typical $\bp \in \SpaceOfDiracBundles$.
    Therefore, \autoref{Eq_KosmannLift} and a suitable continuity condition determine the Kosmann lift uniquely.
  \item
    For generic $\bp \in \SpaceOfDiracBundles$ and $v \in \Vect(X)$,
    $F_\nabla^\tw(v,-) \notin \im \paren[\big]{\nabla \co \Gamma\paren{X,\fo_{\Cl}(S)} \to \Omega^1\paren{X,\fo_{\Cl}(S)}}$.
    As a consequence, this term cannot be eliminated from \autoref{Eq_KosmannLift}.
  \item
    The reader should be warned that,
    despite its name,
    the Kosmann Lie derivative does not define a Lie algebra homomorphism $\Vect(X) \to \Lie(\ExtendedGaugeGroup{S})$;
    see \cite[Proposition 18]{Bourguignon1992}.
    It does not even agree with the Lie derivative on $\Omega^\bullet(X)$ for the Dirac bundle that gives rise to the Hodge--de Rham operator $\rd + \rd^*$.
    \qedhere
  \end{enumerate}
\end{remark}

\begin{proof}[Proof of \autoref{Prop_LieAlgebraActionOfKosmannLift}]
  \autoref{Prop_LieAlgebraActionOfKosmannLift_Spinor} is obvious.
  From this \autoref{Prop_LieAlgebraActionOfKosmannLift_DiracBundle} is derived by the following direct computation.
  Set
  \begin{equation*}
    (\dot g;\dot\gamma,\dot\nabla) \coloneq \bp \cdot \lift_\bp^K(v).
  \end{equation*}
  Evidently,
  $\dot g = \sL_vg$.  
  Since
  \begin{equation}
    \label{Eq_LieDerivativeOfMetric}
    (\sL_v g)(w,z) = g(\nabla_w v,z) + g(\nabla_z v,w),
  \end{equation}
  the formulae in  \autoref{Prop_UniversalSpaceOfDiracBundles_TangentBundle}~\autoref{Prop_UniversalSpaceOfDiracBundles_TangentBundle_HBG} concretise to
  \begin{align*}
    \dot\gamma^\BG(w)
    &=
      \frac12 \sum_{i=1}^d \paren[\big]{g(\nabla_w v,e_i) + g(\nabla_{e_i} v,w)} \gamma(e_i) \qand \\
    \dot\nabla_w^\BG
    &=
      \frac18 \sum_{i,j=1}^d \paren[\big]{g(\nabla^2_{e_i,e_j}v,w) + g(\nabla^2_{e_i,w} v,e_j)} [\gamma(e_i),\gamma(e_j)].
  \end{align*}
  Therefore,
  using \autoref{Eq_DoubleCommutator},
  \begin{align*}
    \dot \gamma(w)
    &=
      [\sL_v^K,\gamma(w)] - \gamma([v,w])
    =
      [\kappa_v,\gamma(w)] + \gamma(\nabla_w v) \\
    &=
      -\frac18 \sum_{i,j=1}^d g(\nabla_{e_i}v,e_j) [[\gamma(e_i),\gamma(e_j)],\gamma(w)]
      +
      \gamma(\nabla_w v) \\
    &=
      \frac12 \sum_{i=1}^d \paren[\big]{g(\nabla_wv,e_i) + g(\nabla_{e_i}v,w)} \gamma(e_i)
      =
      \dot\gamma^\BG(w).
  \end{align*}
  Moreover,
  \begin{align*}
    \dot\nabla_w
    &=
      [\sL_v^K,\nabla_w] - \nabla_{[v,w]}
    =
      [\nabla_v,\nabla_{w}] - \nabla_{[v,w]} - [\nabla_w,\kappa_v] \\
    &=
      \Riem_g^S(v,w) + F_\nabla^\tw(v,w)
      +  \frac18 \sum_{i,j=1}^d g(\nabla_{w,e_i}^2v,e_j) [\gamma(e_i),\gamma(e_j)] \\
    &=
      \dot\nabla_{w}^\BG + F_{\nabla}^\tw(v,w) 
      + \rR
  \end{align*}
  with $\rR$ denoting the remainder
  \begin{equation*}    
    \rR
    \coloneq
    \frac18\sum_{i,j=1}^d
    \paren{
      g(\Riem_g(v,w)e_i,e_j) + g(\nabla^2_{w,e_i} v,e_j) - g(\nabla^2_{e_i,e_j}v,w) - g(\nabla^2_{e_i,w} v,e_j)
    }
    [\gamma(e_i),\gamma(e_j)].
  \end{equation*}
  As a consequence of the algebraic Bianchi identity,
  \begin{align*}
    \rR
    &=
      \frac1{16} \sum_{i,j=1}^d
      \paren*{g(\Riem_g(v,w)e_i,e_j) + 2g(\Riem_g(w,e_i)v,e_j)}
      [\gamma(e_i),\gamma(e_j)] \\
    &=
      \frac1{16} \sum_{i,j=1}^d
      \paren[\big]{g(\Riem_g(v,w)e_i,e_j) + g(\Riem_g(v,e_j)w,e_i) + g(\Riem_g(v,e_i)e_j,w)}
      [\gamma(e_i),\gamma(e_j)] \\
    &=
      0.
      \qedhere
  \end{align*}
\end{proof}

%%% Local Variables:
%%% mode: latex
%%% TeX-master: "UniversalModuliSpaceOfZ2ZHarmonicSpinors"
%%% ispell-local-dictionary: "british"
%%% End:

\subsection{The universal bundles of (admissible) singular spinors and residues}
\label{Sec_UniversalBundlesOfSingularSpinorsAndResidues}

The following discussion constructs $\BundleOfAdmissibleSingularSpinors$, $\BundleOfSingularSpinors$, and $\BundleOfResidues$ as well as $\UniversalDiracOperator$ and $\UniversalResidueMap$ using the method introduced by \citet[§4]{Donaldson2021}.

\begin{definition}
  \label{Def_Universal_1}
  Here is the first stage of the construction:
  \begin{enumerate}
  \item
    \label{Def_Universal_1_AdmissibleSingularSpinors}
    The \defined{universal bundle of admissible singular spinors} is the \emph{set}
    \begin{align*}
      \BundleOfAdmissibleSingularSpinors
      &\coloneq
        \coprod_{(\bp,[\fl]) \in \SpaceOfDiracBundles \times \SpaceOfRamifiedLineBundles} H_a^\infty\Gamma\paren{X\setminus \Br(\fl),S\otimes\fl;\bp}
    \end{align*}    
    together with the projection map
    $\BundleOfAdmissibleSingularSpinors \to \SpaceOfDiracBundles \times \SpaceOfRamifiedLineBundles$
    and the tame Fréchet space structures on the fibres,
    defined in \autoref{Sec_ConormalAndAdaptedSobolevSpaces} and \cite[Definition 4.2 (b)]{BeraWalpuski2025}.
    Here and throughout the remainder of this article,
    additional decorations with $\bp$ and $\fl$ are used to indicate the dependence on these parameters.
  \item
    \label{Def_Universal_1_SingularSpinors}
    The \defined{universal bundle of singular spinors} is the \emph{set}
    \begin{align*}
      \BundleOfSingularSpinors
      &\coloneq
        \coprod_{(\bp,[\fl]) \in \SpaceOfDiracBundles \times \SpaceOfRamifiedLineBundles} H_b^\infty\Gamma\paren{X\setminus \Br(\fl),S\otimes\fl;\bp}
    \end{align*}    
    together with the projection map
    $\BundleOfSingularSpinors \to \SpaceOfDiracBundles \times \SpaceOfRamifiedLineBundles$
    and the tame Fréchet space structures on the fibres,
    defined in \autoref{Sec_ConormalAndAdaptedSobolevSpaces} and \cite[Definition 4.2 (a)]{BeraWalpuski2025}.
  \item
    \label{Def_Universal_1_Residues}    
    The \defined{universal bundle of residues} is the \emph{set}
    \begin{equation*}
      \BundleOfResidues \coloneq \coprod_{(\bp,[\fl]) \in \SpaceOfDiracBundles \times \SpaceOfRamifiedLineBundles} \Gamma\paren{\Br(\fl),\ResidueBundle_\bp}
    \end{equation*}
    together with the projection map $\BundleOfResidues \to \SpaceOfDiracBundles \times \SpaceOfRamifiedLineBundles$ and the usual tame Fréchet space structures on the fibres.
  \item
    \label{Def_Universal_1_D}
    The \defined{universal Dirac operator} $\UniversalDiracOperator \co \BundleOfAdmissibleSingularSpinors \to \BundleOfSingularSpinors$ is defined by
    \begin{equation*}
      \UniversalDiracOperator(\bp,[\fl];\phi)
      \coloneq
      (\bp,[\fl];D_\bp^\fl \phi).
    \end{equation*}
  \item
    \label{Def_Universal_1_Res}
    The \defined{universal residue map} $\UniversalResidueMap \co \BundleOfAdmissibleSingularSpinors \to \BundleOfResidues$ is defined by
    \begin{equation*}
      \UniversalResidueMap(\bp,[\fl];\phi)
      \coloneq
      (\bp,[\fl];\res \phi).
    \end{equation*}
  \end{enumerate}
  It remains to endow
  $\BundleOfAdmissibleSingularSpinors$,
  $\BundleOfSingularSpinors$, and
  $\BundleOfResidues$
  with the structure of tame Fréchet space bundles such that $\UniversalDiracOperator$ and $\UniversalResidueMap$ become maps of tame Fréchet space bundles.
\end{definition}

\begin{figure}[h]
  \centering
  \begin{equation*}
    \begin{tikzcd}
      \BundleOfAdmissibleSingularSpinors \ar{r}{\UniversalDiracOperator} \ar{d}[swap]{\UniversalResidueMap} & \BundleOfSingularSpinors \\
      \BundleOfResidues
    \end{tikzcd}
  \end{equation*}
  \caption{The universal Dirac operator and residue map.}
  \label{Fig_UniversalDiracOperatorAndResidueMap}
\end{figure}

The above task is not entirely trivial.
Even if $(Z_0,[\fl_0]) \in \SpaceOfRamifiedLineBundles$ is held fixed and $\bp_0,\bp \in \SpaceOfDiracBundles$ are close,
$H_a^\infty\paren{X\setminus Z_0,S\otimes\fl_0;\bp_0}$ and $H_a^\infty\paren{X\setminus Z_0,S\otimes\fl_0;\bp}$ need not agree, even as subspaces of $\Gamma\paren{X\setminus Z_0,S\otimes\fl_0}$, unless a much stronger relation between $\bp_0$ and $\bp$ is assumed.

\begin{definition}
  \label{Def_Slice}
  A set of \defined{normal data} for a codimension two submanifold $Z_0 \subseteq X$ consists of:
  \begin{enumerate}[label=\rm{(N\arabic*)},ref=N\arabic*]
  \item
    a rank $2$ subbundle $N \subseteq TX|_{Z_0}$ complementary to $TZ_0$,
  \item
    a Euclidean metric $h$ on $N$, and
  \item
    \label{Def_NormalData_I}
    an $I \in \Gamma\paren{Z_0,\Hom\paren{\Wedge^2N,\End(S|_{Z_0})}}$ such that
    $I(\Or) \in \End(S_x)$ is an orthogonal complex structure
    for every $x \in Z_0$ and $\Or \in \Wedge^2N_x$ with $\abs{\Or} = 1$.
  \end{enumerate}
  Every $\bp = (g;\gamma,\nabla) \in \SpaceOfDiracBundles$ defines normal data for a codimension two submanifold $Z \subseteq X$ by declaring
  \begin{equation*}
    N \coloneq TZ^\perp, \quad
    h \coloneq g|_N, \qandq
    I \coloneq \gamma|_{\Wedge^2 N}.
  \end{equation*}
  The \defined{slice} associated with a ramified Euclidean line bundle $(Z_0,[\fl_0]) \in \SpaceOfRamifiedLineBundles$ and normal data $(N,h,I)$ for $Z_0$ is the tame Fréchet submanifold
  \begin{equation*}
    \Slice
    \coloneq
    \set[\big]{
      (\bp,[\fl_0]) \in \SpaceOfDiracBundles \times \SpaceOfRamifiedLineBundles
      :
      \bp ~\text{induces the normal data}~ (N,h,I) ~\text{for}~ Z_0
    }
    \subseteq
    \SpaceOfDiracBundles \times \SpaceOfRamifiedLineBundles.
  \end{equation*}
  These slices foliate $\SpaceOfDiracBundles \times \SpaceOfRamifiedLineBundles$ in the sense that every $(\bp,[\fl]) \in \SpaceOfDiracBundles \times \SpaceOfRamifiedLineBundles$ is contained in some slice.
  \qedhere
\end{definition}

\begin{prop}
  \label{Prop_Universal_Slice}
  Let
  $\Slice \subseteq \SpaceOfDiracBundles \times \SpaceOfRamifiedLineBundles$ be a slice and
  $(\bp_0,[\fl_0]) \in \Slice$.
  \begin{enumerate}
  \item
    \label{Prop_Universal_Slice_Bundles}
    For every $(\bp,[\fl]) \in \Slice$ the identity maps induce tame isomorphisms
    \begin{align*}
      H_a^\infty\Gamma\paren{X\setminus \Br(\fl_0),S\otimes\fl_0;\bp_0}
      &\iso
        H_a^\infty\Gamma\paren{X\setminus \Br(\fl),S\otimes\fl;\bp} = \BundleOfAdmissibleSingularSpinors_{\bp,\fl}, \\
        %%% 
      H_b^\infty\Gamma\paren{X\setminus \Br(\fl_0),S\otimes\fl_0;\bp_0}
      &\iso
        H_b^\infty\Gamma\paren{X\setminus \Br(\fl),S\otimes\fl;\bp} = \BundleOfSingularSpinors_{\bp,\fl}, \qand \\
        %%% 
      \Gamma\paren{\Br(\fl_0),\ResidueBundle_{\bp_0}}
      &\iso
        \Gamma\paren{\Br(\fl),\ResidueBundle_{\bp}} = \BundleOfResidues_{\bp,\fl}.
    \end{align*}
  \item
    \label{Prop_Universal_Slice_Maps}
    The maps
    \begin{align*}
      \UniversalDiracOperator|_\Slice \co \Slice \times H_a^\infty\Gamma\paren{X\setminus \Br(\fl_0),S\otimes\fl_0;\bp_0} &\to H_b^\infty\Gamma\paren{X\setminus \Br(\fl_0),S\otimes\fl_0;\bp_0} \qand \\
      {\UniversalResidueMap}|_{\Slice} \co \Slice \times H_a^\infty\Gamma\paren{X\setminus \Br(\fl_0),S\otimes\fl_0;\bp_0} &\to \Gamma\paren{ \Br(\fl_0),\ResidueBundle_{\bp_0}}
    \end{align*}
    induced by $\UniversalDiracOperator$ and $\UniversalResidueMap$ are tame smooth.
  \end{enumerate}
\end{prop}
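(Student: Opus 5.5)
Within a slice the branching locus $Z_0$ and the line bundle $\fl_0$ are fixed; only $\bp$ varies, subject to the constraint that it induces the fixed normal data $(N,h,I)$ along $Z_0$. The plan is therefore to show that every function space and operator entering the definitions of $H_a^\infty$, $H_b^\infty$, $\ResidueBundle$, $D$ and $\res$ depends on $\bp$ only through objects that are either independent of $\bp$ or vary tamely smoothly with $\bp$. The key point is that the normal data are exactly what the constructions in \cite{BeraWalpuski2025} need to be $\bp$--independent at leading order along $Z_0$.

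\textbf{The conormal spaces.} I start with $H_b^\infty$. The blow-up $\hat X$, the space $\Vect_b(\hat X)$, and hence the filtered ring $\DiffOp_b^\bullet$ depend only on $Z_0$ and a chosen tubular neighbourhood, not on $\bp$. Two changes in $\bp$ remain to be controlled:
\begin{enumerate}
\item changing $g$ changes $L^2$ by a smooth positive density on $\hat X$, which is harmless;
\item changing $\nabla$ changes $\nabla_v$ for $v\in\Vect_b(\hat X)$ by a smooth endomorphism, which lies in $\DiffOp_b^0$.
\end{enumerate}
Hence the norms $\Abs{-}_{H_b^k;\bp}$ and $\Abs{-}_{H_b^k;\bp_0}$ are equivalent, with constants depending on finitely many derivatives of $\bp$, and the identity is a tame isomorphism. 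This part does not yet use the slice condition.

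\textbf{The adapted spaces.} For $H_a^\infty$ I need that $D_\bp-D_{\bp_0}$ maps $H_b^{k+1}$ to $H_b^k$ tamely. Write $D_\bp-D_{\bp_0}=\sum_i a_i\nabla_{\partial_i}+b$ with smooth coefficients. Normal derivatives $\nabla_{\partial_{x}},\nabla_{\partial_y}$ are not conormal; they become conormal, namely $r\nabla_{\partial_r}$ and $\nabla_{\partial_\theta}$, only after multiplication by $r$. So the coefficient of the normal derivatives in the difference must vanish along $Z_0$. This is exactly where the slice condition enters: $\gamma|_N$ is fixed up to the metric, because $h$ and $I=\gamma|_{\Wedge^2N}$ pin down $\gamma$ on $N$ up to a Clifford-compatible rotation. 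I expect this to need care, and the step I would try first is the following:
\begin{itemize}
\item use the rigidity argument from \autoref{Prop_UniversalSpaceOfDiracBundles_TangentBundle} to replace $\bp$ by a gauge-equivalent Dirac structure whose $\gamma|_N$ agrees with that of $\bp_0$ along $Z_0$;
\item conclude that the normal symbol of the difference is $O(r)$.
\end{itemize}
It follows that $D_\bp-D_{\bp_0}\in\DiffOp_b^1$ up to smooth endomorphisms, so $\phi\in H_a^{k+1;\bp_0}$ iff $\phi\in H_a^{k+1;\bp}$, with tame norm equivalence.

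\textbf{Residues and smoothness.} The residue bundle $\ResidueBundle=S|_{Z_0}\otimes_\AlgebraBundle NZ^{-1/2}$ depends only on $(N,h,I)$ and $\fl_0$, so the residue bundles of $\bp$ and $\bp_0$ literally coincide and the third identification is the identity. The residue map agrees as well, because $\res$ is characterised by the leading $r^{-1/2}$ coefficient via $\phi|_{\del\hat X}=\pi^*\res(r^{-1/2}\phi)$ on a dense subspace, and this characterisation is $\bp$--independent. For part (2):
\begin{itemize}
\item $\UniversalResidueMap|_\Slice$ is then the constant family $\res_{\bp_0}$, which is tame linear;
\item $\UniversalDiracOperator|_\Slice(\bp,\phi)=D_{\bp_0}\phi+(D_\bp-D_{\bp_0})\phi$, where the second term is bilinear in a smooth function of $\bp$ (its coefficients) and $\phi$, with tame estimates by the interpolation inequalities for $H_b^k$.
\end{itemize}

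The main obstacle is the vanishing-to-first-order of the normal part of $D_\bp-D_{\bp_0}$, together with checking that $\res$ is genuinely unchanged rather than merely changed by a tame isomorphism.
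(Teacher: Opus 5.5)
Your overall plan is the paper's. You show that on a slice $D_\bp$ and $D_{\bp_0}$ differ by something the conormal calculus absorbs. You note that $\ResidueBundle$ and $\res$ depend only on $(N,h,I)$ and $\fl_0$, consistent with $\nabla^\Slice\UniversalResidueMap=0$. You then get tame smoothness from tame estimates; the paper obtains the comparison via the Bera--Walpuski model operators.

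\textbf{The gap.} It lies in the step you flagged as the main obstacle. For an $h$--orthonormal frame $(e_1,e_2)$ of $N$, the normal data fix $N$, $h$ and the product $\gamma(e_1)\gamma(e_2)=\pm I(e_1\wedge e_2)$. They do not fix $\gamma|_N$, and the remaining freedom is not absorbed by the metric. For example, let $u\in\sG(S)$ equal $\exp\bigl(\theta\gamma_{\bp_0}(e_1)\gamma_{\bp_0}(e_2)\bigr)$ along (a cooriented piece of) $Z_0$. Then $u^*\bp_0$ lies in the same slice, but its Clifford multiplication on $N$ is $\exp\bigl(\pm 2\theta\gamma_{\bp_0}(e_1)\gamma_{\bp_0}(e_2)\bigr)\gamma_{\bp_0}|_N$, because $\gamma(n)$ anticommutes with $\gamma(e_1)\gamma(e_2)$ for $n\in N$. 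So the normal coefficients of $D_{u^*\bp_0}-D_{\bp_0}$ do not vanish on $Z_0$, and this difference does not map $H_b^{k+1}$ to $H_b^k$.

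Your gauge-fixing device does not repair this. It shows $H_a^\infty(u^*\bp)=H_a^\infty(\bp_0)$, but the proposition is about the identity map, and $H_a^\infty(\bp)$ differs from $H_a^\infty(u^*\bp)$ by the action of $u$. To transport back you would need multiplication by $u$ to preserve $H_a^\infty(\bp_0)$. Since $u|_{Z_0}$ need not commute with $\gamma_{\bp_0}|_N$, the commutator $[D_{\bp_0},u]$ has exactly the non-conormal normal part you were trying to remove. The argument is therefore circular at the decisive point.

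\textbf{The fix.} The missing observation is the factorisation
\[
  \gamma(e_1)\nabla_{e_1}+\gamma(e_2)\nabla_{e_2}
  =
  \gamma(e_1)\bigl(\nabla_{e_1}-\gamma(e_1)\gamma(e_2)\nabla_{e_2}\bigr),
\]
in which the bracket involves only the normal data. Consequently $D_\bp=a\,D_{\bp_0}+B$ with $B\in\DiffOp_b^1$. Here $a\in\DiffOp_b^0$ is an extension off $Z_0$ of the endomorphism $\gamma_\bp(e_1)\gamma_{\bp_0}(e_1)^{-1}$, which is independent of the choice of frame. The same holds with $\bp$ and $\bp_0$ interchanged.

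So in general one only has $D_\bp-D_{\bp_0}\in\DiffOp_b^0D_{\bp_0}+\DiffOp_b^1$, which is the structure recorded in \autoref{Rmk_SliceDerivativeOfUniversalDiracOperator_Conormal} for the directions $V_\sG^\Slice$. This suffices:
\begin{enumerate}
\item If $\phi\in H_a^{k+1}(\bp_0)$, then $D_\bp\phi=aD_{\bp_0}\phi+B\phi\in H_b^k$ with tame bounds.
\item The converse inclusion follows by symmetry.
\item The map $(\bp,\phi)\mapsto a_\bp D_{\bp_0}\phi+B_\bp\phi$ is tame smooth.
\end{enumerate}
With this in place of your gauge-fixing step, the rest of your proposal goes through.
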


\begin{proof}%[Proof of \autoref{Prop_ComparableTameFrechetSpaces}]  
  By \cite[Proposition 3.14]{BeraWalpuski2025},
  $D_0 \coloneq D_{\bp_0}^{\fl_0}$ and $D_{\bp}^{\fl_0}$ differ from (cut-offs of) the corresponding model operators $\mathring{D}_{\bp_0}^{\fl_0}$ and $\mathring{D}_{\bp}^{\fl_0}$ \cite[Definition 3.12]{BeraWalpuski2025} by operators in $\DiffOp_b^1(S\otimes\fl_0)$.  
  Direct inspection of the model operators and a glance at \cite[Proof of Lemma 4.29]{BeraWalpuski2025} show that
  \begin{equation*}
    D_\bp^{\fl_0} = D_{\bp_0}^{\fl_0} + B_{\bp,\bp_0}
  \end{equation*}
  with $B_{\bp,\bp_0} \in \DiffOp_b^1(S\otimes\fl_0)$.
  This immediately implies \autoref{Prop_Universal_Slice_Bundles}.
  Inspection of $B_{\bp,\bp_0}$ establishes \autoref{Prop_Universal_Slice_Maps} as in \cite[Part II Theorem 3.3.3]{Hamilton1982:NashMoser}.
\end{proof}

This provides the desired tame Fréchet bundle structures for the restrictions
$\BundleOfAdmissibleSingularSpinors|_\Slice$,
$\BundleOfSingularSpinors|_\Slice$, and
$\BundleOfResidues|_\Slice$
to slices:
in fact, it canonically trivialises them.
These tame Fréchet bundle structures can be extended with the help of the following method due to \citet{Donaldson2021}.

\begin{prop}[cf.~{\cite[Proposition 4.2]{Donaldson2021}}]
  \label{Prop_TransformToNormalForm}
  Let $\Slice$ be a slice.
  For every $(\bp_0,[\fl_0]) \in \Slice$ there is a \defined{local uniformiser};
  that is:
  an open neighbourhood $\sU \subseteq \SpaceOfDiracBundles \times \SpaceOfRamifiedLineBundles$ of $(\bp_0,[\fl_0])$ and a tame smooth map
  \begin{equation*}
    u \co \sU \to \ExtendedGaugeGroup{S}
  \end{equation*}
  such that for every $(\bp,[\fl]) \in \sU$
  \begin{equation*}
    u(\bp,[\fl])^*(\bp,[\fl]) \in \Slice.
  \end{equation*}
  Moreover, given a compact subset $K \subseteq X \setminus \Br(\fl_0)$,
  the local uniformiser can be chosen such that 
  $u(\bp,[\fl])|_K = \one_S$
  for every
  $(\bp,[\fl]) \in \sU$.
\end{prop}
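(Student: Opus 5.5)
The local uniformiser will be built as a composition $u = u_3 \circ u_2 \circ u_1$, each factor depending tamely and smoothly on $(\bp,[\fl])$, which successively moves the branching locus, the normal bundle with its metric, and finally the complex structure $I$ into the required normal form.

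\textbf{Step 1: move the branching locus.} Fix a tubular neighbourhood $\jmath \co NZ_0 \supseteq U \to X$ and use the chart $\phi_\jmath$ from \autoref{Def_ModuliSpaceOfRamifiedEuclideanLineBundles}. For $[\fl]$ near $[\fl_0]$ write $\Br(\fl) = \im(\jmath\circ v)$ with $v = \phi_\jmath(\Br(\fl)) \in \Gamma(Z_0,U)$ small. Choose a cut-off $\chi$ on $NZ_0$, equal to $1$ near the zero section and supported in a small neighbourhood of $Z_0$ disjoint from $K$. Set $\check u_1 \coloneq \jmath \circ (w \mapsto w + \chi(w)\,v(\pi w)) \circ \jmath^{-1}$, extended by the identity; this is a diffeomorphism for $v$ small and depends tamely smoothly on $v$. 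Lift $\check u_1$ to $\ExtendedGaugeGroup{S}$ by parallel transport along the straight-line isotopy $t \mapsto \check u_1^{t}$ using a fixed connection. The lift is $\one_S$ wherever $\check u_1 = \id$, in particular on $K$. Then $\check u_1^{-1}(\Br(\fl)) = Z_0$, and by continuity of monodromy (the topology of \autoref{Def_ModuliSpaceOfRamifiedEuclideanLineBundles_Topology}) the pulled-back ramified bundle is $[\fl_0]$, after shrinking $\sU$.

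\textbf{Step 2: fix $N$ and $h$.} After Step 1, $\bp$ induces on $Z_0$ a normal bundle $TZ_0^{\perp_g}$ and a metric; both are close to $(N,h)$. I will construct a diffeomorphism $\check u_2$ fixing $Z_0$ pointwise whose differential along $Z_0$ is the identity on $TZ_0$ and sends $N$ to $TZ_0^{\perp_g}$ isometrically from $h$ to $g$. Concretely, take the endomorphism $L_\bp \in \Gamma(Z_0,\Hom(N,TX|_{Z_0}))$ given by the graph/polar-decomposition construction, which depends smoothly and algebraically on $g|_{Z_0}$. Then set $\check u_2(\jmath(w)) \coloneq \jmath(\chi(w)(L_\bp - \one) w + w)$. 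This is an algebraic, hence tame, dependence. Lift it to $\ExtendedGaugeGroup{S}$ as in Step 1.

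\textbf{Step 3: fix $I$.} Now $N$ and $h$ agree, and $\gamma|_{\Wedge^2N}$ and $I$ are two orthogonal complex structures on $S|_{Z_0}$ depending on the unit $\Or$, which exists locally up to sign and squares away the sign issue. They are close, so the Cayley-type map $\xi \coloneq \frac12 (\one - I\gamma(\Or))$, or more canonically the polar part of $\one - I\gamma(\Or)$, gives an orthogonal $a \in \Gamma(Z_0,\O(S|_{Z_0}))$ with $a\gamma(\Or)a^{-1} = I(\Or)$. This expression is even in $\Or$, hence globally defined. Extend $a$ to a gauge transformation supported near $Z_0$ using $\exp(\chi \log a)$ along the tubular structure; it is the identity on $K$.

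The main obstacle is tameness and smoothness of the whole family in the Fréchet sense, together with the orbifold subtlety of \autoref{Footnote_ActionLift}. Each step is a composition of smooth nonlinear differential or algebraic operators in the parameters, which is tame by Hamilton's results \cite[Part II]{Hamilton1982:NashMoser}. Composition with diffeomorphisms depending on parameters is tame smooth as well. The action lifts only locally to $\SpaceOfRamifiedLineBundles^0$, which is why $\sU$ is taken small enough that the monodromy identification in Step 1 is unambiguous.
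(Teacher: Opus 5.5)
Your proposal is correct and follows the paper's route. The paper gets the diffeomorphism part (moving $\Br(\fl)$ back to $Z_0$ and normalising $N$ and $h$, with lift equal to $\one_S$ on $K$) by citing \cite[Proof of Proposition 4.2]{Donaldson2021}, and then repairs $I$ exactly as in your Step 3, by a gauge transformation of $S|_{Z_0}$ close to $\one$ extended to $X$; your Steps 1--2 make that cited construction explicit, with one small fix needed in Step 2: $(L_\bp-\one)w$ generally has a $TZ_0$--component, so the formula should use a map defined near the zero section of $TX|_{Z_0}$ (e.g.\ an exponential map restricting to $\jmath$ on $N$) rather than $\jmath$ itself.
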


\begin{proof}%[Proof of \autoref{Prop_TransformToNormalForm}]
  \cite[Proof of Proposition 4.2]{Donaldson2021} explains the construction of a tame smooth map $\check u \co \sU \to \Diff_0(X)$ which admits a lift $u \co \sU \to \ExtendedGaugeGroup{S}$ such that,
  for every $(\bp,[\fl]) \in \sU$, 
  $u(\bp,[\fl])|_K = \one_S$ and
  $u(\bp,[\fl])^*\bp$ defines the same normal data as $\bp_0$---except possibly for the almost complex structure $I$,
  because in \cite{Donaldson2021} there is no analogue of \autoref{Def_NormalData_I}.
  After possibly shrinking $\sU$,
  this defect can be repaired by observing the following:
  \begin{enumerate}
  \item
    For every $I$ as in \autoref{Def_NormalData_I} that is sufficiently close to $I_0$, there is a $u_0 \in \sG(S|_{Z_0})$ close to $\one_{S|_{Z_0}}$ with $u_0^*I = I_0$;
  \item
    Since $u_0$ is close to $\one_{S|_{Z_0}}$,
    it can be extended to $\overline u_0 \in \sG(S)$.
  \end{enumerate}
  Evidently, this repair can be carried out in a way that produces the desired tame smooth map $u \co \sU \to \ExtendedGaugeGroup{S}$.
\end{proof}

The following two observations are verified by direct inspection.

\begin{prop}
  \label{Prop_UniversalBundles_Atlases_Chart}
  Let
  $(\bp,[\fl]) \in \SpaceOfDiracBundles \times \SpaceOfRamifiedLineBundles$. 
  Every extended gauge transformation
  $u \in \ExtendedGaugeGroup{S}$
  induces tame isomorphisms
  \begin{align*}
    u \co H_a^\infty\Gamma\paren{X\setminus \Br(\fl),S\otimes \fl;\bp} &\to H_a^\infty\Gamma\paren{X\setminus \Br(\check u^*\fl),S\otimes \check u^*\fl;u^*\bp}, \\
    %%% 
    u \co H_b^\infty\Gamma\paren{X\setminus \Br(\fl),S\otimes \fl;\bp} &\to H_b^\infty\Gamma\paren{X\setminus \Br(\check u^*\fl),S\otimes \check u^*\fl;u^*\bp}, \qand \\
    %%% 
    u \co \Gamma\paren{\Br(\fl),\ResidueBundle_{\bp}} &\to \Gamma\paren{\Br(\check u^*\fl),\ResidueBundle_{u^*\bp}}.
  \end{align*}
  In particular,
  these lift the action of
  $\ExtendedGaugeGroup{S}$
  on the right of
  $\SpaceOfDiracBundles \times \SpaceOfRamifiedLineBundles$
  to
  $\BundleOfAdmissibleSingularSpinors$,
  $\BundleOfSingularSpinors$, and
  $\BundleOfResidues$.
  \qed
\end{prop}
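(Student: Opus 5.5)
Everything in \autoref{Prop_UniversalBundles_Atlases_Chart} is naturality of the constructions under pullback by an extended isometry, so the plan is to check that each ingredient entering the definitions of $H_b^\infty$, $H_a^\infty$ and $\ResidueBundle$ is transported by $u$. Write $\check u \in \Diff_S(X)$ for the underlying diffeomorphism and let $u$ act on sections by pullback, $\phi \mapsto u^*\phi$. By definition $u^*\bp$ is the Dirac bundle structure making $u \co (S,u^*\bp) \to (S,\bp)$, covering $\check u$, an isomorphism of Dirac bundles. Likewise $\check u^*\fl$ is a ramified Euclidean line bundle with $\Br(\check u^*\fl) = \check u^{-1}(\Br(\fl))$, and the canonical isometry $\check u^*\fl \iso \fl$ intertwines the orthogonal connections. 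It follows that $u$ maps $\Gamma\paren{X\setminus\Br(\fl),S\otimes\fl}$ bijectively onto $\Gamma\paren{X\setminus\Br(\check u^*\fl),S\otimes\check u^*\fl}$, with $D_{u^*\bp}^{\check u^*\fl}\,u^*\phi = u^*D_\bp^\fl\phi$. Note that the identification $\check u^*\fl \iso \fl$ is canonical only up to $\set{\pm1}$; I fix a choice, as the paper does on $\SpaceOfRamifiedLineBundles^0$, and the footnote to \autoref{Def_ExtendedIsometry} indicates this is harmless.

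\emph{Conormal spaces.} The diffeomorphism $\check u$ maps $Z = \Br(\fl)$ onto itself relative to $\check u^*\fl$, in the sense that it sends $\check u^{-1}(Z)$ to $Z$, so it lifts to a diffeomorphism of the blow-ups $\hat X$. Consequently it maps conormal vector fields to conormal vector fields. Conjugation by $u$ therefore sends $\DiffOp_b^k(S\otimes\fl)$ onto $\DiffOp_b^k(S\otimes\check u^*\fl)$. Indeed, $u^{-1}\nabla_v u$ equals $\nabla^{u^*\bp}_{\check u^*v}$, and endomorphisms smooth up to $\del\hat X$ pull back to such. Since $u$ is a fibrewise isometry and $\check u$ preserves the Riemannian volume form of $g$ relative to $\check u^*g$, pullback is an $L^2$--isometry. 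Hence $\Abs{u^*\phi}_{H_b^k} \lesssim_{k,u} \Abs{\phi}_{H_b^k}$, where the constant comes from re-expanding the chosen generators $\sP_b^k$ in terms of the pulled-back ones. Applying the same argument to $u^{-1}$ gives the reverse bound. These estimates are tame with zero loss, so $u^*$ is a tame isomorphism on $H_b^\infty$.

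\emph{Adapted spaces.} This follows at once from the intertwining $D\,u^* = u^*D$ together with the definition $\Abs{\phi}_{H_a^{k+1}} = \Abs{\phi}_{H_b^{k+1}} + \Abs{D\phi}_{H_b^k}$.

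\emph{Residues.} The residue bundle $\ResidueBundle$ is built functorially from $(S|_Z,\gamma,\nabla)$, $NZ$ with $\AlgebraBundle$, and $NZ^{-1/2}$. The differential $d\check u$ induces an isometry $N(\check u^{-1}Z) \to NZ$ for the respective metrics. This isometry intertwines the $\AlgebraBundle$--structures and the square-root modules $NZ^{-1/2}$ determined by $\check u^*\fl$ and $\fl$, because the isomorphism of \cite[Proposition 3.23]{BeraWalpuski2025} is natural. Tensoring with $u|_Z$ then yields an isomorphism of residue bundles $\ResidueBundle_{u^*\bp} \iso \check u|_Z^*\ResidueBundle_\bp$ that covers $\check u|_{\check u^{-1}Z}$. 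Pullback by a diffeomorphism between closed manifolds together with a smooth bundle isometry is a tame isomorphism of $\Gamma$; it moreover commutes with $\res$ and $\ext$, by the blow-up description $\phi|_{\del\hat X} = \pi^*\res(r^{-1/2}\phi)$, since $r\circ\check u$ is a regularised distance to $\check u^{-1}Z$. The group-action compatibility $(uv)^* = v^*u^*$ is then formal.

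The only step requiring genuine care is the naturality of $NZ^{-1/2}$ and of $\AlgebraBundle$ under $\check u$. I would verify this directly from the construction in \cite[Definition 3.21]{BeraWalpuski2025}, which uses only metric data and $\fl$.
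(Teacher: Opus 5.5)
Your proposal is correct and is essentially the naturality argument the paper has in mind when it says the statement follows by direct inspection. Each ingredient is transported by $u$: $\DiffOp_b^\bullet$ via the lifted diffeomorphism of blow-ups, the $L^2$ norm, $D$, $NZ^{-1/2}$ and $\ResidueBundle$; so $u$ gives tame isomorphisms with no loss of derivatives, and your side remark that $u$ commutes with $\ext$ holds only if the auxiliary choices entering $\ext$ (tubular neighbourhood, cut-offs) are transported too, which the statement does not need.
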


\begin{prop}  
  \label{Prop_UniversalBundles_Atlases_Transition}
  Let $\Slice$ be a slice and $(\bp_0,[\fl_0]) \in \Slice$.
  \begin{enumerate}
  \item
    The subgroup
    \begin{equation*}
      \Stab_{\ExtendedGaugeGroup{S}}(\Slice)
      \coloneq
      \set[\big]{
        u \in \ExtendedGaugeGroup{S}
        :
        u^*\Slice = \Slice
      }
      \subset
      \ExtendedGaugeGroup{S}
    \end{equation*}
    is a tame Fréchet Lie subgroup.
  \item
    The right actions of $\Stab_{\ExtendedGaugeGroup{S}}(\Slice)$ on
    $H_a^\infty\paren{X\setminus Z_0,S\otimes \fl_0;\bp_0}$,
    $H_b^\infty\paren{X\setminus Z_0,S\otimes \fl_0;\bp_0}$, and
    $\Gamma\paren{Z_0,\ResidueBundle_{\bp_0}}$
    are tame smooth.
    \qed
  \end{enumerate}
\end{prop}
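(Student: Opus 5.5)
The statement to prove is \autoref{Prop_UniversalBundles_Atlases_Transition}. The plan is to identify $\Stab_{\ExtendedGaugeGroup{S}}(\Slice)$ concretely and then reduce both assertions to standard facts about Fréchet Lie groups of bundle automorphisms, together with the $b$-calculus invariance from \autoref{Prop_Universal_Slice}.

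\textbf{Step 1: describe the stabiliser.} Since the slice fixes $(Z_0,[\fl_0])$ and the normal data $(N,h,I)$, an extended isometry $u$ with covering diffeomorphism $\check u$ lies in $\Stab_{\ExtendedGaugeGroup{S}}(\Slice)$ exactly when three conditions hold:
\begin{enumerate}
\item $\check u(Z_0)=Z_0$ and $\check u^*[\fl_0]=[\fl_0]$;
\item $d\check u|_{Z_0}$ preserves $N$ and restricts to an $h$-isometry on it;
\item $u|_{Z_0}$ intertwines $I$, i.e.\ $u|_{Z_0}\circ I(\Or)=I(d\check u\,\Or)\circ u|_{Z_0}$.
\end{enumerate}
The monodromy condition is open and closed, so it only selects components. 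The remaining conditions are first-order jet conditions along $Z_0$. They are therefore cut out by a tame smooth map
\begin{equation*}
  \ExtendedGaugeGroup{S}\supseteq \set{u : \check u(Z_0)=Z_0} \to \Gamma\paren[\big]{Z_0,\text{(finite-dimensional jet bundle)}}.
\end{equation*}

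\textbf{Step 2: Lie subgroup structure.} I would show that this map is a submersion onto a homogeneous target. Concretely:
\begin{enumerate}
\item Use the Lie subgroup $\Diff(X,Z_0)\le\Diff(X)$ of diffeomorphisms preserving $Z_0$ \cite{Hamilton1982:NashMoser}.
\item Impose the fibrewise conditions. Each is the preimage of a point under a smooth fibrewise action of a compact group bundle ($\O(N)$ on metrics, the orthogonal group on complex structures). These have smooth local sections, built from polar decomposition or the Cayley transform.
\end{enumerate}
This produces tame charts modelled on the Lie algebra consisting of pairs $(v,\xi)\in\Vect(X)\oplus\Gamma(X,\fo(S))$ (relative to the Kosmann lift) such that $v|_{Z_0}\in TZ_0$, $\nabla v|_{Z_0}$ preserves $N$ skew-symmetrically, and $\xi|_{Z_0}$ satisfies the linearised intertwining relation. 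Exponentiating via a $Z_0$-adapted connection (for example, one for which $Z_0$ is totally geodesic and $N$ is parallel) gives tame charts near the identity; translating them yields the tame Fréchet Lie subgroup. This parallels the proof of \cite[Proposition 2.6]{Molitor2010:UnimodularAutomorphisms}.

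\textbf{Step 3: smooth actions.} For $u$ in the stabiliser, pullback by $u$ maps $(\bp_0,[\fl_0])$ to $u^*(\bp_0,[\fl_0])\in\Slice$. By \autoref{Prop_UniversalBundles_Atlases_Chart} together with the identifications of \autoref{Prop_Universal_Slice}, $u$ therefore acts on the fixed spaces $H_a^\infty$, $H_b^\infty$ and $\Gamma(Z_0,\ResidueBundle_{\bp_0})$.

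The key point is that $\check u$ preserves $Z_0$, so it lifts to a diffeomorphism of the blow-up $\hat X$. Consequently:
\begin{enumerate}
\item Conjugation by $u$ preserves $\Vect_b(\hat X)$, and hence $\DiffOp_b^k$.
\item This gives tame estimates $\Abs{u^*\phi}_{H_b^k}\lesssim \Abs{\phi}_{H_b^k}+\Abs{u}_{C^{k+c}}\Abs{\phi}_{H_b^0}$ of Hamilton type \cite[Part II Theorem 2.2.6]{Hamilton1982:NashMoser}.
\item For $H_a$, additionally use $D_{\bp_0}(u^*\phi)=u^*(D_{u_*\bp_0}\phi)$, where $D_{u_*\bp_0}-D_{\bp_0}=B\in\DiffOp^1_b$ as in the proof of \autoref{Prop_Universal_Slice}, with $B$ depending tamely on $u$.
\end{enumerate}
Smoothness in $u$ then follows by differentiating: the derivative is $\sL^K_v+\xi$, which is again a $b$-operator because $v$ is tangent to $Z_0$. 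The residue action is just the restriction of $u$ to $Z_0$, which preserves $NZ^{-1/2}$ via its action on $\hat\fl|_{\del\hat X}$, so it is an ordinary tame smooth action on sections.

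The main obstacle I expect is Step 2, specifically producing the tame charts. The difficulty is ensuring that the nonlinear first-order constraints along $Z_0$ can be solved by a tame smooth right inverse; the $b$-estimates in Step 3 are routine by comparison. My first attempt would be to compose with the local uniformiser of \autoref{Prop_TransformToNormalForm}: applied to $u^*(\bp_0,[\fl_0])$, it gives a tame smooth retraction from a neighbourhood of the identity in $\ExtendedGaugeGroup{S}$ onto the stabiliser, which suffices.
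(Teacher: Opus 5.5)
The paper gives no argument here beyond ``direct inspection'', and your Steps 1 and 3 are a sensible way of filling that in. One small omission in Step 3: $u$ determines $\check u^*\fl_0 \iso \fl_0$ only up to $\set{\pm 1}$, so those actions are defined only near $\one$ or on a double cover, cf.\ \autoref{Footnote_ActionLift}.

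The genuine gap is Step 2, which you flag yourself. Neither of your proposals yields charts in which the stabiliser is a linear subspace.

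Exponentiating the linearised constraint algebra fails for \emph{any} connection. At a point of $Z_0$ where $v$ vanishes, $d(\exp\circ v) = \one + \nabla v$. But $\one + A$ is not an $h$--isometry of $N$ for $0 \neq A \in \fo(N,h)$, and such $v$ (normal rotations about $Z_0$) lie in your Lie algebra. The isometry and intertwining conditions are nonlinear in the $1$--jet, so the exponential of their linearisation leaves $\Stab_{\ExtendedGaugeGroup{S}}(\Slice)$.

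The fallback does not suffice either:
\begin{enumerate}
\item \autoref{Prop_TransformToNormalForm} does not assert that the uniformiser is $\one$ on $\Slice$. So $w \mapsto w\,u\paren{w^*(\bp_0,[\fl_0])}$ maps into the stabiliser but is not evidently a retraction.
\item Even for a genuine tame smooth idempotent, the standard proof that its image is a submanifold uses the inverse function theorem. You would therefore have to verify Nash--Moser hypotheses that you have not addressed.
\end{enumerate}

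The missing idea is to split off the jet data along $Z_0$, on which all the constraints depend.
\begin{enumerate}
\item On the subgroup of those $u$ with $\check u(Z_0) = Z_0$, the map $j(u) \coloneq \paren{J^1\check u|_{Z_0}, u|_{Z_0}}$ is a homomorphism into the group of bisections of a finite-dimensional Lie groupoid over $Z_0$.
\item The stabiliser is a union of components of $j^{-1}(\mathcal J_\Slice)$, where $\mathcal J_\Slice$ is cut out fibrewise by your algebraic conditions. This is where polar decomposition and the Cayley transform belong: they show that $\mathcal J_\Slice$ is a Lie subgroup.
\item Exponentiation does work on $K \coloneq \ker j$, because $J^1 v|_{Z_0} = 0$ forces $d(\exp\circ v)|_{Z_0} = \one$. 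So the usual exponential chart, restricted to the closed subspace $\set{(v,\xi) : J^1v|_{Z_0} = 0,\ \xi|_{Z_0} = 0}$, parametrises $K$ near $\one$. This subspace is tamely complemented via \autoref{Lem_1JetExtensionMap}.
\item A tame smooth local section $\sigma$ of $j$ can be built by realising the prescribed jet data in a tubular neighbourhood and cutting off. It gives the chart $(k,\mathfrak j) \mapsto k\,\sigma(\mathfrak j)$ with explicit inverse $w \mapsto \paren{w\,\sigma(j(w))^{-1}, j(w)}$.
\end{enumerate}
In this chart the stabiliser corresponds to $K \times \mathcal J_\Slice$, and no inverse function theorem is needed.
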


\begin{definition}
  \label{Def_UniversalBundles_Atlases}
  The following completes the construction begun in \autoref{Def_Universal_1}.
  Endow
  $\BundleOfAdmissibleSingularSpinors$,
  $\BundleOfSingularSpinors$, and
  $\BundleOfResidues$    
  with the unique tame smooth Fréchet bundle structure in which for every $(\bp_0,[\fl_0]) \in \SpaceOfDiracBundles \times \SpaceOfRamifiedLineBundles$ and  for every local uniformiser $u \co \sU \to \ExtendedGaugeGroup{S}$ as in \autoref{Prop_TransformToNormalForm}
  the bijections
  $\sigma \co \BundleOfAdmissibleSingularSpinors|_\sU \to \sU \times H_a^\infty\paren{X\setminus \Br(\fl_0),S\otimes \fl_0;\bp_0}$,
  $\tau \co \BundleOfSingularSpinors|_\sU \to \sU \times H_b^\infty\paren{X\setminus \Br(\fl_0),S\otimes \fl_0;\bp_0}$, and
  $\upsilon \co \BundleOfResidues|_\sU \to \sU \times \Gamma\paren{\Br(\fl_0),\ResidueBundle_{\bp_0}}$
  defined by
  \begin{gather*}
    \sigma(\bp,[\fl];\phi)
    \coloneq
    (\bp,[\fl];u(\bp,[\fl])\phi), \quad
    %%% 
    \tau(\bp,[\fl];\phi)
    \coloneq
    (\bp,[\fl];u(\bp,[\fl])\phi), \qandq \\
    %%% 
    \upsilon(\bp,[\fl];\rho)
    \coloneq
    (\bp,[\fl];u(\bp,[\fl])\rho)
  \end{gather*}
  are local trivialisations.
  By \autoref{Prop_UniversalBundles_Atlases_Chart},
  these are compatible with the tame Fréchet space structures on the fibres.
  By \autoref{Prop_UniversalBundles_Atlases_Transition},
  they form an atlas.
\end{definition}

Given the above construction,
the following is an immediate consequence of \autoref{Prop_Universal_Slice}.

\begin{cor}
  \label{Cor_UniversalDiracOperatorAndResidueMapsAreSmooth}   
  The universal Dirac operator $\UniversalDiracOperator \co \BundleOfAdmissibleSingularSpinors \to \BundleOfSingularSpinors$ and the universal residue map $\UniversalResidueMap \co \BundleOfAdmissibleSingularSpinors \to \BundleOfResidues$ are maps of tame Fréchet bundles.
\end{cor}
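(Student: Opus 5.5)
The plan is to check smoothness of $\UniversalDiracOperator$ and $\UniversalResidueMap$ locally in the trivialisations $\sigma,\tau,\upsilon$ of \autoref{Def_UniversalBundles_Atlases}. Fix $(\bp_0,[\fl_0])$ and a local uniformiser $u \co \sU \to \ExtendedGaugeGroup{S}$ as in \autoref{Prop_TransformToNormalForm}, with $u(\bp,[\fl])^*(\bp,[\fl]) \in \Slice$. We will compute the local representatives
\begin{equation*}
  \tau \circ \UniversalDiracOperator \circ \sigma^{-1}(\bp,[\fl];\phi)
  \qandq
  \upsilon \circ \UniversalResidueMap \circ \sigma^{-1}(\bp,[\fl];\phi)
\end{equation*}
and show that they are tame smooth maps $\sU \times H_a^\infty\Gamma\paren{X\setminus\Br(\fl_0),S\otimes\fl_0;\bp_0} \to H_b^\infty$ and $\to \Gamma\paren{\Br(\fl_0),\ResidueBundle_{\bp_0}}$, respectively.

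The first step is equivariance. Extended gauge transformations intertwine Dirac operators and residue maps:
\begin{equation*}
  u\,D_\bp^\fl\,u^{-1} = D_{u^*\bp}^{\check u^*\fl}
  \qandq
  u \circ \res_{\bp,\fl} \circ u^{-1} = \res_{u^*\bp,\check u^*\fl}.
\end{equation*}
The first identity holds because $u$ is an isometry of Dirac bundles $(S,\bp)\iso (S,u^*\bp)$ covering $\check u$, and $D$ is natural under such isometries. The second follows from the naturality of the construction of $\res$ in \cite{BeraWalpuski2025}, whose only input is the Dirac bundle and ramified line bundle data. It can also be seen from the characterisation $\res(r^{-1/2}\phi)=\phi|_{\del\hat X}$ together with the fact that $u$ lifts to the blow-up and preserves the $-\tfrac12$ Fourier modes. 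Granting this, the local representative of $\UniversalDiracOperator$ is $(\bp,[\fl];\psi)\mapsto D^{\fl_0}_{u(\bp,[\fl])^*\bp}\psi$, and similarly for $\UniversalResidueMap$.

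Hence each local representative factors as the tame smooth map $\sU \to \Slice$, $(\bp,[\fl])\mapsto u(\bp,[\fl])^*(\bp,[\fl])$, followed by $\UniversalDiracOperator|_\Slice$ or $\UniversalResidueMap|_\Slice$. The first map is tame smooth because $u$ is tame smooth and the action of $\ExtendedGaugeGroup{S}$ on $\SpaceOfDiracBundles\times\SpaceOfRamifiedLineBundles$ is tame smooth. The second is tame smooth by \autoref{Prop_Universal_Slice}~\autoref{Prop_Universal_Slice_Maps}. Compositions of tame smooth maps are tame smooth, and in each fibre the maps are linear, so both are maps of tame Fréchet bundles. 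Independence of the chosen uniformiser is already built into the atlas via \autoref{Prop_UniversalBundles_Atlases_Transition}.

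I expect the main obstacle to be the equivariance of the residue map under $u$. The subtlety is the identification $NZ^{-1/2}$ and the sign ambiguity of the lift $\fl \mapsto \check u^*\fl$ discussed in Footnote~\ref{Footnote_ActionLift}. I would handle this by working locally on $\sU$, where the lift exists, and by noting that $\res$ is defined intrinsically from $(\bp,\fl)$ and is therefore natural under isomorphisms of this data.
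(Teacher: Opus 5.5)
Your proof is correct and follows the paper's route. The paper's proof just calls the corollary an immediate consequence of \autoref{Prop_Universal_Slice}: in the trivialisations of \autoref{Def_UniversalBundles_Atlases}, the local representatives of $\UniversalDiracOperator$ and $\UniversalResidueMap$ are their restrictions to the slice, precomposed with the tame smooth map $(\bp,[\fl]) \mapsto u(\bp,[\fl])^*(\bp,[\fl])$. The gauge-equivariance of $D$ and $\res$, which you single out as the main point, is what the paper takes as evident in \autoref{Prop_UniversalBundles_Atlases_Chart} and \autoref{Rmk_BundleOfSingularSpinors}.
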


\begin{remark}
  \label{Rmk_BundleOfSingularSpinors}
  Here are some observations regarding the above discussion:
  \begin{enumerate}
  \item
    \label{Rmk_BundleOfSingularSpinors_Robust}
    The construction of the tame Fréchet space bundle structures is quite robust, insofar as it depends only on \autoref{Prop_Universal_Slice}, \autoref{Prop_UniversalBundles_Atlases_Chart}, and \autoref{Prop_UniversalBundles_Atlases_Transition}.
    In particular, the method shall be used later in this article to construct certain tame Fréchet subbundles of $\BundleOfAdmissibleSingularSpinors$.
  \item
    \label{Rmk_BundleOfSingularSpinors_B}
    The construction of $\BundleOfSingularSpinors$ could have been carried out with a less delicate definition of slice,
    because $H_b^\infty\Gamma\paren{X\setminus Z,S\otimes \fl;\bp}$ does not depend on the normal structure.
  \item
    \label{Rmk_BundleOfSingularSpinors_NotInverseLimitsOfHilbertSpaceBundles}
    For $k \in \N_0$ the right actions of $\Stab_{\ExtendedGaugeGroup{S}}(\Slice)$ on the Hilbert spaces $H_a^k\paren{X\setminus Z_0,S\otimes \fl_0;\bp_0}$, $H_b^k\paren{X\setminus Z_0,S\otimes \fl_0;\bp_0}$, and $H^k\paren{Z_0, \ResidueBundle_{\bp_0}}$ fail to be smooth (for the same reason that the translation action $\R \circlearrowright H^1(\R)$ fails to be smooth).
    Therefore, $\BundleOfAdmissibleSingularSpinors$, $\BundleOfSingularSpinors$, and $\BundleOfResidues$ are not (in an obvious way) inverse limits of Hilbert space bundles---%
    at least not in the usual sense, but see \cite[§4.4]{TaubesWu2021:Z2ZEigenfunctionsTopologicalAspects}.
  \item
    \label{Rmk_BundleOfSingularSpinors_Equivariant}
    By construction,
    $\BundleOfAdmissibleSingularSpinors$, $\BundleOfSingularSpinors$, and $\BundleOfResidues$ are $\ExtendedGaugeGroup{S}$--equivariant tame Fréchet space bundles over $\SpaceOfDiracBundles \times \SpaceOfRamifiedLineBundles$,
    and $\UniversalDiracOperator$ and $\UniversalResidueMap$ are $\ExtendedGaugeGroup{S}$--equivariant (up to the caveat pointed out in \autoref{Footnote_ActionLift}).
    \qedhere
  \end{enumerate}
\end{remark}

%%% Local Variables:
%%% mode: latex
%%% TeX-master: "UniversalModuliSpaceOfZ2ZHarmonicSpinors"
%%% ispell-local-dictionary: "british"
%%% End:

\subsection{Covariant derivatives of the universal Dirac operator}
\label{Sec_PartialCovariantDerivatives}

This section explains how to differentiate the universal Dirac operator $\bD$.
Of course, this requires the choice of (partial) connections.

\begin{definition}
  \label{Def_SliceDerivative}
  The \defined{slice distribution} $\sF^\Slice \subseteq T\paren{\SpaceOfDiracBundles\times\SpaceOfRamifiedLineBundles}$ is the distribution induced by the slices foliating $\SpaceOfDiracBundles \times \SpaceOfRamifiedLineBundles$;
  that is:
  for every $(\bp,[\fl]) \in \SpaceOfDiracBundles \times \SpaceOfRamifiedLineBundles$
  \begin{equation*}    
    \sF_{\bp,[\fl]}^\Slice \coloneq T_{\bp,[\fl]}\Slice_{\bp,[\fl]}
  \end{equation*}
  with $\Slice_{\bp,[\fl]}$ denoting the unique slice through $(\bp,[\fl])$.
  The canonical trivialisations of $\BundleOfAdmissibleSingularSpinors|_\Slice$, $\BundleOfSingularSpinors|_\Slice$, and $\BundleOfResidues|_\Slice$ defined by \autoref{Prop_Universal_Slice} define the \defined{slice partial covariant derivatives} $\nabla^\Slice$ with respect to $\sF^\Slice$ on $\BundleOfAdmissibleSingularSpinors$, $\BundleOfSingularSpinors$, and $\BundleOfResidues$.
\end{definition}

To compute $\nabla^\Slice\bD$,
it is helpful to decompose $\sF^\Slice$ according to \autoref{Prop_UniversalSpaceOfDiracBundles_TangentBundle}.

\begin{prop}
  \label{Prop_SlicesAreTameFrechetSubmanifolds}
  For every $(Z_0,[\fl_0]) \in \SpaceOfRamifiedLineBundles$, normal data $(N,h,I)$ for $Z_0$, and $(\bp,[\fl]) \in \Slice$ the tangent space $T_{\bp,[\fl]}\Slice$ decomposes into three tame Fréchet subspaces
  \begin{align*}
    T_{\bp,[\fl]}\Slice
    =
    H_{\BG;\bp,[\fl]}^\Slice \oplus V_{\sG;\bp,[\fl]}^\Slice \oplus V_{\nabla,\Uh;\bp,[\fl]}^\Slice
  \end{align*}
  defined by
  \begin{align*}
    H_{\BG;\bp,[\fl]}^\Slice
    &\coloneq
      \set[\big]{
      (\dot g;\dot\gamma^\BG,\dot\nabla^\BG;0) \in H_{\BG;\bp,[\fl]}
      :
      \dot g|_Z \in \Gamma(Z,\Hom(S^2TZ,\R))
      }, \\
    V_{\sG;\bp,[\fl]}^\Slice
    &\coloneq
      \set[\big]{
      (0;[\xi,\gamma],-\nabla\xi;0) \in V_{\sG;\bp,[\fl]}
      :
      [\xi|_Z,\gamma|_{\Wedge^2N}] = 0
      }, \qand \\
    V_{\nabla,\Uh;\bp,[\fl]}^\Slice
    &\coloneq
      V_{\nabla,\Uh;\bp,[\fl]};
  \end{align*}
  in particular,
  $V_{\nabla;\bp} \oplus 0 \subseteq T_{\bp,[\fl]}\Slice$.
\end{prop}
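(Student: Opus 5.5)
The plan is to identify $T_{\bp,[\fl]}\Slice$ as the kernel of the linearisation of the constraints defining $\Slice$. Then we intersect this kernel with the decomposition $T_\bp\SpaceOfDiracBundles = H_{\BG,\bp} \oplus V_{\sG,\bp} \oplus V_{\nabla,\Uh;\bp}$ of \autoref{Prop_UniversalSpaceOfDiracBundles_TangentBundle}.

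Since $\Slice$ fixes $[\fl_0]$, the $\SpaceOfRamifiedLineBundles$--component of every tangent vector vanishes. A tangent vector $(\dot g;\dot\gamma,\dot\nabla;0)$ therefore lies in $T\Slice$ if and only if it preserves the three pieces of normal data to first order:
\begin{enumerate}
\item $N = TZ^\perp$, i.e.\ $\dot g(TZ,N) = 0$;
\item $h = g|_N$, i.e.\ $\dot g|_{N\times N} = 0$;
\item $I = \gamma|_{\Wedge^2N}$, i.e.\ the derivative of $\gamma|_{\Wedge^2N}$ vanishes.
\end{enumerate}
The spin connection is unconstrained. Conditions (1) and (2) together say exactly that $\dot g|_Z$ is supported on $S^2TZ$, which is the condition defining $H^\Slice_\BG$. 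I would also note in passing that $\Slice$ is a tame Fréchet submanifold. This holds because the map $\bp \mapsto$ (induced normal data) is a tame smooth submersion onto the space of normal data; the submersion property follows from the fact that, along $Z_0$, one can prescribe $\dot g|_Z$ and $\xi|_Z$ arbitrarily and extend them.

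Next I would compute the first-order variation of $I$ along each summand.
\begin{itemize}
\item Along $V_{\nabla,\Uh}$ and, more generally, $V_\nabla$, nothing changes, since $\dot\gamma = 0$ and $\dot g = 0$.
\item Along $V_\sG$, the metric is fixed and $\dot I = [\xi,\gamma]|_{\Wedge^2 N}$. Interpreting $I(u\wedge v) \propto \gamma(u)\gamma(v)$, this gives the condition $[\xi|_Z,\gamma|_{\Wedge^2N}]=0$.
\item Along $H_\BG$ with $\dot g|_Z \in \Hom(S^2TZ,\R)$, I would check directly from $\dot\gamma^\BG(w) = \frac12\sum_i \dot g(w,e_i)\gamma(e_i)$ that for $u,w \in N$ we get $\dot\gamma^\BG(u)=0$, because $\dot g(u,-)=0$ on $Z$. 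The frame of $N$ stays orthonormal, so $I$ is preserved automatically.
\end{itemize}

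The main obstacle is that the constraints do not obviously split along the summands. A general tangent vector is a sum $h + \sigma + \nu$, and $h$ could carry a component of $\dot g|_Z$ mixing $TZ$ and $N$ that is compensated by $\sigma$. This cannot happen, since $V_\sG$ and $V_\nabla$ have $\dot g = 0$. So conditions (1) and (2) force $h \in H^\Slice_\BG$, and then $h$ contributes nothing to $\dot I$, which forces $\sigma \in V^\Slice_\sG$.

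It remains to check $V^\Slice_{\nabla,\Uh} = V_{\nabla,\Uh}$, which holds since these vectors impose no constraint. Directness and tameness of the three subspaces are inherited from \autoref{Prop_UniversalSpaceOfDiracBundles_TangentBundle}. Finally, $V_{\nabla;\bp} \oplus 0 \subseteq T\Slice$ holds because $V_\nabla \subseteq V_\sG \oplus V_{\nabla,\Uh}$. Its $V_\sG$--component comes from $\xi$ with values in $\fo_\Cl(S)$, which commutes with $\gamma$, so this component lies in $V^\Slice_\sG$.
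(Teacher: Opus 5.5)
Your proposal is correct and follows the paper's proof. The paper likewise characterises $T_{\bp,[\fl]}\Slice$ by the two linearised normal-data conditions, namely $\dot g|_Z \in \Gamma(Z,\Hom(S^2TZ,\R))$ and $\dot\gamma(e_1)\gamma(e_2) + \gamma(e_1)\dot\gamma(e_2) = 0$ for every orthonormal basis $(e_1,e_2)$ of $N$, and then intersects with the decomposition of \autoref{Prop_UniversalSpaceOfDiracBundles_TangentBundle}; your observation that only the $H_\BG$ summand carries a $\dot g$--component, so that the constraints split summand by summand, is exactly the step the paper leaves to ``direct inspection''.
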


\begin{proof}%[Proof of \autoref{Prop_SlicesAreTameFrechetSubmanifolds}]
  Direct inspection reveals that
  $(\dot g;\dot\gamma,\dot\nabla;0) \in T_\bp\SpaceOfDiracBundles \oplus T_{[\fl]}\SpaceOfRamifiedLineBundles$ lies in $\sF^{\Slice}$ if and only if:
  \begin{enumerate}
  \item
    $\dot g|_Z \in \Gamma(Z,\Hom(S^2TZ,\R))$ and
  \item
    for every $x \in Z_0$ and every orthonormal basis $(e_1,e_2)$ of $N_x$
    \begin{equation*}
      \dot\gamma(e_1)\gamma(e_2) + \gamma(e_1)\dot\gamma(e_2) = 0.
    \end{equation*}    
  \end{enumerate}
  This together with \autoref{Prop_UniversalSpaceOfDiracBundles_TangentBundle} implies the assertion. 
\end{proof}

\begin{prop}
  \label{Prop_SliceDerivativeOfUniversalDiracOperator}
  For every $(\bp,[\fl]) \in \SpaceOfDiracBundles\times\SpaceOfRamifiedLineBundles$ the following hold:
  \begin{enumerate}
  \item
    \label{Prop_SliceDerivativeOfUniversalDiracOperator_1}
    For every $\dot\bp_\BG = (\dot g;\dot \gamma^\BG,\dot\nabla^\BG;0) \in H_{\BG;\bp,[\fl]}^\Slice$
    \begin{equation*}
      \nabla_{\dot\bp_\BG}^\Slice \bD(\bp,\fl;\phi)
      =
      - \frac12 \sum_{i,j=1}^d \dot g(e_i,e_j) \gamma(e_i)\nabla_{e_j} \phi
      + \frac14\gamma\paren[\big]{(\nabla^*\dot g)^\sharp + \nabla(\tr_g\dot g)}\phi.
    \end{equation*}
  \item
    \label{Prop_SliceDerivativeOfUniversalDiracOperator_2}
    For every $\dot\bp_\sG = (0;[\xi,\gamma],-\nabla\xi;0) \in V_{\sG;\bp,[\fl]}^\Slice$
    \begin{equation*}
      \nabla_{\dot\bp_\sG}^\Slice \bD(\bp,\fl;\phi)
      =
      \sum_{i=1}^d \paren[\big]{
        [\xi,\gamma(e_i)] \nabla_{e_i} \phi
        - \gamma(e_i) (\nabla_{e_i}\xi) \phi
      }.
    \end{equation*}
  \item
    \label{Prop_SliceDerivativeOfUniversalDiracOperator_3}
    For every $\dot\bp_\nabla = (0;0,\dot\nabla;0) \in V_{\nabla;\bp,[\fl]}^\Slice$
    \begin{equation*}
      \nabla_{\dot\bp_\nabla}^\Slice \bD(\bp,\fl;\phi)
      =
      \sum_{i=1}^d \gamma(e_i) \dot\nabla_{e_i}\phi.
    \end{equation*}    
  \end{enumerate}
  Moreover, $\nabla^\Slice \UniversalResidueMap = 0$.
\end{prop}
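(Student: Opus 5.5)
Along a slice the trivialisation of \autoref{Prop_Universal_Slice} is the identity on sections: the fibres $H_a^\infty\Gamma\paren{X\setminus \Br(\fl_0),S\otimes\fl_0;\bp_0}$ are identified as subspaces of $\Gamma\paren{X\setminus Z_0,S\otimes\fl_0}$, and $[\fl]=[\fl_0]$ is fixed. Consequently $\nabla^\Slice_{\dot\bp}\bD(\bp,\fl;\phi)$ is simply the derivative $\frac{\rd}{\rd t}\big|_{t=0} D_{\bp_t}^{\fl_0}\phi$ along a curve $\bp_t$ in the slice with $\dot\bp_0=\dot\bp$, with $\phi$ held fixed. By \autoref{Prop_SlicesAreTameFrechetSubmanifolds} and linearity, it suffices to treat the three summands separately. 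For each I would differentiate $D_\bp\phi=\sum_i\gamma(e_i)\nabla_{e_i}\phi$, taking into account that the orthonormal frame also varies with $g$. Concretely, I would rewrite the operator as $D_\bp\phi=\sum_{i,j} g^{ij}\gamma(\del_i)\nabla_{\del_j}\phi$ in a fixed coordinate frame, which gives
\begin{equation*}
  \dot D\phi = -\sum_{i,j}\dot g(e_i,e_j)\gamma(e_i)\nabla_{e_j}\phi + \sum_i\dot\gamma(e_i)\nabla_{e_i}\phi + \sum_i\gamma(e_i)\dot\nabla_{e_i}\phi.
\end{equation*}

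Items \autoref{Prop_SliceDerivativeOfUniversalDiracOperator_2} and \autoref{Prop_SliceDerivativeOfUniversalDiracOperator_3} follow immediately from this formula, since $\dot g=0$ in both cases. For \autoref{Prop_SliceDerivativeOfUniversalDiracOperator_3} one substitutes $\dot\gamma=0$. For \autoref{Prop_SliceDerivativeOfUniversalDiracOperator_2} one substitutes $\dot\gamma=[\xi,\gamma]$ and $\dot\nabla=-\nabla\xi$, noting that $\dot\nabla_{e_i}$ acts as the endomorphism $-\nabla_{e_i}\xi$.

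For \autoref{Prop_SliceDerivativeOfUniversalDiracOperator_1} I substitute $\dot\gamma^\BG(e_i)=\frac12\sum_j\dot g(e_i,e_j)\gamma(e_j)$. Combined with the first term and the symmetry of $\dot g$, this yields $-\frac12\sum\dot g(e_i,e_j)\gamma(e_i)\nabla_{e_j}\phi$. The remaining term is $\frac18\sum_{i,j,k}(\nabla_{e_j}\dot g)(e_k,e_i)\gamma(e_i)[\gamma(e_j),\gamma(e_k)]\phi$. I would simplify it with the Clifford identity
\begin{equation*}
  \gamma(e_i)\gamma(e_j)\gamma(e_k)=\gamma(e_{[i}e_je_{k]})-\delta_{ij}\gamma(e_k)+\delta_{ik}\gamma(e_j)-\delta_{jk}\gamma(e_i).
\end{equation*}
The totally antisymmetric part vanishes against the tensor $(\nabla_{e_j}\dot g)(e_k,e_i)$, which is symmetric in $k,i$. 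The contractions produce multiples of $\gamma((\nabla^*\dot g)^\sharp)$ and $\gamma(\nabla\tr_g\dot g)$, which should assemble into the stated $\frac14\gamma\paren{(\nabla^*\dot g)^\sharp+\nabla\tr_g\dot g}\phi$. This bookkeeping, in particular tracking the sign conventions for $\nabla^*$ and the $[\gamma(e_j),\gamma(e_k)]$ ordering, is the main place where errors can occur. I would cross-check it against the known Bourguignon--Gauduchon variation formula for the Dirac operator.

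Finally, $\nabla^\Slice\UniversalResidueMap=0$ amounts to showing that $\res_\bp\phi$ is independent of $\bp\in\Slice$ for fixed $\phi$, under the identification $\Gamma(Z_0,\ResidueBundle_{\bp_0})=\Gamma(Z_0,\ResidueBundle_\bp)$. The residue bundle $\ResidueBundle=S|_{Z_0}\otimes_\AlgebraBundle NZ^{-1/2}$ and the $\AlgebraBundle$-structure depend only on the normal data $(N,h,I)$, which is fixed along the slice. Moreover, by density it suffices to consider $\phi=r^{-1/2}\hat\phi$, for which $\res\phi$ is the restriction $\hat\phi|_{\del\hat X}$, viewed as a $-\frac12$ Fourier mode. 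I must check that $r$ and the blow-up agree to leading order for $\bp$ and $\bp_0$, since $h=g|_N$ is fixed. Otherwise I would argue via $\ker\res_\bp=\dom(\Dmin)=H^1$, which is slice-independent because $B_{\bp,\bp_0}\in\DiffOp_b^1$. In that approach the obstacle is showing that the identity is induced on the quotient.
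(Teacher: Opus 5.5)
Your argument for (1)--(3) is the paper's. It uses the same abstract variation formula; the paper obtains the $-\dot g(e_i,e_j)$ term from the variation of $\sharp$ rather than of $g^{ij}$. It then evaluates the $\dot\nabla^\BG$ term with the symmetrised identity $\gamma(e_k)[\gamma(e_i),\gamma(e_j)]+\gamma(e_j)[\gamma(e_i),\gamma(e_k)]=-2\delta_{ij}\gamma(e_k)+4\delta_{jk}\gamma(e_i)-2\delta_{ki}\gamma(e_j)$. Your decomposition of $\gamma(e_i)\gamma(e_j)\gamma(e_k)$ reduces to exactly this once contracted against a tensor symmetric in two slots. With $\nabla^*\dot g=-\sum_i(\nabla_{e_i}\dot g)(e_i,-)$, your bookkeeping gives precisely $\tfrac14\gamma\bigl((\nabla^*\dot g)^\sharp+\nabla\tr_g\dot g\bigr)$.

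The paper gives no argument for $\nabla^\Slice\UniversalResidueMap=0$. Your first route is sound, and the check you flag holds: fixing $(N,h)$ makes $r_\bp/r_{\bp_0}$ smooth on $\hat X$ and identically $1$ on $\del\hat X$. Your kernel observation ($\ker\res_\bp=H^1\cap H_a^\infty$ for every $\bp$ in the slice) complements it rather than replacing it. It reduces the claim to comparing the two residue maps on a family of elements $r^{-1/2}\hat\phi\in H_a^\infty$ realising every smooth residue. This avoids a density argument in $\dom(\Dmax)$, a space that, unlike $H_a^\infty$, may depend on $\bp$.
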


\begin{proof}%[Proof of \autoref{Prop_SliceDerivativeOfUniversalDiracOperator}]
  The variation $\dot\sharp$ of the isomorphism $\sharp \co T^*X \to TX$ induced by $\dot g \in T_g\SpaceOfMetrics$ satisfies
  \begin{equation*}
    \dot\sharp \alpha =  - \sum_{i=1}^d \dot g(\alpha^\sharp,e_i) e_i.
  \end{equation*}
  Therefore,
  for every $(\bp;[\fl]) \in \SpaceOfDiracBundles\times\SpaceOfRamifiedLineBundles$
  and $\dot \bp = (\dot g;\dot\gamma,\dot\nabla;0) \in \sF_{\bp,[\fl]}^\Slice,$
  \begin{equation}
    \label{Eq_AbstractVariationOfD}
    \nabla_{\dot\bp}^\Slice \bD(\bp,\fl;\phi)
    =
    \sum_{i=1}^d \paren[\big]{
      \dot\gamma(e_i) \nabla_{e_i}\phi
      + \gamma(e_i)\dot\nabla_{e_i}\phi
    }
    - \sum_{i,j=1}^d \dot g(e_i,e_j) \gamma(e_i) \nabla_{e_j}\phi.
  \end{equation}
  For every $\dot\bp = \dot\bp_\BG = (\dot g;\dot \gamma^\BG,\dot\nabla^\BG;0)$ as in \autoref{Prop_UniversalSpaceOfDiracBundles_TangentBundle}~\autoref{Prop_UniversalSpaceOfDiracBundles_TangentBundle_HBG} this concretises to
  \begin{align*}
    \nabla_{\dot\bp_\BG}^\Slice \bD(\bp,\fl;\phi)
    &=
      -\frac12 \sum_{i,j=1}^d \dot g(e_i,e_j) \gamma(e_i) \nabla_{e_j}\phi
      +
      \frac18 \sum_{i,j,k=1}^d \paren{\nabla_{e_i} \dot g}(e_j,e_k) \gamma(e_k)[\gamma(e_i),\gamma(e_j)]\phi \\
    &=
      - \frac12 \sum_{i,j=1}^d \dot g(e_i,e_j) \gamma(e_i)\nabla_{e_j} \phi
      + \frac14\gamma\paren[\big]{(\nabla^*\dot g)^\sharp + \nabla(\tr_g\dot g)}\phi.
  \end{align*}
  The final step uses the observation
  \begin{equation*}
    \gamma(e_k)[\gamma(e_i),\gamma(e_j)] + \gamma(e_j)[\gamma(e_i),\gamma(e_k)]
    =
    -2\delta_{ij} \gamma(e_k)    
    +4\delta_{jk} \gamma(e_i)
    -2\delta_{ki} \gamma(e_j).
  \end{equation*}
  This proves \autoref{Prop_SliceDerivativeOfUniversalDiracOperator_1}.
  The remaining formulae \autoref{Prop_SliceDerivativeOfUniversalDiracOperator_2} and \autoref{Prop_SliceDerivativeOfUniversalDiracOperator_3} are obvious.
\end{proof}

\begin{remark}
  \label{Rmk_SliceDerivativeOfUniversalDiracOperator_Conormal}
  Direct inspection of the above formulae and a glance at \autoref{Prop_SliceDerivativeOfUniversalDiracOperator} reveal that
  \begin{gather*}
    \nabla_{\dot\bp_\BG}^\Slice\bD(\bp,\fl;-),
    \nabla_{\dot\bp_\nabla}^\Slice\bD(\bp,\fl;-)
    \in \DiffOp_b^1(S\otimes\fl) \qand \\
    \nabla_{\dot\bp_\sG}^\Slice\bD(\bp,\fl;-)
    \in
    \DiffOp_b^0(S\otimes\fl) D_\bp^\fl + \DiffOp_b^1(S\otimes\fl).
    \qedhere
  \end{gather*}
\end{remark}

\begin{remark}
  \label{Rmk_VolumeRenormalisedSliceDerivative}
  The differential operator $\nabla_{\dot\bp_\BG}^\Slice \bD(\bp,\fl;-)$ is not formally self-adjoint.
  In fact, a brief computation shows that its formal adjoint satisfies
  \begin{equation*}
    \nabla_{\dot\bp_\BG}^\Slice \bD(\bp,\fl;-)^*
    =
    \nabla_{\dot\bp_\BG}^\Slice \bD(\bp,\fl;-)
    -
    \frac12 \gamma(\nabla(\tr_g\dot g)).
  \end{equation*}
  This can be rectified by using the \defined{volume-renormalised slice partial covariant derivative}
  \begin{equation*}
    \nabla_{\dot\bp}^{\Slice,\vol}
    \coloneq
    \nabla_{\dot\bp}^\Slice + \tfrac14 \tr_g\dot g
  \end{equation*}
  instead of $\nabla^\Slice$;
  cf.~\cites[§2.2]{Maier1997}[§2.5]{AmmannDahl2025}.
  For the purposes of this article,
  the above is not an issue and it suffices to work with $\nabla^\Slice$.
  However,
  it would be one in setting up a Brill--Noether theory of $\Z/2\Z$ harmonic spinors;
  cf.~\cite[§1.A]{Doan2018}.
\end{remark}

The right action of $\ExtendedGaugeGroup{S}$ on $\SpaceOfDiracBundles \times \SpaceOfRamifiedLineBundles$ induces a right action
\begin{equation*}
  T(\SpaceOfDiracBundles \times \SpaceOfRamifiedLineBundles)
  \circlearrowleft
  T\ExtendedGaugeGroup{S} = \ExtendedGaugeGroup{S} \ltimes \Lie{\ExtendedGaugeGroup{S}}.
\end{equation*}
This can be used to construct (local) partial covariant derivatives with respect to
\begin{equation*}
  \sF^\SpaceOfRamifiedLineBundles \coloneq \pr_2^*T\SpaceOfRamifiedLineBundles
  \subseteq
  T\paren{\SpaceOfDiracBundles \times \SpaceOfRamifiedLineBundles}
\end{equation*}
from $\nabla^\Slice$ and the following infinitesimal version of \autoref{Prop_TransformToNormalForm}.

\begin{prop}
  \label{Prop_InfinitesimalSliceUniformisers}
  For every $(\bp_0,[\fl_0]) \in \SpaceOfDiracBundles \times \SpaceOfRamifiedLineBundles$
  there is a \defined{local infinitesimal uniformiser};
  that is:
  an open neighbourhood $(\bp_0,[\fl_0]) \in \sU \subseteq \SpaceOfDiracBundles \times \SpaceOfRamifiedLineBundles$ and
  a tame smooth map
  \begin{align*}
    \sF^\SpaceOfRamifiedLineBundles|_\sU &\to \Vect(X) \\
    (\bp,[\fl];v) &\mapsto \bar v = \bar v(\bp,[\fl];v)
  \end{align*}
  such that:
  \begin{enumerate}
  \item
    \label{Prop_InfinitesimalSliceUniformisers_Condition}
    for every $(\bp,[\fl];v) \in \sF^\SpaceOfRamifiedLineBundles|_\sU$
    \begin{equation*}
      (\bp,[\fl];v) \cdot \lift_\bp^K(\bar v) \in \sF_{\bp,[\fl]}^{\Slice},
    \end{equation*}
  \item
    \label{Prop_InfinitesimalSliceUniformisers_Linear}
    $\bar v(\bp,[\fl];-)$ is $\R$--linear,
    and
  \item
    \label{Prop_InfinitesimalSliceUniformisers_Tame}
    for every $(\bp,[\fl];v) \in \sF^\SpaceOfRamifiedLineBundles|_\sU$
    \begin{equation*}
      \Abs{\bar v(\bp,[\fl];v)}_{H^{k+1}} \lesssim_k \Abs{v}_{H^k}.
    \end{equation*}
  \end{enumerate}  
  Moreover,
  given a compact subset $K \subseteq X \setminus \Br(\fl_0)$,
  the local infinitesimal uniformiser can be chosen such that
  $\bar v(\bp,[\fl];v)|_K = 0$ for every $(\bp,[\fl];v) \in \sF^\SpaceOfRamifiedLineBundles|_\sU$.
\end{prop}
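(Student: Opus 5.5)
The plan is to solve, to first order and linearly in $v$, the conditions that define the slice through $(\bp,[\fl])$. By \autoref{Prop_SlicesAreTameFrechetSubmanifolds}, a tangent vector $(\dot g;\dot\gamma,\dot\nabla;\dot Z)$ lies in $\sF^\Slice_{\bp,[\fl]}$ if and only if three things hold:
\begin{enumerate}
\item $\dot Z = 0$;
\item $\dot g|_Z$ has no normal–tangential or normal–normal components in a way that changes $(N,h)$; more precisely, the induced variation of the normal data $(N,h)$ vanishes;
\item the induced variation of $I = \gamma|_{\Wedge^2N}$ vanishes.
\end{enumerate}
By \autoref{Prop_LieAlgebraActionOfKosmannLift}, acting by $\lift^K_\bp(\bar v)$ adds $\sL_{\bar v} g$ to $\dot g$ together with its Bourguignon–Gauduchon lift, plus the twisting-curvature term $F^\tw_\nabla(\bar v,-)$. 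That term only changes $\dot\nabla$, so it is invisible to the slice conditions. The same action moves $Z$ by the normal part of $\bar v|_Z$.

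I will therefore look for $\bar v$ with the following properties. First, $\bar v|_Z$ has normal component $-v$, which kills $\dot Z$. Second, the first normal derivatives of $\bar v$ along $Z$ are prescribed so that $\sL_{\bar v} g$, restricted to $Z$, has vanishing $N\otimes TX$ components. By \autoref{Eq_LieDerivativeOfMetric}, $(\sL_{\bar v}g)(w,n) = g(\nabla_w\bar v,n)+g(\nabla_n\bar v,w)$. Given $\bar v|_Z$ (hence the tangential derivatives $\nabla_w\bar v$ for $w\in TZ$), one can solve for the normal derivatives $\nabla_n\bar v$, $n\in N$, pointwise and algebraically. For the normal–normal block one picks the symmetric part of $g(\nabla_n\bar v,n')$. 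The skew part remains free, and I set it to zero. Once these hold, the Bourguignon–Gauduchon formula shows that $\dot\gamma^\BG$ restricted to $N$ satisfies the criterion $\dot\gamma(e_1)\gamma(e_2)+\gamma(e_1)\dot\gamma(e_2)=0$, because $\dot\gamma^\BG(e_a)=\tfrac12\sum_i\dot g(e_a,e_i)\gamma(e_i)$ and the relevant components of $\dot g$ vanish. So condition (3) follows from (2). This is where the Kosmann lift is used rather than the horizontal lift, and it is the point I expect to require the most care. The concern is the skew normal–normal part $g(\nabla_{e_1}\bar v,e_2)-g(\nabla_{e_2}\bar v,e_1)$: it rotates $N$, and I must check that the $\kappa$ term exactly compensates its effect on $I$. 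If it does not, I will fix it by a further choice of that skew part.

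Concretely, I construct $\bar v$ with a fixed tubular neighbourhood map $\jmath$ of $Z_0$ and a cutoff $\chi$ supported near $Z_0$ and vanishing on $K$. Set
\begin{equation*}
  \bar v \coloneq \chi\cdot\paren[\big]{\widetilde{-v} + r\,\widetilde{L(v)}},
\end{equation*}
where:
\begin{itemize}
\item the tilde denotes extension constant along the fibres of $\jmath$;
\item $L(v)$ is the section of $\Hom(N,TX)|_Z$ obtained by the pointwise linear algebra above, which depends linearly on $v$ and its first derivatives along $Z$;
\item the linear term $r\,\widetilde{L(v)}$ is understood as $\widetilde{L(v)}$ applied to the normal coordinate $y\in N$.
\end{itemize}
Linearity in $v$, property \autoref{Prop_InfinitesimalSliceUniformisers_Linear}, is then clear. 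Tame smooth dependence on $(\bp,[\fl])\in\sU$ follows because every ingredient is built from $g$, $\gamma$ and $Z=\Br(\fl)$ by differential operations of bounded order. Here $\jmath$ is transported via the charts $\phi_\jmath$ of \autoref{Def_ModuliSpaceOfRamifiedEuclideanLineBundles}, as in \cite[Part II]{Hamilton1982:NashMoser}.

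For the estimate \autoref{Prop_InfinitesimalSliceUniformisers_Tame}, $L(v)$ involves one derivative of $v$, so naively it loses a derivative. I would instead extend $v$ and $L(v)$ off $Z$ by a Poisson-type (harmonic or heat-kernel) extension, as in the standard trace–extension theory. That extension gains half a derivative per normal order in $H^{k}\to H^{k+1/2}$; combined with the trace, this yields a bound $\Abs{\bar v}_{H^{k+1}}\lesssim_k\Abs{v}_{H^{k}}$ up to the half-derivative bookkeeping. Here I would read $\Abs{v}_{H^k}$ as a norm on $Z$, where trace theorems are available. This is the second technical point, but it is standard.
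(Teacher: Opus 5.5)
Your reduction matches the paper's. By \autoref{Prop_LieAlgebraActionOfKosmannLift}, acting by $\lift_\bp^K(\bar v)$ produces the Bourguignon--Gauduchon lift of $\sL_{\bar v}g$ plus $F_\nabla^\tw(\bar v,-)$, and the latter lies in $V_\nabla \subseteq T\Slice$. So condition~(1) reduces to $\bar v|_Z^N = v$ and $(\sL_{\bar v}g)|_Z \in \Gamma(Z,\Hom(S^2TZ,\R))$, which are algebraic conditions on $J^1\bar v|_Z$, and you solve them correctly. One sign slip: with the paper's conventions the $\SpaceOfRamifiedLineBundles$--component of the result is $v - \bar v|_Z^N$, so the normal part of $\bar v|_Z$ must be $+v$, not $-v$. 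Your worry about the skew normal--normal part of $\nabla\bar v$ is unnecessary. $\dot\gamma^\BG$ depends only on $\dot g = \sL_{\bar v}g$, which does not see that part; the term $\kappa_{\bar v}$ in the Kosmann lift is exactly what absorbs it. Hence once $\dot g(n,-) = 0$ along $Z$ for $n \in N$, one has $\dot\gamma^\BG|_N = 0$ and $I$ is preserved without further adjustment.

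The genuine gap is property~(3), which is the analytic content of the proposition. The field you actually write down, $\chi\cdot(\widetilde{-v} + r\,\widetilde{L(v)})$ with fibrewise constant extensions, gains nothing on the first term. It loses a derivative on the second, since $L(v)$ only lies in $H^{k-1}$, so you get no better than $\Abs{\bar v}_{H^{k-1}} \lesssim \Abs{v}_{H^k}$. Your proposed repair is misstated in two ways. A gain of half a derivative ($H^k \to H^{k+1/2}$ is the codimension-one statement) does not suffice. And a Poisson-type multiplier $e^{-\langle\xi\rangle\lvert y\rvert}$ is not $C^1$ across a codimension-two locus, so it would not realise the prescribed $1$--jet. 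What is needed, and what the paper supplies in \autoref{Lem_1JetExtensionMap}, is a smooth, linear, frequency-adapted $1$--jet extension
\[ F(x,y) = \int e^{2\pi i\langle\xi,x\rangle}\,\theta(\langle\xi\rangle y)\,\Bigl(\hat f(\xi) + \sum_{i} y_i\,\hat g_i(\xi)\Bigr)\,\rd\xi, \qquad \theta \in \sS(\R^2),\ \theta(0)=1,\ \nabla^\ell\theta(0)=0 \text{ for } \ell\geq 1. \]
In codimension two this satisfies $\Abs{F}_{H^s} \lesssim \Abs{f}_{H^{s-1}} + \sum_i \Abs{g_i}_{H^{s-2}}$. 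The $0$--jet gains one full derivative (half the codimension), and the $1$--jet datum gains two, because $y_i\theta(\langle\xi\rangle y)$ is of size $\langle\xi\rangle^{-1}$. Take $f$ to be $v$ and $g$ the normal derivatives forced by $\nabla_t\bar v$, which cost one derivative of $v$; this gives exactly $\Abs{\bar v}_{H^{k+1}} \lesssim_k \Abs{v}_{H^k}$. A Gaussian (heat-kernel) profile $\theta$ would also work, but only with this $\langle\xi\rangle y$ scaling and this derivative count. As written, your argument does not establish~(3).
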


The proof of \autoref{Prop_InfinitesimalSliceUniformisers}~\autoref{Prop_InfinitesimalSliceUniformisers_Tame} uses the following well-known observation.

\begin{lemma}
  \label{Lem_1JetExtensionMap}
  Let $d,k \in \N_0$.
  There is a \defined{$1$--jet extension map}
  \begin{equation*}
    \ext^{(1)} \co \sS(\R^d) \oplus \sS(\R^d)^k \to \sS(\R^d \oplus \R^k)
  \end{equation*}
  such that
  for every $(f,g_1,\ldots,g_k) \in \sS(\R^d)^{k+1}$ its extension $F \coloneq  \ext^{(1)}(f,g_1,\ldots,g_k) \in \sS(\R^d\oplus\R^k)$ satisfies
  \begin{equation}
    \label{Lem_1JetExtensionMap_Restriction}
    F(-,0) = f \qandq
    \frac{\del F}{\del y_i}(-,0) = g_i,
  \end{equation}
  and
  for every $s \in \R$
  \begin{equation}
    \label{Lem_1JetExtensionMap_Estimate}
    \Abs{F}_{H^{s}}
    \lesssim_{s}
    \Abs{f}_{H^{s-k/2}} + \sum_{i=1}^k \Abs{g_i}_{H^{s-k/2-1}}.
  \end{equation}
\end{lemma}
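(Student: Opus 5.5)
The plan is to construct $\ext^{(1)}$ explicitly on the Fourier side, as in the classical proof of the trace theorem. Write $\xi \in \R^d$ and $\eta \in \R^k$ for the dual variables, $\langle\xi\rangle \coloneq (1+\abs{\xi}^2)^{1/2}$, and fix $\chi \in \sS(\R^k)$ with $\chi(0)=1$ and $\del\chi(0)=0$. Fix also $\chi_1,\ldots,\chi_k \in \sS(\R^k)$ with $\chi_i(0)=0$ and $\del_j\chi_i(0)=\delta_{ij}$; for example $\chi_i(y) = y_i\chi(y)$. Then define
\begin{equation*}
  \hat F(\xi,\eta)
  \coloneq
  \langle\xi\rangle^{-k}\hat\chi\paren[\big]{\eta/\langle\xi\rangle}\hat f(\xi)
  +
  \sum_{i=1}^k \langle\xi\rangle^{-k-1}\hat\chi_i\paren[\big]{\eta/\langle\xi\rangle}\hat g_i(\xi).
\end{equation*}
Equivalently, taking the partial Fourier transform in $x$ only, one has $\tilde F(\xi,y) = \chi(\langle\xi\rangle y)\hat f(\xi) + \sum_i \langle\xi\rangle^{-1}\chi_i(\langle\xi\rangle y)\hat g_i(\xi)$.

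In the second form the restriction conditions \autoref{Lem_1JetExtensionMap_Restriction} can be read off directly. At $y=0$ the first term gives $\hat f(\xi)$ and the $\chi_i$ terms vanish. For the derivative $\del_{y_i}$ at $y=0$, the $\chi$ term contributes $\langle\xi\rangle\,\del_i\chi(0)\hat f = 0$. The $\chi_j$ terms contribute $\langle\xi\rangle^{-1}\langle\xi\rangle\,\del_i\chi_j(0)\hat g_j = \delta_{ij}\hat g_j$. Inverting the Fourier transform in $\xi$ gives the identities. Schwartz decay of $F$ holds because $\hat f,\hat g_i$ are Schwartz and the dilation factors $\langle\xi\rangle^{\pm1}$ grow at most polynomially with all derivatives, so $\hat F \in \sS(\R^{d+k})$. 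Linearity is clear.

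For the estimate \autoref{Lem_1JetExtensionMap_Estimate}, compute $\Abs{F}_{H^s}^2 = \int (1+\abs\xi^2+\abs\eta^2)^s\abs{\hat F}^2\,d\xi\,d\eta$ and treat each term separately by the triangle inequality. In the $\eta$-integral, substitute $\eta = \langle\xi\rangle\zeta$, so that $d\eta = \langle\xi\rangle^k d\zeta$ and $1+\abs\xi^2+\abs\eta^2 = \langle\xi\rangle^2(1+\abs\zeta^2)$. For the $f$ term this gives
\begin{equation*}
  \int \langle\xi\rangle^{2s-2k+k}\abs{\hat f(\xi)}^2\,d\xi\cdot\int(1+\abs\zeta^2)^s\abs{\hat\chi(\zeta)}^2\,d\zeta
  \lesssim_s \Abs{f}_{H^{s-k/2}}^2,
\end{equation*}
since $\hat\chi$ is Schwartz, so the $\zeta$-integral is finite for every $s \in \R$. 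The same computation with the extra factor $\langle\xi\rangle^{-2}$ yields $\Abs{g_i}_{H^{s-k/2-1}}^2$. Negative $s$ causes no trouble because the weight factorises exactly.

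No step is a genuine obstacle. The only point needing care is choosing the cutoffs so that the derivative conditions hold exactly, namely $\del\chi(0)=0$ and the normalisation of $\chi_i$. The choice above handles this.
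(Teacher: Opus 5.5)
Your proposal is correct and is essentially the paper's proof. With $\chi_i(y) = y_i\chi(y)$, your partial Fourier transform $\chi(\langle\xi\rangle y)\hat f(\xi) + \sum_i y_i\chi(\langle\xi\rangle y)\hat g_i(\xi)$ is exactly the paper's definition of $F$, and your estimate uses the same rescaling $\eta = \langle\xi\rangle\zeta$ of the $\eta$-integral; the only difference is that you require just $\partial\chi(0)=0$, whereas the paper imposes $\nabla^\ell\chi(0)=0$ for all $\ell \geq 1$, which is more than the restriction conditions need.
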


\begin{proof}%[Proof of \autoref{Lem_1JetExtensionMap}]  
  Choose a $\chi \in \sS(\R^k)$ with $\chi(0) = 1$ and $\nabla^\ell \chi(0) = 0$ for every $\ell \in \N$.
  For $(f,g_1,\ldots,g_k) \in \sS(\R^d)^{k+1}$ define $F \coloneq \ext^{(1)}(f,g_1,\ldots,g_k)$ by
  \begin{align*}
    F(x,y)
    &\coloneq
      \int_{\R^d} e^{2\pi i\Inner{\xi,x}} \chi\paren{\bracket{\xi} y} \hat f(\xi) \, \rd \xi \\
    &\quad
      + \sum_{i=1}^k y_i  \int_{\R^d} e^{2\pi i\Inner{\xi,x}} \chi\paren{\bracket{\xi} y} \hat g_i(\xi) \, \rd \xi.
  \end{align*}
  \autoref{Lem_1JetExtensionMap_Restriction} is an immediate consequence of the Fourier inversion theorem.
  
  By direct computation,
  with the substitution $\tilde\eta = \eta/\bracket{\xi}$,
  \begin{align*}
    \Abs*{\int_{\R^d} e^{2\pi i\Inner{\xi,x}} \chi\paren{\bracket{\xi} y} \hat f(\xi) \, \rd \xi}_{H^s}^2
    &=
      \int_{\R^d} \abs{\hat f(\xi)}^2 \int_{\R^k} \bracket{(\xi,\eta)}^{2s} \abs{\sF_y\paren{\chi\paren{\bracket{\xi}y}}(\eta)}^2 \,\rd \eta\,\rd \xi \\
    &=
      \int_{\R^d} \bracket{\xi}^{2s-k} \abs{\hat f(\xi)}^2
      \underbrace{\int_{\R^k}  \bracket{\tilde \eta}^{2s} \abs{\hat\chi\paren{\tilde \eta}}^2 \,\rd \tilde\eta}_{\eqcolon c(s)}
      \rd \xi \\
    &=
      c(s) \Abs{f}_{H^{s-k/2}}^2,
  \end{align*}  
  and
  \begin{align*}
    \Abs*{y_i\int_{\R^d} e^{2\pi i\Inner{\xi,x}} \chi\paren{\bracket{\xi} y} \widehat{g_i}(\xi) \, \rd \xi}_{H^s}^2
    &=
      \int_{\R^d} \abs{\widehat{g_i}(\xi)}^2 \int_{\R^k} \bracket{(\xi,\eta)}^{2s} \abs{\sF_y\paren{y_i\chi\paren{\bracket{\xi}y}}(\eta)}^2 \,\rd \eta\,\rd \xi \\
    &=
      \int_{\R^d} \bracket{\xi}^{2s-k-2 }\abs{\widehat{g_i}(\xi)}^2
      \underbrace{\int_{\R^k} \bracket{\eta}^{2s} \abs{\sF_y\paren{y_i\chi\paren{y}}(\eta)}^2 \,\rd \eta}_{\eqcolon c_i(s)}
      \,\rd \xi \\
    &=
      c_i(s) \Abs{g_i}_{H^{s-k/2-1}}^2.
  \end{align*}
  Here $\sF_y$ denotes the Fourier transform in the variable $y \in \R^k$.
  Since $c(s), c_i(s) < \infty$,
  the above prove \autoref{Lem_1JetExtensionMap_Estimate}.
\end{proof}

\begin{proof}[Proof of \autoref{Prop_InfinitesimalSliceUniformisers}]  
  Let $(\bp;Z,[\fl];v) \in \sF^\SpaceOfRamifiedLineBundles|_\sU$ and $\bar v \in \Vect(X)$.
  By \autoref{Prop_LieAlgebraActionOfKosmannLift}~\autoref{Prop_LieAlgebraActionOfKosmannLift_DiracBundle},
  \begin{equation*}
    (\bp,[\fl];v) \cdot \lift_\bp^K(\bar v)
    =
    \paren[\big]{\sL_{\bar v}g;
    \dot\gamma^\BG,
    \dot\nabla^\BG + F_\nabla^\tw(\bar v,-);
    v-\bar v|_Z^N}
  \end{equation*}
  with $(\sL_{\bar v}g;\dot\gamma^\BG,\dot\nabla^\BG) \in T_\bp\SpaceOfDiracBundles$ denoting the Bourguignon--Gauduchon lift of $\sL_{\bar v}g \in T_g\SpaceOfMetrics$.
  Here $-^N$ denotes the orthogonal projection onto $NZ$.
  By inspection of \autoref{Prop_SlicesAreTameFrechetSubmanifolds},
  condition \autoref{Prop_InfinitesimalSliceUniformisers_Condition} translates into
  \begin{equation*}
    \bar v|_Z^N = v \qandq
    \paren{\sL_{\bar v} g}|_Z \in \Gamma\paren{Z,\Hom(S^2TZ,\R)}.
  \end{equation*}
  By \autoref{Eq_LieDerivativeOfMetric},
  the second condition means that for every $x\in Z$, $t \in T_xZ$, $n,m \in N_xZ$
  \begin{equation*}
    g(\nabla_n \bar v,t) = -g(n,\nabla_t \bar v)
    \qandq
    g(\nabla_n \bar v,m) + g(n,\nabla_m \bar v) = 0.
  \end{equation*}
  These are algebraic conditions on the $1$--jet $J^1\bar v|_Z$ of $\bar v$ at $Z$.
  Evidently,
  it is possible to algebraically solve for $J^1 \bar v \in \Gamma\paren{Z,J^1TX}$ and to extend it to a vector field $\bar v \in \Vect(X)$.
  In fact, using \autoref{Lem_1JetExtensionMap}, this can be done locally in $\SpaceOfDiracBundles \times \SpaceOfRamifiedLineBundles$ in such a way that $\bar v$ depends tame smoothly on $(\bp,[\fl];v)$ and satisfies \autoref{Prop_InfinitesimalSliceUniformisers_Linear} and \autoref{Prop_InfinitesimalSliceUniformisers_Tame}.
\end{proof}

\begin{definition}
  \label{Def_PartialConnectionAlongSpaceOfRamifiedLineBundles}
  Given a local infinitesimal uniformiser $\sF^\SpaceOfRamifiedLineBundles|_\sU \to \Vect(X), (\bp,[\fl];v) \mapsto \bar v = \bar v(\bp,[\fl],v)$ as in \autoref{Prop_InfinitesimalSliceUniformisers},
  define the partial covariant derivatives $\nabla^\SpaceOfRamifiedLineBundles$ with respect to $\sF^\SpaceOfRamifiedLineBundles$ on $\BundleOfAdmissibleSingularSpinors|_\sU$, $\BundleOfSingularSpinors|_\sU$, and $\BundleOfResidues|_\sU$ by
  \begin{equation*}
    \nabla^\SpaceOfRamifiedLineBundles_v \coloneq \nabla^\Slice_{(\bp,[\fl],v) \cdot \lift_\bp^K(\bar v)}.
    \qedhere
  \end{equation*}
\end{definition}

Unsurprisingly,
$\nabla_v^\SpaceOfRamifiedLineBundles\bD$ can be expressed in terms of the Kosmann Lie derivative defined in \autoref{Prop_LieAlgebraActionOfKosmannLift}.
This is crucial for the discussion in \autoref{Sec_ConstructionOfTameFrechetmanifoldStructures}.

\begin{prop}[{cf.~\cite[§V.1.2]{Kosmann1972}}]
  \label{Prop_CommutatorLKvD}  
  In the situation of \autoref{Def_PartialConnectionAlongSpaceOfRamifiedLineBundles},
  for every $(\bp;[\fl];\phi) \in \BundleOfAdmissibleSingularSpinors|_\sU$ and $v \in T_{[\fl]}\SpaceOfRamifiedLineBundles$
  \begin{equation*}
    \nabla_v^\SpaceOfRamifiedLineBundles\bD(\bp,[\fl];\phi)
    =
    [\sL_{\bar v}^K,D_\bp^\fl]\phi.
  \end{equation*}
\end{prop}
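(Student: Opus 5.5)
The plan is to compute the derivative directly from the definition of the slice derivative, using equivariance of $\bD$ under $\ExtendedGaugeGroup{S}$. By \autoref{Def_PartialConnectionAlongSpaceOfRamifiedLineBundles}, $\nabla_v^\SpaceOfRamifiedLineBundles\bD = \nabla^\Slice_{w}\bD$ with
\begin{equation*}
  w \coloneq (0,v) + (\bp,[\fl]) \cdot \lift_\bp^K(\bar v) \in \sF^\Slice_{\bp,[\fl]}.
\end{equation*}
I would realise $w$ as the velocity of a curve. Let $u_t \in \ExtendedGaugeGroup{S}$ be the one-parameter group generated by $\lift_\bp^K(\bar v)$, covering the flow $\check u_t$ of $\bar v$. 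Let $(\bp,[\fl_t])$ be a curve in $\SpaceOfRamifiedLineBundles$ with velocity $v$. Then $c(t) \coloneq u_t^*(\bp,[\fl_t])$ has velocity $w$ at $t=0$. By construction of $\bar v$ (\autoref{Prop_InfinitesimalSliceUniformisers}~\autoref{Prop_InfinitesimalSliceUniformisers_Condition}), it is tangent to the slice to first order. To avoid needing $c$ to stay in $\Slice$ exactly, I would first compose with a local uniformiser $u(\cdot)$ from \autoref{Prop_TransformToNormalForm}. This modifies $c$ only at second order in $t$, or else by a curve in $\ExtendedGaugeGroup{S}$ whose velocity vanishes, since the velocity is already slice-tangent.

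The key steps are as follows. First, by equivariance (\autoref{Rmk_BundleOfSingularSpinors}~\autoref{Rmk_BundleOfSingularSpinors_Equivariant}), for every $\psi$ one has
\begin{equation*}
  D_{u^*\bp}^{\check u^*\fl}(u^*\psi) = u^*\paren{D_\bp^\fl\psi}.
\end{equation*}
Second, in the slice trivialisation the section $\phi$ is held fixed as an element of $H_a^\infty$ over $\Br(\fl_0)$. So the slice derivative of $\bD$ at $\phi$ is $\frac{\rd}{\rd t}|_{t=0}$ of $D_{c(t)}\phi$, the fibre identifications along the slice being the identity (\autoref{Prop_Universal_Slice}). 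Writing $D_{c(t)}\phi = u_t^*\paren{D_{\bp}^{\fl_t}\,(u_t^{-1})^*\phi}$ and differentiating with the product rule, \autoref{Prop_LieAlgebraActionOfKosmannLift}~\autoref{Prop_LieAlgebraActionOfKosmannLift_Spinor} gives $\frac{\rd}{\rd t}|_{t=0}u_t^*\psi = \sL^K_{\bar v}\psi$. This yields
\begin{equation*}
  \sL^K_{\bar v} D\phi - D\sL^K_{\bar v}\phi + \tfrac{\rd}{\rd t}\big|_{t=0} D_\bp^{\fl_t}\phi .
\end{equation*}

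The remaining point, which I expect to be the main obstacle, is to make sense of the last term and show that it vanishes (or is absorbed). The fibres $H_a^\infty$ over different $[\fl_t]$ are not canonically identified, since the branching locus moves. Along the pure $\SpaceOfRamifiedLineBundles$-direction there is no canonical derivative, and the whole point of the construction is that only the combined vector $w$ is meaningful. So rather than splitting, I would argue as follows. Choose the curve $\fl_t \coloneq (\check u_t^{-1})^*\fl$, transported by the flow itself, and accordingly adjust $\bp_t \coloneq (u_t^{-1})^*\bp$. Then $c(t) \equiv (\bp,[\fl])$ is constant, and the velocity of this particular curve is $(0,v) + (\bp,[\fl])\cdot\lift^K_\bp(\bar v) + (\dot\bp,0)$. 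This forces me to separate the $\SpaceOfDiracBundles$-component.

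The cleaner route, and the one I would actually follow, is this. Since $\nabla^\Slice\bD$ along $w$ only depends on $w$, and $w = (\bp,[\fl];v)\cdot\lift^K_\bp(\bar v)$ is by definition the infinitesimal action of $\lift^K_\bp(\bar v)$ on the point $(\bp,[\fl];v)$ of $\sF^\SpaceOfRamifiedLineBundles$, differentiate the equivariance identity $u_t^*\circ\bD = \bD\circ u_t^*$ along $t$ at a base point where the $v$-part has been used only to define the starting chart. The $v$-contribution enters solely through the choice of trivialisation, which is slice-flat and for which $\nabla^\Slice\UniversalResidueMap = 0$ by \autoref{Prop_SliceDerivativeOfUniversalDiracOperator}. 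I would then check that the commutator $[\sL^K_{\bar v},D]\phi$ is a genuine element of $H_b^\infty$. This holds because $\bar v$ satisfies $\bar v|_Z^N = v$ and the $1$-jet conditions, which make $\nabla_{\bar v}$ conormal up to the normal translation matching the motion of $Z$. Matching with \autoref{Rmk_SliceDerivativeOfUniversalDiracOperator_Conormal} then confirms that both sides agree as elements of $\DiffOp_b^0 D + \DiffOp_b^1$ applied to $\phi$.
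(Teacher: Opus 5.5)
There is a genuine gap. Your computation reduces the claim to $[\sL^K_{\bar v},D_\bp^\fl]\phi + \tfrac{\rd}{\rd t}\big|_{t=0}D_\bp^{\fl_t}\phi$, and you never show that the second term vanishes. Your final paragraph does not close this. The identity $\nabla^\Slice\UniversalResidueMap=0$ says nothing about $\nabla^\Slice\bD$. Checking that both sides lie in $\DiffOp_b^0(S\otimes\fl)D_\bp^\fl+\DiffOp_b^1(S\otimes\fl)$ applied to $\phi$ does not show that they are equal. The claim that correcting $c$ by a local uniformiser changes it only at second order is also unjustified. The map $u(\cdot)$ can have non-zero derivative along slice-tangent vectors, and that would change the velocity of the corrected curve.

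Two observations are missing. (i) Since $\bar v|_Z^N=v$ by \autoref{Prop_InfinitesimalSliceUniformisers}, the vector $(\bp,[\fl];v)\cdot\lift^K_\bp(\bar v)$ has \emph{zero} $\SpaceOfRamifiedLineBundles$--component. It is the pure $\SpaceOfDiracBundles$--vector $\dot\bp=(\sL_{\bar v}g;\dot\gamma^\BG,\dot\nabla^\BG+F_\nabla^\tw(\bar v,-))$ from \autoref{Prop_LieAlgebraActionOfKosmannLift}. So do not take a general curve $\fl_t$: then $\Br(\check u_t^*\fl_t)$ still moves at second order and $D_{c(t)}\phi$ is not even defined. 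Instead take your own choice $\fl_t\coloneq(\check u_t^{-1})^*\fl$ with $\bp$ \emph{held fixed}. This gives $c(t)=(u_t^*\bp,[\fl])$, along which the branching locus does not move at all. Since \autoref{Eq_AbstractVariationOfD} is a pointwise formula depending only on the velocity, $\nabla^\Slice_{\dot\bp}\bD(\bp,[\fl];\phi)=\tfrac{\rd}{\rd t}\big|_{t=0}D^\fl_{u_t^*\bp}\phi$ on $X\setminus Z$. No uniformiser correction is needed, even though $c$ need not stay in $\Slice$. (ii) The term $\tfrac{\rd}{\rd t}D_\bp^{\fl_t}$ vanishes because $\fl$ is flat. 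On any compact $K\subseteq X\setminus Z$ and for small $t$, parallel transport along the flow lines of $\bar v$ gives a flat isomorphism $\fl_t|_K\iso\fl|_K$, under which $D_\bp^{\fl_t}=D_\bp^\fl$ on $K$. Differentiating the naturality identity $D^\fl_{u_t^*\bp}\circ u_t^*=u_t^*\circ D^{\fl_t}_\bp$ then gives exactly $[\sL^K_{\bar v},D_\bp^\fl]\phi$.

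With these two points your equivariance route becomes a valid, more conceptual alternative to the paper's proof. The paper avoids curves altogether. It expands $[\sL^K_{\bar v},D]$ using $[\sL^K_{\bar v},\gamma(e_i)]=\dot\gamma^\BG(e_i)+\gamma([\bar v,e_i])$ and $[\sL^K_{\bar v},\nabla_{e_i}]=\dot\nabla^\BG_{e_i}+F_\nabla^\tw(\bar v,e_i)+\nabla_{[\bar v,e_i]}$. It then matches the result term by term with \autoref{Eq_AbstractVariationOfD}, the bracket terms producing $-\sum_{i,j}(\sL_{\bar v}g)(e_i,e_j)\gamma(e_i)\nabla_{e_j}\phi$.
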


\begin{proof}%[Proof of \autoref{Prop_CommutatorLKvD}]
  By \autoref{Prop_LieAlgebraActionOfKosmannLift},
  \begin{align*}
    [\sL_{\bar v}^K,\gamma(e_i)]
    &=
      \dot\gamma^\BG(e_i) + \gamma([\bar v,e_i]) \qand \\
    [\sL_{\bar v}^K,\nabla_{e_i}]
    &=
      \dot\nabla_{e_i}^\BG + F_\nabla^\tw(\bar v,e_i) + \nabla_{[\bar v,e_i]}.
  \end{align*}
  Therefore,
  \begin{equation*}
    [\sL_{\bar v}^K,D]
    =
    \sum_{i=1}^d \dot\gamma^\BG(e_i)\nabla_{e_i}
    + \gamma(e_i)\dot\nabla_{e_i}^\BG
    + \gamma(e_i)F_\nabla^\tw(\bar v,e_i) 
    - \gamma(\nabla_{e_i}\bar v)\nabla_{e_i}
    - \gamma(e_i)\nabla_{\nabla_{e_i} \bar v}.
  \end{equation*}  
  A glance at \autoref{Eq_AbstractVariationOfD} reveals that this is precisely $ \nabla_v^\SpaceOfRamifiedLineBundles\bD(\bp,[\fl];-)$.
\end{proof}

\begin{remark}
  \label{Rmk_CommutatorLKvD}
  The formula in \autoref{Prop_CommutatorLKvD} has to be treated with some care,
  because
  \begin{equation*}
    \sL_{\bar v}^KD_\bp^\fl \qandq D_\bp^\fl\sL_{\bar v}^K
  \end{equation*}
  do not map $\BundleOfAdmissibleSingularSpinors_{\bp,\fl}$ to $\BundleOfSingularSpinors_{\bp,\fl}$,
  but their difference does.
  In fact,
  as a consequence of \autoref{Rmk_SliceDerivativeOfUniversalDiracOperator_Conormal},
  \begin{equation*}
    \nabla_v^\SpaceOfRamifiedLineBundles\bD(\bp,\fl;-) \in \DiffOp_b^1(S\otimes\fl).
    \qedhere
  \end{equation*}
\end{remark}

%%% Local Variables:
%%% mode: latex
%%% TeX-master: "UniversalModuliSpaceOfZ2ZHarmonicSpinors"
%%% ispell-local-dictionary: "british"
%%% End:

\subsection{Families of residue conditions}
\label{Sec_FamiliesOfResidueConditions}

This subsection explains how to impose residue conditions in families.

\begin{definition}
  \label{Prop_FamilyOfResidueCondition}
  A \defined{family of residue conditions} $(B,f,\bR)$ consists of:
  \begin{enumerate}
  \item
    a tame Fréchet manifold $B$,
  \item
    a tame smooth map $f \co B \to \SpaceOfDiracBundles \times \SpaceOfRamifiedLineBundles$, and    
  \item
    a tame Fréchet space subbundle $\bR \subseteq f^*\BundleOfResidues$.
    \qedhere
  \end{enumerate}
\end{definition}

Here is an example of a family of (spectral) residue conditions.

\begin{example}
  \label{Ex_FamilyOfAPSResidueConditions}
  Let $\lambda \in \R$.
  Denote by $\sU_\lambda \subseteq \SpaceOfDiracBundles \times \SpaceOfRamifiedLineBundles$ the open subset of those $(\bp,[\fl]) \in \SpaceOfDiracBundles \times \SpaceOfRamifiedLineBundles$ for which the branching locus operator $A_{\bp,[\fl]}$ does not have $\lambda$ as an eigenvalue;
  that is: $\lambda \notin \spec(A_{\bp,[\fl]})$.
  There is a tame Fréchet space subbundle
  \begin{equation*}
    \mathbf{APS}_\lambda \subseteq \BundleOfResidues|_{\sU_\lambda}
  \end{equation*}
  such that,
  for every $(\bp,[\fl]) \in \sU_\lambda$,
  \begin{equation*}
    \mathbf{APS}_{\lambda;\bp,[\fl]}
    =
    \one_{(-\infty,\lambda)}(A_{\bp,[\fl]})\Gamma\paren{\Br(\fl),\ResidueBundle_\bp}.
  \end{equation*}
  This defines the \defined{family of APS residue conditions} with threshold $\lambda \in \R$.
\end{example}

\begin{prop}
  \label{Prop_FamilyOfResidueConditions=>Subbundle}  
  For every family of residue conditions $(B,f,\bR)$
  \begin{equation*}
    \BundleOfAdmissibleSingularSpinors_\bR
    \coloneq
    (f^*{\UniversalResidueMap})^{-1}(\bR)
    \subseteq f^*\BundleOfAdmissibleSingularSpinors
  \end{equation*}
  is a tame Fréchet space subbundle.
\end{prop}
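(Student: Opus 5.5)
The plan is to exhibit $\BundleOfAdmissibleSingularSpinors_\bR$ locally as the image of a tame smooth family of projections, or equivalently as the preimage of a subbundle under a tame smooth family of surjections that admit tame smooth right inverses. The key input is that $\UniversalResidueMap$ is fibrewise surjective with a right inverse $\ext$ that can be chosen to depend tame smoothly on the base point.

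First, I work locally near a point $b_0 \in B$ with $f(b_0) = (\bp_0,[\fl_0])$. I choose a local uniformiser $u \co \sU \to \ExtendedGaugeGroup{S}$ as in \autoref{Prop_TransformToNormalForm}. By \autoref{Def_UniversalBundles_Atlases} and \autoref{Prop_Universal_Slice}, this trivialises
\begin{equation*}
  f^*\BundleOfAdmissibleSingularSpinors \iso f^{-1}(\sU) \times E_0
  \qandq
  f^*\BundleOfResidues \iso f^{-1}(\sU) \times G_0,
\end{equation*}
where $E_0 \coloneq H_a^\infty\Gamma\paren{X\setminus Z_0,S\otimes\fl_0;\bp_0}$ and $G_0 \coloneq \Gamma\paren{Z_0,\ResidueBundle_{\bp_0}}$. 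In this trivialisation $\UniversalResidueMap$ becomes a tame smooth family $\rho_b \co E_0 \to G_0$ by \autoref{Cor_UniversalDiracOperatorAndResidueMapsAreSmooth}. Because $\nabla^\Slice\UniversalResidueMap = 0$ by \autoref{Prop_SliceDerivativeOfUniversalDiracOperator}, the trivialisation intertwines residues. So $\rho_b$ should be conjugate to the fixed map $\res_0$ via the tame smooth actions of $u(f(b))$ on $E_0$ and $G_0$ from \autoref{Prop_UniversalBundles_Atlases_Chart}. Concretely, $\rho_b = \res_0$ after composing with these isomorphisms.

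Second, the fixed map $\res_0 \co E_0 \to G_0$ has the tame right inverse $\ext_0$. This follows from \autoref{Sec_ConormalAndAdaptedSobolevSpaces}: $\ext$ maps $H^{k+1/2}$ to $H_a^{k+1}$ with bounds for all $k$, and these assemble into a tame linear map on the cores. Hence $E_0 = \ker\res_0 \oplus \ext_0(G_0)$ tamely, via the tame projection $\one - \ext_0\res_0$. For a subbundle $\bR$, I pick a local trivialisation $\bR|_{V} \iso V \times R_0$ with $R_0 \subseteq G_0$ a tame direct summand. By definition of a tame Fréchet subbundle, this comes with a tame smooth family of isomorphisms $\theta_b \co G_0 \to G_0$ carrying $R_0$ onto $\bR_b$. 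The map
\begin{equation*}
  \Psi_b \co \ker\res_0 \oplus R_0 \to E_0,
  \qquad
  (\psi,\rho) \mapsto \psi + \ext_0(\theta_b\rho),
\end{equation*}
transported through $u(f(b))$, is a tame smooth family of injections onto $(f^*\UniversalResidueMap)^{-1}(\bR)_b$. It extends to a tame smooth family of isomorphisms $E_0 \iso E_0$ by adding the complement of $R_0$ in the same way. This gives a local subbundle chart. Compatibility of different charts is automatic, since the subset is intrinsically defined.

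The main obstacle is the first step: verifying that the trivialised residue maps are literally, or at least tamely smoothly, conjugate to a fixed map. This requires checking that the extended gauge transformations $u(\bp,[\fl])$, which move $Z$ and alter the normal data, intertwine $\res$ on adapted spaces with the induced action on $\Gamma\paren{Z,\ResidueBundle}$. It also requires that this action is tame smooth in $b$, and not merely continuous as in \autoref{Rmk_BundleOfSingularSpinors_NotInverseLimitsOfHilbertSpaceBundles}.

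I would first try to deduce this from naturality of the residue construction under isometries covering diffeomorphisms, together with \autoref{Prop_UniversalBundles_Atlases_Transition} for tameness. If that fails, the fallback is to differentiate $b \mapsto \ext_0\theta_b$ using the tame estimates of \cite[Lemma 4.12, 4.13]{BeraWalpuski2025}.
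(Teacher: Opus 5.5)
Your proposal is correct and is essentially the paper's argument: the paper uses the local splitting $\UniversalExtensionMap$ of $\UniversalResidueMap$ from \autoref{Prop_UniversalBundleOfZ2ZSpinors} to write $\BundleOfAdmissibleSingularSpinors_{\bR}|_{f^{-1}(\sU)} = f^*\BundleOfZModTwoZSpinors \oplus (f^*\UniversalExtensionMap)(\bR)$, which is exactly your $\ker\res_0 \oplus \ext_0(\theta_b R_0)$ written in a uniformiser trivialisation. The step you flag as the main obstacle is treated in the paper as evident. In the trivialisations of \autoref{Def_UniversalBundles_Atlases}, equivariance together with $\nabla^{\Slice}\UniversalResidueMap = 0$ makes the residue map literally the constant map $\res_0$, so no smoothness of the action needs checking beyond \autoref{Prop_UniversalBundles_Atlases_Transition}.
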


\begin{figure}[h]
  \centering
  \begin{equation*}
    \begin{tikzcd}
      \BundleOfAdmissibleSingularSpinors_\bR \ar{r}{\bD_\bR} \ar{d}{\UniversalResidueMap} \ar[hook]{rd} & f^*\BundleOfSingularSpinors \ar[equals]{rd} \\
      \bR \ar[hook]{rd} & f^*\BundleOfAdmissibleSingularSpinors \ar{r}{f^*\bD} \ar{d}{f^*{\UniversalResidueMap}} & f^*\BundleOfSingularSpinors \\
      & f^*\BundleOfResidues
    \end{tikzcd}
  \end{equation*}
  \caption{The construction of $\bD_\bR$.}
  \label{Fig_DR}
\end{figure}

\begin{definition}
  \label{Def_DR}
  In the situation of \autoref{Prop_FamilyOfResidueConditions=>Subbundle},
  the restriction of $f^*\bD$ to $\BundleOfAdmissibleSingularSpinors_\bR$ is denoted by
  \begin{equation*}
    \bD_\bR \co \BundleOfAdmissibleSingularSpinors_\bR \to f^*\BundleOfSingularSpinors.
    \qedhere
  \end{equation*}
\end{definition}

The proof of \autoref{Prop_FamilyOfResidueConditions=>Subbundle} relies on the following.

\begin{prop}
  \label{Prop_UniversalBundleOfZ2ZSpinors}
  The \defined{universal bundle of $\Z/2\Z$ spinors}
  \begin{equation*}
    \BundleOfZModTwoZSpinors \coloneq \ker \UniversalResidueMap \subseteq \BundleOfAdmissibleSingularSpinors
  \end{equation*}
  is a tame Fréchet space subbundle.
  Moreover,
  the exact sequence
  \begin{equation*}
    \BundleOfZModTwoZSpinors \incl \BundleOfAdmissibleSingularSpinors \stackrel{\UniversalResidueMap}{\onto} \BundleOfResidues
  \end{equation*}
  of tame Fréchet space bundles locally splits;
  that is:
  for every $(\bp,[\fl]) \in \SpaceOfDiracBundles \times \SpaceOfRamifiedLineBundles$
  there are an open neighbourhood $(\bp,[\fl]) \in \sU \subseteq \SpaceOfDiracBundles \times \SpaceOfRamifiedLineBundles$ and
  a map of tame Fréchet space bundles $\UniversalExtensionMap \co \BundleOfResidues|_\sU \to \BundleOfAdmissibleSingularSpinors|_\sU$ satisfying $\UniversalResidueMap \circ \UniversalExtensionMap = \one$.
\end{prop}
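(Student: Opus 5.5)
The plan is to reduce everything to the local trivialisations of \autoref{Def_UniversalBundles_Atlases}, in which $\UniversalResidueMap$ becomes a family of residue maps over a fixed ramified line bundle. In these trivialisations a local splitting is constructed by hand from the extension map of \cite{BeraWalpuski2025}.

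Fix $(\bp_0,[\fl_0])$ and a local uniformiser $u \co \sU \to \ExtendedGaugeGroup{S}$ as in \autoref{Prop_TransformToNormalForm}. The trivialisations $\sigma$ and $\upsilon$ conjugate $\UniversalResidueMap|_\sU$ to the map
\begin{equation*}
  \sU \times H_a^\infty\Gamma\paren{X\setminus Z_0,S\otimes\fl_0;\bp_0} \to \sU \times \Gamma\paren{Z_0,\ResidueBundle_{\bp_0}},
  \quad
  (q;\phi) \mapsto \paren[\big]{q;\res_{u(q)^*q}\phi}.
\end{equation*}
This uses that $u$ intertwines the residue maps; that is the content of the equivariance in \autoref{Prop_UniversalBundles_Atlases_Chart} and \autoref{Rmk_BundleOfSingularSpinors}. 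By \autoref{Prop_Universal_Slice}, the slice trivialisation identifies all fibres over $\Slice$ with the $(\bp_0,\fl_0)$--fibres via the identity. Moreover $\nabla^\Slice\UniversalResidueMap = 0$ by \autoref{Prop_SliceDerivativeOfUniversalDiracOperator}. I expect this to mean that, for $(\bp,[\fl_0]) \in \Slice$, $\res_{\bp}$ and $\res_{\bp_0}$ agree as maps on $H_a^\infty\Gamma\paren{X\setminus Z_0,S\otimes\fl_0;\bp_0}$. The residue depends only on the leading-order behaviour at $Z_0$, which is governed by the normal data $(N,h,I)$; these are common to the whole slice.

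So in the trivialisation $\UniversalResidueMap$ is the constant map $\res_0 \coloneq \res_{\bp_0}^{\fl_0}$. Define $\UniversalExtensionMap$ in the trivialisation to be the constant map $(q;\rho) \mapsto (q;\ext_0\rho)$, with $\ext_0$ the extension map of \cite[Definition 3.40 (2)]{BeraWalpuski2025} for $(\bp_0,\fl_0)$. By \autoref{Sec_ConormalAndAdaptedSobolevSpaces}, $\ext_0$ restricts to bounded maps $H^{k+1/2}\Gamma\paren{Z_0,\ResidueBundle} \to H_a^{k+1}\Gamma$ for every $k$, with constants depending only on $k$. Hence it is a tame linear map $\Gamma\paren{Z_0,\ResidueBundle} \to H_a^\infty\Gamma$; the degree shift from $k+\frac12$ to $k+1$ is harmless, since tameness allows a fixed loss. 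It satisfies $\res_0\circ\ext_0 = \one$. Transporting back through $\sigma^{-1}$ and $\upsilon$ gives a tame smooth bundle map with $\UniversalResidueMap\circ\UniversalExtensionMap = \one$.

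With the splitting in hand, $P \coloneq \one - \UniversalExtensionMap\circ\UniversalResidueMap$ is a tame smooth family of projections on $\BundleOfAdmissibleSingularSpinors|_\sU$ with image $\ker\UniversalResidueMap = \BundleOfZModTwoZSpinors|_\sU$. In the trivialisation $P$ is the constant projection $\one - \ext_0\res_0$. Therefore $\BundleOfZModTwoZSpinors|_\sU$ is trivialised as $\sU \times \ker \res_0$. Here $\ker\res_0$ is a tame Fréchet space as a tame direct summand of $H_a^\infty\Gamma$: it is the image of a tame projection, so it inherits the smoothing operators $P S_\epsilon$. This trivialisation exhibits $\BundleOfZModTwoZSpinors$ as a tame Fréchet subbundle complemented by $\im \UniversalExtensionMap$. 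Compatibility of these local charts is automatic, because they are restrictions of the charts of $\BundleOfAdmissibleSingularSpinors$.

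I expect the main obstacle to be the first step: justifying that $\res_\bp = \res_{\bp_0}$ on the common fibre for every $\bp$ in the slice, i.e.\ that the residue really depends only on the normal data. If this is not literally true, the fallback is weaker. One would only know that $\res_{u(q)^*q}$ depends tame smoothly on $q$, which is \autoref{Prop_Universal_Slice}~\autoref{Prop_Universal_Slice_Maps}, and equals $\res_0$ at $q_0$. One would then set $\UniversalExtensionMap_q \coloneq \ext_0\circ(\res_q\ext_0)^{-1}$, after checking that $\res_q\ext_0 = \one + (\res_q-\res_0)\ext_0$ is a tame invertible family near $q_0$. The difference $(\res_q - \res_0)\ext_0$ would be shown to be a small tame operator via the structure $D_\bp = D_{\bp_0} + B_{\bp,\bp_0}$ with $B_{\bp,\bp_0} \in \DiffOp_b^1$. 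Inverting it tamely near $q_0$ would then need either a Neumann series with tame estimates or the Nash--Moser inverse theorem for families of linear maps.
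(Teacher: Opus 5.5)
Your proposal is correct and takes the same route as the paper. In the uniformiser trivialisations of \autoref{Def_UniversalBundles_Atlases}, $\ExtendedGaugeGroup{S}$--equivariance together with $\nabla^\Slice\UniversalResidueMap = 0$ makes $\UniversalResidueMap$ the constant map $\res_0$, so its kernel is a constant subspace, tamely complemented via $\one-\ext_0\res_0$, and the constant extension map $\ext_0$ of \cite[Definition 3.40 (2)]{BeraWalpuski2025} supplies the splitting; your fallback is unnecessary, because $\nabla^\Slice\UniversalResidueMap=0$ (already established in the paper) is precisely the local constancy along slices that you were unsure of.
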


\begin{proof}%[Proof of \autoref{Prop_UniversalBundleOfZ2ZSpinors}]
  That $\BundleOfZModTwoZSpinors \subseteq \BundleOfAdmissibleSingularSpinors$ is a tame Fréchet space subbundle is evident by inspection of the trivialisations constructed in \autoref{Def_UniversalBundles_Atlases}.
  The splitting arises from the extension map mentioned in \autoref{Sec_ResidueMap} and constructed in \cite[Definition 3.40 (2)]{BeraWalpuski2025}.
\end{proof}

\begin{remark}  
  \label{Rmk_UniversalBundleOfZ2ZSpinors}
  Here are some observations regarding $\BundleOfZModTwoZSpinors$:
  \begin{enumerate}
  \item
    \label{Rmk_UniversalBundleOfZ2ZSpinors_Charts}
    The construction of $\BundleOfZModTwoZSpinors$ could be carried out directly with a less delicate definition of slice, because $\ker \res = H_a^\infty\Gamma\paren{X\setminus Z,S\otimes \fl;0;\bp}$ does not depend on the normal structure;
    cf.~\autoref{Rmk_BundleOfSingularSpinors}~\autoref{Rmk_BundleOfSingularSpinors_B}.
  \item
    \label{Rmk_UniversalBundleOfZ2ZSpinors_Derivative}
    For every $v \in \Vect(X)$ and $(\bp,[\fl];\Phi) \in \BundleOfZModTwoZSpinors$,
    $(\bp,[\fl];\nabla_v\Phi) \in \BundleOfSingularSpinors$ and the corresponding map
    \begin{equation*}
      \nabla \co \Vect(X) \times \BundleOfZModTwoZSpinors \to \BundleOfSingularSpinors
    \end{equation*}
    is tame smooth \cite[Proposition 4.27]{BeraWalpuski2025}.
    \qedhere
  \end{enumerate}
\end{remark}

\begin{proof}[Proof of \autoref{Prop_FamilyOfResidueConditions=>Subbundle}]
  Using \autoref{Prop_UniversalBundleOfZ2ZSpinors},
  $\BundleOfAdmissibleSingularSpinors_\bR$ can locally be presented as
  \begin{equation*}
    \BundleOfAdmissibleSingularSpinors_\bR|_{f^{-1}(\sU)} = f^*\bE_0 \oplus (f^*{\UniversalExtensionMap})(\bR).
    \qedhere
  \end{equation*}
\end{proof}

The arguments in \autoref{Sec_ConstructionOfTameFrechetmanifoldStructures} use families of residue conditions of the following kind.

\begin{definition}
  \label{Def_FamilyOfLocalResidueConditions}
  Here is how to consider local residue conditions in families:
  \begin{enumerate}
  \item
    \begin{enumerate}
    \item
      The \defined{universal branching locus} is the tame Fréchet submanifold
      \begin{equation*}
        \UniversalBr
        \coloneq
        \set[\big]{
          (\bp,[\fl];x) \in \SpaceOfDiracBundles \times \SpaceOfRamifiedLineBundles \times X : x \in \Br(\fl)
        }
        \subseteq
        \SpaceOfDiracBundles \times \SpaceOfRamifiedLineBundles\times X.
      \end{equation*}
      The projection map $p \co \UniversalBr \to \SpaceOfDiracBundles \times \SpaceOfRamifiedLineBundles$ is a fibre bundle.
    \item
      The normal bundle $\bN \coloneq N\UniversalBr$ shall be identified with
      \begin{equation*}
        \bN = \coprod_{(\bp,[\fl]) \in \SpaceOfDiracBundles \times \SpaceOfRamifiedLineBundles} N\Br(\fl).
      \end{equation*}
      It inherits a Euclidean metric from the metrics on $X$ which are part of $\bp \in \SpaceOfDiracBundles$.
      This induces an isomorphism $(\Wedge^2 \bN)^{\otimes 2} \iso \R$ and endows
      \begin{equation*}
        \AlgebraBundle \coloneq \R \oplus i \Wedge^2 \bN
      \end{equation*}
      with the structure of a bundle of algebras over $\UniversalBr$,
      whose fibres are isomorphic to $\C$, but canonically only up to complex conjugation.
    \item
      The construction from  \cite[Definition 3.21 and Remark 3.22]{BeraWalpuski2025} explained in \autoref{Sec_ResidueMap}, parametrised by $(\bp,[\fl]) \in \SpaceOfDiracBundles \times \SpaceOfRamifiedLineBundles$, assembles into an $\AlgebraBundle$--module $\bN^{-1/2}$.      
    \item
      Clifford multiplication endows the restriction of $\pr_X^*S$ to $\UniversalBr$ with the structure of an $\AlgebraBundle$--module.    
      The \defined{universal residue bundle} is the rank $r$ Euclidean bundle over $\UniversalBr$ defined by
      \begin{equation*}
        \UniversalResidueBundle \coloneq
        \pr_X^*S \otimes_\AlgebraBundle \bN^{-1/2}
        =
        \coprod_{(\bp,[\fl]) \in \SpaceOfDiracBundles \times \SpaceOfRamifiedLineBundles} \ResidueBundle_\bp.
      \end{equation*}
    \end{enumerate}
  \item
    A \defined{family of local residue conditions} $(\bB,f,\bV)$ consists of:
    \begin{enumerate}
    \item
      a tame Fréchet manifold $\bB$,
    \item
      a tame smooth map $f \co \bB \to \SpaceOfDiracBundles \times \SpaceOfRamifiedLineBundles$, and
    \item
      a subbundle $\bV \subseteq (p^*f)^* \UniversalResidueBundle$.
    \end{enumerate}
    Here $f^*p \co f^*\UniversalBr \to \bB$ is the pullback of the universal branching locus and $p^*f \co f^*\UniversalBr \to \UniversalBr$ is the projection map;
    see \autoref{Fig_FamiliesOfLocalResidueConditions}.
  \item
    A family of local residue conditions $(\bB,f,\bV)$ defines a family of residue conditions $(\bB,f,\bR)$
    such that for every $b \in \bB$ 
    \begin{equation*}
      \bR_b \coloneq \Gamma(Z_b,\bV|_{Z_b}) \subseteq \Gamma(Z_b,\ResidueBundle_{\bp_b})
      \qwithq    
      (\bp_b;Z_b,[\fl_b]) \coloneq f(b).
      \qedhere
    \end{equation*}
  \end{enumerate}
\end{definition}

\begin{remark}
  \label{Rmk_FamilyOfLocalResidueConditions}
  Here are some comments regarding \autoref{Def_FamilyOfLocalResidueConditions}:
  \label{Rmk_ResdiueBundle=>BundleOfResidues}
  \begin{enumerate}
  \item
    $\UniversalResidueBundle$ is related to the universal bundle of residues $\BundleOfResidues$,
    constructed in \autoref{Sec_UniversalBundlesOfSingularSpinorsAndResidues},
    by
    \begin{equation*}
      \BundleOfResidues = p_*\UniversalResidueBundle;
    \end{equation*}
    in the sense that,
    for every $(\bp,[\fl]) \in \SpaceOfDiracBundles \times \SpaceOfRamifiedLineBundles$,
    $\BundleOfResidues_{\bp,[\fl]} = \Gamma\paren{p^{-1}(\bp,[\fl]),\UniversalResidueBundle}$.
  \item
    \label{Rmk_FamilyOfLocalResidueConditions=>FamilyOfResidueConditions}
    Identifying $f^*\BundleOfResidues = f^*p_*\UniversalResidueBundle = (f^*p)_*(p^*f)^*\UniversalResidueBundle$,
    the tame Fréchet space subbundle $\bR \subseteq f^*\BundleOfResidues$ is   
    \begin{equation*}
      \bR \coloneq (f^*p)_*\bV \subseteq f^*\BundleOfResidues.
      \qedhere
    \end{equation*}
  \end{enumerate}
\end{remark}

\begin{figure}[h]
  \centering
  \begin{subfigure}{.48\textwidth}
    \centering
    \begin{equation*}
      \begin{tikzcd}
        \bV \ar[hook]{r} & (p^*f)^*\UniversalResidueBundle \ar{r} \ar{d} & \UniversalResidueBundle \ar{d} \\
        & f^*\UniversalBr \ar{r}{p^*f} \ar[swap]{d}{f^*p} & \UniversalBr \ar{d}{p} \\
        & \bB \ar[swap]{r}{f} & \SpaceOfDiracBundles \times \SpaceOfRamifiedLineBundles
      \end{tikzcd}
    \end{equation*}
    \caption{}
    \label{Fig_A}
  \end{subfigure}
  \begin{subfigure}{.48\textwidth}
    \centering
    \begin{equation*}
      \begin{tikzcd}
        \bR \coloneq (f^*p)_*\bV \ar[hook]{r} & f^*\BundleOfResidues \ar{r} \ar{d} & \BundleOfResidues = p_*\UniversalResidueBundle \ar{d} \\
        & \bB \ar[swap]{r}{f} & \SpaceOfDiracBundles \times \SpaceOfRamifiedLineBundles
      \end{tikzcd}
    \end{equation*}
    \caption{}
    \label{Fig_B}
  \end{subfigure}
  \caption{Families of local residue conditions.}
  \label{Fig_FamiliesOfLocalResidueConditions}
\end{figure}

For the purposes of this article,
it is very desirable for $\bD_\bR$ to have the following property.

\begin{definition}
  \label{Def_LinearUniformlyFredholm}
  Let
  $E$ and $F$ be tame Fréchet space bundles over a tame Fréchet manifold $B$.
  A map of tame Fréchet space bundles $L \co E \to F$ is \defined{uniformly Fredholm}
  if
  for every $b \in B$
  there exist an open neighbourhood $b \in \sU \subseteq B$,
  finite-rank vector bundles $\DeformationBundle$ and $\ObstructionBundle$ over $\sU$,
  and maps of tame Fréchet space bundles $\pi \co E|_\sU \onto \DeformationBundle$ and $\iota \co \ObstructionBundle \into F|_\sU$ such that the \defined{thickening}
  \begin{equation*}
    \begin{pmatrix}
      L & \iota \\
      \pi & 0
    \end{pmatrix}
    \co E|_\sU \oplus \ObstructionBundle \to F|_\sU \oplus \DeformationBundle
  \end{equation*}
  is an isomorphism of tame Fréchet space bundles.
\end{definition}

\begin{prop}
  \label{Prop_TotallyReal=>UniformlyFredholm}
  Let $(\bB,f,\bV)$ be a family of local residue conditions such that
  for every $b \in \bB$, $x \in Z_b$, and $v \in T_xZ_b \setminus\set{0}$
  \begin{equation*}
    \bV_{(b,x)} \oplus J_{\bp_b}\gamma(v) \bV_{(b,x)} = \ResidueBundle_{\bp_b,x}.
  \end{equation*}
  Denote by $(\bB,f,\bR)$ the corresponding family of residue conditions.
  If $\fa \co f^*\BundleOfSingularSpinors \to f^*\BundleOfSingularSpinors$ is a map of tame Fréchet bundles of order zero,
  then
  \begin{equation*}
    \UniversalDiracOperator_\bR + \fa \co \BundleOfAdmissibleSingularSpinors_\bR \to f^*\BundleOfSingularSpinors
  \end{equation*}
  is uniformly Fredholm.
  Moreover,
  for every $b_0 \in \bB$
  the maps $\pi \co \BundleOfAdmissibleSingularSpinors_\bR|_\sU \to \DeformationBundle$ and $\iota \co \ObstructionBundle \into f^*\BundleOfSingularSpinors|_\sU$ in \autoref{Def_LinearUniformlyFredholm} can be chosen such that there is a compact subset $K \subseteq X$ such that for every
  $b \in \sU$
  the following hold:
  \begin{enumerate}
  \item
    $K \subseteq X\setminus Z_b$;
  \item
    if $\phi \in \BundleOfAdmissibleSingularSpinors_{\bR,b}$ has $\supp(\phi) \cap K = \emptyset$, then $\pi(\phi) = 0$; and
  \item  
    $\supp(\iota(o)) \subseteq K$ for every $o \in \ObstructionBundle_b$.
  \end{enumerate}  
\end{prop}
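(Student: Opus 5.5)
The plan is to reduce to a single fibre, use the fibrewise Fredholm theory from \autoref{Sec_ElliticResidueConditions}, and then propagate invertibility of a thickening to a neighbourhood by tame estimates.

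Fix $b_0 \in \bB$ and write $(\bp_0;Z_0,[\fl_0]) \coloneq f(b_0)$ and $L_b \coloneq (\UniversalDiracOperator_\bR + \fa)_b$. The symbolic hypothesis is precisely the criterion of \cite[Theorem 4.40]{BeraWalpuski2025}. Hence the local residue condition $R_{b_0} = \check H\Gamma(Z_0,\bV_{b_0})$ is elliptic, and by \cite[Theorem 4.17 and Proposition 4.21]{BeraWalpuski2025} the operators
\begin{equation*}
  L_{b_0}^{(k)} \co H_a^{k+1}\Gamma\paren{X\setminus Z_0,S\otimes\fl_0;R_{b_0}} \to H_b^k\Gamma\paren{X\setminus Z_0,S\otimes\fl_0}
\end{equation*}
are Fredholm. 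Their kernels and cokernels are independent of $k$ and consist of elements of the $H^\infty$ spaces. The term $\fa$ has order zero, so it is a compact-relative bounded perturbation that preserves the elliptic estimates.

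Choose $\Phi_1,\ldots,\Phi_m$ spanning a complement of $\im L_{b_0}$. By density, and because elliptic regularity makes the range closed, I can take them smooth and compactly supported in a fixed compact $K \subseteq X \setminus Z_0$. Define $\iota \co \R^m \to f^*\BundleOfSingularSpinors$, $o \mapsto \sum o_i\Phi_i$, transported to nearby fibres via the local uniformisers of \autoref{Prop_TransformToNormalForm}. These can be chosen to equal $\one_S$ on $K$, so the supports stay in $K$ and $K \cap Z_b = \emptyset$ for $b$ near $b_0$. Similarly, $\ker L_{b_0}$ is finite-dimensional and spanned by $\psi_1,\ldots,\psi_n$. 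By unique continuation for Dirac operators, the restrictions $\psi_j|_K$ are linearly independent for $K$ large (alternatively, add smooth approximants). Let $\pi(\phi) \coloneq (\Inner{\phi,\chi_j}_{L^2})_j$ with $\chi_j$ smooth, supported in $K$, and dual to $\psi_j$. Then $\pi$ vanishes on $\phi$ with $\supp\phi \cap K = \emptyset$, as the last assertions require. With these choices the thickening $\begin{pmatrix} L_{b_0} & \iota\\ \pi & 0\end{pmatrix}$ is bijective at $b_0$, and at every level $k$ it satisfies a tame estimate
\begin{equation*}
  \Abs{\phi}_{H_a^{k+1}} + \abs{o} \lesssim_k \Abs{L\phi+\iota o}_{H_b^k} + \abs{\pi\phi} + \Abs{\phi}_{L^2}
\end{equation*}
with constants depending only on finitely many derivatives of the coefficients.

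Next I trivialise $\BundleOfAdmissibleSingularSpinors_\bR$ and $f^*\BundleOfSingularSpinors$ near $b_0$. For this I combine the atlas of \autoref{Def_UniversalBundles_Atlases} with the splitting of \autoref{Prop_UniversalBundleOfZ2ZSpinors}. I also need a tame trivialisation of the finite-rank subbundle $\bV$, for instance by orthogonal projection onto $\bV_{b_0}$ after applying the uniformiser, which is an isomorphism nearby. This realises $\BundleOfAdmissibleSingularSpinors_\bR|_\sU \iso \sU \times H_a^\infty(\ldots;R_{b_0})$. In these trivialisations the thickening becomes a tame smooth family $T_b$ of operators on fixed spaces. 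By \autoref{Prop_Universal_Slice} it differs from $T_{b_0}$ by terms of the form $\DiffOp_b^1$ plus residue-map perturbations, with coefficients small in low norms.

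The main obstacle is showing that $T_b$ is invertible for all $b$ in a neighbourhood $\sU$, with a tame smooth inverse family. I would first establish invertibility at each Sobolev level $k$ by a perturbation argument. At the base level $k = 0$, the operator norm of $T_b - T_{b_0}$ is small once $b$ is close in a fixed $C^\ell$ norm. Elliptic regularity for $R_b$, which is uniform because ellipticity is an open condition on symbols, then shows that the kernel and cokernel stay trivial at all higher levels. The tame estimate for $T_b^{-1}$, uniform over $\sU$, then follows from the uniform elliptic estimates by the standard interpolation argument in \cite[Part II Theorem 3.3.3]{Hamilton1982:NashMoser}. Smoothness in $b$ follows from the formula $D(T^{-1}) = -T^{-1}(DT)T^{-1}$. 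Taking $\DeformationBundle = \sU \times \R^n$ and $\ObstructionBundle = \sU \times \R^m$ completes the proof.
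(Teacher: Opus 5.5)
Your proposal is correct and follows essentially the same route as the paper. Both arguments obtain fibrewise ellipticity from the symbolic criterion and thicken at $b_0$ by finitely many sections supported in $K$, transported by uniformisers that are the identity on $K$. Both then get invertibility near $b_0$ at level $k=0$ by continuity in a trivialisation, extend it to all $k$ by elliptic regularity, and assemble a tame smooth inverse by Hamilton's argument. The only cosmetic difference is that the paper uses unique continuation for both $\ker$ and the adjoint kernel, so any compact $K$ with non-empty interior works, whereas you pick the cokernel representatives by density.
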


\begin{proof}%[Proof of \autoref{Prop_TotallyReal=>UniformlyFredholm}]
  By \cite[Proposition 4.41]{BeraWalpuski2025},
  for every $b \in \bB$ the residue condition $\bR_b$ is elliptic;
  in particular:
  for every $k \in \N_0$ the operator
  \begin{equation*}
    D_{\bR_b}^{(k)} + \fa_b^{(k)} \coloneq D_{\bp_b,\bR_b}^{\fl_b,(k)} + \fa_b^{(k)} \co H_a^{k+1}\Gamma\paren{X\setminus Z_b, S\otimes\fl_b;\bR_b;\bp_b} \to H_b^k\Gamma\paren{X\setminus Z_b, S\otimes\fl_b;\bp_b}
  \end{equation*}
  is Fredholm, and $\ker \paren{D_{\bR_b}^{(k)} + \fa_b^{(k)}}$ and $\coker \paren{D_{\bR_b}^{(k)} + \fa_b^{(k)}}$ are independent of $k \in \N_0$.
  
  Let $b_0 \in \bB$.
  Choose the following:
  \begin{enumerate}[label=\rm{(\Roman*)},ref=\Roman*]
  \item
    a compact subset $K \subseteq X \setminus Z_{b_0}$ with non-empty interior,
  \item
    an open neighbourhood $b_0 \in \sU \subseteq \bB$ such that $K \subseteq X \setminus Z_b$ for every $b \in \sU$,
  \item
    finite rank Euclidean vector bundles $\DeformationBundle$ and $\ObstructionBundle$ over $\sU$ and maps of tame Fréchet space bundles $\pi \co \BundleOfAdmissibleSingularSpinors_\bR|_\sU \onto \DeformationBundle$ and $\iota \co \ObstructionBundle \into f^*\BundleOfSingularSpinors|_\sU$ such that for every $b \in \sU$:
    \begin{enumerate}
    \item
      \label{It_PDefinableOnHa1}
      for every $\phi \in \BundleOfAdmissibleSingularSpinors_{\bR,b}$
      \begin{equation*}
        \Abs{\pi(\phi)} \lesssim \Abs{\phi}_{H_a^1},
      \end{equation*}
    \item
      \label{It_SuppInK}
      \begin{enumerate}
      \item
        if $\supp(\phi) \cap K = \emptyset$,
        then $\pi(\phi) = 0$,
      \item
        $\supp(\iota(o)) \subseteq K$ for every $b \in \sU$ and $o \in \ObstructionBundle_b$,
      \end{enumerate}
    \item
      \label{It_Invertible}
      for every $b \in \sU$ and $k \in \N_0$
      \begin{equation*}
        L_b^{(k)}
        \coloneq
        \begin{pmatrix}
          D_{\bR_b}^{(k)} + \fa_b^{(k)} & \iota_b \\
          \pi_b & 0
        \end{pmatrix}
      \end{equation*}
      is invertible.
    \end{enumerate}
  \end{enumerate}  
  \autoref{It_SuppInK} can be arranged with the help of  \autoref{Prop_TransformToNormalForm}.
  To see that \autoref{It_Invertible} can be achieved,
  observe the following.
  If $L_b^{(0)}$ is invertible,
  then $L_b^{(k)}$ is invertible for every $k \in \N_0$ by elliptic regularity \cite[Theorem 4.17]{BeraWalpuski2025}.  
  Since $b \mapsto L_b^{(0)}$ is a continuous family of bounded operators between Banach spaces (in a suitable trivialisation),
  it suffices to ensure that $L_{b_0}^{(0)}$ is invertible.
  The latter holds provided the induced maps $\ker \paren{D_{\bR_{b_0}}^{(0)} + \fa_{b_0}^{(0)}} \to \DeformationBundle_{b_0}$ and $\ObstructionBundle_{b_0} \to \coker \paren{D_{\bR_{b_0}}^{(0)} + \fa_{b_0}^{(0)}}$ are isomorphisms.
  Because the kernels of both $D_{\bR_{b_0}} + \fa_{b_0}$ and its adjoint have the unique continuation property,
  this can easily be arranged while satisfying the desired support properties for a suitable $K \subseteq X \setminus Z_{b_0}$.

  The argument in \cite[Part II Proofs of Theorem 3.3.3 and Theorem 3.3.5]{Hamilton1982:NashMoser} shows that inverses obtained from \autoref{It_Invertible} assemble into an inverse of the map of tame Fréchet bundles
  \begin{equation*}
    \begin{pmatrix}
      D_\bR + \fa & \iota \\
      \pi & 0
    \end{pmatrix}
    \co
    {\BundleOfAdmissibleSingularSpinors_\bR}|_\sU \oplus \ObstructionBundle
    \to
    f^*\BundleOfSingularSpinors \oplus \DeformationBundle.
    \qedhere
  \end{equation*}
\end{proof}

%%% Local Variables:
%%% mode: latex
%%% TeX-master: "UniversalModuliSpaceOfZ2ZHarmonicSpinors"
%%% ispell-local-dictionary: "british"
%%% End:

\subsection{The universal subbundle of admissible $\Z/2\Z$ spinors}
\label{Sec_UniversalBundleOfAdmissibleZModTwoZSpinors}

Naively, one might hope to use $\BundleOfZModTwoZSpinors$ as the ambient space for the construction of tame Fréchet manifold structures on the universal moduli spaces of $\Z/2\Z$ harmonic spinors introduced in \autoref{Sec_Introduction}.
However, for the arguments in \autoref{Sec_Z2ZHarmonicSpinorsInDimensionThree} it is crucial to resolve the issue raised in \autoref{Rmk_CommutatorLKvD} by working with a smaller ambient tame Fréchet space bundle $\BundleOfAdmissibleZModTwoZSpinors$,
the \defined{universal bundle of admissible $\Z/2\Z$ spinors}.
The following describes the fibres of $\BundleOfAdmissibleZModTwoZSpinors$.

\begin{prop}
  \label{Prop_K_Tame}
  For every $\bp \in \SpaceOfDiracBundles$ and $(Z,[\fl]) \in \SpaceOfRamifiedLineBundles$ the graded Fréchet space $\paren{K^\infty\paren{X\setminus Z,S\otimes\fl;\bp},(\Abs{-}_{K^k})_{k \in \N_0}}$ defined by
  \begin{align*}
    K^\infty\paren{X\setminus Z,S\otimes\fl;\bp}
    &\coloneq
      H_a^\infty\paren{X\setminus Z,S\otimes\fl;0;\bp}
      \cap
      (D_\bp^\fl)^{-1} \WeightLog^{-1} H_a^\infty\paren{X\setminus Z,S\otimes\fl;0;\bp}, \qand \\
    \Abs{\phi}_{K^k}
    &\coloneq
      \Abs{\phi}_{H_b^{k+2}} + \Abs{\nabla \phi}_{H_b^{k+1}} + \Abs{\WeightLog D_\bp^\fl\phi}_{H_b^{k+1}} + \Abs{\nabla \WeightLog D_\bp^\fl\phi}_{H_b^k}
  \end{align*}
  is tame;
  that is, it has the smoothing operator property.
  Here $\WeightLog$ denotes multiplication by $\bracket{\log(r)} = \sqrt{1 + \log(r)^2}$ and $r$ denotes the distance to $Z$.
\end{prop}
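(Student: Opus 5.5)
The approach is to exhibit $K^\infty$ as a tame direct summand of a product of tame Fréchet spaces whose smoothing operator properties are already known, namely the conormal and adapted scales from \cite[Propositions 4.32 and 4.37]{BeraWalpuski2025}. The general principle (Hamilton, Part II) is this: if $F$ is tame and there are tame linear maps $\iota \co K^\infty \to F$ and $P \co F \to K^\infty$ with $P\iota = \one$, and both satisfy grading estimates with bounded loss, then $S^K_\epsilon \coloneq P S_\epsilon \iota$ is a family of smoothing operators on $K^\infty$. Properties (S1)--(S3) of \autoref{Def_SmoothingOperatorProperty} then transfer with a fixed shift of indices, which is harmless.

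Concretely, I set $\psi \coloneq \WeightLog D\phi$. The map is $\iota(\phi) \coloneq (\phi,\psi)$, into $F \coloneq \ker\res \times \ker\res$, where each factor is the core of the adapted scale with residue zero. By \cite[Proposition 4.27]{BeraWalpuski2025}, on $\ker\res$ the norm $\Abs{\phi}_{H_b^{k+2}} + \Abs{\nabla\phi}_{H_b^{k+1}}$ is equivalent to $\Abs{\phi}_{H_a^{k+2}} + \Abs{r^{-1}\phi}_{H_b^{k+1}}$. So $\Abs{\phi}_{K^k}$ is tamely equivalent to $\Abs{\phi}_{a,k+1} + \Abs{\psi}_{a,k}$, where $\Abs{-}_{a,k}$ denotes the grading on $\ker \res$ pulled back from $H_a^{k+1}$. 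Since $\ker\res$ is a tamely complemented subspace via $\ext\circ\res$, the core of $\ker\res$ is tame. The product $F$ is then tame.

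The main obstacle is the left inverse $P$. The image of $\iota$ is the closed subspace $\set{(\phi,\psi) : \WeightLog D\phi = \psi}$. To project onto it, I need to solve, for given $\psi$, the equation $D\phi' = \WeightLog^{-1}\psi$ with tame estimates and $\res \phi' = 0$, modulo finite-dimensional corrections. My plan is to use an elliptic residue condition, say the APS condition $R_\APS$ of \autoref{Sec_ElliticResidueConditions}. Its operator $D_{R_\APS}$ is Fredholm with elliptic regularity in every $H_a^{k+1}$ \cite[Theorem 4.17]{BeraWalpuski2025}. This gives a tame parametrix $Q$, with $DQ = \one - \Pi$ and $\Pi$ a finite-rank smoothing projection. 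The catch is that $Q$ lands in $\dom(D_{R_\APS})$ rather than in $\ker\res$. I would correct this using the decay gained from $\WeightLog^{-1}$. For $\psi \in H_a$ with $\res\psi = 0$, the function $\WeightLog^{-1}\psi$ vanishes fast enough near $Z$ that its preimage has vanishing residue, because the leading $r^{-1/2}$ Fourier mode is produced only by data not decaying like $r^{-1/2}\log^{-1} r$. Checking this in the model operator near $Z$ is where I expect the real work: a weighted estimate on the model $\mathring D$ with log weight. Given it, I define
\begin{equation*}
  P(\phi,\psi) \coloneq \phi - Q\paren[\big]{D\phi - \WeightLog^{-1}\psi} + (\text{finite-rank correction}),
\end{equation*}
which is tame and restricts to the identity on the image of $\iota$.

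Alternatively, if the correction proves messy, I would build the smoothing operators directly, as in the proof of \cite[Proposition 4.37]{BeraWalpuski2025}. There one uses a dyadic decomposition in $\log r$ together with the Mellin transform in the normal direction and ordinary Fourier smoothing along $Z$. These operators commute with $D$ and with $\WeightLog$ up to conormal operators of order zero, and the resulting commutator terms can be absorbed with the same estimates.
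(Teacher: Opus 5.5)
Your embedding $\iota(\phi)=(\phi,\WeightLog D\phi)$ and the norm equivalence $\Abs{\phi}_{K^k}\asymp\Abs{\phi}_{H_a^{k+2}}+\Abs{\WeightLog D\phi}_{H_a^{k+1}}$ on $K^\infty$ are fine. The retraction $P$, however, breaks down at exactly the step you flagged, and the failure is structural rather than technical.

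Since $0\in R_\APS$, you have $\ker\res\subseteq\dom(D_{R_\APS})$. So $QD\phi=\phi$ up to a finite-rank term, and $P(\phi,\psi)$ equals $Q(\WeightLog^{-1}\psi)$ up to finite rank. Everything therefore hinges on $\res Q(\WeightLog^{-1}\psi)$ vanishing modulo finite rank. It does not: the residue of a solution is a global quantity and is not controlled by the decay of the right-hand side near $Z$. By Green's formula, $\Inner{D\chi,\kappa}_{L^2}=\check\Omega(\res\chi,\res\kappa)$ for $\chi\in\dom(\Dmax)$ and $\kappa\in\ker\Dmax$, so $\res\chi=0$ forces $D\chi\perp\ker\Dmax$. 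Already for $\psi$ supported away from $Z$, where decay is irrelevant, $\WeightLog^{-1}\psi$ runs through a dense subspace of $L^2$, while $\ker\Dmax$ is infinite-dimensional. Hence $\psi\mapsto\res Q(\WeightLog^{-1}\psi)$ has infinite-dimensional image, and no finite-rank correction puts $P(\phi,\psi)$ into $\ker\res$. Equivalently, $\WeightLog D(K^\infty)\subseteq(\WeightLog^{-1}\ker\Dmax)^\perp$ has infinite codimension in $\ker\res$. So a left inverse of $\iota$ cannot come from solving $D\phi'=\WeightLog^{-1}\psi$ globally inside $\ker\res$, for any choice of Fredholm residue condition, and your proposal supplies no other candidate for $P$.

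Your fallback is the route that works, and it is essentially the paper's. But the sentence claiming that the smoothing operators commute with $D$ and $\WeightLog$ up to order-zero conormal errors is where all the content lies, and it is not how the argument goes.
\begin{itemize}
\item The paper first reduces, near $Z$, to the model norms $\Abs{-}_{\mathring K^k}$. This requires the log-weighted bounds of \autoref{Prop_K_Estimate} to absorb $[D-\mathring D,\log r]=r^{-1}\sigma_B(\rd r)$.
\item It then separates variables into modes $(\lambda,\mu)$, applies a spectral cutoff, and smooths in $r$ by Mellin convolution with $\sigma_\epsilon(r)=\sM^{-1}(\chi_\epsilon)(r)\rho(\log r)$, where $\rho$ is compactly supported.
\item The operators $\partial_r$ and $\lambda/r$ do not commute with $\sigma_\epsilon*_\sM$; they intertwine it with $(r^{-1}\sigma_\epsilon)*_\sM$. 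This is harmless only because the kernel is compactly supported in $\log r$: a kernel that is merely Schwartz in $\log r$ is destroyed by the factor $r^{-\nu}=e^{-\nu\log r}$.
\item The commutator with $\log r$ is Mellin convolution with $\log(r)\sigma_\epsilon$. Its Mellin transform $\tfrac{i}{2\pi}\chi_\epsilon'*\hat\rho$ gains a factor $\epsilon$ and is concentrated at $\abs{\xi}\sim\epsilon^{-1}$.
\item These commutator terms satisfy the smoothing estimates on their own once combined with $\Abs{(\partial_r-\lambda/r)\phi}_{H_b^k}\lesssim\Abs{\log(r)(\partial_r-\lambda/r)\phi}_{H_b^k}$ for $\phi$ supported in $\set{r\leq 1/2}$.
\end{itemize}
As written, your fallback asserts these estimates rather than proving them.
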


The proof uses the following observation.

\begin{prop}
  \label{Prop_K_Estimate}
  For every $\bp \in \SpaceOfDiracBundles$, $(Z,[\fl]) \in \SpaceOfRamifiedLineBundles$,
  $\phi \in K^\infty\paren{X\setminus Z,S\otimes\fl;\bp}$, and $k \in \N_0$
  \begin{equation*}
    \Abs{\WeightLog \phi}_{H_b^{k+2}} + \Abs{r^{-1}\WeightLog \phi}_{H_b^{k+1}} + \Abs{\WeightLog \nabla \phi}_{H_b^{k+1}}
    \lesssim_k
    \Abs{\phi}_{K^k}.    
  \end{equation*}
\end{prop}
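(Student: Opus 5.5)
The plan is to localise and then reduce the estimate to a one-dimensional weighted Hardy inequality in the normal radial variable. By a partition of unity it suffices to treat two regions separately: a compact set away from $Z=\Br(\fl)$, and a tubular neighbourhood of $Z$.
\begin{itemize}
\item Away from $Z$ we have $r\asymp 1$ and $\WeightLog\asymp 1$. The $H_b$ norms there are ordinary Sobolev norms, so all three terms on the left are bounded by $\Abs{\phi}_{H_b^{k+2}}+\Abs{\nabla\phi}_{H_b^{k+1}}$, which is part of $\Abs{\phi}_{K^k}$.
\item Near $Z$, I will first record that $\WeightLog$ is an admissible weight for the $b$-calculus. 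For $P\in\DiffOp_b^1(S\otimes\fl)$, the commutator $[P,\WeightLog]$ is multiplication by a bounded function times $\WeightLog^{-1}$, because $r\del_r\bracket{\log r}=\log r/\bracket{\log r}$.
\item Consequently, for every $\ell$ and every $\psi$,
\begin{equation*}
  \Abs{\WeightLog\psi}_{H_b^{\ell}}\lesssim_\ell \sum_{P\in\sP_b^\ell}\Abs{\WeightLog P\psi}_{L^2},
\end{equation*}
and conversely. This lets me move the weight past $b$-derivatives freely, up to lower-order terms that are absorbed by induction on $k$.
\end{itemize}

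The core is the case $k=0$ near $Z$, proved in the normal form provided by \autoref{Prop_TransformToNormalForm} and \cite[Proposition 3.14]{BeraWalpuski2025}. There $D=\mathring D+B$ with $B\in\DiffOp_b^1$ having coefficients vanishing at $Z$. In coordinates $(r,\theta,z)$,
\begin{equation*}
  \mathring D=\gamma(\del_r)\paren[\big]{\del_r+r^{-1}I\del_\theta}+\sum_a\gamma(\del_{z_a})\del_{z_a}.
\end{equation*}
Because of the twist by $\fl$, the Fourier decomposition in $\theta$ involves only half-integer modes. The steps are as follows.
\begin{itemize}
\item Set $\psi\coloneq D\phi$. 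By hypothesis $\WeightLog\psi\in H_a^\infty$ with $\res(\WeightLog\psi)=0$.
\item By \cite[Proposition 4.27]{BeraWalpuski2025} applied to $\WeightLog\psi$, both $r^{-1}\WeightLog\psi$ and $\nabla(\WeightLog\psi)$ lie in $H_b^{k}$. Their norms are controlled by $\Abs{\WeightLog D\phi}_{H_b^{k+1}}+\Abs{\nabla\WeightLog D\phi}_{H_b^k}$, which is part of $\Abs{\phi}_{K^k}$.
\item Next I solve $\mathring D\phi=\psi-B\phi$ mode by mode. For a $\theta$-mode $m\in\frac12+\Z$, the radial operator is $\del_r\mp\abs{m}/r$ on the two chirality components of $\gamma(\del_r)$.
\item Since $\res\phi=0$, the $r^{-1/2}$ branch is absent. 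Hence each mode is given by an explicit integral of $\psi_m$ against $(r/s)^{\pm\abs{m}}$, integrated from $0$ or from the outer radius, whichever converges.
\item For $\abs{m}\geq 3/2$ there is a spectral gap, so the classical Hardy inequality gives the bound without any weight.
\item The borderline modes $\abs m=\frac12$ are where the logarithm enters. The required inequality is the one-dimensional weighted Hardy estimate
\begin{equation*}
  \int_0^1 \abs{r^{-1}\bracket{\log r}f}^2\, r\,\rd r \lesssim \int_0^1 \abs{\bracket{\log r}\,(\del_r\mp\tfrac1{2r})f}^2 \, r\,\rd r+\abs{f(1)}^2.
\end{equation*}
I expect to get it by the substitution $f=r^{\pm1/2}g$ and $t=-\log r$, which turns it into a weighted Hardy inequality on a half-line with polynomial weight $\bracket{t}$.
\end{itemize}

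The tangential terms $\gamma(\del_{z_a})\del_{z_a}$ and the perturbation $B$ I will treat as errors.
\begin{itemize}
\item The $z$-derivatives of $\phi$ are controlled by $\Abs{\nabla\phi}_{H_b^{k+1}}$, and $\nabla\phi$ carries one more $b$-derivative than needed. Hence $\gamma(\del_{z_a})\del_{z_a}\phi$ can be moved to the right-hand side. Bounding it after weighting, where multiplication by $r$ cancels the $r^{-1}$, requires knowing in advance that $r\,\WeightLog\,\del_z\phi$ is controlled. This follows from the tangential regularity of $\phi$ in $H_b^{k+2}$.
\item The coefficients of $B$ vanish at $Z$, so $B\phi$ is bounded by $r$ times $b$-derivatives of $\phi$.
\end{itemize}
With $r^{-1}\WeightLog\phi$ bounded, the estimate for $\WeightLog\nabla\phi$ follows by writing $\nabla=r^{-1}\cdot(\text{$b$-vector fields})$ in the normal directions. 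The estimate for $\WeightLog\phi$ in $H_b^{k+2}$ follows from the commutator remark above. Higher $k$ is handled by applying the $k=0$ estimate to $P\phi$ for $P\in\sP_b^{k+1}$, using that $[D,P]\in\DiffOp_b^{1}\circ r^{-1}$ up to terms already controlled.

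The main obstacle is the borderline $\abs m=\frac12$ analysis. Its output must be uniform in the tangential Fourier variable and compatible with the coupling through $\del_z$. If the error terms do not absorb directly, my first fallback is to also Fourier transform along $Z$ and rescale $r\mapsto\bracket{\zeta}r$. The model then becomes a Bessel-type ODE with the same indicial roots, and the logarithmic Hardy inequality above is applied uniformly in $\zeta$.
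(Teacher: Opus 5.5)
\textbf{Comparison with the paper's route.} You argue directly, by a mode-by-mode Hardy analysis in normal form. The paper instead applies the existing elliptic theory to $\WeightLog\phi$. As written, your proposal has a genuine gap at the tangential term.

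\textbf{The gap: the tangential term at top order.} You move $\gamma(\del_{z_a})\del_{z_a}\phi$ to the right-hand side. At the top level your radial estimates then need $\Abs{\WeightLog\del_z\phi}_{H_b^{k+1}}$. Indeed, applying $\del_z^{k+1}$ to the radial equation puts $\WeightLog\del_z^{k+2}\phi$ on the right.

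But this is exactly the tangential part of $\Abs{\WeightLog\nabla\phi}_{H_b^{k+1}}$, which is one of the quantities you are trying to bound. Nothing you have available supplies it:
\begin{enumerate}
\item Tangential regularity of $\phi$ in $H_b^{k+2}$ gives $\del_z\phi\in H_b^{k+1}$ only \emph{without} the weight.
\item The unweighted Hardy bound $\Abs{r^{-1}\phi}_{H_b^{k+1}}\lesssim\Abs{\phi}_{K^k}$, which comes from $\res\phi=0$, gives $\WeightLog\del_z\phi=(r\WeightLog)\,r^{-1}\del_z\phi\in H_b^{k}$. This is one tangential derivative short.
\item There is no small factor with which to absorb the term.
\end{enumerate}
Gaining the logarithm on the top tangential derivatives requires the equation with tangential frequency coupled to radial scale. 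So your ``fallback'' (Fourier transform along $Z$, Bessel-type model, estimates uniform in $\zeta$) is not optional. It amounts to redoing the elliptic regularity theory of \cite{BeraWalpuski2025} with a logarithmic weight, and the proposal does not carry this out.

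The same derivative loss affects your derivation of the $\WeightLog\nabla\phi$ bound from $r^{-1}\WeightLog\phi\in H_b^{k+1}$ via $\nabla=r^{-1}\cdot(\text{$b$-vector fields})$. That step would need $r^{-1}\WeightLog\phi\in H_b^{k+2}$.

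\textbf{The modes $\abs{m}=\frac12$ are not the obstacle.} Their indicial roots $\pm\frac12$ stay a distance $\frac12$ from the borderline exponent $0$ for $r^{-1}f\in L^2(r\,\rd r)$. After the substitution $t=-\log r$ the solution kernels decay exponentially, so the weight $\bracket{\log r}=\bracket{t}$ is harmless there. The logarithm is borderline only in the two-dimensional Hardy inequality for $\bar v$ used in \autoref{Prop_K_PerturbationEstimate}.

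\textbf{The paper's route: apply the unweighted theory to $\WeightLog\phi$.}
\begin{enumerate}
\item By \cite[Lemma 3.3 and Proposition 4.27]{BeraWalpuski2025}, $\Abs{r^{-1}\phi}_{H_b^{k+1}}\lesssim\Abs{\phi}_{H_b^{k+1}}+\Abs{\nabla\phi}_{H_b^{k+1}}\le\Abs{\phi}_{K^k}$.
\item Near $Z$, $[D_\bp^\fl,\WeightLog]=\frac{\log r}{\bracket{\log r}}\,r^{-1}\gamma(\del_r)$. Together with step 1, this bounds $\Abs{\WeightLog\phi}_{H_b^{k+1}}+\Abs{D_\bp^\fl(\WeightLog\phi)}_{H_b^{k+1}}$ by $\Abs{\phi}_{K^k}$. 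Hence $\WeightLog\phi\in H_a^{k+1}$.
\item Since $\phi$ is bounded, $\res(\WeightLog\phi)=0$.
\item Elliptic regularity \cite[Theorem 4.11]{BeraWalpuski2025} places $\WeightLog\phi$ in $H_a^{k+2}$ with vanishing residue and bounds $\Abs{\WeightLog\phi}_{H_b^{k+2}}$.
\item \cite[Proposition 4.27]{BeraWalpuski2025} then bounds $r^{-1}\WeightLog\phi$ and $\nabla(\WeightLog\phi)$ in $H_b^{k+1}$.
\item Finally, $\WeightLog\nabla\phi=\nabla(\WeightLog\phi)-(\nabla\WeightLog)\phi$ with $\abs{\nabla\WeightLog}\le r^{-1}$.
\end{enumerate}
The weight enters only through a commutator, which costs exactly the unweighted Hardy bound. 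All the tangential analysis that defeats your perturbative scheme is already contained in the cited results.
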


\begin{proof}%[Proof of \autoref{Prop_K_Estimate}
  By \cite[Lemma 3.3 and Proposition 4.27]{BeraWalpuski2025},  
  \begin{equation}
    \label{Ex_1/REstimate}
    \Abs{r^{-1}\phi}_{H_b^{k+1}}
    \lesssim_k
    \Abs{\phi}_{H_b^{k+1}} + \Abs{\nabla\phi}_{H_b^{k+1}}
    \lesssim_k
    \Abs{\phi}_{K^k}.
  \end{equation}
  Therefore, since $[D_\bp^\fl,\log(r)] = r^{-1}\gamma(\del_r)$,
  \begin{align*}
    \Abs{\WeightLog\phi}_{H_b^{k+1}} + \Abs{D_\bp^\fl \WeightLog\phi}_{H_b^{k+1}}
    \lesssim_k
    \Abs{\phi}_{H_b^{k+1}} + \Abs{\nabla\phi}_{H_b^{k+1}} + \Abs{\WeightLog D_\bp^\fl\phi}_{H_b^{k+1}} 
    \leq
    \Abs{\phi}_{K^k}.
  \end{align*}
  As a consequence,
  $\WeightLog\phi \in H_a^{k+1}(X\setminus Z,S\otimes\fl)$.
  Since $\phi \in L^\infty(X\setminus Z,S\otimes\fl)$,
  $\res(\WeightLog\phi) = 0$.
  By \cite[Theorem 4.11]{BeraWalpuski2025},
  \begin{equation*}
    \Abs{\WeightLog\phi}_{H_b^{k+2}}
    \lesssim_k
    \Abs{\WeightLog\phi}_{H_b^{k+1}} + \Abs{D_\bp^\fl \WeightLog\phi}_{H_b^{k+1}}
  \end{equation*}
  and $\WeightLog\phi \in H_a^{k+2}(X\setminus Z,S\otimes\fl;0;\bp)$.
  Therefore,
  by \cite[Lemma 3.3 and Proposition 4.27]{BeraWalpuski2025},
  \begin{equation*}
    \Abs{r^{-1}\WeightLog \phi}_{H_b^{k+1}}
    +
    \Abs{\WeightLog\nabla \phi}_{H_b^{k+1}}
    \lesssim_k
    \Abs{r^{-1}\phi}_{H_b^{k+1}} + \Abs{\nabla \WeightLog\phi}_{H_b^{k+1}}
    \lesssim
    \Abs{\phi}_{K^k}.
    \qedhere
  \end{equation*}
\end{proof}

\begin{proof}[Proof of \autoref{Prop_K_Tame}]
  For every $\phi \in \Gamma\paren{X\setminus Z, S\otimes\fl}$ with $\supp \phi \subseteq r^{-1}([0,1/2])$ set
  \begin{equation*}
    \Abs{\phi}_{\mathring K^k}
    \coloneq
    \Abs{\phi}_{H_b^{k+2}} + \Abs{\mathring\nabla \phi}_{H_b^{k+1}} + \Abs{\log(r)\mathring D_\bp^\fl\phi}_{H_b^{k+1}} + \Abs{\mathring\nabla \log(r) \mathring D_\bp^\fl\phi}_{H_b^{k}}.
  \end{equation*}
  Here $\mathring \nabla$ and $\mathring D_\bp^\fl$ are the models for $\nabla$ and $D_\bp^\fl$ constructed in \cite[§3.2]{BeraWalpuski2025}.
  Since $B \coloneq D_\bp^\fl - \mathring D_\bp^\fl \in \DiffOp_b^1(S\otimes \fl)$,
  $[B,\log(r)] = r^{-1}\sigma_B(\rd r)$ and its conormal derivatives are bounded.  
  Therefore,
  for $\phi \in K^\infty\paren{X\setminus Z,S\otimes\fl;\bp}$ with $\supp \phi \subseteq r^{-1}([0,1/2])$,
  by \autoref{Prop_K_Estimate}
  \begin{align*}
    \Abs{\log(r) B\phi}_{H_b^{k+1}}
    \lesssim_{B,k}  
    \Abs{\log(r) \phi}_{H_b^{k+2}}
    \lesssim_{B,k}
    \Abs{\phi}_{K^k}
  \end{align*}
  and
  \begin{align*}
    \Abs{\nabla \log(r) B\phi}_{H_b^k}
    &\lesssim_k
      \Abs{r^{-1} B \phi}_{H_b^k} + \Abs{\log(r) \nabla B\phi}_{H_b^k} \\
    &\lesssim_{B,k}
      \Abs{r^{-1} \phi}_{H_b^{k+1}} + \Abs{\log(r) \nabla \phi}_{H_b^{k+1}}
      \lesssim_k
      \Abs{\phi}_{K^k}.
  \end{align*}
  The above (and similar estimates) imply that  
  \begin{equation*}
    \Abs{\phi}_{\mathring K^k} \lesssim_k \Abs{\phi}_{K^{k}}
    \qandq
    \Abs{\phi}_{K^{k}} \lesssim_k \Abs{\phi}_{\mathring K^k}.
  \end{equation*}

  \autoref{Sec_SmoothingOperators} constructs a family of smoothing operators for $\paren{\Abs{-}_{\mathring K^k} : k \in \N_0}$.
  These combined with the usual smoothing operators away from $Z$ construct the required smoothing operators as, for example, in \cites[Proof of Lemma B.1]{Parker2023:Deformation}[Proof of Proposition 4.32]{BeraWalpuski2025}.
\end{proof}

\begin{prop}
  \label{Prop_K_PerturbationEstimate}
  For every $\bp \in \SpaceOfDiracBundles$, $(Z,[\fl]) \in \SpaceOfRamifiedLineBundles$,
  $\Phi \in K^\infty(X\setminus Z, S\otimes\fl;\bp)$, and $\bar v \in \Vect(X)$
  \begin{equation*}
    \sL_{\bar v}^KD_\bp^\fl \Phi
    \in
    H_b^\infty\paren{X\setminus Z, S\otimes\fl;\bp};
  \end{equation*}
  in fact, for every $k,\ell \in \N_0$ with $\ell > d/2$, and $\epsilon > 0$,
  there is a constant $c_k(\epsilon) < \infty$ such that
  \begin{equation*}
    \Abs{\sL_{\bar v}^KD_\bp^\fl \Phi}_{H_b^k}
    \leq
    \epsilon \Abs{\bar v}_{H^{k+1}}\Abs{\Phi}_{K^{\ell}}
    + c_k(\epsilon) \Abs{\bar v}_{H^1}\Abs{\Phi}_{K^{k+\ell}}.
  \end{equation*}
\end{prop}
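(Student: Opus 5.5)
Write $\psi \coloneq D_\bp^\fl\Phi$ and unpack $\sL_{\bar v}^K\psi = \nabla_{\bar v}\psi + \kappa_{\bar v}\psi$. The zeroth-order term is harmless. Since $\kappa_{\bar v}$ is a smooth endomorphism built from $\nabla\bar v$, the standard tame product estimate for $H_b^k$ against $C^k$ coefficients bounds $\Abs{\kappa_{\bar v}\psi}_{H_b^k}$ by
\begin{equation*}
  \Abs{\bar v}_{C^{k+1}}\Abs{\psi}_{H_b^0} + \Abs{\bar v}_{C^1}\Abs{\psi}_{H_b^k}.
\end{equation*}
Sobolev embedding then converts this into the asserted interpolated form. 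Note also that $\Abs{\psi}_{H_b^{k}} \lesssim \Abs{\WeightLog\psi}_{H_b^{k}} \leq \Abs{\Phi}_{K^{k}}$, because $\WeightLog \geq 1$. Thus everything reduces to the derivative term $\nabla_{\bar v}\psi$.

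For that term, the key observation is that by definition of $K^\infty$ we have $\WeightLog\psi \in H_a^\infty(\ldots;0;\bp)$, that is, it has vanishing residue. Hence \cite[Proposition 4.27]{BeraWalpuski2025} applies, as in \autoref{Rmk_UniversalBundleOfZ2ZSpinors}~\autoref{Rmk_UniversalBundleOfZ2ZSpinors_Derivative}, and gives $\nabla_w(\WeightLog\psi) \in H_b^\infty$ for every smooth vector field $w$, even one transverse to $Z$. The norm is controlled by $\Abs{\nabla\WeightLog\psi}_{H_b^k} \leq \Abs{\Phi}_{K^k}$. We then write
\begin{equation*}
  \nabla_{\bar v}\psi = \WeightLog^{-1}\nabla_{\bar v}(\WeightLog\psi) - \WeightLog^{-1}(\bar v\WeightLog)\psi.
\end{equation*}
Here $\bar v\WeightLog = \log(r)\bracket{\log r}^{-1}\, r^{-1}\,\bar v(r)$, and $\bar v(r)$ is only $O(1)$ near $Z$, so this term carries a factor of $r^{-1}$. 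The issue is therefore to control $r^{-1}\psi$ in $H_b^k$. This can be handled because $\WeightLog\psi$ has vanishing residue: \cite[Lemma 3.3 and Proposition 4.27]{BeraWalpuski2025}, exactly as in \autoref{Ex_1/REstimate}, give
\begin{equation*}
  \Abs{r^{-1}\WeightLog\psi}_{H_b^k} \lesssim \Abs{\WeightLog\psi}_{H_b^k}+\Abs{\nabla\WeightLog\psi}_{H_b^k} \lesssim \Abs{\Phi}_{K^k}.
\end{equation*}
The weights $\WeightLog^{-1}$ and $\log(r)\bracket{\log r}^{-1}$ are bounded with bounded conormal derivatives, since $r\partial_r$ hits them with $O(\bracket{\log r}^{-1})$. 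They are therefore harmless multipliers on $H_b^k$. This establishes membership in $H_b^\infty$.

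For the quantitative tame estimate, I would expand $\nabla_{\bar v}$ in a local frame as $\sum_i \bar v^i\nabla_{e_i}$ plus lower order terms, and likewise expand $\bar v(r)$. I would then apply the Moser-type product estimate in conormal Sobolev spaces, namely
\begin{equation*}
  \Abs{fu}_{H_b^k} \lesssim \Abs{f}_{C^k}\Abs{u}_{H_b^0} + \Abs{f}_{C^0}\Abs{u}_{H_b^k},
\end{equation*}
where ordinary derivatives of smooth $f$ dominate conormal ones. This yields a bound of the form $\Abs{\bar v}_{C^{k+1}}\Abs{\Phi}_{K^0} + \Abs{\bar v}_{C^1}\Abs{\Phi}_{K^k}$. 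To reach the stated form with $H^{k+1}$, $H^1$ norms of $\bar v$ and the loss $\ell > d/2$ on $\Phi$, I would instead place $L^\infty$ on the $\Phi$-factor and $L^2$ on $\bar v$ in the high-derivative term. This uses the conormal Sobolev embedding $H_b^{\ell}\subset L^\infty$, available locally in the $b$-geometry since $\ell>d/2$. The $\epsilon$ comes from interpolation: intermediate terms $\Abs{\bar v}_{H^{j+1}}\Abs{\Phi}_{K^{k-j+\ell}}$ are bounded via Gagliardo--Nirenberg/Young by $\epsilon$ times the top term plus $c_k(\epsilon)$ times the bottom term.

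I expect the main obstacle to be bookkeeping the mixed structure. The vector field $\bar v$ is measured in ordinary Sobolev norms on $X$, while $\Phi$ is measured in conormal norms. The critical derivatives are normal derivatives of $\bar v$, which are not conormal. I would handle this by commuting: the conormal vector fields $r\partial_r,\partial_\theta,\partial_z$ applied to coefficients of $\bar v$ give at worst $r$ times ordinary derivatives, hence are controlled by ordinary derivatives. Meanwhile the transverse derivative in $\nabla_{\bar v}$ always lands on $\WeightLog\psi$, where \cite[Proposition 4.27]{BeraWalpuski2025} absorbs it. The $L^\infty$ bound on low derivatives of $\bar v$ by $H^1$ is false for $d\geq2$. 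So I would arrange the splitting so that $\bar v$ only ever appears in $L^2$-type norms, with $\Phi$ taking the $L^\infty$ role via $\ell>d/2$.
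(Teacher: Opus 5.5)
\textbf{Verdict: the qualitative part of your proposal is fine, but the tame estimate has a genuine gap.}

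Your argument for membership in $H_b^\infty$ works. It amounts to the bound $\Abs{\WeightLog\nabla D_\bp^\fl\Phi}_{H_b^k}\lesssim_k\Abs{\Phi}_{K^k}$, which the paper also extracts from the definition of $\Abs{-}_{K^k}$ and \autoref{Ex_1/REstimate}.

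The gap is the embedding you rely on, $H_b^\ell\subset L^\infty$ for $\ell>d/2$, which is false near $Z$. The conormal norms are built on the ordinary $L^2$ norm, with normal measure $r\,\rd r\,\rd\theta$, and not on the $b$--measure $r^{-1}\rd r\,\rd\theta$. The correct statement is only $H_b^\ell\subseteq r^{-1}L^\infty$ \cite[Proposition 4.28]{BeraWalpuski2025}. For instance, $r^{-1+\delta}$ times a smooth section of $\hat S\otimes\hat\fl$, cut off near $Z$, lies in $H_b^\infty$ for $\delta>0$ but is unbounded.

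This matters here. In the derivative term, the factor you want to place in $L^\infty$ is $\nabla D_\bp^\fl\Phi$; in your splitting these are $\nabla(\WeightLog\psi)$ and $r^{-1}\psi$. It is the derivative of something vanishing like $r^{1/2}$, so it is typically of size about $r^{-1/2}$ and unbounded. Pairing $\bar v$ in $L^2$ with the $\Phi$--factor in $L^\infty$ therefore does not close, and neither does the decomposition you wrote down.

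The missing idea is to move the singular weight onto $\bar v$. Write $\bar v\otimes\nabla D_\bp^\fl\Phi=(r^{-1}\WeightLog^{-1}\bar v)\otimes(r\WeightLog\nabla D_\bp^\fl\Phi)$.
\begin{itemize}
\item The second factor lies in $L^\infty$, bounded by $\Abs{\WeightLog\nabla D_\bp^\fl\Phi}_{H_b^{m+\ell}}\lesssim\Abs{\Phi}_{K^{m+\ell}}$, because $H_b^\ell\subseteq r^{-1}L^\infty$.
\item The first factor is controlled by the borderline Hardy inequality in the two normal directions, $\Abs{r^{-1}\WeightLog^{-1}\bar v}_{H_b^k}\lesssim_k\Abs{\bar v}_{H^{k+1}}$.
\end{itemize}
The plain Hardy inequality $\Abs{r^{-1}u}_{L^2}\lesssim\Abs{u}_{H^1}$ fails in codimension two, so the logarithmic loss is unavoidable. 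Absorbing it is exactly the role of the weight $\WeightLog$ in the definition of $K^\infty$. Your plan never uses this weight quantitatively, which is why it cannot produce an $H^1$ norm of $\bar v$ in the low term.

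For the zeroth-order term $\nabla\bar v\otimes D_\bp^\fl\Phi$, an $L^\infty$ bound on $D_\bp^\fl\Phi$ is available. It comes from $H_a^\ell(\ldots;0)\subseteq L^\infty$ \cite[Proposition 4.26]{BeraWalpuski2025} together with $\Abs{D_\bp^\fl\Phi}_{H_a^{k+1}}\lesssim_k\Abs{\Phi}_{K^k}$; the vanishing residue is essential, and it does not come from an $H_b$ embedding.

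Your intermediate bound in terms of $C^{k+1}$ and $C^1$ norms of $\bar v$ also cannot be converted into the stated form by Sobolev embedding. Doing so would cost $d/2$ extra derivatives on $\bar v$ rather than on $\Phi$.

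With these two product estimates you obtain $\sum_{m=0}^k\Abs{\bar v}_{H^{k-m+1}}\Abs{\Phi}_{K^{m+\ell}}$, and your interpolation step then finishes the proof.
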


\begin{proof}%[Proof of \autoref{Prop_K_PerturbationEstimate}]
  Evidently,
  \begin{align*}
    \Abs{\sL_{\bar v}^K D_\bp^\fl \Phi}_{H_b^k}
    &\lesssim_k
      \Abs{\nabla \bar v \otimes D_\bp^\fl \Phi}_{H_b^k}
      +
      \Abs{\bar v \otimes \nabla D_\bp^\fl \Phi}_{H_b^k}
    \\
    &
      =
      \Abs{\nabla \bar v \otimes D_\bp^\fl \Phi}_{H_b^k}
      +
      \Abs{r^{-1} \WeightLog^{-1} \bar v \otimes r \WeightLog \nabla D_\bp^\fl \Phi}_{H_b^k}.
  \end{align*}
  The summands on the right-hand side are estimated individually.
  
  By \cite[Proposition 4.26]{BeraWalpuski2025},  
  $H_a^\ell\Gamma\paren{X\setminus Z, S\otimes\fl;0,\bp} \subseteq L^\infty\Gamma\paren{X\setminus Z, S\otimes\fl;\bp}$.
  Moreover,
  by definition of $\Abs{-}_{K^k}$ and \autoref{Ex_1/REstimate},
  \begin{equation*}
    \Abs{D_\bp^\fl \Phi}_{H_a^{k+1}}
    \lesssim_k
    \Abs{\Phi}_{K^k}.
  \end{equation*}  
  Therefore,
  \begin{equation*}
    \Abs{\nabla \bar v \otimes D_\bp^\fl \Phi}_{H_b^k}
    \lesssim_k
    \sum_{m=0}^k
    \Abs{\nabla \bar v}_{H^{k-m}}\Abs{D\Phi}_{H_a^{m+\ell}}
    \lesssim_k
    \sum_{m=0}^k
    \Abs{\bar v}_{H^{k-m+1}}\Abs{\Phi}_{K^{m+\ell-1}}.
  \end{equation*}
  
  %%% 
  By (the proof of) the borderline $L^2$ Hardy inequality in dimension two,
  \begin{equation*}
    \Abs{r^{-1}\WeightLog^{-1}\bar v}_{H_b^k} \lesssim_k \Abs{\bar v}_{H^{k+1}};
  \end{equation*}
  see, e.g., \cite[Proposition 3.1]{DegeratuStern2013}.
  Moreover, by \cite[Proposition 4.28]{BeraWalpuski2025},
  $H_b^\ell\Gamma\paren{X\setminus Z, S\otimes\fl;\bp} \subseteq r^{-1}L^\infty\Gamma\paren{X\setminus Z, S\otimes\fl;\bp}$.
  Finally, by definition of $\Abs{-}_{K^k}$ and \autoref{Ex_1/REstimate},
  \begin{equation*}
    \Abs{\WeightLog \nabla D_\bp^\fl \Phi}_{H_b^k}
    \lesssim_k
    \Abs{\Phi}_{K^k}.
  \end{equation*}   
  Therefore,
  \begin{equation*}
    \Abs{r^{-1} \WeightLog^{-1} \bar v \otimes r \WeightLog \nabla D_\bp^\fl \Phi}_{H_b^k}
    \lesssim_k
    \sum_{m=0}^k
    \Abs{\bar v}_{H^{k-m+1}}\Abs{\WeightLog \nabla D_\bp^\fl \Phi}_{H_b^{m+\ell}}
    \lesssim_k
    \sum_{m=0}^k
    \Abs{\bar v}_{H^{k-m+1}}\Abs{\Phi}_{K^{m+\ell}}.
  \end{equation*}

  Combining the above estimates,
  \begin{equation*}
    \Abs{\sL_{\bar v}^K D_\bp^\fl \Phi}_{H_b^k}
    \lesssim_k
    \sum_{m=0}^k
    \Abs{\bar v}_{H^{k-m+1}}\Abs{\Phi}_{K^{m+\ell}}.
  \end{equation*}     
  This implies the asserted estimate using interpolation as in \cites[§4.2]{Hamilton1979:DeformationOfComplexStructuresII}[Lemma A.4]{DonaldsonLehmann2025:CY3Boundary}.   
\end{proof}

\begin{prop}[{cf.~\cite[Remark 4.45]{BeraWalpuski2025}}]
  \label{Prop_AdmissibleZ2ZSpinor_LeadingOrderTerm}
  Let $(\bp,[\fl]) \in \SpaceOfDiracBundles \times \SpaceOfRamifiedLineBundles$ and set $Z \coloneq \Br(\fl)$.
  Choose a local infinitesimal uniformiser as in \autoref{Prop_InfinitesimalSliceUniformisers}.
  For every $\Phi \in K^\infty(X\setminus Z, S\otimes\fl;\bp)$ and $v \in T_{[\fl]}\SpaceOfRamifiedLineBundles = \Gamma\paren{Z,NZ}$  
  \begin{equation*}
    \sL_{\bar v}^K \Phi \in H_a^\infty\Gamma\paren{X\setminus Z, S\otimes\fl;\bp};
  \end{equation*}
  moreover,
  for every $\Phi \in K^\infty(X\setminus Z, S\otimes\fl;\bp)$
  there is a unique $\bL_\Phi \in \Gamma\paren{Z,\Hom_\AlgebraBundle(\overline{NZ},\ResidueBundle_\bp)}$
  such that for every $v \in T_{[\fl]}\SpaceOfRamifiedLineBundles$
  \begin{equation*}
    \res(\sL_{\bar v}^K\Phi) = \bL_\Phi (v).
  \end{equation*}
\end{prop}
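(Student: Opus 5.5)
The plan is to write $\sL_{\bar v}^K\Phi = \nabla_{\bar v}\Phi + \kappa_{\bar v}\Phi$ and treat the two terms separately. The zeroth-order term $\kappa_{\bar v}\Phi$ is easy. Since $\kappa_{\bar v}$ is a smooth endomorphism of $S$, it lies in $\DiffOp_b^0$. Because $\Phi \in K^\infty \subseteq H_a^\infty(\ldots;0)$ and $\nabla\Phi \in H_b^\infty$ (\autoref{Prop_K_Estimate}), we get
\begin{equation*}
  \kappa_{\bar v}\Phi \in H_b^\infty
  \qandq
  D(\kappa_{\bar v}\Phi) = \gamma(\nabla\kappa_{\bar v})\Phi \pm \kappa_{\bar v}D\Phi + \text{(terms in } \nabla\Phi) \in H_b^\infty.
\end{equation*}
Hence $\kappa_{\bar v}\Phi \in H_a^\infty$. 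Moreover $\res(\kappa_{\bar v}\Phi)=0$, because $r^{-1}\kappa_{\bar v}\Phi \in H_b^\infty \subseteq L^2$, so $\kappa_{\bar v}\Phi$ lies in the minimal domain.

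The main term is $\nabla_{\bar v}\Phi$. Near $Z$, split $\bar v = \bar v^T + \bar v^N$ into components tangent and normal to the level sets of $r$ (using the tubular coordinates). Let $\tilde v$ be a vector field with $\tilde v|_Z = 0$ that agrees with $\bar v - \chi\,\tau(v)$, where $\tau(v)$ is the constant-in-fibre extension of $v$ along the normal fibres and $\chi$ is a cut-off. Then $\tilde v = O(r)$, so $\tilde v \in r\Vect_b$, and $\nabla_{\tilde v}\Phi \in H_b^\infty$ with vanishing residue, by the same argument as for $\kappa$ using \autoref{Prop_K_Estimate}. For $D\nabla_{\tilde v}\Phi$ we use $[D,\nabla_{\tilde v}] \in \DiffOp_b^1$, which holds because $\nabla\tilde v$ is bounded.

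So everything reduces to the translation term $\nabla_{\chi\tau(v)}\Phi$. Writing $\Phi$ near $Z$ via the blow-up, the $K$-regularity (namely $D\Phi \in \WeightLog^{-1}H_a$ with vanishing residue, and $\Phi \in H_a(\ldots;0)$) should force an expansion
\begin{equation*}
  \Phi = r^{1/2}\pi^*\!\bigl(\psi\,\bar z^{1/2}\text{-mode}\bigr) + \Phi', \qquad \Phi' \text{ of order } r^{1/2+\delta} \text{ or } r^{3/2}\log r,
\end{equation*}
with leading coefficient $\psi \in \Gamma(Z, S|_Z\otimes_\AlgebraBundle NZ^{1/2})$, as in \cite[Remark 4.45]{BeraWalpuski2025}. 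Differentiating the leading term in a normal direction $v$ gives $\tfrac12\,\psi\cdot v^\flat\,\bar z^{-1/2}$. This is exactly an element of $\ResidueBundle = S|_Z\otimes NZ^{-1/2}$ depending $\AlgebraBundle$-linearly on $\bar v$, i.e.\ on $\overline{NZ}$. I define $\bL_\Phi(v)$ to be this contraction. Its $\AlgebraBundle$-linearity in $\bar v$ comes from the anti-linearity constraint on the leading coefficient of a spinor annihilated by $D$ to leading order.

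Derivatives of $\Phi'$ are in $\dom(\Dmin)$. Membership in $H_a^\infty$ then follows from $D\nabla_{\tau(v)}\Phi = \nabla_{\tau(v)}D\Phi + [D,\nabla_{\tau(v)}]\Phi$. This is where the log weight in $K$ is used. The term $\nabla_{\tau(v)}D\Phi$ is in $H_b^\infty$ because $r\WeightLog\nabla D\Phi \in H_b$ and $r^{-1}\WeightLog^{-1}\tau(v)$ is controlled, exactly as in \autoref{Prop_K_PerturbationEstimate}. The commutator is conormal of order one applied to $\Phi$. Uniqueness of $\bL_\Phi$ is immediate, since the condition determines $\bL_\Phi$ on all of $\Gamma(Z,NZ)$ and $\bL_\Phi$ is a bundle map.

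The main obstacle is establishing the leading-order expansion of $\Phi\in K^\infty$ with a remainder whose normal derivative has zero residue. I would first try to obtain it from \cite[Remark 4.45]{BeraWalpuski2025} and the indicial analysis of the model operator $\mathring D$. The indicial roots at $\pm\tfrac12$ are relevant, and the log-weighted hypothesis on $D\Phi$ should rule out the $r^{1/2}\log r$ term that would otherwise spoil the residue computation.
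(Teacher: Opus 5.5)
Your argument that $\sL_{\bar v}^K\Phi\in H_a^\infty$ is sound in substance. The second assertion, however, is not proved. You route both the construction of $\bL_\Phi$ and its $\AlgebraBundle$-anti-linearity through a leading-order expansion of $\Phi$, with an $r^{1/2}$-coefficient $\psi$ and a remainder $\Phi'$ satisfying $\nabla_{\tau(v)}\Phi'\in\dom(\Dmin)$. You flag this expansion as the main obstacle and leave it open. It is not a formality: $K^\infty$ gives only conormal control, not polyhomogeneity. A partial expansion with a smooth coefficient on $Z$ and a remainder of exactly this type would need a genuine edge-calculus argument, and you give none. Moreover, the most direct way to produce $\psi$ is to read it off from the residues of the normal derivatives of $\Phi$, i.e.\ from the very map $\bL_\Phi$ you are constructing. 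As a route to the statement it is therefore essentially circular, and your definition of $\bL_\Phi$ via $\psi$ has nothing to stand on yet.

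The missing idea is that no expansion is needed.
\begin{enumerate}
\item Membership holds for every $u$. For $\Phi\in K^\infty$ both $\nabla\Phi$ and $\nabla D\Phi$ lie in $H_b^\infty$, so every first-order operator with smooth coefficients maps $\Phi$ and $D\Phi$ into $H_b^\infty$. Hence your membership argument works for \emph{every} smooth vector field $u$, and $\res(\sL_u^K\Phi)\in\Gamma(Z,\ResidueBundle)$ is defined for all $u$. The conormality claims you invoke are unnecessary, and false in general as stated: a field vanishing on $Z$ lifts to $\Vect_b(\hat X)$ but not to $r\Vect_b(\hat X)$, and $[D,\nabla_{\tilde v}]$, $[D,\nabla_{\tau(v)}]$ are not in $\DiffOp_b^1$. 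It is equally irrelevant that the uniformiser only guarantees $\bar v|_Z^N=v$ rather than your assumed $\bar v|_Z=v$.
\item $C^\infty(X)$-linearity. The difference $\sL_{hu}^K\Phi-h\sL_u^K\Phi$ is a smooth endomorphism (coming from $\kappa$) applied to $\Phi$, so it has zero residue by your own $\kappa$-argument. Also $\res(h\phi)=h|_Z\res(\phi)$.
\item Tangential fields contribute nothing. The map vanishes when $u|_Z$ is tangent to $Z$: such $u$ are conormal, so $\nabla_u\Phi$ again has zero residue, using $r^{-1}\Phi\in H_b^\infty$ from \autoref{Prop_K_Estimate}.
\end{enumerate}
Thus $\res(\sL_u^K\Phi)$ depends only on $u|_Z^N$. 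This defines $\bL_\Phi\in\Gamma(Z,\Hom(NZ,\ResidueBundle))$ independently of the uniformiser, which is the paper's ``moment's thought''. In your notation it is just $\tau(fv)=\tilde f\,\tau(v)$, with $\tilde f$ the fibrewise constant extension of $f$.

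Only the $\AlgebraBundle$-anti-linearity needs the model operator. The section $D\Phi$ and the $Z$-tangential derivatives of $\Phi$ have zero residue, and smooth endomorphisms preserve zero-residue elements. So for a normal frame $(e_1,e_2)$ write $\nabla_{e_j}\Phi=\ext\bL_\Phi(e_j)+(\text{zero residue})$. The $r^{-1/2}$-terms of $\gamma(e_1)\ext\bL_\Phi(e_1)+\gamma(e_2)\ext\bL_\Phi(e_2)$ must then cancel on $\del\hat X$. This gives $\bL_\Phi(\Or e_1)=-\gamma(\Or)\bL_\Phi(e_1)$ for $\Or=e_1\wedge e_2$, the same constraint as for $\phi^{(1)}$ in \autoref{Sec_ResidueMap}.
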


\begin{proof}%[Proof of \autoref{Prop_AdmissibleZ2ZSpinor_LeadingOrderTerm}]
  By \autoref{Rmk_CommutatorLKvD} and \autoref{Prop_K_Estimate},
  $\nabla_v^\SpaceOfRamifiedLineBundles\bD(\bp,\fl;\Phi) \in H_b^\infty\Gamma\paren{X\setminus Z, S\otimes \fl; \bp}$.
  Therefore, by \autoref{Prop_CommutatorLKvD} and \autoref{Prop_K_PerturbationEstimate},
  $\sL_{\bar v}^K \Phi \in H_a^\infty\Gamma\paren{X\setminus Z, S\otimes\fl;\bp}$.
  
  A moment's thought reveals that $v \mapsto \res(\sL_{\bar v}^K\Phi)$ is independent of the choice of infinitesimal uniformiser.
  Moreover, it is $C^\infty(Z)$--linear and, therefore, defines $\bL_\Phi \in \Gamma\paren{Z,\Hom\paren{NZ,\ResidueBundle_\bp}}$.
  Direct inspection of $D_\bp^\fl$, or rather the model operator $\mathring D_\bp^\fl$, shows that $\bL_\Phi$ is $\AlgebraBundle$--anti-linear.  
\end{proof}

By the method explained in \autoref{Sec_UniversalBundlesOfSingularSpinorsAndResidues},
the above fibres assemble into the following tame Fréchet space bundle.

\begin{definition}
  \label{Prop_BundleOfAdmissibleZModTwoZSpinors}
  Consider the tame Fréchet space bundle
  \begin{equation*}
    \BundleOfCoAdmissibleZModTwoZSpinors \coloneq \WeightLog^{-1}\BundleOfZModTwoZSpinors.
  \end{equation*}
  The \defined{universal bundle of admissible $\Z/2\Z$ spinors} is the tame Fréchet subbundle
  \begin{equation*}
    \BundleOfAdmissibleZModTwoZSpinors
    \coloneq
    \BundleOfZModTwoZSpinors \cap \bD^{-1}\BundleOfCoAdmissibleZModTwoZSpinors
    \subseteq
    \BundleOfZModTwoZSpinors;
  \end{equation*}
  indeed, the preceding discussion and direct inspection show that the atlas constructed in \autoref{Def_UniversalBundles_Atlases} induces an atlas for $\BundleOfAdmissibleZModTwoZSpinors$;
  cf.~\autoref{Rmk_BundleOfSingularSpinors}~\autoref{Rmk_BundleOfSingularSpinors_Robust}.
  Of course, the tame Fréchet space structure on each fibre of $\BundleOfAdmissibleZModTwoZSpinors$ is the one defined in \autoref{Prop_K_Tame}.
  Denote by
  \begin{equation*}
    \BundleOfNonDegenerateAdmissibleZTwoZSpinors \subseteq \BundleOfAdmissibleZModTwoZSpinors
  \end{equation*}
  the open subset of \defined{non-degenerate} admissible $\Z/2\Z$ spinors;
  that is: the subset consisting of those admissible $\Z/2\Z$ spinors $(\bp,[\fl];\Phi) \in \BundleOfAdmissibleZModTwoZSpinors$ for which $\bL_\Phi$ is injective.
\end{definition}

The crucial observation is that every non-degenerate admissible $\Z/2\Z$ harmonic spinor defines a residue condition.

\begin{prop}
  \label{Prop_AdmissibleZ2ZSpinor_NonDegenerate}
  Assume that $\rk_\AlgebraBundle\ResidueBundle = 2$.
  Denote by $f \co \BundleOfNonDegenerateAdmissibleZTwoZSpinors \subseteq \BundleOfAdmissibleZModTwoZSpinors \to \SpaceOfDiracBundles \times \SpaceOfRamifiedLineBundles$ the projection map and by
  \begin{equation*}
    \bV \subseteq (p^*f)^* \UniversalResidueBundle
  \end{equation*}
  the subbundle such that for every $(\bp,[\fl];\Phi) \in \BundleOfNonDegenerateAdmissibleZTwoZSpinors$
  \begin{equation*}
    \bV_{\bp,[\fl];\Phi} = \im \bL_\Phi.
  \end{equation*}
  $(\BundleOfNonDegenerateAdmissibleZTwoZSpinors,f,\bV)$ is a family of local residue conditions that satisfies the hypothesis of \autoref{Prop_TotallyReal=>UniformlyFredholm}.
\end{prop}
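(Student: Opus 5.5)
The plan has two parts: first show that $\bV$ is a genuine subbundle, so that $(\BundleOfNonDegenerateAdmissibleZTwoZSpinors,f,\bV)$ is a family of local residue conditions; then check the pointwise transversality condition of \autoref{Prop_TotallyReal=>UniformlyFredholm}.

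\emph{Step 1: $\bV$ is a subbundle.} It suffices to show that $\Phi \mapsto \bL_\Phi$ defines a tame smooth section $\bL$ of $(p^*f)^*\Hom_\AlgebraBundle(\overline{\bN},\UniversalResidueBundle)$ over $f^*\UniversalBr$.
\begin{enumerate}
\item By \autoref{Prop_AdmissibleZ2ZSpinor_LeadingOrderTerm}, $\bL_\Phi(v) = \res(\sL_{\bar v}^K\Phi)$. Fix a local infinitesimal uniformiser as in \autoref{Prop_InfinitesimalSliceUniformisers}. Then $(\bp,[\fl];\Phi,v) \mapsto \sL_{\bar v}^K\Phi$ is tame smooth into $\BundleOfAdmissibleSingularSpinors$, by \autoref{Prop_K_PerturbationEstimate}, \autoref{Prop_CommutatorLKvD} and \autoref{Rmk_UniversalBundleOfZ2ZSpinors}~\autoref{Rmk_UniversalBundleOfZ2ZSpinors_Derivative}.
\item Composing with $\UniversalResidueMap$, which is tame smooth by \autoref{Cor_UniversalDiracOperatorAndResidueMapsAreSmooth}, gives a tame smooth map. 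It is linear in $\Phi$ and $C^\infty(Z)$--linear in $v$, so it is a tame smooth bundle homomorphism $\overline{\bN} \to \UniversalResidueBundle$ over $f^*\UniversalBr$.
\item This is also the step where the atlas of \autoref{Def_UniversalBundles_Atlases} must be checked against the slice trivialisations. I expect this compatibility to be the main technical point.
\item On $\BundleOfNonDegenerateAdmissibleZTwoZSpinors$ the homomorphism $\bL_\Phi$ is injective by definition. Since $\rk_\AlgebraBundle \overline{NZ} = 1$, its image is an $\AlgebraBundle$--linear subbundle of $\AlgebraBundle$--rank one (real rank two). The image of an injective smooth bundle map of constant rank is a smooth subbundle, locally trivialised by $\bL_\Phi$ applied to a local frame of $\bN$.
\item Injectivity is an open condition. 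The loss of derivatives in the $C^0$ norm of $\bL_\Phi$ is harmless, since the $K^k$ norms control $\bL_\Phi$ in $C^m$ via the residue map's boundedness into $H^{k+1/2}$ and Sobolev embedding.
\end{enumerate}

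\emph{Step 2: the transversality condition.} For $x \in Z_b$ and $v \in T_xZ_b \setminus \set{0}$ we must show
\[
  \bV_{(b,x)} \oplus J\gamma(v)\bV_{(b,x)} = \ResidueBundle_{\bp_b,x}.
\]
This is the argument cited in \autoref{Sec_ElliticResidueConditions} from \cite[Example 4.44]{BeraWalpuski2025}.
\begin{enumerate}
\item Since $\gamma(v)$ anticommutes with $\gamma(\Or)$ for $\Or \in \Wedge^2N_xZ$, and $J$ is $\AlgebraBundle$--anti-linear, the composite $J\gamma(v)$ is $\AlgebraBundle$--linear. It is also skew-adjoint, and invertible because $v \neq 0$.
\item Let $w \in \bV_{(b,x)}$ be a unit vector. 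Then $\bV_{(b,x)} = \AlgebraBundle w = \operatorname{span}_\R\{w, iw\}$, where $i$ denotes a unit element of $i\Wedge^2 N_xZ$.
\item Because $J\gamma(v)$ is skew-adjoint and commutes with $i$, it is orthogonal to $w$ and to $iw$: $\Inner{J\gamma(v)w, w} = 0$ and $\Inner{J\gamma(v)w, iw} = 0$. The second needs the real inner product to be compatible with $\AlgebraBundle$, i.e.\ $\Inner{iu, iu'} = \Inner{u,u'}$, which holds because $i$ acts orthogonally.
\item Hence $J\gamma(v)\bV_{(b,x)}$ is an $\AlgebraBundle$--line orthogonal to $\bV_{(b,x)}$. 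Since $\rk_\AlgebraBundle \ResidueBundle = 2$, the two lines together fill $\ResidueBundle_{\bp_b,x}$, which gives the direct sum.
\end{enumerate}

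The hard part is Step 1, specifically the regularity of $\bL$ as a tame smooth section in the universal setting, rather than the linear algebra of Step 2.
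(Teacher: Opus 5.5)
Your Step 2 has a genuine error, and it sits exactly where the proposition has content. Take $v \in T_xZ$ and an orthonormal basis $(e_1,e_2)$ of $N_xZ$. Because $v$ is orthogonal to both normal directions, the Clifford relations give $\gamma(v)\gamma(e_1)\gamma(e_2) = \gamma(e_1)\gamma(e_2)\gamma(v)$. So $\gamma(v)$ \emph{commutes} with $\gamma(\Or)$ and is $\AlgebraBundle$--linear. Since $J$ is $\AlgebraBundle$--anti-linear, $J\gamma(v)$ is $\AlgebraBundle$--\emph{anti}-linear, as the paper states when discussing elliptic residue conditions; it is not $\AlgebraBundle$--linear.

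This is not a harmless slip, because your item 3 cannot be derived from linearity. If $T$ is skew-adjoint and commutes with $i$, skew-adjointness only yields the tautology $\langle Tw, iw\rangle = \langle iw, Tw\rangle$. Indeed, $T = i$ (multiplication by a unit element of $i\Wedge^2N_xZ$) is linear, skew-adjoint and invertible, yet it maps every $\AlgebraBundle$--line to itself, so $\bV \oplus T\bV = \ResidueBundle$ fails. As written, your argument would prove a false statement.

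The repair uses anti-linearity. Suppose $Ti = -iT$, $T^* = -T$, and $i$ is orthogonal with $i^2 = -1$, hence $i^* = -i$. Then
\[
  \langle Tw, iw\rangle = -\langle w, T(iw)\rangle = \langle w, iTw\rangle = -\langle iw, Tw\rangle,
\]
so $\langle Tw, iw\rangle = 0$. Combined with $\langle Tw, w\rangle = 0$ and $T(iw) = -iTw$, this gives $T\bV \perp \bV$. Moreover $T\bV$ is again an $\AlgebraBundle$--line, so $\rk_\AlgebraBundle \ResidueBundle = 2$ finishes the argument. This is precisely \cite[Example 4.44]{BeraWalpuski2025}.

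That example, together with the observation that $\im \bL_\Phi$ is an $\AlgebraBundle$--line because $\bL_\Phi$ is $\AlgebraBundle$--linear on $\overline{NZ}$, is the paper's entire proof. Your Step 1 (smoothness of $\Phi \mapsto \bL_\Phi$ and openness of injectivity) is reasonable and says more than the paper does, but it is not where the difficulty lies.
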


\begin{proof}%[Proof of \autoref{Prop_AdmissibleZ2ZSpinor_NonDegenerate}]
  Since $\bL_\Phi$ is $\AlgebraBundle$--anti-linear and $\dim \im \bL_\Phi = \frac12 \rk\ResidueBundle$,
  this follows from \cite[Example 4.44]{BeraWalpuski2025}.
\end{proof}

%%% Local Variables:
%%% mode: latex
%%% TeX-master: "UniversalModuliSpaceOfZ2ZHarmonicSpinors"
%%% ispell-local-dictionary: "british"
%%% End:

\subsection{Chiral adaptations}
\label{Sec_ChiralAdaptations}

Throughout this subsection,
assume that $X$ is oriented and $\dim X = 0 \pmod 4$.
For every $\bp \in \SpaceOfDiracBundles$ and $[\fl] \in \SpaceOfRamifiedLineBundles$ the volume form induces a chirality operator $\epsilon \coloneq \gamma(\vol_g)$.
This decomposes $S = S^+ \oplus S^-$ and splits the entire discussion so far into positive and negative chiral parts.
Most of this is entirely formal.

\begin{definition}
  Here is the decomposition of the objects constructed in \autoref{Sec_UniversalBundlesOfSingularSpinorsAndResidues}:
  \begin{enumerate}
  \item
    Decompose the tame Fréchet space bundles $\BundleOfAdmissibleSingularSpinors$, $\BundleOfSingularSpinors$, and $\BundleOfResidues$ into tame Fréchet subbundles
    \begin{equation}
      \label{Eq_ChiralDecomposition}
      \BundleOfAdmissibleSingularSpinors = \BundleOfAdmissibleSingularSpinors^+ \oplus \BundleOfAdmissibleSingularSpinors^-, \quad
      \BundleOfSingularSpinors = \BundleOfSingularSpinors^+ \oplus \BundleOfSingularSpinors^-, \qandq
      \BundleOfResidues = \BundleOfResidues^+ \oplus \BundleOfResidues^-
    \end{equation}
    with fibres over $(\bp,[\fl]) \in \SpaceOfDiracBundles \times \SpaceOfRamifiedLineBundles$ given by
    \begin{align*}
      \BundleOfAdmissibleSingularSpinors_{\bp,\fl}^\pm &=  H_a^\infty\Gamma\paren{X\setminus \Br(\fl),S^\pm\otimes\fl;\bp}, \\
      \BundleOfSingularSpinors_{\bp,\fl}^\pm &= H_b^\infty\Gamma\paren{X\setminus \Br(\fl),S^\pm\otimes\fl;\bp}, \qand \\
      \BundleOfResidues_{\bp,\fl}^\pm &= \Gamma\paren{\Br(\fl),\ResidueBundle_\bp^\pm}.
    \end{align*}
  \item
    Decompose the universal Dirac operator $\UniversalDiracOperator \co \BundleOfAdmissibleSingularSpinors \to \BundleOfSingularSpinors$ into the \defined{universal chiral Dirac operators} $\UniversalDiracOperator^\pm \co \BundleOfAdmissibleSingularSpinors^\pm \to \BundleOfSingularSpinors^\mp$ such that
    \begin{equation*}
      \UniversalDiracOperator
      =
      \begin{pmatrix}
        0 & \UniversalDiracOperator^- \\
        \UniversalDiracOperator^+ & 0
      \end{pmatrix}.
    \end{equation*}
  \item
    Decompose the universal residue map $\UniversalResidueMap \co \BundleOfAdmissibleSingularSpinors \to \BundleOfResidues$ into the \defined{universal chiral residue maps} $\UniversalResidueMap^\pm \co \BundleOfAdmissibleSingularSpinors^\pm \to \BundleOfResidues^\pm$.
    \qedhere
  \end{enumerate}
\end{definition}

\begin{figure}[h]
  \centering
  \begin{equation*}
    \begin{tikzcd}
      \BundleOfAdmissibleSingularSpinors^\pm \ar{r}{\UniversalDiracOperator^\pm} \ar{d}[swap]{\UniversalResidueMap^\pm} & \BundleOfSingularSpinors^\mp \\
      \BundleOfResidues^\pm
    \end{tikzcd}
  \end{equation*}
  \caption{The universal chiral Dirac operator and residue map.}
  \label{Fig_UniversalChiralDiracOperatorAndResidueMap}
\end{figure}

The partial covariant derivatives constructed in \autoref{Sec_PartialCovariantDerivatives} respect the above decomposition mostly, but not entirely.

\begin{prop}
  \label{Prop_PartialCovariantDerivatives_Chiral}
  ~
  \begin{enumerate}
  \item
    \label{Prop_PartialCovariantDerivatives_Chiral_Slice}
    The restriction of the slice partial covariant derivatives $\nabla^\Slice$ to $H_\BG^\Slice$ and $V_\nabla$ preserves the splittings \autoref{Eq_ChiralDecomposition}.
  \item
    \label{Prop_PartialCovariantDerivatives_Chiral_Ram}
    In the situation of \autoref{Def_PartialConnectionAlongSpaceOfRamifiedLineBundles},
    the partial covariant derivatives $\nabla^\SpaceOfRamifiedLineBundles$ preserve the splittings
    \autoref{Eq_ChiralDecomposition};
    moreover,
    for every $(\bp,[\fl];\phi) \in \BundleOfAdmissibleSingularSpinors^\pm|_\sU$ and $v \in T_{[\fl]}\SpaceOfRamifiedLineBundles$
    \begin{equation*}
      \nabla_v^\SpaceOfRamifiedLineBundles\UniversalDiracOperator^\pm(\bp,[\fl];\phi)
      =
      [\sL_{\bar v}^K,D_\bp^\fl]\phi.
    \end{equation*}
  \end{enumerate}
\end{prop}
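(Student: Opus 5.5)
The plan is to check that every ingredient in the definitions of $\nabla^\Slice$ and $\nabla^\SpaceOfRamifiedLineBundles$ commutes with the chirality operator $\epsilon = \gamma(\vol_g)$, or at least preserves its eigenspaces. On a slice the trivialisations of \autoref{Prop_Universal_Slice} are identity maps, so $\nabla^\Slice$ preserves the splitting \autoref{Eq_ChiralDecomposition} precisely when $S^\pm_\bp = \ker(\epsilon_\bp \mp \one)$ does not move to first order along the chosen direction. I would therefore compute $\dot\epsilon$ for each direction.

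For \autoref{Prop_PartialCovariantDerivatives_Chiral_Slice}:
\begin{enumerate}
\item Along $V_\nabla$ we have $\dot g = 0$ and $\dot\gamma = 0$, so $\dot\epsilon = 0$ immediately.
\item Along $H_\BG^\Slice$ we have $\dot\gamma^\BG(v) = \tfrac12\sum_i \dot g(v,e_i)\gamma(e_i)$, and the volume form also varies by $\dot{\vol}_g = \tfrac12 \tr_g\dot g\,\vol_g$. Write $\epsilon = \gamma(e_1)\cdots\gamma(e_d)$ in a $g$-orthonormal frame and vary both the frame (by $-\tfrac12\dot g$) and $\gamma$. The two contributions should cancel; this is the familiar fact that the Bourguignon--Gauduchon identification is an isometry of Clifford modules preserving the volume element. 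Thus $\dot\epsilon = 0$ to first order.
\item Since the slice trivialisation is literally the identity on $S$, the partial derivatives then map $\bE^\pm$ into $\bE^\pm$. By contrast, $V_\sG$ is excluded from the statement because $[\xi,\epsilon] \neq 0$ in general.
\end{enumerate}

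For \autoref{Prop_PartialCovariantDerivatives_Chiral_Ram}, recall from \autoref{Def_PartialConnectionAlongSpaceOfRamifiedLineBundles} that $\nabla^\SpaceOfRamifiedLineBundles_v = \nabla^\Slice$ along $(\bp,[\fl],v)\cdot\lift^K_\bp(\bar v)$. By \autoref{Eq_KosmannLift}, this direction equals $v$ plus the Kosmann action, whose $\SpaceOfDiracBundles$-component is the BG lift of $\sL_{\bar v}g$ plus $F^\tw_\nabla(\bar v,-) \in V_\nabla$. Both of these lie in directions handled in the first part. I would rather argue directly, though. The derivative is obtained by differentiating the pullback by the flow of $\lift^K_\bp(\bar v)$, which acts on spinors by $\sL^K_{\bar v} = \nabla_{\bar v} + \kappa_{\bar v}$. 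One then checks $[\sL^K_{\bar v},\epsilon] = 0$:
\begin{enumerate}
\item $\nabla\epsilon = 0$ because $\epsilon$ is parallel.
\item $\kappa_{\bar v}$ is a combination of $[\gamma(e_i),\gamma(e_j)]$, i.e.\ even Clifford elements, which commute with $\gamma(\vol_g)$.
\item The metric variation contributes only through $\vol_g$, which is handled as in the slice case.
\end{enumerate}
Hence the Kosmann flow preserves $S^\pm$, and $\nabla^\SpaceOfRamifiedLineBundles$ preserves \autoref{Eq_ChiralDecomposition}.

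The formula for $\nabla^\SpaceOfRamifiedLineBundles_v\bD^\pm$ then follows by restricting \autoref{Prop_CommutatorLKvD} to $\bE^\pm$. Because $\sL^K_{\bar v}$ commutes with $\epsilon$ and $D$ anticommutes with it, $[\sL^K_{\bar v},D]$ anticommutes with $\epsilon$ and so maps $\bE^\pm$ into $\bF^\mp$. Its off-diagonal block is exactly the asserted expression.

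I expect the main obstacle to be the first-order invariance of $\epsilon$ along $H_\BG^\Slice$, i.e.\ the bookkeeping of the frame variation against $\dot\gamma^\BG$ and $\dot{\vol}_g$. The approach I would try first is a pointwise computation in a frame diagonalising $\dot g$, where $\dot\gamma^\BG(e_i) = \tfrac12\lambda_i\gamma(e_i)$ and each $e_i$ rescales by $-\tfrac12\lambda_i$, so that the factors cancel term by term.
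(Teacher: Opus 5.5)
Your proposal is correct and follows the paper's route. Part (1) is the direct computation that $\dot\epsilon = 0$ along $H_\BG^\Slice$ and $V_\nabla$; your frame computation $\frac{d}{dt}\gamma_t(e_i(t)) = 0$ is exactly that. Part (2) rests, as in the paper, on $\nabla\epsilon = 0$ and on $\kappa_{\bar v}$ being even in the Clifford algebra and hence commuting with $\epsilon$; the formula is then \autoref{Prop_CommutatorLKvD} restricted to $\BundleOfAdmissibleSingularSpinors^\pm$.

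Your shortcut that (2) also follows from (1) is equally valid, because the direction $(\bp,[\fl];v)\cdot\lift_\bp^K(\bar v)$ splits into an $H_\BG^\Slice$ part and the $V_\nabla$ part $F_\nabla^\tw(\bar v,-)$. Your third bullet on the metric variation is redundant, since naturality of $\epsilon$ under the Kosmann flow already accounts for it.
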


\begin{proof}%[Proof of \autoref{Prop_PartialCovariantDerivatives_Chiral}]
  \autoref{Prop_PartialCovariantDerivatives_Chiral_Slice} follows by direct computation.
  %%%
  By direct inspection,
  $\kappa_v$ defined in \autoref{Def_KosmannLift} commutes with $\epsilon$.
  This implies \autoref{Prop_PartialCovariantDerivatives_Chiral_Ram}.
\end{proof}

\begin{definition}
  (Local) chiral residue conditions can be imposed in families as follows:
  \begin{enumerate}
  \item
    A \defined{family of chiral residue conditions} $(B,f,\bR^\pm)$ consists of:
    \begin{enumerate}
    \item
      a tame Fréchet manifold $B$,
    \item
      a tame smooth map $f \co B \to \SpaceOfDiracBundles \times \SpaceOfRamifiedLineBundles$, and    
    \item
      a tame Fréchet space subbundle $\bR^\pm \subseteq f^*\BundleOfResidues^\pm$.
    \end{enumerate}
    Set
    \begin{equation*}
      \BundleOfAdmissibleSingularSpinors_{\bR^\pm}^\pm \coloneq \paren{f^*\UniversalResidueMap^\pm}^{-1}(\bR^\pm)
    \end{equation*}
    and denote the restriction of $f^*\bD$ to $\BundleOfAdmissibleSingularSpinors_{\bR^\pm}^\pm$ by
    \begin{equation*}
      \bD_{\bR^\pm}^{\pm} \co \BundleOfAdmissibleSingularSpinors_{\bR^\pm}^\pm \to f^*\BundleOfSingularSpinors^\mp.
    \end{equation*}
  \item
    \begin{enumerate}
    \item
      Decompose the universal residue bundle $\UniversalResidueBundle$ as
      \begin{equation*}
        \UniversalResidueBundle = \UniversalResidueBundle^+ \oplus \UniversalResidueBundle^-
      \end{equation*}
      into the \defined{universal chiral residue bundles} $\UniversalResidueBundle^\pm$ over $\UniversalBr$.
    \item
      A \defined{family of local chiral residue conditions} $(\bB,f,\bV^\pm)$ consists of:
      \begin{enumerate}
      \item
        a tame Fréchet manifold $\bB$,
      \item
        a tame smooth map $f \co \bB \to \SpaceOfDiracBundles \times \SpaceOfRamifiedLineBundles$, and
      \item
        a subbundle $\bV^\pm \subseteq (p^*f)^* \UniversalResidueBundle^\pm$.
      \end{enumerate}
      Here $f^*p \co f^*\UniversalBr \to \bB$ is the pullback of the universal branching locus and $p^*f \co f^*\UniversalBr \to \UniversalBr$ is the projection map.
    \item
      A family of local chiral residue conditions $(\bB,f,\bV^\pm)$ defines a family of chiral residue conditions $(\bB,f,\bR^\pm)$
      such that for every $b \in \bB$ 
      \begin{equation*}
        \bR_b^\pm \coloneq \Gamma(Z_b,\bV^\pm|_{Z_b}) \subseteq \Gamma(Z_b,\ResidueBundle_{\bp_b}^\pm)
        \qwithq    
        (\bp_b;Z_b,[\fl_b]) \coloneq f(b).
        \qedhere
      \end{equation*}
    \end{enumerate}
  \end{enumerate}
\end{definition}

It is immediate from the discussion in \autoref{Sec_OnceMoreWithChirality} that the following variant of \autoref{Prop_TotallyReal=>UniformlyFredholm} holds.

\begin{prop}
  \label{Prop_TotallyReal=>UniformlyFredholm_Chiral}
  Let $(\bB,f,\bV^\pm)$ be a family of local chiral residue conditions such that
  for every $b \in \bB$, $x \in Z_b$, and $v \in T_xZ_b \setminus\set{0}$
  \begin{equation*}
    \bV_{(b,x)}^\pm \oplus J_{\bp_b}\gamma(v) \bV_{(b,x)}^\pm = \ResidueBundle_{\bp_b,x}^\pm.
  \end{equation*}
  Denote by $(\bB,f,\bR^\pm)$ the corresponding family of chiral residue conditions.
  If $\fa \co f^*\BundleOfSingularSpinors^\pm \to f^*\BundleOfSingularSpinors^\mp$ is a map of tame Fréchet bundles of order zero,
  then
  \begin{equation*}
    \UniversalDiracOperator_{\bR^\pm}^\pm + \fa \co \BundleOfAdmissibleSingularSpinors_{\bR^\pm}^\pm \to f^*\BundleOfSingularSpinors^\mp
  \end{equation*}
  is uniformly Fredholm.
  Moreover,
  for every $b_0 \in \bB$
  the maps $\pi \co \BundleOfAdmissibleSingularSpinors_{\bR^\pm}^\pm|_\sU \to \DeformationBundle$ and $\iota \co \ObstructionBundle \into f^*\BundleOfSingularSpinors^\mp|_\sU$ in \autoref{Def_LinearUniformlyFredholm} can be chosen such that there is a compact subset $K \subseteq X$ such that for every
  $b \in \sU$
  the following hold:
  \begin{enumerate}
  \item
    $K \subseteq X\setminus Z_b$;
  \item
    if $\phi \in \BundleOfAdmissibleSingularSpinors_{\bR^\pm,b}^\pm$ has $\supp(\phi) \cap K = \emptyset$, then $\pi(\phi) = 0$; and
  \item  
    $\supp(\iota(o)) \subseteq K$ for every $o \in \ObstructionBundle_b$.
    \fakeqed
  \end{enumerate}  
\end{prop}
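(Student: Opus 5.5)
The plan is to reduce to the non-chiral statement, \autoref{Prop_TotallyReal=>UniformlyFredholm}, by assembling a Lagrangian residue condition from $\bV^\pm$ and then reading off the chiral block. Given $(\bB,f,\bV^\pm)$, I form the complementary subbundle $\bV^\mp \subseteq (p^*f)^*\UniversalResidueBundle^\mp$. By \autoref{Sec_OnceMoreWithChirality}, this is the unique subbundle for which $\bV^+\oplus\bV^-$ is Lagrangian, namely the $\check\Omega$--annihilator of $\bV^\pm$ inside $\UniversalResidueBundle^\mp$. Since $\check\Omega$ is built from the Euclidean structure and $J$, the annihilator is a smooth subbundle depending tamely on $b$, so $(\bB,f,\bV^+\oplus\bV^-)$ is a family of local residue conditions.

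Next I verify the symbolic criterion for the total family, $\bV\oplus J\gamma(v)\bV=\ResidueBundle$ for $v\in T_xZ_b\setminus\set{0}$. Because $\gamma(v)$ anticommutes with $\epsilon$ and $J$ comes from Clifford multiplication by $\Wedge^2 N$, which commutes with $\epsilon$, the operator $J\gamma(v)$ preserves the splitting $\ResidueBundle^+\oplus\ResidueBundle^-$. The criterion therefore splits into a $\pm$ part, which is the hypothesis, and a $\mp$ part for $\bV^\mp$. The $\mp$ part follows by linear algebra. The operator $J\gamma(v)$ is skew-adjoint and invertible, and it interacts with $\check\Omega$ compatibly. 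Hence $\bV^\mp\cap J\gamma(v)\bV^\mp\neq 0$ would force $\bV^\pm + J\gamma(v)\bV^\pm$ to be a proper subspace via annihilators. Dimension counting then gives the direct sum.

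Now \autoref{Prop_TotallyReal=>UniformlyFredholm} applies to $\UniversalDiracOperator_\bR+\tilde\fa$, where $\tilde\fa$ has the block form with $\fa$ in one off-diagonal slot and $0$ elsewhere. To extract the chiral statement I need the chiral block $\UniversalDiracOperator^\pm_{\bR^\pm}+\fa$ itself to have the thickening property. The cleanest route is to rerun the proof of \autoref{Prop_TotallyReal=>UniformlyFredholm} directly with the chiral operator, using the following two inputs:
\begin{enumerate}
\item the ellipticity of $R=R^+\oplus R^-$ implies the elliptic estimates and regularity for $D^{\pm,(k)}_{R^\pm}$ stated in \autoref{Sec_OnceMoreWithChirality}, so each $D^{\pm,(k)}_{\bR^\pm_b}+\fa_b$ is Fredholm with kernel and cokernel independent of $k$;
\item the adjoint is $D^{\mp}_{R^\mp}$ plus the adjoint of $\fa$, and unique continuation holds for both.
\end{enumerate}
With these, I choose $K$, $\DeformationBundle$, $\ObstructionBundle$, $\pi$, $\iota$ with the stated support properties, using \autoref{Prop_TransformToNormalForm}, so that the thickening at $b_0$ is invertible on $H^1_a$. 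Openness of invertibility, elliptic regularity, and Hamilton's assembly argument then give the tame inverse.

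The main obstacle is the linear-algebra verification in the second step, namely that the complementary $\bV^\mp$ inherits the symbolic criterion and that $J\gamma(v)$ respects chirality. If the annihilator argument turns out messy, the fallback is the case actually used in the paper, $\rk_\AlgebraBundle\ResidueBundle^\pm=2$ with $\bV^\pm$ being $\AlgebraBundle$--linear of rank one. In that case $\bV^\mp$ is again $\AlgebraBundle$--linear of rank one, and the criterion follows from the $\AlgebraBundle$--anti-linearity and skew-adjointness of $J\gamma(v)$, exactly as in \cite[Example 4.44]{BeraWalpuski2025}.
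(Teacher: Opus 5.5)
Your proposal takes the same route as the paper. The paper gives no separate proof and simply points to \autoref{Sec_OnceMoreWithChirality}: pass to the Lagrangian complement $\bV^\mp$, check that $R_{\bV^+}^+\oplus R_{\bV^-}^-$ satisfies the symbolic criterion and is therefore elliptic, and then rerun the proof of \autoref{Prop_TotallyReal=>UniformlyFredholm} using the chiral elliptic estimates together with unique continuation for $D^\pm_{R^\pm}+\fa$ and its adjoint $D^\mp_{R^\mp}+\fa^*$.

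One justification needs correcting. You say $J$ comes from Clifford multiplication by $\Wedge^2N$, but that describes $I=\gamma|_{\Wedge^2 N}$. The map $J$ is induced by Clifford multiplication by \emph{normal vectors} and is $\AlgebraBundle$--anti-linear.
\begin{itemize}
\item Hence $J$ anticommutes with $\epsilon$, as $\gamma(v)$ does, so $J\gamma(v)$ commutes with $\epsilon$ and preserves $\ResidueBundle^\pm$. With the reasons you give, $J\gamma(v)$ would instead interchange the two summands.
\item The same fact is what makes $\ResidueBundle^\pm$ Lagrangian for $\check\Omega$. It is also why the $\check\Omega$--annihilator of $\bV^\pm$ is a proper subbundle of $\ResidueBundle^\mp$.
\end{itemize}
Once this is fixed, the linear algebra is short and needs no fallback to the rank-one $\AlgebraBundle$--linear case. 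The complement is $\bV^\mp=(J\bV^\pm)^\perp\cap\ResidueBundle^\mp$. Since $J\gamma(v)$ is skew-adjoint with $(J\gamma(v))^2=-\abs{v}^2$ and $J\gamma(v)=-\gamma(v)J$, one gets $(J\gamma(v)\bV^\mp)^\perp=J\gamma(v)J\bV^\pm=\gamma(v)\bV^\pm$ inside $\ResidueBundle^\mp$. Therefore $\bV^\mp+J\gamma(v)\bV^\mp=\ResidueBundle^\mp$ holds if and only if $J\bV^\pm\cap\gamma(v)\bV^\pm=0$. Applying $J$, this is equivalent to $\bV^\pm\cap J\gamma(v)\bV^\pm=0$, which is your hypothesis. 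The dimension count then gives the direct sum.
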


Finally, it remains to decompose the universal bundle of admissible $\Z/2\Z$ spinors and the resulting family of local residue conditions.

\begin{definition} 
  The tame Fréchet space bundles $\BundleOfAdmissibleZModTwoZSpinors$ and $\BundleOfCoAdmissibleZModTwoZSpinors$ defined in \autoref{Prop_BundleOfAdmissibleZModTwoZSpinors} decompose into
  \begin{equation*}
    \BundleOfAdmissibleZModTwoZSpinors = \BundleOfAdmissibleZModTwoZSpinors^+ \oplus \BundleOfAdmissibleZModTwoZSpinors^-
    \qandq
    \BundleOfCoAdmissibleZModTwoZSpinors =   \BundleOfCoAdmissibleZModTwoZSpinors^+ \oplus \BundleOfCoAdmissibleZModTwoZSpinors^-
  \end{equation*}
  with $\BundleOfAdmissibleZModTwoZSpinors^\pm$ denoting the \defined{universal bundle of admissible chiral $\Z/2\Z$ spinors}.
  The latter contains the open subset
  \begin{equation*}
    \BundleOfNonDegenerateAdmissibleChiralZTwoZSpinors \subseteq \BundleOfAdmissibleZModTwoZSpinors^\pm
  \end{equation*}
  of \defined{non-degenerate} admissible chiral $\Z/2\Z$ spinors.
  For every $(\bp,[\fl];\Phi) \in  \BundleOfNonDegenerateAdmissibleChiralZTwoZSpinors$,
  the map $\bL_\Phi$ constructed in \autoref{Prop_AdmissibleZ2ZSpinor_LeadingOrderTerm} corestricts to
  \begin{equation*}
    \bL_\Phi^\pm \in \Gamma\paren{Z,\Hom_\AlgebraBundle(\overline{NZ},\ResidueBundle_\bp^\pm)}. 
  \end{equation*}
  The images define a family of local chiral residue conditions
  \begin{equation*}
    (\BundleOfNonDegenerateAdmissibleChiralZTwoZSpinors,f,\bV^\pm).
  \end{equation*}
  If $\rk_\AlgebraBundle\ResidueBundle^\pm = 2$, this family satisfies the hypothesis of \autoref{Prop_TotallyReal=>UniformlyFredholm_Chiral}.
\end{definition}

%%% Local Variables:
%%% mode: latex
%%% TeX-master: "UniversalModuliSpaceOfZ2ZHarmonicSpinors"
%%% ispell-local-dictionary: "british"
%%% End:

%%% Local Variables:
%%% mode: latex
%%% TeX-master: "UniversalModuliSpaceOfZ2ZHarmonicSpinors"
%%% ispell-local-dictionary: "british"
%%% End:

\section{The constructions of the tame Fréchet manifold structures}
\label{Sec_ConstructionOfTameFrechetmanifoldStructures}

Using the framework laid out in \autoref{Sec_UniversalDiracOperatorAndResidueMap},
this section constructs tame Fréchet manifold structures on $\ProjectiveSpaceOfNonDegenerateHarmonicSpinors$, $\ProjectiveSpaceOfNonDegenerateEigenSpinors$, and $\ProjectiveSpaceOfNonDegenerateChiralHarmonicSpinors$, thereby establishing \autoref{Thm_ProjectiveSpaceOfNonDegenerateHarmonicSpinors}, \autoref{Thm_ProjectiveSpaceOfNonDegenerateEigenSpinors}, and \autoref{Thm_ProjectiveSpaceOfNonDegenerateChiralHarmonicSpinors}.
The final two subsections derive \autoref{Thm_ProjectiveSpaceOfNonDegenerateHarmonicOneForms} and \autoref{Thm_ProjectiveSpaceOfNonDegenerateHarmonicSelfDualTwoAndOneForms}.

\subsection{Uniformly Fredholm maps}
\label{Sec_UniformlyFredholmMaps}

The results to be proved in this section refer to the following notion.

\begin{definition}
  \label{Def_UniformlyFredholmMap}
  Let $\bX,\bY$ be tame Fréchet manifolds.
  A tame smooth map $f \co \bX \to \bY$ is \defined{uniformly Fredholm} if for every $x \in \bX$ there are
  a tame Fréchet space $\bF$,
  a chart $\phi \co U \to \tilde U \subseteq \bF \times \R^k$ of $\bX$ with $x \in U$,
  a chart $\psi \co V \to \tilde V \subseteq \bF \times \R^\ell$ of $\bY$ with $f(x) \in V$, and
  a smooth map $F \co \tilde U \to \R^\ell$ such that
  \begin{equation*}
    \psi \circ f \circ \phi^{-1}(y,z) = (y,F(y,z)).
    \qedhere
  \end{equation*}
  In this case, $k - \ell$ is independent of the choice of charts and defines the \defined{index} of $f$ as a locally constant map
  \begin{equation*}
    \ind f \co \bX \to \Z.
  \end{equation*}
\end{definition}

\begin{remark}
  \label{Rmk_UniformlyFredholmMap}
  A uniformly Fredholm map for which one can always choose $\ell = 0$ is nothing but a \defined{uniform submersion} in the sense of \cite[p.~2]{Gloeckner2016:Submersions}.%
  \footnote{%
    \citeauthor{Gloeckner2016:Submersions} does not use the adjective \emph{uniform}.
  }
  Variations of the above notion are discussed in \cites[§2.2.3]{Diez2019:PhDThesis}[§3.3]{DiezRudolph2022:NormalForm}.
\end{remark}

The significance of uniformly Fredholm maps arises from the fact that they are compatible with transverse base change.

\begin{prop}%[uniform Fredholmness is stable under transverse base change] 
  \label{Prop_UniformFredholmMapsAreStableUnderTransverseBaseChange}
  Let $\bX$, $\bY$, and $\bZ$ be tame Fréchet manifolds,
  $f \co \bX \to \bY$ uniformly Fredholm and
  $g \co \bZ \to \bY$ tame smooth.
  If $f$ and $g$ are \defined{transverse};
  that is: for every $(x,z) \in \bX \times \bZ$ with $f(x) = g(z) \eqcolon y$
  the map $T_xf + T_zg \co T_x\bX \oplus T_z\bZ \to T_y\bY$ is surjective;
  then:
  \begin{enumerate}
  \item
    the fibre product
    \begin{equation*}
      g^*\bX \coloneq \set[\big]{ (x,z) \in \bX \times \bZ : f(x) = g(z) }
    \end{equation*}
    is a tame Fréchet submanifold, and
  \item
    the map $g^*f \co g^*\bX \to \bZ$ is uniformly Fredholm and $\ind g^*f = \paren{\ind f} \circ f^*g$.
  \end{enumerate}
\end{prop}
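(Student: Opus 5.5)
The argument is local in $(x,z)$, so it reduces to finite-dimensional transversality in the $\R^\ell$ factor. Fix $(x_0,z_0) \in g^*\bX$ with $y_0 \coloneq f(x_0) = g(z_0)$. Choose charts $\phi \co U \to \tilde U \subseteq \bF \times \R^k$ and $\psi \co V \to \tilde V \subseteq \bF \times \R^\ell$ as in \autoref{Def_UniformlyFredholmMap}, so that $\psi\circ f\circ\phi^{-1}(y,w) = (y,F(y,w))$. Work in a chart of $\bZ$ around $z_0$ modelled on a tame Fréchet space $\bG$, and shrink so that $g$ maps into $V$. Write $\psi \circ g = (g_1,g_2)$, with $g_1 \co \bZ \supseteq W \to \bF$ and $g_2 \co W \to \R^\ell$. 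In these coordinates the fibre product is
\begin{equation*}
  g^*\bX \cap (U\times W) \iso \set{ (w,z) \in \R^k \times W : (g_1(z),w) \in \tilde U,\ F(g_1(z),w) = g_2(z) },
\end{equation*}
since the $\bF$--component $y$ is forced to equal $g_1(z)$. So the fibre product is the zero set of the tame smooth map $\Phi(w,z) \coloneq F(g_1(z),w) - g_2(z)$, which takes values in the finite-dimensional space $\R^\ell$.

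Next I translate transversality. The tangent map of $\psi\circ f\circ\phi^{-1}$ is $(\dot y,\dot w) \mapsto (\dot y, \del_yF\,\dot y + \del_wF\,\dot w)$. Surjectivity of $T_xf + T_zg$ is therefore equivalent to the following: for every $\dot\eta \in \R^\ell$ there are $\dot w$ and $\dot z$ with
\begin{equation*}
  \del_yF\,(-T g_1\,\dot z) + \del_wF\,\dot w + T g_2\,\dot z = \dot\eta.
\end{equation*}
To see this, choose $\dot y = -Tg_1\,\dot z$ so that the $\bF$--components cancel. Hence $T_{(w_0,z_0)}\Phi$ is surjective onto $\R^\ell$.

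Now pick an $\ell$--dimensional subspace $E \subseteq \R^k \times T_{z_0}\bZ$ mapped isomorphically by $T\Phi$; this is possible because the target is finite-dimensional. Split off a closed complement $C \coloneq \ker T_{(w_0,z_0)}\Phi$, so that $\R^k\times\bG = C \oplus E$. Apply the finite-dimensional-target implicit function theorem. Only the $E$--direction has to be inverted, and $E$ is finite-dimensional, so no Nash--Moser argument is needed: an ordinary parameter-dependent IFT in $\R^\ell$ suffices, and tame smoothness of the resulting graph map follows from the chain rule together with the formula for the derivative of the implicit solution. This exhibits $\Phi^{-1}(0)$ locally as the graph of a tame smooth map $C \to E$, and hence as a tame Fréchet submanifold modelled on $C$; this proves the first assertion.

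For uniform Fredholmness of $g^*f$, which is $(w,z)\mapsto z$, I need charts in the required normal form. The main obstacle is that $C$ is not naturally of the form $\bG' \times \R^{k'}$ compatible with the projection to $\bZ$. I would handle it as follows. Let $A \subseteq \R^\ell$ be the image of $T\Phi|_{\R^k\times 0}$. Choose a complement $A'$ of $A$, and finite-dimensional $Q \subseteq T_{z_0}\bZ$ with $T\Phi(0\times Q) \supseteq A'$ and $\dim Q = \dim A'$. Write $\bG = \bG'\oplus Q$ with $\bG'$ closed of finite codimension; such splittings exist in tame spaces. Solving $\Phi = 0$ then determines $\dim A$ of the $w$--coordinates and the $Q$--component as functions of $(\bG'$--component, remaining $k-\dim A$ coordinates of $w)$. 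So $g^*\bX$ has a chart $\bG'\times\R^{k-\dim A}$, and $\bZ$ has a chart $\bG'\times Q \iso \bG'\times\R^{\ell-\dim A}$. In these charts the projection reads $(y',w') \mapsto (y', \text{$Q$--component})$, which is the normal form. The index is $(k-\dim A)-(\ell-\dim A) = k-\ell = (\ind f)(x_0)$, which gives $\ind g^*f = (\ind f)\circ f^*g$.
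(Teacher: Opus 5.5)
Your argument is correct and starts from the same reduction as the paper. In charts adapted to $f$, the fibre product is the zero set of $\Phi(w,z)=F(g_1(z),w)-g_2(z)\in\R^\ell$, with the $\bF$--coordinate slaved to $g_1(z)$. Transversality is then exactly surjectivity of $T\Phi$.

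After that the two routes diverge. The paper stops at this point and invokes the regular value theorem \cite[Part III Theorem 2.3.1]{Hamilton1982:NashMoser}, a result from the Nash--Moser framework whose right-inverse hypothesis is trivially met here because the target is finite-dimensional. It leaves implicit both the normal form required by \autoref{Def_UniformlyFredholmMap} and the index count.

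You instead note that only an $\ell$--dimensional system has to be solved. An elementary parameter-dependent implicit function theorem therefore suffices, since a locally bounded map with finite-dimensional values is automatically tame. You then build charts $\mathbf{G}'\times\R^{k-\dim A}$ and $\mathbf{G}'\times\R^{\ell-\dim A}$ in which $g^*f$ visibly has the form $(y',w')\mapsto(y',F'(y',w'))$. This gives $\ind g^*f=k-\ell$ directly. The paper's version is shorter but rests on a black box and is terse about the second assertion; yours is self-contained and actually verifies it.

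One small slip needs fixing. You cannot first fix an arbitrary complement $A'$ of $A=\im\del_wF$ and then find $Q$ with $T\Phi(0\times Q)\supseteq A'$ and $\dim Q=\dim A'$, because $T\Phi(0\times\mathbf{G})$ need not contain a prescribed complement. Instead, choose $Q$ with $\dim Q=\ell-\dim A$ and $A+T\Phi(0\times Q)=\R^\ell$; this is possible since $A+T\Phi(0\times\mathbf{G})=\R^\ell$. Then set $A'\coloneq T\Phi(0\times Q)$. With $A_w\subseteq\R^k$ a complement of $\ker\del_wF$, the subspace $A_w\oplus Q$ maps isomorphically onto $\R^\ell$, and the rest of your argument goes through unchanged.
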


\begin{proof}
  It suffices to consider the situation in which
  $\bF$ is a tame Fréchet space,
  $\bX$ is an open subset of $\bF \times \R^k$,
  $\bY$ is an open subset of $\bF \times \R^\ell$, and
  $f(x_0,x_1) = (x_0,F(x_0,x_1))$.
  By direct inspection,
  \begin{align*}
    g^*\bX
    =
    \set[\big]{
    (z,x_0,x_1) \in \bZ \times \bF \times \R^k
    :
    x_0 = \pr_1 \circ g(z),
    F(\pr_1 \circ g(z),x_1) = \pr_2 \circ g(z)
    }.
  \end{align*}
  By the transversality assumption,
  at every $(z,x_0,x_1) \in g^*\bX$,
  the derivative of the map
  $(z,x_1) \mapsto F(\pr_1 \circ g(z),x_1) - \pr_2 \circ g(z)$
  is surjective.
  Therefore,
  the assertion follows from the version of the regular value theorem stated in \cite[Part III Theorem 2.3.1]{Hamilton1982:NashMoser}.
\end{proof}

The hypothesis in \autoref{Prop_UniformFredholmMapsAreStableUnderTransverseBaseChange} is generic according to the following parametric transversality result.

\begin{prop}
  \label{Prop_UniformlyFredholm_GenericTransversality}
  Let $\bX$, $\bY$, and $\bP$ be second-countable tame Fréchet manifolds,
  $Z$ a second-countable (finite-dimensional) manifold, and
  $f \co \bX \to \bY$ uniformly Fredholm.
  If $G \co \bP \times Z \to \bY$ is transverse to $f$,
  in particular, if it is a \defined{naive submersion};
  that is:
  $T_{(p,z)}G \co T_p\bP \oplus T_zZ \to T_{G(p,z)}\bY$ is surjective for every $(p,z) \in \bP \times Z$,
  then the subset  
  \begin{equation*}
    \bP^{\pitchfork}
    \coloneq
    \set[\big]{
      p \in \bP
      :
      g_p \coloneq G(p,-) \co Z \to \bY \text{ is transverse to } f
    }
  \end{equation*}
  is comeager.
\end{prop}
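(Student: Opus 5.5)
The plan is to reduce to the classical Sard--Smale parametric transversality theorem through the local normal form in \autoref{Def_UniformlyFredholmMap}. Transversality of $g_p$ to $f$ is a local condition at the points of the fibre product $g_p^*\bX$. I will therefore cover $\bX$ by countably many open sets $U_i$ (using second countability) on which $f$ has the normal form
\begin{equation*}
  \psi \circ f \circ \phi^{-1}(y,w) = (y,F(y,w))
\end{equation*}
with charts $\phi \co U_i \to \tilde U_i \subseteq \bF_i \times \R^{k_i}$ and $\psi \co V_i \to \tilde V_i \subseteq \bF_i \times \R^{\ell_i}$. Each $U_i$ will be chosen to admit a countable exhaustion by closed subsets $C_{i,j}$ that are ``relatively compact in the $\R^{k_i}$ direction'', and $Z$ will be exhausted by compact sets $Z_m$.

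In such a chart the fibre product is described as follows. For $(p,z)$ with $G(p,z) \in V_i$, write $\psi \circ G(p,z) = (G_0(p,z),G_1(p,z)) \in \bF_i \times \R^{\ell_i}$. Then $g_p^*\bX \cap \paren{Z \times U_i}$ is the zero set of
\begin{equation*}
  \Phi_p(z,w) \coloneq F(G_0(p,z),w) - G_1(p,z),
\end{equation*}
a map from an open subset of $Z \times \R^{k_i}$ into $\R^{\ell_i}$. Moreover, $g_p$ is transverse to $f$ on this chart exactly when $0$ is a regular value of $\Phi_p$. This is the same computation as in the proof of \autoref{Prop_UniformFredholmMapsAreStableUnderTransverseBaseChange}: the $\bF_i$ component of $T_xf$ is the identity, so surjectivity of $T_xf + T_zg$ reduces to surjectivity in the finite-dimensional $\R^{\ell_i}$ component.

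The transversality hypothesis on $G$ says that the universal map $\Phi(p,z,w) \coloneq \Phi_p(z,w)$ has $0$ as a regular value. This again follows by the same reduction, since the $\bF_i$ directions are absorbed by $y = G_0(p,z)$. Consequently $\sM_i \coloneq \Phi^{-1}(0)$ is a tame Fréchet submanifold of $\bP \times Z \times \R^{k_i}$, by the regular value theorem \cite[Part III Theorem 2.3.1]{Hamilton1982:NashMoser}. Indeed, the linearisation has a tame right inverse, because the target is finite-dimensional: one chooses finitely many tangent vectors whose images span $\R^{\ell_i}$. By the standard argument, $p$ satisfies ``$0$ is a regular value of $\Phi_p$'' if and only if $p$ is a regular value of the projection $\pi_i \co \sM_i \to \bP$. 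This projection is Fredholm in the finite-dimensional-kernel/cokernel sense, of index $\dim Z + k_i - \ell_i$; near any point it takes the form of a uniformly Fredholm map in the sense of \autoref{Def_UniformlyFredholmMap}, obtained by splitting off a finite-dimensional complement.

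The main obstacle is Sard's theorem, which is unavailable for maps between Fréchet spaces; I will instead exploit that the relevant degeneracy is finite-dimensional. Near a point of $\sM_i$, split $\bP$ locally as $\bP' \times \R^a$, where $\R^a$ is a finite-dimensional slice through which the directions needed for surjectivity enter. Using the normal form, $\sM_i$ is then locally a graph over $\bP' \times (\text{finite-dimensional manifold})$, and $\pi_i$ becomes $(p',s) \mapsto (p',h(p',s))$ with $h$ a map between finite-dimensional manifolds. For each fixed $p'$, Sard's theorem shows that the critical values of $h(p',-)$ form a null set. The critical set intersected with a compact piece $C_{i,j} \times Z_m$ is closed, so its image is closed. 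Hence the set of good $p$ for this piece is open, and by the Fubini-type fibrewise argument it is dense: any $p = (p',t)$ can be approximated by $(p',t')$ with $t'$ a regular value. The set of $p$ for which $g_p$ is transverse on the $(i,j,m)$-th piece is therefore open and dense. Since there are countably many pieces, $\bP^\pitchfork$ is a countable intersection of open dense sets, hence comeager. The delicate points are the compactness in the $\R^{k_i}$ direction, which is needed to make the bad sets closed and must be handled by shrinking the charts, and the local splitting of $\bP$. I expect the openness/closedness step to require the most care.
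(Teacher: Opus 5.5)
Your proposal is correct and follows the paper's route. You identify $\bP^{\pitchfork}$ with the set of regular values of the projection from the fibre product of $G$ and $f$ to $\bP$, which is uniformly Fredholm because $Z$ is finite-dimensional, and then apply a Sard--Smale argument. The difference is presentational: you work chart-by-chart and re-run Smale's proof by hand (local normal form, fibrewise finite-dimensional Sard, closed critical images over compact pieces), whereas the paper builds the fibre product globally via \autoref{Prop_UniformFredholmMapsAreStableUnderTransverseBaseChange}, identifies the regular values with a Snake Lemma argument, and cites \autoref{Thm_UniformlyFredholm_SardSmale}.
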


The proof relies on the following observation.

\begin{theorem}[{\citet{Smale1965}}]
  \label{Thm_UniformlyFredholm_SardSmale}
  Let $\bX$ and $\bY$ be second-countable tame Fréchet manifolds.
  If $f \co \bX \to \bY$ is uniformly Fredholm,
  then its set of regular values
  \begin{equation*}
    \RegVal(f)
    \coloneq
    \set{
      y \in \bY
      :
      T_xf ~\text{is surjective for every}~ x \in f^{-1}(y)
    }
  \end{equation*}
  is comeager.
\end{theorem}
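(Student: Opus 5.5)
The plan is to reduce the statement to the classical Sard theorem in finite dimensions, using the local normal form of \autoref{Def_UniformlyFredholmMap} and second-countability.

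First, fix $x \in \bX$ and choose charts $\phi \co U \to \tilde U \subseteq \bF \times \R^k$ and $\psi \co V \to \tilde V \subseteq \bF \times \R^\ell$ with $\psi \circ f \circ \phi^{-1}(y,z) = (y,F(y,z))$. In these coordinates $T f$ at $(y,z)$ is the block map $(\dot y,\dot z) \mapsto (\dot y, \partial_y F\,\dot y + \partial_z F\,\dot z)$. This is surjective if and only if $\partial_z F(y,z) \co \R^k \to \R^\ell$ is surjective. Consequently, a point $(y,w) \in \tilde V$ fails to be a regular value of $f|_U$ exactly when $w$ is a critical value of the finite-dimensional smooth map $F_y \coloneq F(y,-)$ on the open set $\tilde U_y \coloneq \set{z : (y,z) \in \tilde U}$.

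Second, I will show that the set of critical values of $f|_{U'}$ is nowhere dense, for a slightly smaller closed chart piece $U'$. Choose $U'$ of the form $\phi^{-1}(\overline{W} \times \overline{B})$, where $W \subseteq \bF$ is open and $B \subseteq \R^k$ is a closed ball, with the closure inside $\tilde U$. Closedness of the critical values $C'$ of $f|_{U'}$ in $V$ follows from compactness of $\overline B$: if $(y_n,F(y_n,z_n)) \to (y,w)$ with $\partial_zF(y_n,z_n)$ not surjective, pass to a subsequence $z_n \to z \in \overline B$. Continuity of $F$ and $\partial_z F$, together with closedness of the non-surjective matrices, gives that $(y,w)$ is again a critical value. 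Empty interior follows from Sard's theorem applied fibrewise. For each $y$, the set $\set{w : (y,w) \in C'}$ is the set of critical values of $F_y$ on $\overline B$, which has measure zero and hence empty interior. Any open set in $\bF \times \R^\ell$ contains a product $W'' \times B''$, so it meets the complement of $C'$.

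Third, I will globalise. By second-countability of $\bX$, countably many such closed pieces $U'_i$ (with interiors) cover $\bX$; they can be taken inside interiors of slightly larger chart pieces so that their interiors still cover. Each $\psi_i^{-1}(C'_i)$ is closed in the open set $V_i$ and nowhere dense there. Its closure in $\bY$ is contained in $\psi_i^{-1}(C'_i) \cup \partial V_i$, which is still nowhere dense after shrinking $V_i$ appropriately. Equivalently, one can argue directly that the set $f(U'_i) \cap \set{\text{critical}}$ is nowhere dense in $\bY$. The complement of the regular values is contained in $\bigcup_i \psi_i^{-1}(C'_i)$, which is a countable union of nowhere dense sets. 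Hence $\RegVal(f)$ is comeager.

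The main obstacle is the closedness step, in particular arranging that each critical-value piece is nowhere dense in all of $\bY$ and not merely in $V_i$. Here I expect to need the compact fibre factor $\overline B$ and care with boundary points of $V_i$. I would handle this by taking $U'_i$ so that $f(U'_i)$ has closure inside $V_i$. This is possible by continuity after shrinking $W$, because the map is the identity in the $\bF$-direction and $F(\overline W \times \overline B)$ can be made to lie in a compact subset of the $\R^\ell$-slice.
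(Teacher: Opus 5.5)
Your proposal is correct and takes essentially the paper's route. The paper gives no independent proof; it only remarks, following Diez--Rudolph, that Smale's original argument carries over. That argument is exactly yours: the normal form, closedness of the critical values on a piece with a compact $\R^k$-factor, fibrewise finite-dimensional Sard, and a Lindelöf covering.

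The difficulty you flag at the end is not real. A set that is nowhere dense in an open subset $V_i \subseteq \bY$ is automatically nowhere dense in $\bY$, so you need no control of closures in $\bY$. In particular, drop the dubious shrinking argument involving the non-compact $\overline W$.
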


\begin{proof}%[Proof of \autoref{Thm_UniformlyFredholm_SardSmale}]
  As pointed out by \cite[Remark 3.4]{DiezRudolph2022:NormalForm},
  the proof in \cite{Smale1965} carries over to the present situation.
\end{proof}

\begin{proof}[Proof of \autoref{Prop_UniformlyFredholm_GenericTransversality}]
  By \autoref{Prop_UniformFredholmMapsAreStableUnderTransverseBaseChange},
  $G^*\bX$ is a second-countable tame Fréchet submanifold,
  and $G^*f \co G^*\bX \to \bP \times Z$ is uniformly Fredholm.
  Let $(p,z,x) \in G^*\bX$ and set $y \coloneq f(x)$.
  The Snake Lemma applied to
  \begin{equation*}
    \begin{tikzcd}
      T_zZ \oplus T_x\bX \ar{d}[swap]{T_zg_p - T_xf} \ar[hook]{r} & T_p\bP \oplus T_zZ \oplus T_x\bX \ar[two heads]{d}{T_{(p,z)}G - T_xf} \ar[two heads]{r} & T_p\bP \\
      T_y\bY \ar[equals]{r} & T_y\bY
    \end{tikzcd}
  \end{equation*}
  yields an exact sequence
  \begin{equation*}
    T_{(p,z,x)}G^*\bX \xrightarrow{T_{(p,z,x)}\paren{\pr_\bP \circ G^*f}} T_p\bP \onto \coker \paren{T_zg_p - T_xf}.
  \end{equation*}
  Therefore,  
  $p \in \bP^{\pitchfork}$ if and only if it is a regular value of $\pr_\bP \circ G^*f \co G^*\bX \to \bP$.
  Since $Z$ is finite-dimensional,
  $\pr_\bP \circ G^*f$ is uniformly Fredholm.
  Therefore,
  the assertion follows from \autoref{Thm_UniformlyFredholm_SardSmale}.  
\end{proof}

\begin{remark}
  \label{Rmk_UniformlyFredholm_GenericTransversality}
  There is a version of \autoref{Thm_UniformlyFredholm_SardSmale} for uniformly left semi-Fredholm maps \cite{Quinn1970:LeftSemiFredholmSmale}.
  Using this the hypothesis that $Z$ is finite-dimensional can be dropped.  
\end{remark}

%%% Local Variables:
%%% mode: latex
%%% TeX-master: "UniversalModuliSpaceOfZ2ZHarmonicSpinors"
%%% ispell-local-dictionary: "british"
%%% End:

\subsection{The Nash--Moser submersion theorem}
\label{Sec_NashMoserSubmersionTheorem}

The proofs of \autoref{Thm_ProjectiveSpaceOfNonDegenerateHarmonicSpinors}, \autoref{Thm_ProjectiveSpaceOfNonDegenerateEigenSpinors}, and \autoref{Thm_ProjectiveSpaceOfNonDegenerateChiralHarmonicSpinors} use the following Nash--Moser version of the submersion theorem.

\begin{theorem}
  \label{Thm_NashMoserSubmersionTheorem}
  Let $\bX$ and $\bB$ be tame Fréchet manifolds,
  $\bV$ a tame Fréchet space bundle over $\bX$,
  $\pr_\bB \co \bX \to \bB$ a uniform submersion,
  $s \in \Gamma(\bX,\bV)$ a tame smooth section, and
  $\nabla$ a partial connection on $\bV$ with respect to the vertical tangent bundle $\VerticalTangentBundle{\bX}{\bB} \coloneq \ker T\pr_\bB \subseteq T\bX$.
  If
  there is a finite rank vector bundle $\DeformationBundle$ over $\bX$ and
  a map of tame Fréchet space bundles $\pi \co T\bX/\bB \onto \DeformationBundle$ such that the thickening
  \begin{equation}
    \label{Thm_NashMoserSubmersionTheorem_Isomorphism}
    \begin{pmatrix}
      \nabla s \\
      \pi
    \end{pmatrix}
    \co
    T\bX/\bB \to \bV \oplus \DeformationBundle
  \end{equation}
  is an isomorphism of tame Fréchet space bundles,
  then:
  \begin{enumerate}
  \item
    \label{Thm_NashMoserSubmersionTheorem_ZeroLocus}
    the zero locus
    \begin{equation*}
      \bZ \coloneq s^{-1}(0) \subseteq \bX
    \end{equation*}
    is a tame Fréchet submanifold,
  \item
    \label{Thm_NashMoserSubmersionTheorem_Projection}
    the projection $\pr_\bB \co \bZ \to \bB$ is a uniform submersion, and
  \item
    \label{Thm_NashMoserSubmersionTheorem_VerticalTangentBundle}
    $\VerticalTangentBundle{\bZ}{\bB} = \ker T(\pr_\bB|_{\bZ}) \iso \DeformationBundle|_\bZ$.
  \end{enumerate}
\end{theorem}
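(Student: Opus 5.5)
The plan is to reduce to the Nash--Moser inverse function theorem \cite[Part III Theorem 1.1.1]{Hamilton1982:NashMoser} applied to a suitable thickened map, in local coordinates. Since all assertions are local, fix $x_0 \in \bZ$. Because $\pr_\bB$ is a uniform submersion, choose charts in which $\bX$ is an open subset of $\bB \times \bW$ near $x_0$, with $\bW$ a tame Fréchet space modelling the fibres and $\pr_\bB$ the first projection. Trivialise $\bV$ over this chart as $\bX \times \bE$ and $\DeformationBundle$ as $\bX \times \R^k$; $\pi$ then becomes a tame smooth family $\pi_x \co \bW \to \R^k$. The vertical tangent bundle $\VerticalTangentBundle{\bX}{\bB}$ is identified with $\bX \times \bW$, and $s$ becomes a tame smooth map $s \co \bX \to \bE$.

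Define the tame smooth map
\begin{equation*}
  \Psi \co \bX \to \bB \times \bE \times \R^k,
  \qquad
  \Psi(b,w) \coloneq \paren[\big]{b, s(b,w), \pi_{x_0}(w - w_0)},
\end{equation*}
where $x_0 = (b_0,w_0)$. Its derivative at $x = (b,w)$ is block lower-triangular, with identity on the $\bB$-component and, on the fibre directions, the map $\dot w \mapsto (\del_w s(x)\dot w, \pi_{x_0}\dot w)$. Now $\del_w s$ differs from $\nabla s$ in the trivialisation by a term proportional to $s$, namely the connection form applied to $s$. On a neighbourhood of $\bZ$ this difference is small, and exactly on $\bZ$ it vanishes. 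The first task is therefore to show that the family $\paren{\del_w s(x), \pi_{x_0}}$ is invertible with tame smooth inverse for all $x$ near $x_0$, not only at points of $\bZ$. Nash--Moser requires this on an open set.

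This is the main obstacle. The hypothesis gives a tame isomorphism only for $\paren{\nabla s, \pi}$, and a priori only as a statement over $\bX$ in whatever sense ``isomorphism of tame Fréchet space bundles'' means. If that is over all of $\bX$, then $\paren{\nabla s, \pi_x}$ is invertible everywhere, and I would modify it to $\paren{\del_w s, \pi_{x_0}}$. The correction consists of a connection term times $s(x)$ plus $\pi_x - \pi_{x_0}$, both of which vanish at $x_0$. A Neumann-series perturbation argument in the tame category, using tame estimates of the inverse family as in \cite[Part II Theorem 3.3.3]{Hamilton1982:NashMoser}, gives invertibility of the derivative with tame smooth inverse on a smaller neighbourhood. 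To avoid loss-of-derivatives trouble in that perturbation, I would absorb the connection term instead, by choosing the trivialisation of $\bV$ parallel for $\nabla$ along fibres at first order, so that $\del_w s = \nabla s$ exactly along $\bZ$.

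Granting this, \cite[Part III Theorem 1.1.1]{Hamilton1982:NashMoser} shows that $\Psi$ is a local tame diffeomorphism near $x_0$. Then $\bZ = \Psi^{-1}(\bB \times \set{0} \times \R^k)$ is a tame submanifold, which is assertion (1), with chart $\bZ \ni z \mapsto (\pr_\bB z, \pi_{x_0}(w - w_0)) \in \bB \times \R^k$. In this chart $\pr_\bB$ is the projection $\bB \times \R^k \to \bB$, which gives the uniform submersion, assertion (2). Finally, the vertical tangent space at $z \in \bZ$ is $\ker \nabla s|_z$ (since $s(z) = 0$ the choice of connection is irrelevant there). The thickening isomorphism maps this kernel isomorphically onto $\DeformationBundle_z$ via $\pi$, which yields the bundle isomorphism in assertion (3).
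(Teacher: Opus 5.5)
\paragraph{A gap in the invertibility step.}
The gap is at the step you flag as the main obstacle, and neither of your remedies closes it.

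Hamilton's inverse function theorem (Part III Theorem 1.1.1) needs $T\Psi$ to be invertible, with a smooth tame family of inverses, at every point of an open set. In your trivialisation $\del_w s(x) = \nabla s(x) - \Gamma_x(\,\cdot\,,s(x))$, and the hypothesis only controls $\nabla s$.

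A Neumann-series argument cannot absorb the term $\Gamma_x(\,\cdot\,,s(x))$, because in the tame Fréchet category invertibility is not an open condition. Composed with the tame inverse $G_x$ of the thickening, this term in general only satisfies estimates of the schematic form $\Abs{\Gamma_x(G_xv,s(x))}_n \lesssim \Abs{v}_{n+r}\Abs{s(x)}_{r} + \Abs{v}_{r}\Abs{s(x)}_{n+r}$ for some fixed loss $r>0$. So the $m$-th term of the series loses $mr$ derivatives and the series need not converge in any of the norms. Shrinking the neighbourhood only makes finitely many seminorms of $s(x)$ small. The result you cite, Part II Theorem 3.3.3, turns pointwise invertibility with tame estimates into a smooth tame family of inverses; it does not supply the invertibility itself.

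Your second remedy is vacuous. On $\bZ$ one has $s=0$, so $\del_w s = \nabla s$ there in \emph{every} trivialisation. What you would need is $\del_w s = \nabla s$ on an open set, i.e.\ a trivialisation in which $\Gamma$ vanishes identically along the fibres. Such a trivialisation exists only if the partial connection is flat, which is not assumed. First-order parallelism at $x_0$ merely gives $\Gamma_{x_0}=0$, and $\Gamma_x(\,\cdot\,,s(x)) \neq 0$ for nearby $x \notin \bZ$. By contrast, the discrepancy $\pi_x - \pi_{x_0}$ is harmless because it is finite rank: you can restrict $\pi_{x_0}$ to the finite rank bundle $\ker \nabla s$ and invert there, as the paper does.

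\paragraph{The missing idea.}
An exact inverse off $\bZ$ is not needed. The paper augments $s$ to $\overline s(x,k) \coloneq (s(x),\pi_0(x)-k)$, which is essentially your $\Psi$. From the inverse of the thickening it builds a tame smooth family $\overline G$ with $\nabla\overline s\,\overline G = \id$. It then observes that $T\overline s\,\overline G(v,q) = (v,q) - (\Gamma(\overline G(v,q))s,0)$. In other words, $\overline G$ inverts $T\overline s$ up to an error that is bilinear in $s$ and the input, and hence vanishes on the zero set.

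Hamilton's Nash--Moser theorem for inverses up to a quadratic error (Part III Theorem 3.3.4) then gives a tame smooth map $\Phi(b,k)$ parametrising $s^{-1}(0)$ with $\pi_0(\pr_\bU\Phi(b,k)) = k$. This is precisely the chart you want. With this replacing the invertibility of $T\Psi$ on an open set, your derivation of (1)--(3) from the chart goes through.
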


\begin{proof}%[Proof of \autoref{Thm_NashMoserSubmersionTheorem}]
  Since the statement is local,
  it suffices to consider the situation in which
  $\bX = \bU \times \bB$ is the product of open neighbourhoods of the origin in two tame Fréchet spaces, and
  $\bV$ is a product bundle, whose fibre we continue to denote by $\bV$.
  In this case,
  \begin{equation*}
    s \co \bU \times \bB \to \bV
  \end{equation*}
  is a tame smooth map,
  and we may further assume that $(0,0) \in s^{-1}(0)$.

  The assumptions imply that $\DeformationBundle \iso \ker(\nabla s)$;
  let $\DeformationBundle_0$ denote the fibre over $(0,0)$, and
  let $\pi_0 \co T\bU \to \DeformationBundle_0$ be a projection to this fibre.
  For an open neighbourhood of the origin $\bK \subseteq \DeformationBundle_0$,
  consider the family of augmented maps
  \begin{equation*}
    \overline s \co \bX \times \bK \to \bV \oplus \DeformationBundle_0
  \end{equation*}
  defined by $\overline s(x,k) \coloneq (s(x), \pi_0(x) - k)$,
  where $\pi_0(x)$ is understood by viewing $\bU \subseteq T\bU$ as a subset.
  The covariant derivative
  \begin{equation*}
    \nabla \overline s \coloneq (\nabla s, \pi_0) \co T\bX/\bB \to \bV \oplus \DeformationBundle_0
  \end{equation*}
  is an isomorphism;
  indeed, viewing $\DeformationBundle \subseteq T\bU$ as $\ker(\nabla s)$ as above,
  $\pi_0 \co \DeformationBundle \to \DeformationBundle_0$ is a map of finite-rank vector bundles that is the identity over the origin,
  hence we may assume it is an isomorphism after shrinking $\bU$, $\bB$, and $\bK$.

  Let $G \co \bV \to T\bU$ denote the restriction of the inverse of \autoref{Thm_NashMoserSubmersionTheorem_Isomorphism} to the first component in the codomain,
  which is a tame smooth family of right inverses of $\nabla s$.
  Then
  \begin{equation*}
    \overline G \co \bV \oplus \DeformationBundle_0 \to T\bX/\bB
    \qwithq
    \overline G_{(u,b)}(v,q) \coloneq G_{(u,b)}v + \pi_0^{-1}\paren[\big]{q - \pi_0 G_{(u,b)}v}
  \end{equation*}
  is a tame smooth family of inverses for $\nabla \overline s$.
  Moreover,
  \begin{equation*}
    (T\overline s)_{(u,b)} \overline G_{(u,b)}(v,q)
    =
    (v,q) - \paren[\big]{\Gamma_{(u,b)}\paren[\big]{\overline G_{(u,b)}(v,q)}s, 0}.
  \end{equation*}
  Thus $\overline G$ is an inverse of $T\overline s$ up to an $\overline s$--quadratic error term.
  \cite[Part III Theorem 3.3.4]{Hamilton1982:NashMoser} then applies to show that there is a tame smooth map $\Phi \co \bB \times \bK \to \bX$ whose value at $(b,k)$ is the unique solution to
  \begin{equation*}
    s(\Phi(b,k)) = 0
    \qandq
    \pi_0\paren[\big]{\pr_{\bU}\Phi(b,k)} = k
  \end{equation*}
  in $\bX$.
  By construction,
  $(\pr_\bB, \pi_0 \circ \pr_{\bU}) \circ \Phi = \id$,
  hence $\Phi$ provides a chart on $\bZ = s^{-1}(0)$ which is already of the form in \autoref{Def_UniformlyFredholmMap}.
  This shows \autoref{Thm_NashMoserSubmersionTheorem_ZeroLocus} and     \autoref{Thm_NashMoserSubmersionTheorem_Projection}.
  The same composition in the second factor alone shows that $T\Phi(b,-) \co \DeformationBundle_0 \to T\bZ/\bB$ is an isomorphism,
  which shows \autoref{Thm_NashMoserSubmersionTheorem_VerticalTangentBundle}.
\end{proof}

%%% Local Variables:
%%% mode: latex
%%% TeX-master: "UniversalModuliSpaceOfZ2ZHarmonicSpinors"
%%% ispell-local-dictionary: "british"
%%% End:

\subsection{The universal space of non-degenerate \texorpdfstring{$\Z/2\Z$}{Z/2Z} harmonic spinors}
\label{Sec_Z2ZHarmonicSpinorsInDimensionThree}

This subsection proves \autoref{Thm_ProjectiveSpaceOfNonDegenerateHarmonicSpinors};
that is:
assuming that $\dim X = 3$ and $\rk S = 4$,
the space of non-degenerate $\Z/2\Z$ harmonic spinors
\begin{equation*}
  \SpaceOfNonDegenerateHarmonicSpinors
  \coloneq
  \set*{
    (\bp,[\fl];\Phi) \in \BundleOfZModTwoZSpinors
    :
    D_\bp^\fl \Phi = 0
    ~\text{and}~    
    \Phi ~\text{is non-degenerate}
  }
\end{equation*}
shall be equipped with the structure of a tame Fréchet manifold such that the projection map $\pr_\SpaceOfDiracBundles \co \SpaceOfNonDegenerateHarmonicSpinors \to \SpaceOfDiracBundles$ is uniformly Fredholm of index zero.
Since
\begin{equation*}
  \ProjectiveSpaceOfNonDegenerateHarmonicSpinors = \SpaceOfNonDegenerateHarmonicSpinors/\R^\times,
\end{equation*}
this immediately implies \autoref{Thm_ProjectiveSpaceOfNonDegenerateHarmonicSpinors}.
Of course, throughout the following four subsubsections it is assumed that $\dim X = 3$ and $\rk S = 4$.

\subsubsection{Local thickenings of the universal space of $\Z/2\Z$ harmonic spinors}
\label{Sec_LocalThickeningsOfTheSpaceOfZModTwoZHarmonicSpinors}

The following is the essential step in the proof of \autoref{Thm_ProjectiveSpaceOfNonDegenerateHarmonicSpinors}.

\begin{prop}
  \label{Prop_ThickenedSpaceOfZ2ZHarmonicSpinors_Submanifold+Submersion}
  For every $(\bp_0,[\fl_0];\Phi_0) \in \SpaceOfNonDegenerateHarmonicSpinors$
  there are
  an open subset $\sU \subseteq \BundleOfNonDegenerateAdmissibleZTwoZSpinors$ with $(\bp_0,[\fl_0];\Phi_0) \in \sU$ and
  a finite rank subbundle $\ObstructionBundle \subseteq f^*\BundleOfCoAdmissibleZModTwoZSpinors|_\sU$
  such that:
  \begin{enumerate}
  \item
    \label{Prop_ThickenedSpaceOfZ2ZHarmonicSpinors_Submanifold+Submersion_Open}
    $\sV \coloneq \pr_{\SpaceOfDiracBundles}(\sU) \subseteq \SpaceOfDiracBundles$ is open,
  \item
    \label{Prop_ThickenedSpaceOfZ2ZHarmonicSpinors_Submanifold+Submersion_Submanifold}
    the local thickening
    \begin{equation*}
      \ThickenedSpaceOfNonDegenerateHarmonicSpinors
      \coloneq
      \set[\big]{
        (\bp,[\fl];\Phi) \in \sU
        :
        D_\bp^\fl \Phi \in \ObstructionBundle
      }
      \subseteq \sU
    \end{equation*}
    is a tame Fréchet manifold, and
  \item
    \label{Prop_ThickenedSpaceOfZ2ZHarmonicSpinors_Submanifold+Submersion_Submersion}
    the projection map $\pr_\sV \co \ThickenedSpaceOfNonDegenerateHarmonicSpinors \to \sV$ is a uniform submersion.
  \end{enumerate}
\end{prop}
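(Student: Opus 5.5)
The plan is to deduce the statement from \autoref{Thm_NashMoserSubmersionTheorem}. I take $\bX \coloneq \sU$, a small open neighbourhood of $(\bp_0,[\fl_0];\Phi_0)$ in $\BundleOfNonDegenerateAdmissibleZTwoZSpinors$, and $\bB \coloneq \SpaceOfDiracBundles$. The projection $\BundleOfNonDegenerateAdmissibleZTwoZSpinors \to \SpaceOfDiracBundles \times \SpaceOfRamifiedLineBundles \to \SpaceOfDiracBundles$ is a uniform submersion, since the first map is a bundle projection and the second a product projection. Its image $\sV$ is therefore open, which gives \autoref{Prop_ThickenedSpaceOfZ2ZHarmonicSpinors_Submanifold+Submersion_Open}. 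The vertical tangent bundle is $T\bX/\bB \iso \sF^\SpaceOfRamifiedLineBundles \oplus \BundleOfAdmissibleZModTwoZSpinors$, using the partial connections $\nabla^\SpaceOfRamifiedLineBundles$ of \autoref{Def_PartialConnectionAlongSpaceOfRamifiedLineBundles}.

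For the obstruction bundle, I first apply \autoref{Prop_TotallyReal=>UniformlyFredholm} with $\fa = 0$ to the family of local residue conditions $(\BundleOfNonDegenerateAdmissibleZTwoZSpinors,f,\bV)$ of \autoref{Prop_AdmissibleZ2ZSpinor_NonDegenerate}. This yields $\DeformationBundle$, $\ObstructionBundle$, $\pi$, $\iota$ and a compact set $K \subseteq X \setminus Z_b$ with $\supp \iota(o) \subseteq K$. Sections supported in $K$ are smooth away from $Z$, so $\ObstructionBundle \subseteq f^*\BundleOfCoAdmissibleZModTwoZSpinors$. I also choose the local infinitesimal uniformiser of \autoref{Prop_InfinitesimalSliceUniformisers} so that $\bar v|_K = 0$. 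The section is $s(\bp,[\fl];\Phi) \coloneq [D_\bp^\fl\Phi] \in f^*\BundleOfCoAdmissibleZModTwoZSpinors/\ObstructionBundle$, and then $\ThickenedSpaceOfNonDegenerateHarmonicSpinors = s^{-1}(0)$.

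The core computation is the vertical derivative. By \autoref{Prop_CommutatorLKvD},
\begin{equation*}
  \nabla s(v,\phi) = D\phi + [\sL^K_{\bar v},D]\Phi = D\paren{\phi + \sL^K_{\bar v}\Phi} - \sL^K_{\bar v}D\Phi.
\end{equation*}
By \autoref{Prop_AdmissibleZ2ZSpinor_LeadingOrderTerm}, the map $(v,\phi) \mapsto \psi \coloneq \phi + \sL^K_{\bar v}\Phi$ takes values in $\BundleOfAdmissibleSingularSpinors_\bR$ with $\res\psi = \bL_\Phi(v)$. Since $\bL_\Phi$ is injective, $v$ is recovered from $\res\psi$, and the map is an isomorphism onto its image; this is where non-degeneracy is used. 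Its inverse is $\psi \mapsto (\bL_\Phi^{-1}\res\psi,\ \psi - \sL^K_{\bar v}\Phi)$. Hence, modulo the error term $\sL^K_{\bar v}D\Phi$, the operator $\nabla s$ is conjugate to $D_\bR$, and \autoref{Prop_TotallyReal=>UniformlyFredholm} supplies the thickening isomorphism required in \autoref{Thm_NashMoserSubmersionTheorem_Isomorphism}, with $\pi$ composed with this conjugation.

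The error term is handled in two ways. Whenever $D\Phi$ is supported in $K$, it vanishes identically because $\bar v|_K = 0$. On all of $\sU$, \autoref{Prop_K_PerturbationEstimate} shows that it is a tame perturbation that is small in the relevant low norms once $\sU$ is shrunk so that $D\Phi$ is close to $D\Phi_0 = 0$. A Neumann-series argument, tamed as in \cite[Part II Theorem 3.3.3]{Hamilton1982:NashMoser}, then keeps the thickening invertible. With this, \autoref{Thm_NashMoserSubmersionTheorem} gives \autoref{Prop_ThickenedSpaceOfZ2ZHarmonicSpinors_Submanifold+Submersion_Submanifold} and \autoref{Prop_ThickenedSpaceOfZ2ZHarmonicSpinors_Submanifold+Submersion_Submersion}.

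The step I expect to be the main obstacle is matching function spaces. \autoref{Prop_TotallyReal=>UniformlyFredholm} produces an isomorphism $\BundleOfAdmissibleSingularSpinors_\bR \oplus \ObstructionBundle \to f^*\BundleOfSingularSpinors \oplus \DeformationBundle$ on $H_a/H_b$ spaces. I instead need an isomorphism from $\sF^\SpaceOfRamifiedLineBundles \oplus \BundleOfAdmissibleZModTwoZSpinors$ onto $f^*\BundleOfCoAdmissibleZModTwoZSpinors/\ObstructionBundle \oplus \DeformationBundle$, with tame estimates in the $K^k$ norms. Concretely, if $D_\bR\psi \in \WeightLog^{-1}\BundleOfZModTwoZSpinors$, then $\phi = \psi - \sL^K_{\bar v}\Phi$ must lie in $K^\infty$, with $\Abs{\phi}_{K^k}$ controlled by $\Abs{\WeightLog D\phi}_{H_a^{k+1}}$ and $\Abs{v}_{H^{k+1/2}}$. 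I would prove this by applying elliptic regularity for $D_\bR$ to the weighted equation, using the commutator identity $[D,\log r] = r^{-1}\gamma(\del_r)$ and the Hardy-type estimates from the proof of \autoref{Prop_K_Estimate}. The residue of $\phi$ vanishes by construction, so only the weighted control of $D\phi$ needs to be established; this is where the logarithmic weight has to be handled carefully.
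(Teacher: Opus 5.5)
Your architecture matches the paper's. You apply \autoref{Thm_NashMoserSubmersionTheorem} to $s = D\Phi \pmod{\ObstructionBundle}$, take the obstruction bundle from \autoref{Prop_TotallyReal=>UniformlyFredholm} supported in $K$, and use injectivity of $\bL_\Phi$ to change variables from $\BundleOfZModTwoZSpinors\oplus\sF^\SpaceOfRamifiedLineBundles$ to $\BundleOfAdmissibleSingularSpinors_\bR$. You then absorb the error term $\sL^K_{\bar v}D\Phi$ by a tame Neumann series via \autoref{Prop_K_PerturbationEstimate}; the paper cites a small-perturbation lemma of Donaldson--Lehmann for this. There is a minor sign slip: $D\phi + [\sL^K_{\bar v},D]\Phi = D(\phi - \sL^K_{\bar v}\Phi) + \sL^K_{\bar v}D\Phi$, so the residue is $-\bL_\Phi v$. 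This is harmless.

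The gap is the step you flag as the main obstacle, and your sketch for it targets the wrong statement. The thickening from \autoref{Prop_TotallyReal=>UniformlyFredholm}, your conjugation, and the perturbation estimate all live on the unweighted $H_a^\infty/H_b^\infty$ spaces. In particular, $\sL^K_{\bar v}D\Phi$ is only known to lie in $H_b^\infty$, not in $\BundleOfCoAdmissibleZModTwoZSpinors$. So you cannot expect to prove that $D_\bR\psi \in \WeightLog^{-1}\BundleOfZModTwoZSpinors$ forces $\phi \in K^\infty$: the difference $D\phi - D\psi$ contains exactly this term. Weighted elliptic regularity for $D_\bR$ is neither needed nor able to supply it.

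The missing idea is a lift-and-descend argument. First prove that $(\widetilde{\nabla s},\pi)$ is an isomorphism for the lift $\widetilde{\nabla s}(\phi,v) = D\phi + \nabla^\SpaceOfRamifiedLineBundles_v\bD(\Phi) \pmod{\ObstructionBundle}$ on $\BundleOfZModTwoZSpinors\oplus\sF^\SpaceOfRamifiedLineBundles \to f^*\BundleOfSingularSpinors/\ObstructionBundle$. Your conjugation and perturbation argument proves exactly this. Then descend, using one fact: for $\Phi\in\BundleOfAdmissibleZModTwoZSpinors$, the term $\nabla^\SpaceOfRamifiedLineBundles_v\bD(\Phi) = [\sL^K_{\bar v},D]\Phi$ lies in $\BundleOfCoAdmissibleZModTwoZSpinors$ with tame bounds.
\begin{itemize}
\item By \autoref{Rmk_CommutatorLKvD}, this term is a conormal first-order operator applied to $\Phi$. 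This is precisely where the slice-adapted $\bar v$ of \autoref{Prop_InfinitesimalSliceUniformisers} is used.
\item \autoref{Prop_K_Estimate} controls $\WeightLog\Phi$, $r^{-1}\WeightLog\Phi$ and $\WeightLog\nabla\Phi$ in $H_b^{k+1}$, which gives the weighted bounds.
\end{itemize}
Since $\ObstructionBundle \subseteq \BundleOfCoAdmissibleZModTwoZSpinors$ as well, suppose $(\phi,v) \in \BundleOfZModTwoZSpinors\oplus\sF^\SpaceOfRamifiedLineBundles$ solves $D\phi + \nabla^\SpaceOfRamifiedLineBundles_v\bD(\Phi) \equiv y \pmod{\ObstructionBundle}$ with $y \in \BundleOfCoAdmissibleZModTwoZSpinors$. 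Then $D\phi \in \BundleOfCoAdmissibleZModTwoZSpinors$, so $\phi \in \BundleOfZModTwoZSpinors \cap \bD^{-1}\BundleOfCoAdmissibleZModTwoZSpinors = \BundleOfAdmissibleZModTwoZSpinors$. The $K^k$ bounds follow from the residue-zero elliptic estimate together with the bound on $\WeightLog D\phi$ read off from this equation. With this descent step inserted, your argument goes through.
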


\begin{proof}%[Proof of \autoref{Prop_ThickenedSpaceOfZ2ZHarmonicSpinors_Submanifold+Submersion}]
  The proof consists of five steps.
  Denote by $(\BundleOfNonDegenerateAdmissibleZTwoZSpinors,f,\bR)$ the residue condition from  \autoref{Prop_AdmissibleZ2ZSpinor_NonDegenerate}.  
  Let $(\bp_0,[\fl_0];\Phi_0) \in \SpaceOfNonDegenerateHarmonicSpinors$.
  
  \begin{step}
    Choice of $\sU$ and $\ObstructionBundle \subseteq \BundleOfCoAdmissibleZModTwoZSpinors|_\sU$.
  \end{step}  

  By \autoref{Prop_TotallyReal=>UniformlyFredholm} and \autoref{Prop_AdmissibleZ2ZSpinor_NonDegenerate},
  there are
  an open neighbourhood $(\bp_0,[\fl_0];\Phi_0) \in \sU \subseteq \BundleOfNonDegenerateAdmissibleZTwoZSpinors$,
  finite rank vector bundles $\DeformationBundle$ and $\ObstructionBundle$ over $\sU$, and
  maps of tame Fréchet space bundles $\pi \co \BundleOfAdmissibleSingularSpinors_\bR|_\sU \onto \DeformationBundle$ and $\iota \co \ObstructionBundle \into {f}^*\BundleOfSingularSpinors|_\sU$ such that
  \begin{equation*}
    \begin{pmatrix}
      \bD_\bR & \iota \\
      \pi & 0
    \end{pmatrix}
  \end{equation*}
  is invertible.
  Moreover,
  \autoref{Prop_TotallyReal=>UniformlyFredholm} guarantees that $\iota$ can be chosen so that it factors through $\jmath \co \ObstructionBundle \into f^*\BundleOfCoAdmissibleZModTwoZSpinors|_\sU$;
  in fact, the $\iota(o)$ are always supported away from $Z$.
  Henceforth, $\ObstructionBundle$ shall be regarded as a subbundle of $f^*\BundleOfCoAdmissibleZModTwoZSpinors|_\sU$.
  
  Since $\pr_{\SpaceOfDiracBundles} \co \BundleOfNonDegenerateAdmissibleZTwoZSpinors \to \SpaceOfDiracBundles$ is a submersion,
  $\sV \coloneq \pr_\SpaceOfDiracBundles(\sU) \subseteq \SpaceOfDiracBundles$ is open;
  therefore, \autoref{Prop_ThickenedSpaceOfZ2ZHarmonicSpinors_Submanifold+Submersion_Open} holds. 
  The task at hand is to prove that,
  after possibly shrinking $\sU$,  \autoref{Prop_ThickenedSpaceOfZ2ZHarmonicSpinors_Submanifold+Submersion_Submanifold} and \autoref{Prop_ThickenedSpaceOfZ2ZHarmonicSpinors_Submanifold+Submersion_Submersion} hold with the help of \autoref{Thm_NashMoserSubmersionTheorem}.
  
  \begin{step}
    Setup for \autoref{Thm_NashMoserSubmersionTheorem}.
  \end{step}

  Consider the tame Fréchet space bundle
  \begin{equation*}
    \bQ
    \coloneq
    {f}^*\BundleOfCoAdmissibleZModTwoZSpinors/\ObstructionBundle
  \end{equation*}
  over $\sU$.
  Define the section $s \in \Gamma\paren{\sU,\bQ}$ by
  \begin{equation*}
    s(\bp,[\fl];\Phi) \coloneq D_\bp^\fl \Phi \pmod{\ObstructionBundle_{\bp,[\fl];\Phi}}.
  \end{equation*}
  Evidently,
  $\SpaceOfHarmonicSpinors^\bt = s^{-1}(0)$.

  The vertical tangent bundle of $\sU$ fits into the short exact sequence
  \begin{equation*}
    {f}^*\BundleOfAdmissibleZModTwoZSpinors
    \into
    \VerticalTangentBundle{\sU}{\SpaceOfDiracBundles}
    \onto
    {f}^*\sF^\SpaceOfRamifiedLineBundles.
  \end{equation*}
  Choose a local infinitesimal uniformiser and define $\nabla^\SpaceOfRamifiedLineBundles$ as in \autoref{Def_PartialConnectionAlongSpaceOfRamifiedLineBundles}.
  This splits the above short exact sequence and defines a partial covariant derivative on $\bQ$ such that,
  for every $(\bp,[\fl];\Phi) \in \sU$
  \begin{equation}
    \label{Eq_NablaS}
    \begin{split}      
      (\nabla s)_{(\bp,[\fl];\Phi)}
      \co
      {\BundleOfAdmissibleZModTwoZSpinors}_{;\bp,[\fl]} \oplus T_{[\fl]}\SpaceOfRamifiedLineBundles
      &\to
        \bQ_{\bp,[\fl];\Phi} \\
      (\phi,v)
      &\mapsto
        D_\bp^\fl \phi + (\nabla_v^\SpaceOfRamifiedLineBundles\bD)(\bp,[\fl];\Phi)
        \pmod{\ObstructionBundle_{\bp,[\fl];\Phi}}.
    \end{split}
  \end{equation}
  It can be arranged that the subsets $\supp {(\nabla_v^\SpaceOfRamifiedLineBundles\bD)(\bp,[\fl];\Phi)}$ are always disjoint from the compact subset in which the elements of $\ObstructionBundle$ are supported.

  The task at hand is to prove that,
  after possibly shrinking $\sU \ni (\bp_0,[\fl_0];\Phi_0)$,
  the thickening
  \begin{equation*}
    \begin{pmatrix}
      \nabla s \\ \pi
    \end{pmatrix}
  \end{equation*}
  is an isomorphism of tame Fréchet space bundles and thus the hypotheses of \autoref{Thm_NashMoserSubmersionTheorem} are satisfied.
  The latter directly implies \autoref{Prop_ThickenedSpaceOfZ2ZHarmonicSpinors_Submanifold+Submersion_Submanifold} and \autoref{Prop_ThickenedSpaceOfZ2ZHarmonicSpinors_Submanifold+Submersion_Submersion}.
  
  \begin{step}
    Reduction to the invertibility of a lift.
  \end{step}

  It is evident from \autoref{Eq_NablaS} that $\nabla s$ lifts to
  \begin{equation*}
    \widetilde{\nabla s} \co f^*\paren{\BundleOfZModTwoZSpinors \oplus \sF^\SpaceOfRamifiedLineBundles} \to \tilde\bQ \coloneq f^*\BundleOfSingularSpinors|_\sU/\ObstructionBundle
  \end{equation*}
  such that the following diagram commutes
  \begin{equation*}
    \begin{tikzcd}
      {f}^*\paren{\BundleOfAdmissibleZModTwoZSpinors \oplus \sF^\SpaceOfRamifiedLineBundles} \ar{r}{\paren{\nabla s, \pi}} \ar[hook]{d} & \bQ \oplus \DeformationBundle \ar[hook]{d} \\
      {f}^*\paren{\BundleOfZModTwoZSpinors \oplus \sF^\SpaceOfRamifiedLineBundles} \ar[swap]{r}{\paren{\widetilde{\nabla s}, \pi}}  & \tilde \bQ \oplus \DeformationBundle.
    \end{tikzcd}
  \end{equation*}
  By \autoref{Prop_K_Estimate} and \autoref{Rmk_CommutatorLKvD},
  for every $(\bp,[\fl];\Phi) \in \BundleOfAdmissibleZModTwoZSpinors$,
  $(\nabla_v\bD)(\bp,[\fl];\Phi) \in {\BundleOfCoAdmissibleZModTwoZSpinors}_{;\bp,[\fl]}$.
  Therefore,
  if $(\phi,v) \in {\BundleOfZModTwoZSpinors}_{;\bp,[\fl]} \oplus \sF^\SpaceOfRamifiedLineBundles_{\bp,[\fl]}$
  and $\psi \in {\BundleOfCoAdmissibleZModTwoZSpinors}_{;\bp,[\fl]}$ satisfy
  \begin{equation*}
    D_\bp^\fl \phi + \nabla_v^\SpaceOfRamifiedLineBundles\bD(\bp,[\fl];\Phi)
    = \psi
    \pmod{\ObstructionBundle_{\bp,[\fl];\Phi}},
  \end{equation*}
  then
  $D_\bp^\fl\phi \in {\BundleOfCoAdmissibleZModTwoZSpinors}_{;\bp,[\fl]}$.
  As a consequence,
  if $\paren{\widetilde{\nabla s}, \pi}$ is an isomorphism of tame Fréchet space bundles,
  then $\paren{\nabla s, \pi}$ is an isomorphism of tame Fréchet space bundles.

  \medskip
  
  The next two steps verify that $\paren{\widetilde{\nabla s}, \pi}$ is an isomorphism of tame Fréchet space bundles,
  after possibly shrinking $\sU$.

  \begin{step}
    A slight change of perspective.
  \end{step}

  Define
  $\Theta_{\ext},\Theta_K
  \co
  f^*\paren{\BundleOfZModTwoZSpinors
  \oplus
  \sF^\SpaceOfRamifiedLineBundles}
  \to
  \BundleOfAdmissibleSingularSpinors_\bR$
  by
  \begin{align*}
    \Theta_{\ext}(\bp,[\fl],\Phi;\phi,v)
    &\coloneq
      \paren{\bp,[\fl];\Phi;\phi - \ext(\bL_\Phi v)}
    \qand \\
    \Theta_K(\bp,[\fl],\Phi;\phi,v)
    &\coloneq
      \paren{\bp,[\fl];\Phi;\phi - \sL_{\bar v}^{K}\Phi}.
  \end{align*}
  By the discussion in \autoref{Sec_FamiliesOfResidueConditions},
  $\Theta_{\ext}$ is an isomorphism of tame Fréchet space bundles with inverse
  \begin{equation*}
    \Theta_{\ext}^{-1}
    \paren{\bp,[\fl],\Phi;\psi}
    \coloneq
    \paren{
      \bp,[\fl],\Phi;
      \psi - \ext(\res(\psi)),
      - \bL_\Phi^{-1}\res(\psi)
    }.
  \end{equation*}
  By direct computation,
  \begin{equation*}
    \Theta_{\ext}^{-1}\circ \Theta_K
    \paren{\bp,[\fl],\Phi;\phi,v}
    =
    \paren{\bp,[\fl],\Phi;\phi - \Psi_\Phi(v),v}
  \end{equation*}
  with
  \begin{equation*}
    \Psi_\Phi(v)
    \coloneq
    \sL_{\bar v}^{K}\Phi-\ext(\bL_\Phi v).
  \end{equation*}
  By construction,
  $\res(\Psi_\Phi(v)) = \res(\sL_{\bar v}^{K}\Phi)-\res(\ext(\bL_\Phi v)) = 0$.
  Evidently, $\Theta_{\ext}^{-1}\circ \Theta_K$ is an isomorphism of tame Fréchet space bundles;
  hence, so is $\Theta_K$.

  From \autoref{Prop_CommutatorLKvD} and \autoref{Eq_NablaS} it follows that
  \begin{equation*}
    \widetilde{\nabla s} \circ \Theta_K^{-1}
    =
    \bD_\bR+\bP
    \pmod{\ObstructionBundle}
  \end{equation*}
  with the perturbation $\bP$ defined by
  \begin{equation*}
    \bP\paren{\bp,[\fl],\Phi;\psi}
    \coloneq
    \paren{
      \bp,[\fl],\Phi;
      \sL_{\bar v_\Phi(\psi)}^K D_\bp^\fl\Phi
    }
    \qandq
    \bar v_\Phi(\psi)
    \coloneq
    \bar v(\bp,[\fl],-\bL_{\Phi}^{-1}\res(\psi)).
  \end{equation*}
    
  \begin{step}
    \label{St_X}
    Proof of invertibility.
  \end{step}

  By construction,
  the thickening
  \begin{equation*}
    \overline\bD_\bR \coloneq
    \begin{pmatrix}
      \bD_\bR \\
      \pi
    \end{pmatrix}
    \pmod{\ObstructionBundle}
  \end{equation*}
  is an isomorphism of tame Fréchet space bundles and its inverse $\overline\bD_\bR^{-1}$ satisfies tame estimates of the form
  \begin{equation*}
    \Abs{\overline{\bD}_\bR^{-1}(\psi,\delta)}_{H_a^{k+1}} \lesssim \Abs{\psi}_{H_b^k} + \Abs{\delta}.
  \end{equation*}
  By \autoref{Prop_K_PerturbationEstimate},
  $\bP$ is a small perturbation in the sense of \cite[Appendix A]{DonaldsonLehmann2025:CY3Boundary}.
  Therefore, as a consequence of \cite[Proposition A.5]{DonaldsonLehmann2025:CY3Boundary},
  $\overline\bD_\bR + \bP$ is invertible.
\end{proof}

%%% Local Variables:
%%% mode: latex
%%% TeX-master: "UniversalModuliSpaceOfZ2ZHarmonicSpinors"
%%% ispell-local-dictionary: "british"
%%% End:

\subsubsection{Transversality of the obstruction map}
\label{Sec_TransversalityOfTheObstructionMap}

\begin{prop}
  \label{Prop_TransversalityOfTheObstructionMap}
  In the situation of \autoref{Prop_ThickenedSpaceOfZ2ZHarmonicSpinors_Submanifold+Submersion}
  the \defined{obstruction map} $\ObstructionMap \co \ThickenedSpaceOfNonDegenerateHarmonicSpinors \to \ObstructionBundle$ defined by
  \begin{equation*}
    \ObstructionMap(\bp,[\fl];\Phi) \coloneq D_\bp^\fl \Phi
  \end{equation*}
  is transverse to the zero section.
\end{prop}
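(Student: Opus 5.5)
The plan is to show that at every point $x = (\bp,[\fl];\Phi) \in \ThickenedSpaceOfNonDegenerateHarmonicSpinors$ with $\ObstructionMap(x) = 0$ the composition of $T_x\ObstructionMap$ with the projection onto the fibre $\ObstructionBundle_x$ is surjective. Since $\ObstructionBundle$ has finite rank, this is a finite-dimensional surjectivity statement. It suffices to exhibit, for a basis of $\ObstructionBundle_x$, tangent vectors to $\ThickenedSpaceOfNonDegenerateHarmonicSpinors$ whose images under the obstruction map hit these basis elements. The natural supply of such vectors comes from variations of the parameter $\bp$. By \autoref{Prop_ThickenedSpaceOfZ2ZHarmonicSpinors_Submanifold+Submersion}, $\pr_\sV$ is a uniform submersion, so every $\dot\bp \in T_\bp\SpaceOfDiracBundles$ lifts to some $(\dot\bp;\dot\phi,v) \in T_x\ThickenedSpaceOfNonDegenerateHarmonicSpinors$. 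Its image under $T\ObstructionMap$ is
\begin{equation*}
  \nabla^\Slice_{\dot\bp}\bD(\bp,\fl;\Phi) + D_\bp^\fl\dot\phi + \nabla_v^\SpaceOfRamifiedLineBundles\bD(\bp,[\fl];\Phi),
\end{equation*}
up to the connection terms of $\ObstructionBundle$, which vanish at a zero of $\ObstructionMap$.

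First I would identify the cokernel. By construction of the thickening in \autoref{Prop_TotallyReal=>UniformlyFredholm} and Step~5 of the previous proof, $\ObstructionBundle_x$ maps isomorphically onto the cokernel of the linearisation $(\phi,v) \mapsto D\phi + \nabla_v^\SpaceOfRamifiedLineBundles\bD(\Phi)$, that is, of $\bD_\bR + \bP$. Transversality therefore reduces to the claim that the map
\begin{equation*}
  \dot\bp \mapsto \nabla^\Slice_{\dot\bp}\bD(\bp,\fl;\Phi)
\end{equation*}
has image whose $L^2$-pairing with the finite-dimensional cokernel $\ker(\bD_\bR+\bP)^* =: C$ is non-degenerate. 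Concretely: if $\psi \in C$ satisfies $\Inner{\nabla_{\dot\bp}^\Slice\bD\,\Phi,\psi}_{L^2} = 0$ for all $\dot\bp$, then $\psi = 0$.

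Next I would use the simplest variations: $\dot\bp_\nabla = (0;0,\dot\nabla)$ with $\dot\nabla \in \Omega^1(X,\fo_\Cl(S))$ compactly supported in $X\setminus Z$. These lie in the slice distribution by \autoref{Prop_SlicesAreTameFrechetSubmanifolds}, and by \autoref{Prop_SliceDerivativeOfUniversalDiracOperator} they give $\sum_i \gamma(e_i)\dot\nabla_{e_i}\Phi$. Thus $\psi \perp$ all of these means
\begin{equation*}
  \sum_i \Inner{\gamma(e_i)\dot\nabla_{e_i}\Phi,\psi}=0
\end{equation*}
pointwise for all $\dot\nabla$. This is equivalent to $\sum_i e^i\otimes \pi_{\fo_\Cl}\paren{\gamma(e_i)^*\psi\Phi^*} = 0$, where $\pi_{\fo_\Cl}$ denotes projection to $\fo_\Cl(S)$. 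In dimension three with $\rk S = 4$ one has $\fo_\Cl(S)\iso \sp(1)$ (by \autoref{Rmk_OClS}), i.e. $S = \H$ with quaternionic twisting. A pointwise linear-algebra computation should show that at points where $\Phi \neq 0$ this forces $\psi = 0$. If $\sp(1)$ alone is too small, I would also allow metric and gauge variations $\dot\bp_\BG$, $\dot\bp_\sG$ supported away from $Z$. Gauge variations give $[\xi,D]\Phi = -D(\xi\Phi)$ and hence contribute nothing new modulo the image; metric variations give the term $\dot g(e_i,e_j)\gamma(e_i)\nabla_{e_j}\Phi$, which suffices where $\nabla\Phi\ne0$.

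Finally, since $\Phi$ is non-trivial harmonic, unique continuation makes $\set{\Phi\neq0}$ open and dense. Hence $\psi$ vanishes on an open set, and unique continuation for $(\bD_\bR+\bP)^*$ (used already in \autoref{Prop_TotallyReal=>UniformlyFredholm}) gives $\psi = 0$.

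I expect the main obstacle to be the pointwise algebra: showing that the span of $\set{\gamma(e_i)\dot\nabla_{e_i}\Phi}$ (together with the metric terms) is all of $S_y$ at generic $y$. The first thing I would try is to work in the quaternionic model $S=\H$, $\gamma(e_i)=$ left multiplication by $i,j,k$ and $\fo_\Cl=$ right multiplication by $\Im\H$. There $\gamma(e_i)\Phi\,q_i$ with $q_i\in\Im\H$ chosen independently for each $i$ should already span $\H$ whenever $\Phi\ne0$, by direct inspection. A secondary subtlety is checking that the adjoint of the perturbed operator $\bD_\bR+\bP$ still has unique continuation. Since $\bP$ only involves $\Phi$-dependent terms supported near $Z$ and of finite rank in $v$, I would argue that $\psi$ satisfies $D^*\psi=0$ away from a neighbourhood of $Z$, which suffices.
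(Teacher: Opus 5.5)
Your proposal is correct and follows essentially the paper's route. You use the variations $\dot\bp_\nabla = (0;0,\dot\nabla)$ with $\dot\nabla \in \Omega^1\paren{X,\fo_{\Cl}(S)}$ supported where $\Phi \neq 0$. You use that $\fo_{\Cl}(S) \iso \sp(1)$ acts by right multiplication, so that \autoref{Lem_AlgebraInTransversality} makes $\dot\nabla \mapsto \sum_i \gamma(e_i)\dot\nabla_{e_i}\Phi$ pointwise surjective. And you use the submersion property of $\pr_\sV$ to lift these variations into $\ThickenedSpaceOfNonDegenerateHarmonicSpinors$. The only difference is that you phrase the last step dually: a cokernel element annihilating all such variations must vanish. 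The paper instead directly hits every $\psi$ supported in a compact set on which $\Phi$ does not vanish.

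A few simplifications and one small correction:
\begin{itemize}
\item The metric and gauge variations are unnecessary.
\item Your worry about $\bP$ disappears, because $\bP\psi = \sL^K_{\bar v_\Phi(\psi)} D_\bp^\fl \Phi = 0$ at a zero of $\ObstructionMap$.
\item Away from the base point, $\ObstructionBundle_x$ is only guaranteed to surject onto the cokernel, not to be isomorphic to it. That surjection is all your reduction actually uses, so the argument still goes through.
\end{itemize}
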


\begin{proof}[Proof of \autoref{Prop_TransversalityOfTheObstructionMap}]
  Let $(\bp,[\fl];\Phi) \in \ob^{-1}(0) \subseteq \SpaceOfNonDegenerateHarmonicSpinors$.
  By \autoref{Prop_SliceDerivativeOfUniversalDiracOperator},
  for every $\dot\nabla \in \Omega^1\paren{X,\fo_\Cl(S)}$,
  $\dot\bp_\nabla = (0;0;\dot\nabla;0) \in T_{(\bp,[\fl])}\Slice$ and
  \begin{equation*}
    \nabla_{\dot\bp_\nabla}^\Slice\bD (\bp,[\fl];\Phi)
    =
    \sum_{i=1}^3 \gamma(e_i)\dot\nabla_{e_i} \Phi.
  \end{equation*}  
  There is a compact subset $K \subseteq X\setminus Z$ with non-empty interior on which $\Phi$ does not vanish.
  By \autoref{Rmk_OClS} and \autoref{Lem_AlgebraInTransversality}, stated after this proof,
  for every $\psi \in \Gamma(X\setminus Z,S\otimes\fl)$ with $\supp(\psi) \subseteq K$ there is a $\dot\nabla$ such that
  \begin{equation*}
    \sum_{i=1}^3 \gamma(e_i)\dot\nabla_{e_i} \Phi = \psi.
  \end{equation*}
  Since $\pr_\SpaceOfDiracBundles \co \SpaceOfHarmonicSpinors^\bt \to \SpaceOfDiracBundles$ is a submersion,
  there is a lift $\Omega^1\paren{X,\fo_{\Cl}(S)} \to T_{(\bp,[\fl];\Phi)}\SpaceOfHarmonicSpinors^\bt$.
  From the above it is clear that the composition
  \begin{equation*}
    \Omega^1\paren{X,\fo_{\Cl}(S)}
    \incl T_{(\bp,[\fl];\Phi)}\SpaceOfHarmonicSpinors^\bt
    \xrightarrow{T_{(\bp,[\fl];\Phi)}\ob}
    T_{(\bp,[\fl];\Phi;0)}\ObstructionBundle
    \onto
    \ObstructionBundle_{(\bp,[\fl];\Phi)}
  \end{equation*}
  is surjective.
  Therefore, $\ob$ is transverse to the zero section.
\end{proof}

\begin{lemma}
  \label{Lem_AlgebraInTransversality}
  For every $\phi \in \H\setminus\set{0}$ and $\psi \in \H$ there are $\xi,\eta,\zeta \in \Im \H$ such that
  \begin{equation*}
    i\phi\xi + j\phi\eta + k\phi\zeta = \psi.
  \end{equation*}
\end{lemma}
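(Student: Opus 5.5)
The plan is to reduce to a linear-algebra statement about the real-linear map
\begin{equation*}
  L_\phi \co (\Im\H)^3 \to \H, \qquad (\xi,\eta,\zeta) \mapsto i\phi\xi + j\phi\eta + k\phi\zeta,
\end{equation*}
and to show that it is surjective whenever $\phi \neq 0$. The first step removes $\phi$. Since $\phi \neq 0$ is invertible, write $\xi = \phi^{-1}\xi'\phi$, and similarly for $\eta$ and $\zeta$. Conjugation by $\phi$ preserves $\Im\H$, so $\xi'$ again ranges over all of $\Im\H$. Then $\phi\xi = \xi'\phi$, and the equation becomes
\begin{equation*}
  (i\xi' + j\eta' + k\zeta')\phi = \psi.
\end{equation*}
Right multiplication by $\phi^{-1}$ is a bijection of $\H$. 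It therefore suffices to show that the map
\begin{equation*}
  M \co (\Im\H)^3 \to \H, \qquad (\xi,\eta,\zeta) \mapsto i\xi + j\eta + k\zeta
\end{equation*}
is surjective.

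The second step verifies this surjectivity directly on basis elements. Taking $\eta = \zeta = 0$ and $\xi = -i$ gives $i\xi = 1$, so $1$ lies in the image. Taking $\xi = j$ gives $ij = k$, and taking $\xi = -k$ gives $-ik = j$, so $k$ and $j$ lie in the image. Finally, $j\eta$ with $\eta = k$ gives $i$. Hence $1, i, j, k$ all lie in the image. Since $M$ is $\R$--linear, it is surjective, and with it $L_\phi$.

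There is no genuine obstacle here. The only point needing care is that the substitution $\xi \mapsto \phi^{-1}\xi\phi$ maps $\Im\H$ to itself. This holds because conjugation by a nonzero quaternion is (a scalar multiple of) a rotation of $\Im\H$; indeed $\phi^{-1}\xi\phi = \abs{\phi}^{-2}\bar\phi\xi\phi$, which is purely imaginary. A dimension count is consistent with the claim: the domain $(\Im\H)^3$ has dimension $9$ and the target $\H$ has dimension $4$. In the application in \autoref{Prop_TransversalityOfTheObstructionMap}, \autoref{Rmk_OClS} identifies $\fo_\Cl(S_x) \iso \sp(1)$ acting on $S_x = \H$ on the right for $d=3$, $\rk S = 4$, which is why the unknowns lie in $\Im\H$ and multiply $\phi$ from the right.
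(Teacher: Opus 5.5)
Your proof is correct and follows the same route as the paper: conjugate the unknowns by $\phi$ to move $\phi$ to the right, then solve $i\tilde\xi + j\tilde\eta + k\tilde\zeta = \psi\phi^{-1}$ with $\tilde\xi,\tilde\eta,\tilde\zeta \in \Im\H$. Your explicit check that $1,i,j,k$ lie in the image of $(\xi,\eta,\zeta)\mapsto i\xi+j\eta+k\zeta$ spells out what the paper calls ``direct inspection''.
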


\begin{proof}%[Proof of \autoref{Lem_AlgebraInTransversality}]
  By direct inspection,
  it is possible to choose $\tilde\xi,\tilde\eta,\tilde\zeta \in \Im \H$ such that $i\tilde\xi + j\tilde\eta + k\tilde\zeta = \psi\phi^{-1}$.
  Set $\xi \coloneq \phi^{-1}\tilde\xi\phi$, $\eta \coloneq \phi^{-1}\tilde\eta\phi$, $\zeta \coloneq \phi^{-1}\tilde\zeta\phi$.
\end{proof}

\begin{proof}[Proof of \autoref{Thm_ProjectiveSpaceOfNonDegenerateHarmonicSpinors}]
  It is an immediate consequence of
  \autoref{Prop_ThickenedSpaceOfZ2ZHarmonicSpinors_Submanifold+Submersion},
  \autoref{Prop_TransversalityOfTheObstructionMap},
  and \cite[Part~III, Theorem~2.3.1]{Hamilton1982:NashMoser}
  that $\SpaceOfNonDegenerateHarmonicSpinors \subseteq \BundleOfNonDegenerateAdmissibleZTwoZSpinors$ is a tame Fréchet submanifold and that $\pr_\SpaceOfDiracBundles \co \SpaceOfNonDegenerateHarmonicSpinors \to \SpaceOfDiracBundles$ is uniformly Fredholm.

  By \cite[Corollary~2.30]{BeraWalpuski2025},
  the family of residue conditions introduced in \autoref{Prop_AdmissibleZ2ZSpinor_NonDegenerate} is Lagrangian.
  As a consequence $\rk \DeformationBundle = \rk \ObstructionBundle$ and, therefore, $\pr_\SpaceOfDiracBundles$ has index zero.
  This directly implies \autoref{Thm_ProjectiveSpaceOfNonDegenerateHarmonicSpinors}.
\end{proof}

\begin{remark}
  \label{Rmk_TangentSpaces}
  The proof of \autoref{Thm_ProjectiveSpaceOfNonDegenerateHarmonicSpinors} identifies the vertical tangent spaces of $\SpaceOfNonDegenerateHarmonicSpinors$:
  indeed, if $(\bp,[\fl];\Phi) \in \SpaceOfNonDegenerateHarmonicSpinors$,
  then identifying ${\BundleOfZModTwoZSpinors}_{;\bp,[\fl]} \oplus T_{[\fl]}\SpaceOfRamifiedLineBundles = {\BundleOfAdmissibleSingularSpinors}_{\bR;\bp,[\fl];\Phi}$
  \begin{equation*}
    \ker T_{(\bp,[\fl];\Phi)}\pr_\SpaceOfDiracBundles
    =
    \ker D_{R_\Phi}.
    \qedhere
  \end{equation*}
  Of course,
  the corresponding vertical tangent space of $\ProjectiveSpaceOfNonDegenerateHarmonicSpinors$ is $\ker D_{R_\Phi}/\R\Span{\Phi}$.
\end{remark}

%%% Local Variables:
%%% mode: latex
%%% TeX-master: "UniversalModuliSpaceOfZ2ZHarmonicSpinors"
%%% ispell-local-dictionary: "british"
%%% End:

\subsubsection{A transversality criterion}
\label{Sec_TransversalityCriterion}

Here is how to verify whether or not a smooth map $\Gamma \co B \to \SpaceOfDiracBundles$ is transverse to $\pr_\SpaceOfDiracBundles \co \ProjectiveSpaceOfNonDegenerateHarmonicSpinors \to \SpaceOfDiracBundles$.

\begin{definition}
  Let $(\bp,[\fl]) \in \SpaceOfDiracBundles \times \SpaceOfRamifiedLineBundles$ and $\phi,\psi \in \Gamma\paren{X\setminus \Br(\fl), S\otimes \fl}$.
  \begin{enumerate}
  \item
    The \defined{stress-energy tensor}
    $T_{\phi,\psi} \in \Gamma\paren{X\setminus Z,\Hom(S^2TX,\R)}$ is defined by
    \begin{equation*}
      T_{\phi,\psi} (v,w)
      \coloneq
      -\frac18\paren{\Inner{\gamma(v)\nabla_w\phi,\psi} + \Inner{\gamma(w)\nabla_v\phi,\psi}
      + \Inner{\gamma(v)\nabla_w\psi,\phi} + \Inner{\gamma(w)\nabla_v\psi,\phi}}.
    \end{equation*}
  \item  
    Define
    $\mu(\phi,\psi) \in \Omega^1\paren{X\setminus Z,\fo_{\Cl}(S)}$
    by
    \begin{equation*}
      \mu(\phi,\psi)
      \coloneq
      \frac12\bar\gamma^*\paren*{\phi\Inner{\psi,-} + \psi\Inner{\phi,-}}
    \end{equation*}
    with $\bar\gamma^*$ denoting the adjoint of the linear map $\bar \gamma \co T^*X \otimes \fo_{\Cl}(S) \to \Sym(S)$ defined by $\bar\gamma(\alpha\otimes \xi) \coloneq \gamma(\alpha^\sharp)\xi$.
    \qedhere
  \end{enumerate}
\end{definition}

\begin{prop}
  \label{Prop_TransversalityCriterion_HarmonicSpinors}
  Let $B$ be a manifold and \  $\Gamma \co B \to \SpaceOfDiracBundles$ a smooth map.
  The following are equivalent:
  \begin{enumerate}
  \item
    $\Gamma$ is transverse to $\pr_\SpaceOfDiracBundles \co \ProjectiveSpaceOfNonDegenerateHarmonicSpinors \to \SpaceOfDiracBundles$.
  \item
    \label{Prop_TransversalityCriterion_HarmonicSpinors_CoPetri}
    For every $b \in B$ and $(\bp,[\fl];\Phi) \in \SpaceOfNonDegenerateHarmonicSpinors$ with $\Gamma(b) = \bp$
    the map
    \begin{equation*}
      \delta_{\Phi} \co T_bB \to \paren{\ker D_{R_\Phi}}^*
    \end{equation*}
    defined by
    \begin{equation*}
      \delta_\Phi(v)\kappa
      \coloneq
      \Inner*{T_{\Phi,\kappa},T_{\bp}\pr_\SpaceOfMetrics \circ T_b\Gamma(v)}_{L^2}
      + \Inner*{\mu(\Phi,\kappa),\pr_{V_\nabla} \circ T_b\Gamma(v)}_{L^2}
    \end{equation*}    
    is surjective.
    Here $R_\Phi$ denotes the residue condition induced by $\Phi$.
  \end{enumerate}
\end{prop}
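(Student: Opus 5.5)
The plan is to reduce transversality to a cokernel computation for the linearisation of $s$ (equivalently, of the obstruction map) along the direction of $\SpaceOfDiracBundles$, and then to identify that cokernel with $(\ker D_{R_\Phi})^*$ by $L^2$ duality.

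Fix $b \in B$ and $(\bp,[\fl];\Phi) \in \SpaceOfNonDegenerateHarmonicSpinors$ with $\Gamma(b)=\bp$. Because the scaling direction is tangent to the fibre, it is harmless to work with $\SpaceOfNonDegenerateHarmonicSpinors$ instead of its projectivisation. By \autoref{Rmk_TangentSpaces} and the proof of \autoref{Thm_ProjectiveSpaceOfNonDegenerateHarmonicSpinors}, the tangent space of $\SpaceOfNonDegenerateHarmonicSpinors$ at this point is the kernel of the full linearisation
\begin{equation*}
  (\dot\bp,\phi,v) \mapsto \nabla_{\dot\bp}\bD(\bp,[\fl];\Phi) + (\widetilde{\nabla s})(\phi,v).
\end{equation*}
After applying $\Theta_K^{-1}$, the vertical part becomes $D_{R_\Phi}+\bP$. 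At a genuine harmonic spinor the perturbation $\bP$ vanishes, since it involves $D_\bp^\fl\Phi=0$. Hence the vertical part is just $D_{R_\Phi}$.

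\textbf{Step 1: reduce to a cokernel statement.} Transversality of $\Gamma$ to $\pr_\SpaceOfDiracBundles$ at the given point means $T_b\Gamma(T_bB) + T\pr(T\SpaceOfNonDegenerateHarmonicSpinors) = T_\bp\SpaceOfDiracBundles$. The standard snake-lemma argument, as in the proof of \autoref{Prop_UniformlyFredholm_GenericTransversality}, applies to the surjective map $T\SpaceOfDiracBundles\oplus\text{vertical}\to$ target. It shows this holds if and only if the composite
\begin{equation*}
  T_bB \to T_\bp\SpaceOfDiracBundles \xrightarrow{\nabla\bD(\cdot;\Phi)} H_b^\infty \to \coker D_{R_\Phi}
\end{equation*}
is surjective. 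Here one uses that $\nabla s$ is onto after adding $T\SpaceOfDiracBundles$; this follows from \autoref{Prop_TransversalityOfTheObstructionMap} and \autoref{Prop_ThickenedSpaceOfZ2ZHarmonicSpinors_Submanifold+Submersion}.

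\textbf{Step 2: identify the cokernel.} By \cite[Corollary 2.30]{BeraWalpuski2025}, $R_\Phi$ is Lagrangian, so $D_{R_\Phi}^*=D_{R_\Phi^G}=D_{R_\Phi}$. Therefore $\coker D_{R_\Phi}\iso(\ker D_{R_\Phi})^*$ via $\psi\mapsto\Inner{\psi,-}_{L^2}$. It then suffices to compute, for $\kappa\in\ker D_{R_\Phi}$ and $\dot\bp=T_b\Gamma(v)$, the pairing $\Inner{\nabla_{\dot\bp}\bD(\bp,[\fl];\Phi),\kappa}_{L^2}$. The complication is that $\nabla_{\dot\bp}$ is a slice derivative, whereas $\dot\bp$ is an arbitrary tangent vector. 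I would split $\dot\bp$ using \autoref{Prop_SlicesAreTameFrechetSubmanifolds} and \autoref{Prop_UniversalSpaceOfDiracBundles_TangentBundle}, after first subtracting a Kosmann lift $\lift^K_\bp(\bar v)$ and a gauge term that moves it into the slice. Those correction terms only change the $\phi$- and $v$-components, so they vanish in $\coker D_{R_\Phi}$.

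\textbf{Step 3: compute each piece.} Using \autoref{Prop_SliceDerivativeOfUniversalDiracOperator}, the contributions are as follows.
\begin{enumerate}
\item A gauge variation gives $[\xi,D]\Phi - D(\xi\Phi)$-type terms. These pair to zero against $\kappa$ after integration by parts, because $\xi\Phi$ lies in the domain of $D_{R_\Phi}$ up to residue terms that are $G$-orthogonal.
\item The $V_\nabla$ part gives $\Inner{\sum\gamma(e_i)\dot\nabla_{e_i}\Phi,\kappa} = \Inner{\dot\nabla,\mu(\Phi,\kappa)}$ by the definition of $\bar\gamma^*$. Symmetrising in $\Phi,\kappa$ is free, since $\dot\nabla$ is skew and Clifford-commuting.
\item The Bourguignon--Gauduchon part gives $-\tfrac12\dot g(e_i,e_j)\Inner{\gamma(e_i)\nabla_{e_j}\Phi,\kappa}$ plus a zeroth-order term. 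Symmetrising via integration by parts and using $D\Phi=D\kappa=0$ produces exactly $\Inner{T_{\Phi,\kappa},\dot g}_{L^2}$.
\end{enumerate}
The result is $\delta_\Phi(v)$, which proves the equivalence.

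\textbf{Main obstacle.} The hardest step is justifying the integrations by parts on $X\setminus Z$. The boundary terms at $Z$ must vanish: $\Phi,\kappa\sim r^{1/2}$ while $\nabla\Phi\sim r^{-1/2}$, so the boundary integrands over circles of radius $r$ are $O(r)$ or more delicate. One must also check that the Kosmann/gauge corrections contribute nothing. I would first try to handle this with the Green's form: each such term should equal $G$ applied to two elements of $R_\Phi$, and $G(R_\Phi,R_\Phi)=0$ by the Lagrangian property, so these terms vanish.
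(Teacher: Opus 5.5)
Your proposal is correct and follows essentially the paper's argument. The paper likewise reduces transversality to surjectivity of $v \mapsto \Pi\,\nabla_{T_b\Gamma(v)}\bD(\bp,[\fl];\Phi)$ onto $\ker D_{R_\Phi}$, having chosen $\ObstructionBundle$ so that the $L^2$ projection $\Pi$ is an isomorphism onto $\ker D_{R_\Phi}$; it then notes that the $V_{\sG}$ component does not contribute, and identifies the remaining pairing with $\delta_\Phi$ by the standard Maier and Ammann--Dahl computation, all of which your Steps 1--3 spell out in more detail.

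One correction to your heuristic in the last paragraph: a general $\kappa \in \ker D_{R_\Phi}$ has non-zero residue in $\im \bL_\Phi$, so it behaves like $r^{-1/2}$, not $r^{1/2}$. Your conclusion still stands. The boundary fluxes in your integrations by parts involve only $\langle \gamma(\cdot)\Phi,\kappa\rangle = O(1)$, integrated over hypersurfaces $\{r=\epsilon\}$ of volume $O(\epsilon)$, so they vanish as $\epsilon \to 0$. The gauge and Kosmann correction terms vanish by the Lagrangian property of $R_\Phi$, exactly as you suggest.
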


\begin{proof}%[Proof of \autoref{Prop_TransversalityCriterion_HarmonicSpinors}]
  Let $b \in B$ and $(\bp,[\fl];\Phi) \in \SpaceOfNonDegenerateHarmonicSpinors$ with $\Gamma(b) = \bp$.
  In the construction of $\SpaceOfHarmonicSpinors^\bt$,
  $\ObstructionBundle$ can be chosen such that the $L^2$ orthogonal projection $\Pi$ onto $\ker D_{R_\Phi}$ induces an isomorphism $\ObstructionBundle_{(\bp,[\fl];\Phi)} \iso \ker D_{R_\Phi}$.
  In particular,
  as a consequence of the construction of $\SpaceOfNonDegenerateHarmonicSpinors$,
  $\Gamma$ and $\pr_\SpaceOfDiracBundles$ are transverse at $b$ and $(\bp,[\fl];\Phi)$ precisely if
  the map 
  \begin{align*}
    T_bB &\to \ker D_{R_\Phi} \\
    v &\mapsto \Pi \nabla_{T_b\Gamma(v)}\bD(\bp,[\fl];\Phi)
  \end{align*}
  is surjective.
  By direct inspection this is equivalent to $\delta_\Phi$ being surjective;
  cf.~\cite[Proof of Proposition 4.1]{Maier1997} or \cite[Proof of Lemma 3.2]{AmmannDahl2025}.
  Evidently, the $V_\sG$ component of $T_b\Gamma(v)$ does not contribute.
\end{proof}

\begin{remark}
  \label{Rmk_TransversalityCriterion_HarmonicSpinors}
  The (dual of the) condition \autoref{Prop_TransversalityCriterion_HarmonicSpinors_CoPetri} is a \defined{Petri condition}; cf.~\cites[§5]{Wendl2016}[Definition 1.1.11]{Doan2018}.
  Here are some comments on the usefulness of the contributions to $\delta_\Phi$ to verify this condition:
  \begin{enumerate}
  \item    
    The map $\mu\paren{\Phi,-} \co \Gamma\paren{X\setminus Z, S\otimes \fl} \to \Omega^1\paren{X\setminus Z,\fo_{\Cl}(S)}$ factors through a linear map $S\otimes \fl \to \Hom\paren{TX, \fo_{\Cl}(S)}|_{X\setminus Z}$ which is injective on the dense open subset $X \setminus \Phi^{-1}(0)$ by \autoref{Lem_AlgebraInTransversality}.
    Of course,
    this is what enabled the proof of \autoref{Prop_TransversalityOfTheObstructionMap} and makes the second contribution to $\delta_\Phi$ quite useful.
  \item
    The map $T_{\Phi,-} \co \Gamma\paren{X\setminus Z, S\otimes\fl} \to \Gamma\paren{X\setminus Z,\Hom(S^2TX,\R)}$ factors through $J^1(S\otimes \fl) \to \Hom(S^2TX,\R)|_{X\setminus Z}$.
    If $T_{\Phi,\Phi} = 0$,
    then $g$ is conformally flat by \cite[Proof of Theorem 1.2]{Maier1997}.
    Therefore,
    the first contribution to $\delta_\Phi$ might not always be useful.
    However, it is quite possible that for generic metrics it is.
    Maybe, the methods developed by \cite{GreilhuberKepplinger2026:SAH} can be brought to bear on this.
    
    The above issue also is what stands in the way of establishing a version of \autoref{Thm_ProjectiveSpaceOfNonDegenerateHarmonicSpinors} for $\Z/2\Z$ harmonic spinors on Dirac bundles arising from a (topological) spin structure on $X$.
    \qedhere
  \end{enumerate}
\end{remark}

%%% Local Variables:
%%% mode: latex
%%% TeX-master: "UniversalModuliSpaceOfZ2ZHarmonicSpinors"
%%% ispell-local-dictionary: "british"
%%% End:

\subsubsection{The universal zero locus}
\label{Sec_UniversalZeroLocus}

Here is an observation to the effect that generic non-degenerate $\Z/2\Z$ harmonic spinors do not vanish outside of the branching locus.
This observation essentially already appears in \cite[Proof of Corollary~1.12]{HeParker2024} and is repeated here for the readers' convenience.

\begin{prop}
  \label{Prop_UniversalZeroLocus_Harmonic}
  The \defined{universal zero locus} of non-degenerate $\Z/2\Z$ harmonic spinors
  \begin{equation*}
    \bZ_{\ProjectiveSpaceOfNonDegenerateHarmonicSpinors}
    \coloneq
    \set[\big]{
      (\bp,[\fl];[\Phi],x) \in \ProjectiveSpaceOfNonDegenerateHarmonicSpinors \times X
      :
      x \notin \Br(\fl),
      \Phi(x) = 0      
    }
  \end{equation*}
  is a tame Fréchet submanifold of $\ProjectiveSpaceOfNonDegenerateHarmonicSpinors \times X$ and $\pr_{\ProjectiveSpaceOfNonDegenerateHarmonicSpinors} \co \bZ_{\ProjectiveSpaceOfNonDegenerateHarmonicSpinors} \to \ProjectiveSpaceOfNonDegenerateHarmonicSpinors$ has index $-1$.
\end{prop}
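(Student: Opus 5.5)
The plan is to exhibit $\bZ_{\ProjectiveSpaceOfNonDegenerateHarmonicSpinors}$ as the zero locus of a section that is transverse to the zero section, using the same mechanism as in the proof of \autoref{Prop_TransversalityOfTheObstructionMap}. I would work upstairs on $\SpaceOfNonDegenerateHarmonicSpinors$ and pass to the quotient by $\R^\times$ at the end; this is harmless because the condition $\Phi(x) = 0$ is invariant under scaling.

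Consider the open subset
\begin{equation*}
  \mathring\bW \coloneq \set{ (\bp,[\fl];\Phi,x) \in \SpaceOfNonDegenerateHarmonicSpinors \times X : x \notin \Br(\fl) }.
\end{equation*}
Locally it is a tame Fréchet manifold, namely a product chart times $\R^3$, by \autoref{Prop_TwistedUniversalRamifiedEuclideanLineBundle}. Over $\mathring\bW$ there is the rank $4$ vector bundle whose fibre at $(\bp,[\fl];\Phi,x)$ is $(S\otimes\fl)_x$. It carries the evaluation section $\ev(\bp,[\fl];\Phi,x) \coloneq \Phi(x)$, and by definition $\bZ = \ev^{-1}(0)$.

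Evaluation is tame smooth. Indeed, by \autoref{Prop_K_Estimate} elements of the fibres of $\BundleOfAdmissibleZModTwoZSpinors$ are smooth away from $Z$, with tame bounds on compacta. Moreover, the local trivialisations of \autoref{Def_UniversalBundles_Atlases} can be chosen to be the identity on a compact $K \subseteq X\setminus\Br(\fl_0)$ containing a neighbourhood of $x_0$, by the final assertion of \autoref{Prop_TransformToNormalForm}. With such a chart, evaluation near $x_0$ is simply the usual evaluation of smooth sections.

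It then suffices to show that $T\ev$ composed with the projection to the fibre is surjective at every zero. By the regular value theorem \cite[Part III Theorem 2.3.1]{Hamilton1982:NashMoser} in the finite-rank-target form, $\bZ$ is then a tame Fréchet submanifold of codimension $4$. The index of $\pr \co \bZ \to \ProjectiveSpaceOfNonDegenerateHarmonicSpinors$ is computed as $\dim X - \rk S = 3-4 = -1$: the fibre direction contributes $+3$ and the equation cuts out $4$. Uniform Fredholmness in the sense of \autoref{Def_UniformlyFredholmMap} follows because the chart produced by the regular value theorem is of the required product form in the Fréchet directions.

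The key step, and the main obstacle, is surjectivity of $\dot\Phi \mapsto \dot\Phi(x)$, where $\dot\Phi$ ranges over tangent vectors to $\SpaceOfNonDegenerateHarmonicSpinors$. The point $x$ itself is useless here, since $\nabla\Phi(x)$ may vanish. I would produce the required tangent vectors by varying the connection by $\dot\nabla \in \Omega^1(X,\fo_\Cl(S))$ supported in a small ball $B$ around some point $y \neq x$ with $\Phi(y) \neq 0$, where $\Phi$ is nowhere zero on $B$ and $B \cap \Br(\fl)=\emptyset$. The linearised equation for a tangent vector $(\dot\bp_\nabla,\dot\Phi,v)$ reads
\begin{equation*}
  D_{R_\Phi}\dot\Phi' = -\sum_i\gamma(e_i)\dot\nabla_{e_i}\Phi,
\end{equation*}
modulo the identifications of \autoref{Rmk_TangentSpaces}. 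By \autoref{Lem_AlgebraInTransversality}, the right-hand side can be made an arbitrary $\psi$ supported in $B$. Tangency to $\SpaceOfNonDegenerateHarmonicSpinors$ requires $\psi \perp \ker D_{R_\Phi}^*=\ker D_{R_\Phi}$, the operator being self-adjoint because the residue condition is Lagrangian. For such $\psi$, the vector $\dot\Phi(x) = (D_{R_\Phi}^{-1}\psi)(x)$ is well defined modulo values at $x$ of elements of $\ker D_{R_\Phi}$.

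Suppose the image missed some $w \in (S\otimes\fl)_x$ with $w \neq 0$. Then $w$ annihilates all values $\kappa(x)$ for $\kappa\in\ker D_{R_\Phi}$ (this includes $\kappa = \Phi$, which vanishes at $x$). Let $G_x^*w$ be the Green's function solution of $D_{R_\Phi} G = w\,\delta_x$, taken modulo the kernel. Then $\langle G_x^*w,\psi\rangle = 0$ for all $\psi$ in $B$ orthogonal to the kernel. Hence $G_x^*w$ agrees on $B$ with an element of $\ker D_{R_\Phi}$, and by unique continuation it agrees with that element on $X\setminus(Z\cup\{x\})$. This is impossible, since $G_x^*w$ has a nonzero $|\cdot-x|^{-2}$ singularity at $x$.

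Getting this Green's function argument right, in particular the bookkeeping of the kernel and cokernel and the appeal to elliptic regularity with the residue condition from \cite{BeraWalpuski2025}, is where I expect the real work to lie.
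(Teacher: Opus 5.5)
Your proposal is correct and follows the paper's route. The paper likewise realises the zero locus as the zero set of the evaluation section of a rank $4$ bundle: it uses $\Hom(\R\Span{\Phi},S_x\otimes\fl_x)$ over the projectivisation rather than working upstairs, and gets the index as $\dim X-\rk S=-1$. It then reduces transversality to surjectivity of $(\phi,\dot\bp)\mapsto D_\bp^\fl\phi+\nabla_{\dot\bp}\bD(\bp,[\fl];\Phi)$ on $\ker\ev_x\oplus T_\bp\SpaceOfDiracBundles$, settled by the connection-variation argument of \autoref{Prop_TransversalityOfTheObstructionMap}. Your Green's function and unique continuation argument is exactly the dual form of that step, which the paper leaves implicit: the annihilator of the image of $D_\bp^\fl|_{\ker\ev_x}$ is spanned by $\ker D_{R_\Phi}$ and the Green's functions with pole at $x$.
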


\begin{proof}%[Proof of \autoref{Prop_UniversalZeroLocus_Harmonic}]
  There is a rank $4$ vector bundle $\sS$ over $\mathring\bX_{\ProjectiveSpaceOfNonDegenerateHarmonicSpinors} \coloneq \set{ (\bp,[\fl];[\Phi],x) \in \ProjectiveSpaceOfNonDegenerateHarmonicSpinors \times X : x \notin \Br(\fl) }$ whose fibre over $(\bp,[\fl];[\Phi],x)$ is $\Hom\paren{\R\Span{\Phi},S_x\otimes\fl_x}$.
  Evaluation defines a section $\ev \in \Gamma\paren{\mathring\bX_{\ProjectiveSpaceOfNonDegenerateHarmonicSpinors},\sS}$.
  If $\ev$ is transverse to the zero section,
  then,
  by \cite[Part~III, Theorem~2.3.1]{Hamilton1982:NashMoser},
  $\bZ_{\ProjectiveSpaceOfNonDegenerateHarmonicSpinors}$ is a tame Fréchet submanifold and $\ind \pr_{\ProjectiveSpaceOfNonDegenerateHarmonicSpinors} = \dim X - \rk \sS = - 1$.

  To prove that $\ev$ is transverse to the zero section,
  it suffices to prove that for every $(\bp,[\fl];[\Phi],x) \in \bZ_{\ProjectiveSpaceOfNonDegenerateHarmonicSpinors}$ and $\psi \in (S\otimes\fl)_x$ there are $\phi \in H_a^\infty\Gamma\paren{X\setminus Z,S\otimes\fl;R_\Phi;\bp}$ and $\dot\bp \in T_\bp\SpaceOfDiracBundles$ such that
  \begin{equation*}
    \phi(x) = \psi \qandq
    D_\bp^\fl\phi + \nabla_{\dot\bp}\bD\paren{\bp,[\fl];\Phi} = 0.
  \end{equation*}
  To prove this,
  it suffices to prove that
  \begin{align*}
    \ker \ev_x \oplus T_\bp\SpaceOfDiracBundles &\to H_b^\infty\paren{X\setminus Z, S\otimes \fl;\bp} \\
    (\phi,\dot\bp) &\mapsto D_\bp^\fl\phi + \nabla_{\dot\bp}\bD\paren{\bp,[\fl];\Phi}
  \end{align*}
  is surjective.
  Here $\ev_x \co H_a^\infty\Gamma\paren{X\setminus Z,S\otimes\fl;R_\Phi;\bp} \to S_x \otimes \fl_x$ denotes the evaluation map.
  The restriction of $D_\bp^\fl$ to $\ker \ev_x$ is Fredholm and $\coker D_\bp^\fl|_{\ker \ev_x} \iso \paren{\ker D_{R_\Phi} \oplus D_\bp^\fl \widetilde{S_x\otimes\fl_x}}^*$ for every choice of lift $\widetilde{S_x\otimes\fl_x}$ of $S_x \otimes \fl_x$ along $\ev_x$.
  With this in mind,
  the surjectivity of the above map follows by the argument employed in the proof of \autoref{Prop_TransversalityOfTheObstructionMap}. 
\end{proof}

The above might be somewhat interesting in view of \cite{Doan2017c,Yan2025:Model,Yan2025:Gluing}.

%%% Local Variables:
%%% mode: latex
%%% TeX-master: "UniversalModuliSpaceOfZ2ZHarmonicSpinors"
%%% ispell-local-dictionary: "british"
%%% End:

%%% Local Variables:
%%% mode: latex
%%% TeX-master: "UniversalModuliSpaceOfZ2ZHarmonicSpinors"
%%% ispell-local-dictionary: "british"
%%% End:

\subsection{The universal space of non-degenerate $\Z/2\Z$ eigenspinors}
\label{Sec_Z2ZEigenSpinorsInDimensionThree}

Assuming that $\dim X = 3$ and $\rk S = 4$,
consider the universal space of non-degenerate $\Z/2\Z$ eigenspinors
\begin{equation*}
  \SpaceOfNonDegenerateEigenSpinors
  \coloneq
  \set*{
    (\bp,[\fl];\Phi,\lambda) \in \BundleOfNonDegenerateAdmissibleZTwoZSpinors \times \R
    :
    D_\bp^\fl \Phi = \lambda\Phi
  }.
\end{equation*}
To prove that $\SpaceOfNonDegenerateEigenSpinors$ is a submanifold and $\pr_\SpaceOfDiracBundles \co     \SpaceOfNonDegenerateEigenSpinors \to \SpaceOfDiracBundles$ is uniformly Fredholm of index $1$,
one proceeds as in \autoref{Sec_Z2ZHarmonicSpinorsInDimensionThree}.

\begin{prop}
  \label{Prop_ThickenedSpaceOfZ2ZEigenSpinors_Submanifold+Submersion}
  For every $(\bp_0,[\fl_0];\Phi_0,\lambda_0) \in \SpaceOfNonDegenerateEigenSpinors$
  there are
  an open subset $\sU \subseteq \BundleOfNonDegenerateAdmissibleZTwoZSpinors \times \R$ with $(\bp_0,[\fl_0];\Phi_0,\lambda_0) \in \sU$ and
  a finite rank subbundle $\ObstructionBundle \subseteq \pr_1^*f^*\BundleOfCoAdmissibleZModTwoZSpinors|_{\sU}$
  such that:
  \begin{enumerate}
  \item
    \label{Prop_ThickenedSpaceOfZ2ZEigenSpinors_Submanifold+Submersion_Open}
    $\sV \coloneq \pr_{\SpaceOfDiracBundles}(\sU) \subseteq \SpaceOfDiracBundles$ is open,
  \item
    \label{Prop_ThickenedSpaceOfZ2ZEigenSpinors_Submanifold+Submersion_Submanifold}
    the local thickening
    \begin{equation*}
      \ThickenedSpaceOfNonDegenerateEigenSpinors
      \coloneq
      \set[\big]{
        (\bp,[\fl];\Phi,\lambda) \in \sU
        :
        (D_\bp^\fl-\lambda) \Phi \in \ObstructionBundle
      }
      \subseteq \sU
    \end{equation*}
    is a tame Fréchet manifold, and
  \item
    \label{Prop_ThickenedSpaceOfZ2ZEigenSpinors_Submanifold+Submersion_Submersion}
    the projection map $\pr_\sV \co \ThickenedSpaceOfNonDegenerateEigenSpinors \to \sV$ is a uniform submersion.
  \end{enumerate}
\end{prop}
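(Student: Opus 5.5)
The plan is to run the proof of \autoref{Prop_ThickenedSpaceOfZ2ZHarmonicSpinors_Submanifold+Submersion} verbatim, treating $-\lambda$ as an order-zero perturbation and the extra real parameter $\lambda$ as part of the base. First, invoke \autoref{Prop_TotallyReal=>UniformlyFredholm} for the family of local residue conditions $(\BundleOfNonDegenerateAdmissibleZTwoZSpinors\times\R,f\circ\pr_1,\pr_1^*\bV)$ with $\bV$ from \autoref{Prop_AdmissibleZ2ZSpinor_NonDegenerate}, and $\fa\coloneq-\lambda\one$, which is a map of tame Fréchet bundles of order zero. This yields a neighbourhood $\sU$ of $(\bp_0,[\fl_0];\Phi_0,\lambda_0)$, finite rank bundles $\DeformationBundle,\ObstructionBundle$, and maps $\pi,\iota$ such that the thickening of $\bD_\bR-\lambda$ is invertible, with $\iota(o)$ supported in a compact $K$ away from all $Z_b$. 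Hence $\ObstructionBundle\subseteq\pr_1^*f^*\BundleOfCoAdmissibleZModTwoZSpinors|_\sU$. Openness of $\sV$ follows as before, since $\pr_\SpaceOfDiracBundles$ is a submersion on $\BundleOfNonDegenerateAdmissibleZTwoZSpinors\times\R$.

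Next, set up \autoref{Thm_NashMoserSubmersionTheorem} with $\bX=\sU$, $\bB=\sV$, $\bQ\coloneq\pr_1^*f^*\BundleOfCoAdmissibleZModTwoZSpinors/\ObstructionBundle$, and $s(\bp,[\fl];\Phi,\lambda)\coloneq(D_\bp^\fl-\lambda)\Phi \bmod \ObstructionBundle$. This is well defined because $\Phi\in K^\infty$ lies in $\WeightLog^{-1}H_a^\infty(\dots;0)$ by \autoref{Prop_K_Estimate}, so $\lambda\Phi\in\BundleOfCoAdmissibleZModTwoZSpinors$. The vertical tangent bundle is now $f^*\BundleOfAdmissibleZModTwoZSpinors\oplus f^*\sF^\SpaceOfRamifiedLineBundles\oplus\R$. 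Using $\nabla^\SpaceOfRamifiedLineBundles$ from \autoref{Def_PartialConnectionAlongSpaceOfRamifiedLineBundles} gives
\[
(\nabla s)(\phi,v,\dot\lambda)=(D_\bp^\fl-\lambda)\phi+[\sL^K_{\bar v},D_\bp^\fl]\Phi-\dot\lambda\Phi \pmod{\ObstructionBundle},
\]
where $[\sL^K_{\bar v},\lambda]=0$. One must also handle the $\R$-direction. Since $\Phi\in\ker(D_\bp^\fl-\lambda)\subseteq\ker D_{R_\Phi}-\lambda$ at the base point, I will enlarge the deformation bundle: the term $-\dot\lambda\Phi$ is a finite-rank (rank one) addition. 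The thickened operator on $(\phi,v,\dot\lambda)$ is therefore a finite-rank extension of the invertible thickening. After adding $\R$ to the domain, I will either add one more dimension to $\DeformationBundle$, so that $\pi$ also records $\dot\lambda$, or equivalently choose $\pi$ so that the augmented thickening stays invertible; this works because finite-rank changes preserve invertibility of the augmented system after a suitable choice. Take $\DeformationBundle'=\DeformationBundle\oplus\R$ and $\pi'(\phi,v,\dot\lambda)=(\pi(\phi,v),\dot\lambda)$. The resulting operator is block-triangular with invertible diagonal blocks, hence invertible.

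Then repeat Steps 3 to 5 of the earlier proof. First, lift to $f^*(\BundleOfZModTwoZSpinors\oplus\sF^\SpaceOfRamifiedLineBundles)\oplus\R$ and observe that a preimage of an element of $\BundleOfCoAdmissibleZModTwoZSpinors$ automatically has $D\phi\in\BundleOfCoAdmissibleZModTwoZSpinors$; here $\lambda\phi\in\BundleOfCoAdmissibleZModTwoZSpinors$ for $\phi\in\BundleOfZModTwoZSpinors$ needs checking, but it follows from \cite[Proposition 4.27]{BeraWalpuski2025}, since $r^{-1}\phi\in H_b^\infty$ implies $\WeightLog\phi\in H_a^\infty(\dots;0)$. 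Second, conjugate by $\Theta_K$ to rewrite the operator as $\bD_\bR-\lambda+\bP$ modulo $\ObstructionBundle$, with the same perturbation $\bP$, which depends on $\lambda$ only through the base. Finally, apply \autoref{Prop_K_PerturbationEstimate} and \cite[Proposition A.5]{DonaldsonLehmann2025:CY3Boundary}.

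The main obstacle is again the tame invertibility of $\overline{\bD}_\bR-\lambda+\bP$. Its tame estimates are inherited from the elliptic estimates for $D_{\bR_b}+\fa_b$ supplied by \autoref{Prop_TotallyReal=>UniformlyFredholm}. The small-perturbation estimate for $\bP$ is the same as before, so I expect no new difficulty there. The only genuinely new check is the membership $\lambda\phi\in\BundleOfCoAdmissibleZModTwoZSpinors$ for $\phi\in\BundleOfZModTwoZSpinors$ needed in the lifting step.
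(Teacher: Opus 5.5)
Your setup matches the paper's: \autoref{Prop_TotallyReal=>UniformlyFredholm} with $\fa=-\lambda$, the section $s=(D_\bp^\fl-\lambda)\Phi \bmod \ObstructionBundle$, then Steps~3--5 of the harmonic case. Your treatment of the new vertical direction $\dot\lambda$ via $\DeformationBundle\oplus\R$ and a block-triangular thickening is also fine. However, two steps you carried over from the harmonic case fail as written.

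\textbf{The inclusion you use in the lifting step is false.} You claim $\lambda\BundleOfZModTwoZSpinors\subseteq\BundleOfCoAdmissibleZModTwoZSpinors$, that is, $\WeightLog\BundleOfZModTwoZSpinors\subseteq\BundleOfZModTwoZSpinors$. It is true that $r^{-1}\phi\in H_b^\infty$ gives $\WeightLog\phi\in H_b^\infty$. But $D(\WeightLog\phi)=\WeightLog D\phi+\gamma(\rd\WeightLog)\phi$, and nothing forces $\WeightLog D\phi\in L^2$. For a counterexample, work in the model near $Z$ with $\phi=\beta(r)(\log r)^{-1}e$, where $\beta$ is a cut-off and $e$ is a single angular mode of half-integral (hence non-zero) frequency $n$. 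Then $\phi\in H^1\cap H_a^\infty$, so $\res\phi=0$. Yet $\WeightLog\phi\to -e$, and $\abs{D(\WeightLog\phi)}\sim\abs{n}\,r^{-1}$ near $Z$, which is not in $L^2$.

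The lifting step only needs the statement for solutions. Suppose $\phi\in\BundleOfZModTwoZSpinors$ and $\chi\coloneq(D-\lambda)\phi\in\BundleOfCoAdmissibleZModTwoZSpinors$. Then $D(\WeightLog\phi)=\WeightLog\chi+\lambda\WeightLog\phi+\gamma(\rd\WeightLog)\phi\in H_b^\infty$, so $\WeightLog\phi\in H_a^\infty$. Moreover $\res(\WeightLog\phi)=0$ because $\phi\in L^\infty$ by \cite[Proposition 4.26]{BeraWalpuski2025}, exactly as in the proof of \autoref{Prop_K_Estimate}. This bootstrap, together with the tame bounds it gives, is what places the preimage in $\BundleOfAdmissibleZModTwoZSpinors$.

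\textbf{Conjugating by $\Theta_K$ does not give the same perturbation.} Since $(D-\lambda)\sL_{\bar v}^K\Phi+[\sL_{\bar v}^K,D]\Phi=\sL_{\bar v}^K(D-\lambda)\Phi$, you actually get $\bD_\bR-\lambda+\bP_\lambda$ modulo $\ObstructionBundle$, where $\bP_\lambda(\psi)=\sL^K_{\bar v_\Phi(\psi)}(D_\bp^\fl-\lambda)\Phi$. This depends on $\lambda$ explicitly, not only through the base.

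The harmonic perturbation $\bP(\psi)=\sL^K_{\bar v_\Phi(\psi)}D_\bp^\fl\Phi$ differs from $\bP_\lambda$ by $\lambda\sL^K_{\bar v_\Phi(\psi)}\Phi$, whose residue is $-\lambda\res(\psi)$. So at a base point with $\lambda_0\neq 0$ this difference is a fixed non-zero operator and cannot be made small by shrinking $\sU$. With your $\bP$ the conjugation identity is wrong, and \cite[Proposition A.5]{DonaldsonLehmann2025:CY3Boundary} would not apply.

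With $\bP_\lambda$ the argument works. Smallness comes from $(D_{\bp_0}^{\fl_0}-\lambda_0)\Phi_0=0$. \autoref{Prop_K_PerturbationEstimate} goes through with $(D-\lambda)\Phi$ in place of $D\Phi$: its proof only uses $H_a$-- and $\WeightLog\nabla$--bounds on $D\Phi$, and $\lambda\Phi$ satisfies these by \autoref{Prop_K_Estimate}.
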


\begin{proof}
  The proof of \autoref{Prop_ThickenedSpaceOfZ2ZHarmonicSpinors_Submanifold+Submersion} carries over with very minor modifications.
  In particular,
  $\ObstructionBundle$ is constructed using \autoref{Prop_TotallyReal=>UniformlyFredholm} applied to $\UniversalDiracOperator - \lambda$ and $s$ is defined by
  \begin{equation*}
    s(\bp,[\fl];\Phi,\lambda) \coloneq \paren{D_\bp^\fl-\lambda}\Phi \pmod{\ObstructionBundle_{\bp,[\fl];\Phi,\lambda}}.
    \qedhere
  \end{equation*}
\end{proof}

\begin{prop}
  \label{Prop_TransversalityOfTheObstructionMap_Eigen}
  In the situation of \autoref{Prop_ThickenedSpaceOfZ2ZEigenSpinors_Submanifold+Submersion}
  the \defined{obstruction map} $\ObstructionMap \co \ThickenedSpaceOfNonDegenerateEigenSpinors \to \ObstructionBundle$ defined by
  \begin{equation*}
    \ObstructionMap(\bp,[\fl];\Phi,\lambda) \coloneq (D_\bp^\fl-\lambda) \Phi
  \end{equation*}
  is transverse to the zero section.
  \qed
\end{prop}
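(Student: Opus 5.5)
The plan is to mimic the proof of \autoref{Prop_TransversalityOfTheObstructionMap} verbatim, with $D_\bp^\fl$ replaced by $D_\bp^\fl - \lambda$. Fix $(\bp,[\fl];\Phi,\lambda) \in \ob^{-1}(0)$, so $(D_\bp^\fl - \lambda)\Phi = 0$ and $\Phi \neq 0$. The goal is to show that the composition of $T\ob$ with the projection $T_{(\cdots;0)}\ObstructionBundle \onto \ObstructionBundle_{(\bp,[\fl];\Phi,\lambda)}$ is surjective.

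First, consider variations of the parameter purely in the spin connection direction, $\dot\bp_\nabla = (0;0,\dot\nabla;0)$ with $\dot\nabla \in \Omega^1\paren{X,\fo_\Cl(S)}$, keeping $[\fl]$, $\Phi$ and $\lambda$ fixed to first order. Since $\lambda$ is a fixed number and the term $-\lambda\Phi$ does not depend on $\bp$, \autoref{Prop_SliceDerivativeOfUniversalDiracOperator}~\autoref{Prop_SliceDerivativeOfUniversalDiracOperator_3} gives
\begin{equation*}
  \nabla^\Slice_{\dot\bp_\nabla}\paren{\bD - \lambda}(\bp,[\fl];\Phi) = \sum_{i=1}^3 \gamma(e_i)\dot\nabla_{e_i}\Phi.
\end{equation*}
Since $\pr_\sV \co \ThickenedSpaceOfNonDegenerateEigenSpinors \to \sV$ is a uniform submersion by \autoref{Prop_ThickenedSpaceOfZ2ZEigenSpinors_Submanifold+Submersion}, every $\dot\bp_\nabla$ lifts to a tangent vector of $\ThickenedSpaceOfNonDegenerateEigenSpinors$. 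Its image under $T\ob$, projected to $\ObstructionBundle$, equals the projection of $\sum_i \gamma(e_i)\dot\nabla_{e_i}\Phi$ modulo the contributions $(D-\lambda)\phi + \nabla_v^\SpaceOfRamifiedLineBundles\bD(\Phi) - \dot\lambda\Phi$, which lie in the image of the thickened operator and hence vanish after projecting to the obstruction fibre.

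Second, by unique continuation for $D_\bp^\fl - \lambda$, $\Phi$ does not vanish identically on the compact set $K \subseteq X \setminus Z$ that carries the supports of $\ObstructionBundle$. One may therefore choose $K$, or shrink to a subset with non-empty interior, on which $\Phi$ is nowhere zero. Pointwise on this set, \autoref{Rmk_OClS} identifies $\fo_\Cl(S_x) \iso \sp(1) = \Im\H$ in dimension three with $\rk S = 4$, and \autoref{Lem_AlgebraInTransversality} solves $\sum_i \gamma(e_i)\dot\nabla_{e_i}\Phi = \psi$ for any $\psi$ supported in $K$, with $\dot\nabla$ depending smoothly on $\psi$ and supported in $K$. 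Taking $\psi$ to range over $\ObstructionBundle_{(\bp,[\fl];\Phi,\lambda)}$, whose elements are supported in $K$, shows that the projected map is onto. This proves transversality to the zero section.

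The only genuinely delicate point is the first step: checking that the $\lambda$-term adds nothing to the slice derivative and that the extra $\dot\lambda\Phi$ direction stays harmless. This needs the projection onto $\ObstructionBundle$ along the image of the thickened operator $\bD_\bR - \lambda$ to be well defined, which holds by the invertibility established in \autoref{Prop_ThickenedSpaceOfZ2ZEigenSpinors_Submanifold+Submersion}. Everything else is identical to the harmonic case.
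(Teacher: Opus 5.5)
Your route is the paper's: the proposition is stated without proof because the argument of \autoref{Prop_TransversalityOfTheObstructionMap} carries over. That argument uses variations $\dot\bp_\nabla\in V_\nabla$, \autoref{Lem_AlgebraInTransversality} on a region where $\Phi\neq0$, and a lift to the thickening.

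However, the one step you single out as delicate is argued incorrectly. You claim that $(D_\bp^\fl-\lambda)\phi+\nabla_v^\SpaceOfRamifiedLineBundles\bD(\bp,[\fl];\Phi)-\dot\lambda\Phi$ lies in the image of $\bD_\bR-\lambda$, and is therefore killed by projecting to $\ObstructionBundle$. The first two terms do lie there: at a zero of $\ob$ they equal $(D_{R_\Phi}-\lambda)(\phi-\sL_{\bar v}^K\Phi)$. But $\Phi$ does not. It has vanishing residue and satisfies $(D_\bp^\fl-\lambda)\Phi=0$, so $\Phi\in\ker(D_{R_\Phi}-\lambda)$. Since $R_\Phi$ is Lagrangian, this kernel is $L^2$--orthogonal to $\im(D_{R_\Phi}-\lambda)$.

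Consequently, every vertical tangent vector of $\ThickenedSpaceOfNonDegenerateEigenSpinors$ with $\dot\lambda\neq0$ has $T\ob\neq0$; such vectors exist by invertibility of the thickened operator. So the $\lambda$--direction is seen by $\ob$. For an arbitrary lift of $\dot\bp_\nabla$, the value of $T\ob$ is therefore not determined by $\sum_i\gamma(e_i)\dot\nabla_{e_i}\Phi$ alone. Moreover, the projection ``along $\im(\bD_\bR-\lambda)$'' is not even well defined at zeros of $\ob$ where $\dim\ker(D_{R_\Phi}-\lambda)<\rk\DeformationBundle$. There $\ObstructionBundle\cap\im(D_{R_\Phi}-\lambda)\neq0$.

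The repair is easy and needs no projection. When constructing $\ObstructionBundle$ via \autoref{Prop_TotallyReal=>UniformlyFredholm}, choose its support set $K$ inside $\set{\Phi_0\neq0}$. Then shrink $\sU$ so that $\Phi$ is nowhere zero on $K$ at every point of $\sU$. Your alternative of shrinking to some $K'\subsetneq K$ breaks the next step, because elements of $\ObstructionBundle$ are then not supported where you can solve pointwise.

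Given $o\in\ObstructionBundle_{(\bp,[\fl];\Phi,\lambda)}$, \autoref{Lem_AlgebraInTransversality} yields $\dot\nabla$ supported in $K$ with $\sum_i\gamma(e_i)\dot\nabla_{e_i}\Phi=o$. At a zero of $\ob$, the tangent condition for the thickening is that the linearisation of $(D_\bp^\fl-\lambda)\Phi$ lies in $\ObstructionBundle$. Hence the vector $(\dot\bp_\nabla;0,0;0)$ is itself tangent to $\ThickenedSpaceOfNonDegenerateEigenSpinors$; it leaves $[\fl]$, $\Phi$ (in the slice trivialisation), and $\lambda$ unchanged. $T\ob$ sends this vector to $o$. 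This gives surjectivity directly, with $\dot\lambda=0$ and no vertical corrections.
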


\begin{proof}[Proof of \autoref{Thm_ProjectiveSpaceOfNonDegenerateEigenSpinors}]
  As a consequence of
  \autoref{Prop_ThickenedSpaceOfZ2ZEigenSpinors_Submanifold+Submersion},
  \autoref{Prop_TransversalityOfTheObstructionMap_Eigen},
  and \cite[Part~III, Theorem~2.3.1]{Hamilton1982:NashMoser},
  it follows that $\SpaceOfNonDegenerateEigenSpinors \subseteq \BundleOfNonDegenerateAdmissibleZTwoZSpinors \times \R$ is a tame Fréchet manifold and that $\pr_\SpaceOfDiracBundles \co \SpaceOfNonDegenerateEigenSpinors \to \SpaceOfDiracBundles$ is uniformly Fredholm of index $1$.
  This implies the assertion.
\end{proof}

Here is the analogue of the transversality criterion \autoref{Prop_TransversalityCriterion_HarmonicSpinors}.

\begin{prop}
  \label{Prop_TransversalityCriterion_EigenSpinors}
  Let $B$ be a manifold and $\Gamma \co B \to \SpaceOfDiracBundles$ a smooth map.
  The following are equivalent:
  \begin{enumerate}
  \item
    $\Gamma$ is transverse to $\pr_\SpaceOfDiracBundles \co \ProjectiveSpaceOfNonDegenerateEigenSpinors \to \SpaceOfDiracBundles$.
  \item
    For every $b \in B$ and $(\bp,[\fl];[\Phi],\lambda) \in \ProjectiveSpaceOfNonDegenerateEigenSpinors$ with $\Gamma(b) = \bp$
    the map
    \begin{equation*}
      \delta_{\Phi,\lambda} \co T_bB \times \R \to \paren[\big]{\ker \paren{D_{R_\Phi}-\lambda}}^*
    \end{equation*}
    defined by
    \begin{equation*}
      \delta_{\Phi,\lambda}(v,t)\kappa
      \coloneq
      \Inner*{T_{\Phi,\kappa},T_{\bp}\pr_\SpaceOfMetrics \circ T_b\Gamma(v)}_{L^2}
      + \Inner*{\mu(\Phi,\kappa),\pr_{V_\nabla} \circ T_b\Gamma(v)}_{L^2}
      - t\Inner*{\Phi,\kappa}_{L^2}
    \end{equation*}    
    is surjective.
    Here $R_\Phi$ denotes the residue condition induced by $\Phi$.
    \qed
  \end{enumerate}
\end{prop}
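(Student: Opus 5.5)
The plan is to mimic the proof of \autoref{Prop_TransversalityCriterion_HarmonicSpinors}, with the extra $\R$--direction coming from the eigenvalue. Fix $b \in B$ and $(\bp,[\fl];\Phi,\lambda) \in \SpaceOfNonDegenerateEigenSpinors$ with $\Gamma(b) = \bp$. In the construction of $\ThickenedSpaceOfNonDegenerateEigenSpinors$ from \autoref{Prop_ThickenedSpaceOfZ2ZEigenSpinors_Submanifold+Submersion}, choose $\ObstructionBundle$ so that the $L^2$ orthogonal projection $\Pi$ onto $\ker\paren{D_{R_\Phi}-\lambda}$ induces an isomorphism
\begin{equation*}
  \ObstructionBundle_{(\bp,[\fl];\Phi,\lambda)} \iso \ker\paren{D_{R_\Phi}-\lambda}.
\end{equation*}
This is possible because $D_{R_\Phi}$ is self-adjoint, as $R_\Phi$ is Lagrangian by \cite[Corollary~2.30]{BeraWalpuski2025}. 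Hence its cokernel is represented by the kernel, and by unique continuation the kernel elements can be approximated by compactly supported sections away from $Z$.

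Since $\SpaceOfNonDegenerateEigenSpinors = \ob^{-1}(0)$ with $\pr_\sV \co \ThickenedSpaceOfNonDegenerateEigenSpinors \to \sV$ a uniform submersion, transversality of $\Gamma$ and $\pr_\SpaceOfDiracBundles$ at this point reduces to surjectivity of $\Pi \circ T\ob$ on a complement of the vertical tangent space. Concretely, $T\ob$ restricted to vertical directions lands in $\im\paren{D_{R_\Phi}-\lambda}$, which $\Pi$ kills once we apply $\widetilde{\nabla s}$ and $\Theta_K$ as in \autoref{Prop_ThickenedSpaceOfZ2ZHarmonicSpinors_Submanifold+Submersion}. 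So the condition becomes surjectivity of
\begin{equation*}
  T_bB \times \R \to \ker\paren{D_{R_\Phi}-\lambda},
  \qquad
  (v,t) \mapsto \Pi\paren[\big]{\nabla_{T_b\Gamma(v)}\bD(\bp,[\fl];\Phi) - t\Phi}.
\end{equation*}
The $-t\Phi$ term is the derivative of $(D-\lambda)\Phi$ in $\lambda$.

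Next, dualise by pairing with $\kappa \in \ker\paren{D_{R_\Phi}-\lambda}$. Decompose $T_b\Gamma(v)$ via \autoref{Prop_UniversalSpaceOfDiracBundles_TangentBundle} and \autoref{Prop_SlicesAreTameFrechetSubmanifolds} into its Bourguignon--Gauduchon, gauge and $V_\nabla$ components, using \autoref{Prop_SliceDerivativeOfUniversalDiracOperator} for each. The pieces are handled as follows.
\begin{enumerate}
\item The $V_\nabla$ piece gives $\Inner{\sum_i\gamma(e_i)\dot\nabla_{e_i}\Phi,\kappa}_{L^2} = \Inner{\mu(\Phi,\kappa),\dot\nabla}_{L^2}$, directly from the definition of $\bar\gamma^*$.
\item The Bourguignon--Gauduchon piece gives $\Inner{T_{\Phi,\kappa},\dot g}_{L^2}$ after symmetrising in $\Phi,\kappa$ and integrating by parts, exactly as in \cite[Proof of Proposition 4.1]{Maier1997}.
\item The gauge piece contributes $\Inner{[\xi,D]\Phi,\kappa}_{L^2}$ up to conormal terms. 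Its pairing vanishes because $D\Phi = \lambda\Phi$ and $D\kappa = \lambda\kappa$ make $\Inner{\xi\lambda\Phi,\kappa} - \Inner{\xi\Phi,\lambda\kappa}$ cancel by self-adjointness of $D_{R_\Phi}$. The $\lambda$ terms cancel here exactly because both spinors are eigenspinors for the same eigenvalue.
\end{enumerate}

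The main obstacle is justifying the integrations by parts. Both $\Phi$ and $\kappa$ are only $\Z/2\Z$ spinors with $r^{1/2}$ behaviour, and the ramified directions and gauge terms may produce boundary contributions at $Z$. I would handle this by cutting off at distance $\epsilon$ from $Z$ and letting $\epsilon \to 0$. The boundary terms are bounded by $\epsilon\cdot\epsilon^{1/2}\cdot\epsilon^{-1/2}\to 0$, using $\kappa \in H_a^\infty$ with residue in $R_\Phi$ and $\Phi \in \BundleOfAdmissibleZModTwoZSpinors$ (\autoref{Prop_K_Estimate}), and the Green's form vanishes on $R_\Phi$ because it is Lagrangian. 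Once these boundary terms are controlled, the map above pairs to $\delta_{\Phi,\lambda}$, and surjectivity of one is equivalent to surjectivity of the other.
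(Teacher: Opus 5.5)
Your proposal is correct and is essentially the paper's argument. The paper treats this statement as the direct analogue of \autoref{Prop_TransversalityCriterion_HarmonicSpinors}: choose $\ObstructionBundle$ so that $\Pi$ identifies it with $\ker\paren{D_{R_\Phi}-\lambda}$, reduce to surjectivity of $(v,t)\mapsto\Pi\paren{\nabla_{T_b\Gamma(v)}\bD(\bp,[\fl];\Phi)-t\Phi}$, and identify the pairing with $\kappa$ with $\delta_{\Phi,\lambda}$ by direct inspection; the equal-eigenvalue cancellation you use for the gauge part also kills the leftover term $\tfrac14\Inner{\gamma(\nabla\tr_g\dot g)\Phi,\kappa}_{L^2}$ in the Bourguignon--Gauduchon part.

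One wording slip: the $\lambda$--direction is itself vertical for $\pr_\sV$, and $\Pi\circ T\ob$ sends a vertical tangent vector with $\lambda$--component $\dot\lambda$ to $-\dot\lambda\Phi\neq0$, so $T\ob$ does not map all vertical directions into $\im\paren{D_{R_\Phi}-\lambda}$. This is exactly where $t$ comes from, and your final formula already accounts for it.
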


\begin{remark}
  It should be observed that $\tr_g T_{\Phi,\kappa} = -\frac12\lambda \Inner{\Phi,\kappa}$ and, therefore, $T_{\Phi,-}$ cannot vanish if $\lambda \neq 0$.
\end{remark}

Finally, as in \autoref{Prop_UniversalZeroLocus_Harmonic},
generic non-degenerate $\Z/2\Z$ eigenspinors do not vanish outside of the branching locus.

\begin{prop}
  \label{Prop_UniversalZeroLocus_Eigen}
  The \defined{universal zero locus} of non-degenerate $\Z/2\Z$ eigenspinors
  \begin{equation*}
    \bZ_{\ProjectiveSpaceOfNonDegenerateEigenSpinors}
    \coloneq
    \set[\big]{
      (\bp,[\fl];[\Phi],\lambda,x) \in \ProjectiveSpaceOfNonDegenerateEigenSpinors \times X
      :
      x \notin \Br(\fl),
      \Phi(x) = 0      
    }
  \end{equation*}
  is a tame Fréchet submanifold of $\ProjectiveSpaceOfNonDegenerateEigenSpinors \times X$ and $\pr_{\ProjectiveSpaceOfNonDegenerateEigenSpinors} \co \bZ_{\ProjectiveSpaceOfNonDegenerateEigenSpinors} \to \ProjectiveSpaceOfNonDegenerateEigenSpinors$ has index $-1$.
  \qed
\end{prop}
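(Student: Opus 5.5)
The plan is to copy the proof of \autoref{Prop_UniversalZeroLocus_Harmonic}, with the eigenvalue parameter $\lambda$ carried along. Over the open subset
\begin{equation*}
  \mathring\bX_{\ProjectiveSpaceOfNonDegenerateEigenSpinors}
  \coloneq
  \set{ (\bp,[\fl];[\Phi],\lambda,x) \in \ProjectiveSpaceOfNonDegenerateEigenSpinors \times X : x \notin \Br(\fl) }
\end{equation*}
consider the rank $4$ vector bundle $\sS$ with fibre $\Hom\paren{\R\Span{\Phi},S_x\otimes\fl_x}$. Evaluation defines a tame smooth section $\ev$ of $\sS$, and $\bZ_{\ProjectiveSpaceOfNonDegenerateEigenSpinors} = \ev^{-1}(0)$. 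If $\ev$ is transverse to the zero section, then \cite[Part~III, Theorem~2.3.1]{Hamilton1982:NashMoser} shows that $\bZ_{\ProjectiveSpaceOfNonDegenerateEigenSpinors}$ is a tame Fréchet submanifold. Its projection to $\ProjectiveSpaceOfNonDegenerateEigenSpinors$ is then uniformly Fredholm of index $\dim X - \rk\sS = 3-4 = -1$, because it is the restriction of the projection $\mathring\bX \to \ProjectiveSpaceOfNonDegenerateEigenSpinors$, which has $3$-dimensional fibres, to the zero locus of a rank $4$ section.

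Transversality reduces to a statement about vertical tangent vectors. By \autoref{Prop_ThickenedSpaceOfZ2ZEigenSpinors_Submanifold+Submersion} and \autoref{Prop_TransversalityOfTheObstructionMap_Eigen}, tangent vectors to $\SpaceOfNonDegenerateEigenSpinors$ at $(\bp,[\fl];\Phi,\lambda)$ include triples $(\dot\bp,\phi,\dot\lambda)$ with $\phi \in H_a^\infty\Gamma\paren{X\setminus Z,S\otimes\fl;R_\Phi;\bp}$. These are obtained after the identification $\Theta_K$, which absorbs the variation of $[\fl]$ into the residue condition $R_\Phi$. They solve
\begin{equation*}
  (D_\bp^\fl-\lambda)\phi + \nabla_{\dot\bp}\bD\paren{\bp,[\fl];\Phi} - \dot\lambda\Phi = 0.
\end{equation*}
The derivative of $\ev$ in such a direction at a zero $x$ of $\Phi$ is $\phi(x)$ (the $x$-derivative contributes $\nabla\Phi(x)$ but is not needed). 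It therefore suffices to show that for every $\psi \in (S\otimes\fl)_x$ there is such a solution with $\phi(x)=\psi$. Since $\dot\lambda$ may be set to zero, it is enough that
\begin{equation*}
  (\phi,\dot\bp) \mapsto (D_\bp^\fl-\lambda)\phi + \nabla_{\dot\bp}\bD\paren{\bp,[\fl];\Phi},
  \qquad
  \ker\ev_x \oplus T_\bp\SpaceOfDiracBundles \to H_b^\infty\paren{X\setminus Z,S\otimes\fl;\bp},
\end{equation*}
is surjective. Indeed, given $\psi$, pick any $\tilde\phi$ with $\tilde\phi(x)=\psi$ and correct it by an element of $\ker\ev_x$ together with a suitable $\dot\bp$.

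For surjectivity, note that $D_{R_\Phi}-\lambda$ is Fredholm by \autoref{Prop_TotallyReal=>UniformlyFredholm} applied with $\fa = -\lambda$. Its restriction to $\ker\ev_x$ is still Fredholm: evaluation at $x$ is bounded on $H_a^\infty$ because $x\notin Z$ and elliptic regularity holds there, and restricting to a finite-codimension subspace enlarges the cokernel by at most $\rk S$. The cokernel is spanned by finitely many sections, and it remains to hit it with $\dot\bp$-variations. By \autoref{Prop_SliceDerivativeOfUniversalDiracOperator}, variations $\dot\bp_\nabla$ of the spin connection supported in a small ball $K\subseteq X\setminus(Z\cup\set{x})$ on which $\Phi \neq 0$ produce $\sum_i\gamma(e_i)\dot\nabla_{e_i}\Phi$. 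By \autoref{Rmk_OClS} and \autoref{Lem_AlgebraInTransversality}, this expression equals any prescribed $\psi'$ supported in $K$. By unique continuation for $D_{R_\Phi}-\lambda$ and its adjoint, no non-zero cokernel element is $L^2$-orthogonal to all sections supported in $K$.

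The main obstacle is this last unique-continuation step after restricting to $\ker\ev_x$. The cokernel of the restricted operator contains distributional elements: solutions of the adjoint equation with a delta-type source at $x$, namely the duals of $D\widetilde{S_x\otimes\fl_x}$. I would handle this as in \autoref{Prop_UniversalZeroLocus_Harmonic}. Such an element $\kappa$ satisfies $(D_{R_\Phi^G}-\lambda)\kappa = 0$ away from $x$. If it were orthogonal to everything supported in $K$, it would vanish on $K$, hence on $X\setminus(Z\cup\set{x})$ by unique continuation on the connected set $X\setminus(Z\cup\set{x})$, and so would be zero.
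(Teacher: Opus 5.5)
Your proposal is correct and follows the route the paper intends. The statement is marked as following from the proof of \autoref{Prop_UniversalZeroLocus_Harmonic}: transversality of the evaluation section $\ev$ of the rank $4$ bundle $\sS$ is reduced, via $\Theta_K$ and with $\dot\lambda=0$, to surjectivity of $(\phi,\dot\bp)\mapsto(D_\bp^\fl-\lambda)\phi+\nabla_{\dot\bp}\bD(\bp,[\fl];\Phi)$ on $\ker\ev_x\oplus T_\bp\SpaceOfDiracBundles$, and that surjectivity comes from \autoref{Lem_AlgebraInTransversality} plus unique continuation, with $\lambda$ entering only as a zeroth-order term. Two small points: boundedness of $\ev_x$ on $H_a^{k+1}$ for $k\geq 1$ comes from Sobolev embedding near $x$, not elliptic regularity; and in the last step, a cokernel element $\kappa$ satisfies $(D_\bp^\fl-\lambda)^*\kappa=c\,\delta_x$ near $x$, so it is locally integrable there, which is what justifies concluding $\kappa=0$ from its vanishing off $Z\cup\{x\}$.
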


Finally,
it should be pointed out that it is straightforward to verify that zero is a regular value of the projection map $\lambda \co \ProjectiveSpaceOfNonDegenerateEigenSpinors \to \R$.

%%% Local Variables:
%%% mode: latex
%%% TeX-master: "UniversalModuliSpaceOfZ2ZHarmonicSpinors"
%%% ispell-local-dictionary: "british"
%%% End:

\subsection{The universal space of non-degenerate chiral \texorpdfstring{$\Z/2\Z$}{Z/2Z} harmonic spinors}
\label{Sec_Z2ZHarmonicSpinorsInDimensionFour}

\begin{proof}[Proof of \autoref{Thm_ProjectiveSpaceOfNonDegenerateChiralHarmonicSpinors}]
  The proof of \autoref{Thm_ProjectiveSpaceOfNonDegenerateHarmonicSpinors} carries over with cosmetic modifications, using \autoref{Prop_TotallyReal=>UniformlyFredholm_Chiral} instead of \autoref{Prop_TotallyReal=>UniformlyFredholm}.
  The index formula is a consequence of \autoref{Thm_IndexFormula}.
\end{proof}

Here is the analogue of the transversality criterion \autoref{Prop_TransversalityCriterion_HarmonicSpinors}.

\begin{prop}
  \label{Prop_TransversalityCriterion_HarmonicSpinors_4D}
  Let $B$ be a manifold and $\Gamma \co B \to \SpaceOfDiracBundles$ a smooth map.
  The following are equivalent:
  \begin{enumerate}
  \item
    $\Gamma$ is transverse to $\pr_\SpaceOfDiracBundles \co \ProjectiveSpaceOfNonDegenerateChiralHarmonicSpinors \to \SpaceOfDiracBundles$.
  \item
    \label{Prop_TransversalityCriterion_HarmonicSpinors_4D_CoPetri}
    For every $b \in B$ and $(\bp,[\fl];\Phi) \in \SpaceOfNonDegenerateChiralHarmonicSpinors$ with $\Gamma(b) = \bp$
    the map
    \begin{equation*}
      \delta_{\Phi} \co T_bB \to \paren[\big]{\ker D_{R_\Phi^\mp}^\mp}^*
    \end{equation*}
    defined by
    \begin{equation*}
      \delta_\Phi(v)\kappa
      \coloneq
      \Inner*{T_{\Phi,\kappa},T_{\bp}\pr_\SpaceOfMetrics \circ T_b\Gamma(v)}_{L^2}
      + \Inner*{\mu(\Phi,\kappa),\pr_{V_\nabla} \circ T_b\Gamma(v)}_{L^2}
    \end{equation*}    
    is surjective.
    Here $R_{\Phi}^\pm$ denotes the chiral residue condition induced by $\Phi$.
    \qed
  \end{enumerate}
\end{prop}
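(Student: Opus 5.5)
The plan is to copy the proof of \autoref{Prop_TransversalityCriterion_HarmonicSpinors} with chirality inserted throughout. Fix $b \in B$ and $(\bp,[\fl];\Phi) \in \SpaceOfNonDegenerateChiralHarmonicSpinors$ with $\Gamma(b) = \bp$; say $\Phi$ has chirality $\pm$.

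\textbf{Step 1: choice of obstruction bundle.} By \autoref{Prop_TotallyReal=>UniformlyFredholm_Chiral}, applied to the family of local chiral residue conditions $\bV^\pm$ given by the images of $\bL_\Phi^\pm$, the chiral analogue of \autoref{Prop_ThickenedSpaceOfZ2ZHarmonicSpinors_Submanifold+Submersion} yields a thickening with an obstruction bundle $\ObstructionBundle \subseteq f^*\BundleOfCoAdmissibleZModTwoZSpinors^\mp$. The cokernel of $D^\pm_{R_\Phi^\pm}$ is identified with $\ker (D^\pm_{R^\pm_\Phi})^* = \ker D^\mp_{R^\mp_\Phi}$, using $D_R^* = D_{R^G}$ and the fact that $R^\mp_\Phi$ is the complementary chiral residue condition, so that $R_\Phi = R^+_\Phi \oplus R^-_\Phi$ is Lagrangian. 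I would choose $\ObstructionBundle$, supported in a compact $K \subseteq X \setminus Z$, such that $L^2$ orthogonal projection $\Pi$ onto $\ker D^\mp_{R^\mp_\Phi}$ gives an isomorphism $\ObstructionBundle_{(\bp,[\fl];\Phi)} \iso \ker D^\mp_{R^\mp_\Phi}$. This choice is possible by unique continuation, as in the proof of \autoref{Prop_TotallyReal=>UniformlyFredholm}.

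\textbf{Step 2: reduction to a surjectivity statement.} Since the thickened space is a uniform submersion over $\sV$, and the moduli space is the zero locus of $\ob$, transversality of $\Gamma$ to $\pr_\SpaceOfDiracBundles$ at this point is equivalent to a surjectivity condition: the map
\[
v \mapsto \Pi\, \nabla_{T_b\Gamma(v)}\bD^\pm(\bp,[\fl];\Phi)
\]
must be surjective onto $\ker D^\mp_{R_\Phi^\mp}$. This step is a linear-algebra argument, the same as in the non-chiral case.

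\textbf{Step 3: computing the pairing with $\kappa \in \ker D^\mp_{R^\mp_\Phi}$.} Decompose $T_b\Gamma(v)$ by \autoref{Prop_UniversalSpaceOfDiracBundles_TangentBundle}. The $V_\sG$ component is an infinitesimal gauge transformation, so it changes $\Phi$ by $\xi\Phi$. Its contribution therefore lies in $D_{R_\Phi}(\ldots)$, which pairs to zero with $\kappa$ by the Green's formula and the Lagrangian property. For the $V_\nabla$ component, \autoref{Prop_SliceDerivativeOfUniversalDiracOperator} gives $\Inner{\sum_i\gamma(e_i)\dot\nabla_{e_i}\Phi,\kappa}_{L^2} = \Inner{\mu(\Phi,\kappa),\dot\nabla}_{L^2}$, after symmetrising; the symmetrisation is harmless since $\gamma(e_i)\dot\nabla_{e_i}$ is symmetric. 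The horizontal component uses the formula in \autoref{Prop_SliceDerivativeOfUniversalDiracOperator}. Integrating by parts, and using $D\Phi = 0$ and $D\kappa = 0$, yields $\Inner{T_{\Phi,\kappa},\dot g}_{L^2}$, as in \cite{Maier1997,AmmannDahl2025}. By \autoref{Prop_PartialCovariantDerivatives_Chiral}, these variations preserve the chiral splitting.

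\textbf{Main obstacle.} The delicate point is justifying the integrations by parts near $Z$. The boundary terms must vanish, which requires the decay $\Phi,\kappa = O(r^{1/2})$, with derivatives $O(r^{-1/2})$, coming from $\res \Phi = 0$ and from the regularity of elements of $\ker D^\mp_{R^\mp_\Phi}$. Because of this limited regularity, the boundary terms over $\partial\{r>\varepsilon\}$ only tend to zero after one checks that the $r^{-1/2}$ residue term of $\kappa$ pairs against $\Phi$ and $\nabla\Phi$ to an $o(1)$ contribution.
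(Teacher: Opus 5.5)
Your proposal is correct and follows the paper's route. It is the chiral transcription of the proof of \autoref{Prop_TransversalityCriterion_HarmonicSpinors}: choose $\ObstructionBundle$ so that $L^2$ projection onto $\ker D^\mp_{R^\mp_\Phi}$ is an isomorphism, reduce transversality to surjectivity of $v \mapsto \Pi\nabla_{T_b\Gamma(v)}\bD^\pm(\bp,[\fl];\Phi)$, and identify the pairing with $\kappa$ as $\delta_\Phi(v)\kappa$, with the gauge directions dropping out.

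One small correction to your last paragraph. $\kappa$ is only $O(r^{-1/2})$, not $O(r^{1/2})$, since its residue lies in the complementary subbundle and is generally non-zero. This does not cause any difficulty. The boundary terms produced by these first-order integrations by parts have the form $\int_{\{r=\varepsilon\}}\langle\sigma(\nu)\Phi,\kappa\rangle = O(\varepsilon)$ and involve no derivatives of $\Phi$, so they vanish without any cancellation. For the gauge and $\tr_g\dot g$ terms this is simply the vanishing of the Green's form on $\dom(\Dmin)\times\dom(\Dmax)$.
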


\begin{remark}
  The reader should beware that the codomain of $\delta_\Phi$ is the dual of the kernel of the \emph{adjoint} of $D_{R_\Phi^\pm}^\pm$.
  In the situation considered in \autoref{Prop_TransversalityCriterion_HarmonicSpinors},
  the relevant operator is self-adjoint and this distinction does not arise.
\end{remark}

%%% Local Variables:
%%% mode: latex
%%% TeX-master: "UniversalModuliSpaceOfZ2ZHarmonicSpinors"
%%% ispell-local-dictionary: "british"
%%% End:

\subsection{The universal space of non-degenerate \texorpdfstring{$\Z/2\Z$}{Z/2Z} harmonic $1$--forms}
\label{Sec_Z2ZHarmonicOneForms}

This subsection proves \autoref{Thm_ProjectiveSpaceOfNonDegenerateHarmonicOneForms}.
Throughout this subsection, $\dim X = 3$, $X$ is oriented, and $S = \underline \R \oplus T^*X$.
The map $\HdR \co \SpaceOfMetrics \to \SpaceOfDiracBundles$,
alluded to before the statement of \autoref{Thm_ProjectiveSpaceOfNonDegenerateHarmonicOneForms},
assigns to every Riemannian metric $g \in \SpaceOfMetrics$ the Dirac bundle structure on $S$ with respect to which the Clifford multiplication is given by
\begin{equation*}
  \gamma(v)(f,\alpha) \coloneq \paren{ -i_v\alpha, v^\flat f + *\paren{v^\flat \wedge \alpha} }
\end{equation*}
and the spin connection is induced by the Levi-Civita connection.
The pullback
\begin{equation*}
  \SpaceOfNonDegenerateHarmonicOneForms
  \coloneq
  \HdR^*\SpaceOfNonDegenerateHarmonicSpinors
\end{equation*}
is the \defined{universal space of non-degenerate $\Z/2\Z$ harmonic $1$--forms} on $X$.
Indeed,
if $(g,[\fl];f,\alpha) \in \SpaceOfNonDegenerateHarmonicOneForms$,
then
\begin{equation}
  \label{Eq_ZeroForm_IntegrationByParts}
  \rd^*\rd f = \rd^*(\rd f + *\rd\alpha) = 0
\end{equation}
and, therefore, $\rd f = 0$ by integration by parts and using that $(f,\alpha)$ has vanishing residue.
Since $\Br(\fl) \neq \emptyset$, an assumption that is tacitly made, $f = 0$.
Therefore $([\fl],\alpha)$ is a $\Z/2\Z$ harmonic $1$--form with respect to $g$.
Conversely and evidently, if $([\fl],\alpha)$ is a non-degenerate $\Z/2\Z$ harmonic $1$--form with respect to $g$,
then $(g,[\fl];0,\alpha) \in \SpaceOfNonDegenerateHarmonicOneForms$.

Here is some preparation regarding the finite rank vector bundle $\sH^1 \to \SpaceOfMetrics \times \SpaceOfRamifiedLineBundles$ mentioned in \autoref{Thm_ProjectiveSpaceOfNonDegenerateHarmonicOneForms}.

\begin{prop}
  \label{Prop_ExistenceOfBranchedDoubleCover}
  For every $[\fl] \in \SpaceOfRamifiedLineBundles$ the double cover
  $\mathring{\pi} \co \tilde X^\circ \coloneq \set[\big]{ v \in \fl : \abs{v} = 1 } \to X \setminus Z$ extends to a smooth branched double cover $\pi \co \tilde X \to X$.
\end{prop}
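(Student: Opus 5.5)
The plan is to construct $\tilde X$ by gluing local models near $Z$ to $\tilde X^\circ$, using the monodromy condition in the definition of a ramified line bundle. Throughout, write $Z \coloneq \Br(\fl)$.

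First I fix a tubular neighbourhood $\jmath \co NZ \supseteq U \to X$ of $Z$. I take $U$ to be the open disc bundle of some radius $\delta$, and write $F \to Z$ for its unit circle bundle. Over $U \setminus Z \cong F \times (0,\delta)$, the restriction of $\fl$ is pulled back from $F$, since $U\setminus Z$ deformation retracts onto $F$.

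Next I use the minimality condition: if $W \subseteq Z$ is closed and $\fl$ extends over $X\setminus W$, then $W = Z$. Apply this with $W$ equal to $Z$ minus one connected component. The condition forces the monodromy of $\fl$ around every small linking circle $\pi^{-1}(z) \subseteq F$, for $z \in Z$, to be $-1$. As in \autoref{Rmk_ModuliSpaceOfRamifiedEuclideanLineBundles_Topology}, the monodromy is constant on components, because the linking circles of points in the same component are homotopic in $U\setminus Z$.

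Now I build the model branched cover. Let $\tilde F \to F$ be the unit sphere bundle of $\fl|_F$. Restricted to each fibre circle of $F \to Z$ it is the connected double cover, so $\tilde F \to Z$ is again a circle bundle. Concretely, $\tilde F$ is the unit circle bundle of an $O(2)$-bundle $\tilde N$ over $Z$ (a ``square root'' of $NZ$), together with a fibrewise squaring map $\tilde N \to NZ$. The cleanest way to produce this is through the $\AlgebraBundle$-module $NZ^{-1/2}$ constructed from $\fl$ in \autoref{Sec_ResidueMap}, via \cite[Definition 3.21, Proposition 3.23]{BeraWalpuski2025}. Its dual $\tilde N \coloneq NZ^{1/2}$ comes with an $\AlgebraBundle$-quadratic map $q \co \tilde N \to NZ$, given by $q(w) = w \otimes w$ under the isomorphism $NZ^{1/2}\otimes_\AlgebraBundle NZ^{1/2} \cong NZ$. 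In local complex coordinates $q$ is $w \mapsto w^2$. Moreover, that proposition identifies the unit circle bundle of $\tilde N$ with the double cover of $F$ given by the unit sphere bundle of $\fl|_F$.

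I then define
\[
\tilde X \coloneq \tilde X^\circ \cup_{\Psi} q^{-1}(U),
\]
gluing along the diffeomorphism $\Psi$ between $q^{-1}(U)\setminus Z$ and $\mathring\pi^{-1}(U\setminus Z)$. This $\Psi$ covers $\jmath \circ q$ and restricts to the above identification of circle bundles. The map $\pi$ is given by $\mathring\pi$ on $\tilde X^\circ$ and by $\jmath\circ q$ on the model. Hausdorffness and smoothness are immediate, and near $Z$ the map $\pi$ is locally $(x,w)\mapsto (x,w^2)$, so it is a smooth branched double cover.

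The main obstacle is making sure that $\Psi$ is well defined and smooth, rather than just a homeomorphism of circle bundles. I would handle this by matching the radial coordinate $|w|^2 = r$ with $r$ on the $X$ side, and using the isomorphism of \cite[Proposition 3.23]{BeraWalpuski2025}. If that is unavailable, I would argue via covering space theory. The connected double covers of $(U\setminus Z)$ restricting nontrivially to fibres are classified by $\rH^1(U\setminus Z;\set{\pm1})$ with prescribed fibre value. Any two such covers differ by a class pulled back from $Z$. The cover $q^{-1}(U)\setminus Z$ can be twisted by that class, by tensoring $\tilde N$ with a real line bundle on $Z$, and this yields the required isomorphism of coverings. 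A covering isomorphism over a smooth map is automatically smooth.
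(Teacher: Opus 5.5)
Your proof is correct, but it gets the key topological input from a different place than the paper. The paper argues by obstruction theory. The obstruction to filling the double cover $\widetilde{SNZ}$ of the unit circle bundle by a branched double cover of the disc bundle is $w_2(NZ) \in \rH^2(Z;\Z/2\Z)$. This class vanishes because $w_2(NZ) = \PD([Z])|_Z$, and $[Z]$ is zero mod $2$ by \autoref{Rmk_ModuliSpaceOfRamifiedEuclideanLineBundles_Topology}.

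You instead import the square root $NZ^{-1/2}$ and its tensor-square isomorphism from the construction in \cite{BeraWalpuski2025}, and then glue explicitly. So the vanishing of $w_2(NZ)$ is outsourced to the reference rather than proved.

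What your route buys is that it makes explicit two things the paper leaves implicit:
\begin{enumerate}
\item the explicit local model $(x,w)\mapsto(x,w^2)$, together with a smooth gluing map;
\item in your covering-space fallback, the fact that once \emph{some} square root exists, the \emph{given} cover $\widetilde{SNZ}$ is realised after twisting by a real line bundle on $Z$. This holds because the fibrewise-nontrivial covers form an $\rH^1(Z;\Z/2\Z)$--torsor by the Gysin sequence.
\end{enumerate}

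The same Gysin sequence also makes your argument self-contained. Under fibre integration, $w_1(\fl|_F)$ maps to $1 \in \rH^0(Z;\Z/2\Z)$, so exactness forces $w_2(NZ) = 1 \cup w_2(NZ) = 0$. This is exactly the paper's obstruction. Consequently you need neither the reference's $NZ^{-1/2}$ nor the compatibility of \cite[Proposition 3.23]{BeraWalpuski2025} with squaring.
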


\begin{proof}%[Proof of \autoref{Prop_ExistenceOfBranchedDoubleCover}]
  The Euclidean line bundle $\fl$ determines a double cover $\widetilde{SNZ}$ of the unit sphere bundle $SNZ$.
  The latter is the boundary of the unit disc bundle $DNZ$.
  The question is whether $\widetilde{SNZ}$ also is the boundary of a disc bundle $\widetilde{DNZ}$ that is a branched double cover of $DNZ$.
 The obstruction to the existence of $\widetilde{DNZ}$ is $w_2(NZ) \in \rH^2\paren{Z,\set{\pm 1}}$.
  The latter, however, is the restriction $\PD([Z])|_Z$ and vanishes by \autoref{Rmk_ModuliSpaceOfRamifiedEuclideanLineBundles_Topology}.
\end{proof}

\begin{definition}[{cf.~\cite[Example 3.48]{BeraWalpuski2025}}]
  \label{Def_OneFormDirichletResidueCondition}
  For every $(g,[\fl]) \in \SpaceOfMetrics \times \SpaceOfRamifiedLineBundles$ the residue bundle $\ResidueBundle$ over $Z \coloneq \Br(\fl)$ contains the $\AlgebraBundle$--linear subbundle
  \begin{equation*}
    \ResidueBundle_D \coloneq N^*Z \otimes_\AlgebraBundle NZ^{-1/2}.
  \end{equation*}
  These form a family of local residue conditions $\paren{\SpaceOfMetrics \times \SpaceOfRamifiedLineBundles,\HdR \times \id_\SpaceOfRamifiedLineBundles,\UniversalResidueBundle_D}$.  
  Denote the associated family of residue conditions by
  $\paren{\SpaceOfMetrics \times \SpaceOfRamifiedLineBundles,\HdR \times \id_\SpaceOfRamifiedLineBundles,\bR_D}$.
\end{definition}

This is an elliptic Lagrangian family of local residue conditions.
As a consequence of \citeauthor{Teleman1983:SignatureLipschitz}'s Lipschitz Hodge theory \cite{Teleman1983:SignatureLipschitz},
the associated kernels form a vector bundle.

\begin{prop}
  \label{Prop_KerDRD}
  For every $(g,[\fl]) \in \SpaceOfMetrics \times \SpaceOfRamifiedLineBundles$ the following hold:
  \begin{enumerate}
  \item
    \label{Prop_KerDRD_L2Harmonic}
    The kernel of $D_{g,R_D}^\fl$ consists precisely of the $L^2$ harmonic $\fl$--twisted $1$--forms;
    that is:
    \begin{equation*}
      \ker D_{g,R_D}^\fl
      =
      \sH^1(g,[\fl])
      \coloneq
      \set[\big]{
        \alpha \in L^2\Omega^1(X\setminus Z,\fl)
        :
        \rd \alpha = \rd^*\alpha = 0
      }
    \end{equation*}
  \item
    \label{Prop_KerDRD_DeRham}
    Pulling back along $\pi \co \tilde X \to X$ induces an isomorphism
    \begin{equation*}
      \sH^1(g,[\fl]) \iso \rH_\dR^1(\tilde X)^-.
    \end{equation*}
    Here $\rH_\dR^1(\tilde X)^-$ denotes the subspace of $\rH_\dR^1(\tilde X)$ which is anti-invariant under the sheet-swapping involution $\tau$.
  \end{enumerate}
\end{prop}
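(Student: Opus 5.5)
We first identify the kernel of $D_{g,R_D}^\fl$ with $L^2$ harmonic twisted $1$--forms, and then identify the latter with the anti-invariant cohomology of $\tilde X$ by working on the branched double cover.

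\emph{Kernel.} Let $(f,\alpha) \in \ker D_{g,R_D}^\fl$. By elliptic regularity \cite[Theorem 4.17]{BeraWalpuski2025}, $(f,\alpha) \in H_a^\infty$, and its residue lies in $\check H\Gamma(Z,\ResidueBundle_D)$, where $\ResidueBundle_D = N^*Z \otimes_\AlgebraBundle NZ^{-1/2}$. Concretely, this means that the leading $r^{-1/2}$ term has no $\R$--component: $f$ has vanishing residue, and the leading term of $\alpha$ is purely normal. The equations read $\rd^*\alpha = 0$ and $\rd f + *\rd\alpha = 0$. We then repeat the integration by parts \autoref{Eq_ZeroForm_IntegrationByParts}, checking with the model expansion that the boundary terms on $\set{r = \epsilon}$ vanish as $\epsilon \to 0$. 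This works because $f = O(r^{1/2})$ and $\rd f \in L^2$, while the pairing of $f$ with the normal leading part of $*\rd\alpha$ contributes $O(\epsilon^{1/2}\cdot\epsilon^{-1/2}\cdot\epsilon)\to 0$. Hence $\rd f = 0$, and $f = 0$ because $\fl$ is non-trivial near $Z \neq \emptyset$; this gives $\rd\alpha = \rd^*\alpha = 0$ with $\alpha \in L^2$.

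Conversely, let $\alpha \in L^2$ be closed and coclosed. Then $(0,\alpha) \in \ker \Dmax$, and it suffices to show $\res(0,\alpha) \in R_D$. Local analysis of $\ker\Dmax$ near $Z$ (the model operator $\mathring D$, with separation of variables in the fibrewise Fourier modes of order $\pm\tfrac12$) shows that, because $\alpha$ is closed and coclosed, its $r^{-1/2}$ coefficient is forced to be normal, namely of the form $\Re(c\,\rd\bar z\, \bar z^{-1/2})$. The $\R$--component of the residue vanishes since it is $\res(0)=0$.

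\emph{De Rham isomorphism.} The approach is to pull back along $\pi$. The form $\pi^*\alpha$ is $\tau$--anti-invariant on $\tilde X^\circ$, closed and coclosed with respect to the singular metric $\pi^*g$. This metric is Lipschitz-quasi-isometric to a smooth metric on $\tilde X$ (cone angle $4\pi$, quasi-isometric via $w = z^{1/2}$), and $\pi^*\alpha \in L^2$. Since $Z$ has codimension two and the forms are $L^2$, $\pi^*\alpha$ is weakly closed and weakly coclosed across $\tilde Z$: the cut-off argument with a capacity-zero cutoff $\chi_\epsilon$ (using $\int|\rd\chi_\epsilon|^2 \to 0$ in codimension two) removes the singular set. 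Teleman's Hodge theory for Lipschitz Riemannian manifolds \cite{Teleman1983:SignatureLipschitz} identifies $L^2$ harmonic forms of $\pi^*g$ with $\rH_\dR^1(\tilde X)$, and this identification is $\tau$--equivariant, since $\tau$ is an isometry of $\pi^*g$. The anti-invariant part corresponds to $\fl$--twisted forms downstairs, because $\tilde X^\circ$ is the unit sphere bundle of $\fl$: a $\tau$--anti-invariant form equals $\pi^*$ of an $\fl$--twisted form. It remains to check that Teleman's harmonic space coincides with $\sH^1(g,[\fl])$ pulled back, i.e.\ that weak closedness and coclosedness on $\tilde X$ is equivalent to closedness and coclosedness away from $Z$ together with $L^2$; this is the removable-singularity argument above.

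The main obstacle is the precise residue bookkeeping in the kernel step, namely showing that $L^2$ harmonic forms automatically satisfy the Dirichlet-type condition $R_D$ and that the boundary terms vanish. We would handle this by expanding in the model \cite[§3.2]{BeraWalpuski2025} and using that $\ker \Dmax$ has the form $\ext(\text{residue}) + \dom(\Dmin)$, with an explicit leading term computed from $\mathring D$ for the Hodge--de Rham Clifford action on $\R\oplus T^*X$.
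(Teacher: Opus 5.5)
Your overall architecture is the paper's:
\begin{itemize}
\item integration by parts for $\ker D_{g,R_D}^\fl \subseteq \sH^1(g,[\fl])$;
\item a leading-order expansion to verify the residue condition for the reverse inclusion;
\item Teleman's Lipschitz Hodge theorem \cite{Teleman1983:SignatureLipschitz} on $\tilde X$, combined with $\tau$--anti-invariance.
\end{itemize}
In the first step you correctly note that only the function component of the leading term vanishes, so the boundary terms must be checked. Two other steps, however, do not work as written.

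In part (2), the claim that $\pi^*g$ is Lipschitz quasi-isometric to a smooth metric ``via $w = z^{1/2}$'' is false. Take the coordinate $w$ in which $\tilde X$ is smooth and $\pi(x,w) = (x,w^2)$. In the flat model, $\pi^*g = \rd x^2 + 4\abs{w}^2\abs{\rd w}^2$, which degenerates along the ramification locus. So $\pi^*g$ is not a Lipschitz Riemannian metric on $\tilde X$, and Teleman's theorem cannot be applied to $(\tilde X,\pi^*g)$. The cone of angle $4\pi$ becomes bi-Lipschitz to a flat metric only after reparametrising by a non-smooth homeomorphism such as $w \mapsto w\abs{w}$. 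This is exactly the role of the paper's Lipschitz map $\sigma \co \tilde X \to X$. It is $\tau$--invariant, homotopic to $\pi$, and modelled on $(x,w) \mapsto (x,w^2/\abs{w})$, so $\tilde g \coloneq \sigma^*g$ is a Lipschitz Riemannian metric. With $\sigma$ in place of $\pi$ the argument goes through:
\begin{itemize}
\item your capacity argument shows that $\sigma^*$ maps $\sH^1(g,[\fl])$ onto the $\tau$--anti-invariant $L^2$ harmonic forms of $\tilde g$;
\item \cite[Theorem~4.1]{Teleman1983:SignatureLipschitz} then applies;
\item $\sigma \simeq \pi$ identifies the result with pullback along $\pi$.
\end{itemize}

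In the reverse inclusion of part (1), you claim that local analysis forces the $r^{-1/2}$ coefficient of $\alpha$ to be normal. This presupposes that $\alpha$ has such a coefficient. A priori you only know $(0,\alpha) \in \ker \Dmax$, so $\res(0,\alpha)$ is merely an element of $\check H\Gamma\paren{Z,\ResidueBundle}$, which in general is not even in $H^{1/2}$. The decomposition $(0,\alpha) = \ext\paren{\res(0,\alpha)} + \psi$ with $\psi \in \dom(\Dmin)$ therefore yields no pointwise leading term. Elliptic regularity \cite[Theorem 4.17]{BeraWalpuski2025} becomes available only once you know that the residue lies in an elliptic residue condition such as $R_D$, which is the conclusion.

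The missing input is a regularity statement for $L^2$ harmonic $\fl$--twisted $1$--forms. The paper imports the expansion $\alpha = \Re(Az^{-1/2}\rd z) + \cdots$ from \cite[(4.1)]{Donaldson2021}. Alternatively, write $\alpha = \rd u$ on a punctured tubular neighbourhood of $Z$; this is possible because its $\fl$--twisted cohomology vanishes, $\fl$ being non-trivial on the meridian circles. Then expand the $\Z/2\Z$ harmonic function $u$, which has $\rd u \in L^2$. With such an expansion in hand, your leading-order check that $\res(0,\alpha) \in R_D$ goes through.
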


\begin{proof}%[Proof of \autoref{Prop_KerDRD}] 
  If $(f,\alpha)\in\ker D_{g,R_D}^\fl$,
  then $f = 0$ by the argument following \autoref{Eq_ZeroForm_IntegrationByParts}.
  This proves that $\ker D_{g,R_D}^\fl \subseteq \sH^1(g,[\fl])$.
  Conversely,
  if $\alpha\in\sH^1(g,[\fl])$,
  then by \cite[(4.1)]{Donaldson2021} it has an expansion of the form
  \begin{equation*}
    \alpha
    =
    \Re(A z^{-1/2}\rd z) + \text{higher order terms}.
  \end{equation*}
  Here $z$ is a local complex normal coordinate and, globally, $A \in \Gamma\paren{Z,NZ^{-1/2}}$.
  Therefore, $(0,\alpha) \in H_a^\infty\paren{X\setminus \Br(\fl),S\otimes\fl}$ and $\res(0,\alpha)\in R_D$.
  This proves \autoref{Prop_KerDRD_L2Harmonic}.
  
  The proof of \autoref{Prop_KerDRD_DeRham} is a variation of the proof of \cite[Proposition 5]{Wang1993:Involutions}.  
  The metric $\pi^*g$ degenerates along the ramification locus $\pi^{-1}\paren{\Br(\fl)}$.
  This is a consequence of the fact that near the ramification locus $\pi$ is locally modelled on $(x,z) \mapsto (x,z^2)$.
  There is a Lipschitz map $\sigma \co \tilde X \to X$ which is:
  \begin{enumerate*}
  \item
    modelled on $(x,z) \mapsto (x,z^2/\abs{z})$ near the ramification locus,
  \item
    invariant under $\tau$,
  \item
    homotopic to $\pi$,
    and
  \item
    such that $\tilde g \coloneq \sigma^*g$ is a Lipschitz Riemannian metric in the sense of \cite[§3]{Teleman1983:SignatureLipschitz}.
  \end{enumerate*}
  A moment's thought shows that $\sH^1(g,[\fl])$ pulls back under $\sigma$ precisely to the space of $\tau$--anti-invariant $L^2$ harmonic forms with respect to $\tilde g$ in the sense of \cite[(4.4)]{Teleman1983:SignatureLipschitz}.
  Therefore, the assertion follows from the version of the Hodge theorem stated in  \cite[Theorem~4.1]{Teleman1983:SignatureLipschitz}.
\end{proof}

\begin{cor}
  \label{Cor_H1Bundle}
  The vector spaces $\sH^1(g,[\fl])$ form a finite rank subbundle of $\paren{\HdR\times\id_\SpaceOfRamifiedLineBundles}^*\BundleOfAdmissibleSingularSpinors \to \SpaceOfMetrics \times \SpaceOfRamifiedLineBundles$.
\end{cor}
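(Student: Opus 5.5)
The plan is to exhibit $\sH^1$ locally as the kernel bundle of a uniformly Fredholm family whose kernel dimension is constant, and then invoke a general principle: a uniformly Fredholm map of tame Fréchet space bundles with constant-rank kernel has kernel forming a finite-rank subbundle. Concretely, by \autoref{Def_OneFormDirichletResidueCondition} we have the family of local residue conditions $\paren{\SpaceOfMetrics \times \SpaceOfRamifiedLineBundles,\HdR \times \id_\SpaceOfRamifiedLineBundles,\UniversalResidueBundle_D}$. Since $\ResidueBundle_D$ is an $\AlgebraBundle$--linear subbundle of $\AlgebraBundle$--rank one in a bundle of $\AlgebraBundle$--rank two, it satisfies the symbolic criterion $V \oplus J\gamma(v)V = \ResidueBundle$ (as in \cite[Example 4.44]{BeraWalpuski2025}). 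Hence \autoref{Prop_TotallyReal=>UniformlyFredholm}, applied with $\fa = 0$, shows that $\bD_{\bR_D} \co \BundleOfAdmissibleSingularSpinors_{\bR_D} \to (\HdR\times\id)^*\BundleOfSingularSpinors$ is uniformly Fredholm. By \autoref{Prop_FamilyOfResidueConditions=>Subbundle}, $\BundleOfAdmissibleSingularSpinors_{\bR_D}$ is a tame Fréchet subbundle of $(\HdR\times\id)^*\BundleOfAdmissibleSingularSpinors$.

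Fix $(g_0,[\fl_0])$ and take the thickening from \autoref{Def_LinearUniformlyFredholm}: on a neighbourhood $\sU$ there are $\pi$, $\iota$, $\DeformationBundle$, $\ObstructionBundle$ such that $L = \begin{pmatrix} \bD_{\bR_D} & \iota \\ \pi & 0\end{pmatrix}$ is an isomorphism of tame Fréchet space bundles. For each fibre, $\ker \bD_{\bR_D} \iso \{(\phi,o) : \bD\phi + \iota o = 0\}$ maps isomorphically onto the kernel of the finite-dimensional map $\ObstructionBundle \to \coker$. Concretely, define $\sK \coloneq L^{-1}(0 \oplus \DeformationBundle) \subseteq \BundleOfAdmissibleSingularSpinors_{\bR_D}|_\sU \oplus \ObstructionBundle$. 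This is a finite-rank tame subbundle of rank $\rk\DeformationBundle$, since $L^{-1}$ is a tame bundle isomorphism. The kernel of $\bD_{\bR_D}$ at a point is then the kernel of the vector-bundle map $\sK \to \ObstructionBundle$ given by projection to the $\ObstructionBundle$ factor. A finite-rank vector bundle map whose kernel has constant rank has kernel forming a smooth subbundle, and its image in $\BundleOfAdmissibleSingularSpinors_{\bR_D}$ under the first projection is then a finite-rank subbundle.

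It therefore remains to show that $\dim \ker D^\fl_{g,R_D}$ is locally constant. This follows from \autoref{Prop_KerDRD}: $\ker D^\fl_{g,R_D} = \sH^1(g,[\fl]) \iso \rH^1_\dR(\tilde X)^-$, and the right-hand side depends only on the topology of the branched double cover. The branched double cover is locally constant in $[\fl]$, because small deformations of $Z$ are isotopic and the isotopy carries $\fl$ along, while $g$ plays no role at all. Hence the rank is constant on connected components. The inclusion $\BundleOfAdmissibleSingularSpinors_{\bR_D} \subseteq (\HdR\times\id)^*\BundleOfAdmissibleSingularSpinors$ then exhibits $\sH^1$ as a subbundle of the latter.

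The main obstacle is making the constant-rank argument genuinely tame-smooth rather than merely fibrewise. The finite-dimensional linear algebra on $\sK$ handles this: $\sK \to \ObstructionBundle$ is a smooth map of finite-rank bundles over a tame Fréchet base, and on such bundles the constant-rank kernel theorem is elementary. Choosing a local complement of the image and applying a Gram–Schmidt-type trivialisation gives a local frame of the kernel that depends smoothly on the base point.
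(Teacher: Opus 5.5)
Your proposal is correct and follows the paper's own terse argument: ellipticity of the Dirichlet family $\bR_D$ makes $\bD_{\bR_D}$ uniformly Fredholm by \autoref{Prop_TotallyReal=>UniformlyFredholm}, and \autoref{Prop_KerDRD} (via Teleman's Lipschitz Hodge theory) identifies the kernels with $\rH_\dR^1(\tilde X)^-$, whose dimension is locally constant, so the kernels form a finite-rank subbundle. Your explicit reduction to a constant-rank map from the finite-rank bundle $L^{-1}(0\oplus\DeformationBundle)$ to $\ObstructionBundle$ spells out the step the paper leaves implicit.
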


It turns out that the local residue conditions defined in \autoref{Prop_AdmissibleZ2ZSpinor_NonDegenerate} and
\autoref{Def_OneFormDirichletResidueCondition} are related as follows.

\begin{prop}
  \label{Prop_SpinorResidueConditon=DirichletResidueCondition}
  If $(g,[\fl];\alpha) \in \SpaceOfNonDegenerateHarmonicOneForms$,
  then
  \begin{equation*}
    \im \bL_\alpha = \ResidueBundle_D.
  \end{equation*}
\end{prop}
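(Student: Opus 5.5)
Both $\im\bL_\alpha$ and $\ResidueBundle_D$ are $\AlgebraBundle$--linear subbundles of $\ResidueBundle$ of $\AlgebraBundle$--rank one; the former because $\bL_\alpha$ is $\AlgebraBundle$--anti-linear and injective by non-degeneracy, the latter by construction. It therefore suffices to show that $\im\bL_\alpha \subseteq \ResidueBundle_D$ pointwise along $Z$. Equivalently, for every $v \in \Gamma(Z,NZ)$ the residue $\res(\sL_{\bar v}^K(0,\alpha))$ has vanishing component in the complementary summand
\begin{equation*}
  \paren{\R \oplus TZ^\flat}\otimes_\AlgebraBundle NZ^{-1/2}
\end{equation*}
of $\ResidueBundle = S|_Z \otimes_\AlgebraBundle NZ^{-1/2}$. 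Here I use the splitting $S|_Z = \underline\R \oplus T^*Z \oplus N^*Z$, whose factors are $\AlgebraBundle$--submodules under Clifford multiplication by $\Wedge^2 NZ$.

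The computation is local. I will use the expansion quoted in the proof of \autoref{Prop_KerDRD}, namely $\alpha = \Re(A z^{-1/2}\rd z) + O(r^{1/2})$ in complex normal coordinates $z$, with $A \in \Gamma(Z,NZ^{-1/2})$ nowhere vanishing by non-degeneracy. By \autoref{Prop_AdmissibleZ2ZSpinor_LeadingOrderTerm}, $\bL_\alpha v$ is independent of the choice of infinitesimal uniformiser. It is read off from the $r^{-1/2}$ term of $\sL_{\bar v}^K(0,\alpha) = \nabla_{\bar v}(0,\alpha) + \kappa_{\bar v}(0,\alpha)$.

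Only $\nabla_{\bar v}$ contributes at order $r^{-1/2}$. Indeed, $\kappa_{\bar v}$ is a bounded endomorphism, so $\kappa_{\bar v}(0,\alpha) = O(r^{1/2})$, and differentiating the higher order terms of $\alpha$ gives $O(r^{-1/2})$ terms only with tangential or lower structure. More carefully, these higher order terms are $O(r^{1/2})$ with conormal derivatives, and $\nabla_{\bar v}$ with $\bar v|_Z = v$ normal produces at worst $r^{-1/2}$ terms. I would double-check this using the sharper expansion of $\alpha$: its next term is of order $r^{1/2}$ and takes the form $\Re(B\bar z^{1/2}\rd z + C z^{1/2}\rd\bar z + \ldots)$.

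So the leading contribution is $\partial_v\Re(Az^{-1/2}\rd z)$. Writing $v = \Re(w\partial_z)$ with $w$ a section of $NZ$, this equals $\Re\paren{-\tfrac12 A w z^{-3/2}\rd z}$ plus a $\bar w$ term. Hence the leading singular part is a one-form of type $(1,0)$ in the normal directions, i.e. a section of $N^*Z$. The $\underline\R$ component vanishes because $\sL^K_{\bar v}$ preserves the form degree up to the $\kappa$ term, and $f = 0$. The $T^*Z$ component vanishes because there is no $\rd x$ component at order $r^{-1/2}$.

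The main obstacle is bookkeeping the residue map correctly. The quantity computed above is an order $r^{-3/2}$ term, whereas residues are defined via $r^{-1/2}$ asymptotics. The $r^{-3/2}$ term must cancel against the contribution of the higher order expansion terms, or rather be absorbed by how $\res$ is defined in \cite[§3]{BeraWalpuski2025}. Concretely, I would follow the proof of \autoref{Prop_AdmissibleZ2ZSpinor_LeadingOrderTerm}: compute $\res$ via the blow-up description $\phi|_{\del\hat X} = \pi^*\res(r^{-1/2}\phi)$, restricted to $-\tfrac12$ Fourier modes. I would then verify that in the model operator $\mathring D$ the map $\Phi \mapsto \bL_\Phi$ commutes with the bundle projection onto $N^*Z$. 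This holds because the Hodge--de Rham Clifford multiplication by normal vectors preserves $N^*Z \oplus \Wedge^0$ and $T^*Z$ separately, the leading term of $\alpha$ lies in $N^*Z$, and Kosmann differentiation by normal fields respects this splitting at leading order. Given the inclusion, equality then follows from the rank count.
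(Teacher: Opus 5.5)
Your overall structure (reduce to the inclusion $\im\bL_\alpha\subseteq\ResidueBundle_D$, then conclude by comparing $\AlgebraBundle$--ranks) is the paper's. However, your computation of $\bL_\alpha$ starts from the wrong asymptotic expansion, and this is a genuine gap. The expansion $\alpha=\Re(Az^{-1/2}\rd z)+\dots$ quoted in the proof of \autoref{Prop_KerDRD} describes elements of $\sH^1(g,[\fl])=\ker D_{g,R_D}^\fl$, that is, $L^2$ harmonic forms with \emph{non-zero} residue. A point of $\SpaceOfNonDegenerateHarmonicOneForms$ is instead a $\Z/2\Z$ harmonic $1$--form, so $\res(0,\alpha)=0$. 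Hence your $A$ vanishes identically. Non-degeneracy ($\abs{\alpha}\asymp r^{1/2}$, equivalently injectivity of $\bL_\alpha$) is a condition on the \emph{next} coefficient, not on $A$. The $r^{-3/2}$ term you run into is an artefact of this false premise, and saying it ``must cancel against higher order terms or be absorbed by the definition of $\res$'' is not an argument. Your own estimate $\kappa_{\bar v}(0,\alpha)=O(r^{1/2})$ already presupposes $\alpha=O(r^{1/2})$, which contradicts the expansion you wrote down.

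The correct input, which the paper uses, is $\alpha=\Re(Bz^{1/2}\rd z)+\text{higher order terms}$ from \cite[(4.1)]{Donaldson2021}. Locally $\alpha$ is the differential of a two-valued harmonic function with leading term $\Re(cz^{3/2})$. Here $B$ is nowhere zero by non-degeneracy, and the remaining terms (in particular all $T^*Z$--components) are of higher order, so their normal derivatives are $O(r^{1/2})$. Take $\bar v|_Z=v=w\del_z+\bar w\del_{\bar z}$ and differentiate the leading term normally. This gives $\nabla_{\bar v}\alpha=\Re(\tfrac12 Bw\,z^{-1/2}\rd z)+O(r^{1/2})$, while $\kappa_{\bar v}\alpha=O(r^{1/2})$. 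So $\res(\sL_{\bar v}^K\alpha)$ is, up to a universal constant, $B$ applied to $v$. It lies in $N^*Z\otimes_\AlgebraBundle NZ^{-1/2}=\ResidueBundle_D$ because only $\rd z$ appears; this is the paper's statement that $B$ is $\bL_\alpha$ up to a constant factor.

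You should also discard the fallback in your last paragraph. Clifford multiplication by a normal vector does not preserve $\underline\R\oplus N^*Z$ and $T^*Z$ separately. With $e_1,e_2$ normal and $e_3$ tangent, $\gamma(e_1)e^3=-e^2$ and $\gamma(e_1)e^2=e^3$, so it interchanges $\underline\R\oplus T^*Z$ and $N^*Z$. Only the $\AlgebraBundle$--action $\gamma(e_1)\gamma(e_2)$ preserves that splitting. In the same vein, $\underline\R$ and $T^*Z$ are not individually $\AlgebraBundle$--submodules, contrary to what your first paragraph asserts.
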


\begin{proof}%[Proof of \autoref{Prop_SpinorResidueConditon=DirichletResidueCondition}]
  As explained in \cite[(4.1)]{Donaldson2021},
  $\alpha$ has an expansion of the form
  \begin{equation*}
    \alpha = \Re(B z^{1/2} \rd z) + \text{higher order terms}.
  \end{equation*}
  Here $z$ is a local complex normal coordinate and, globally, $B \in \Gamma\paren{Z,\Hom_\AlgebraBundle\paren{\overline{NZ},NZ^* \otimes_\AlgebraBundle NZ^{-1/2}}}$.
  A moment's thought shows that up to a constant factor $B$ is nothing but $\bL_\alpha$.
  As a consequence,
  $\im \bL_\alpha \subseteq \ResidueBundle_D$.
  Since $\rk \bL_\alpha = \rk \ResidueBundle_D$,
  this inclusion is an identity.
\end{proof}

\begin{proof}[Proof of \autoref{Thm_ProjectiveSpaceOfNonDegenerateHarmonicOneForms}]
  Since $\HdR$ is not transverse to $\pr_\SpaceOfDiracBundles \co \SpaceOfNonDegenerateHarmonicSpinors \to \SpaceOfDiracBundles$,
  \emph{a priori} $\SpaceOfNonDegenerateHarmonicOneForms$ might not be a tame Fréchet submanifold of $\HdR^*\BundleOfAdmissibleZModTwoZSpinors$.
  
  Let $(g_0,[\fl_0];\alpha) \in \SpaceOfNonDegenerateHarmonicOneForms$.
  Consider a local thickening $\ThickenedSpaceOfNonDegenerateHarmonicSpinors$ containing $(\HdR(g_0),[\fl_0];\alpha)$ as in \autoref{Prop_ThickenedSpaceOfZ2ZHarmonicSpinors_Submanifold+Submersion}.
  Since $\pr_\sV \co \ThickenedSpaceOfNonDegenerateHarmonicSpinors \to \sV$ is a uniform submersion,
  $\HdR^*\ThickenedSpaceOfNonDegenerateHarmonicSpinors \subseteq \HdR^*\BundleOfNonDegenerateAdmissibleZTwoZSpinors$ is a tame Fréchet submanifold and the projection map $\HdR^*\ThickenedSpaceOfNonDegenerateHarmonicSpinors \to \HdR^{-1}(\sV)$ is a uniform submersion.
  Evidently,  
  $\HdR^*\ThickenedSpaceOfNonDegenerateHarmonicSpinors \supseteq
  \SpaceOfNonDegenerateHarmonicOneForms \cap \HdR^*\sU$.

  By the preceding propositions the thickening $\ThickenedSpaceOfNonDegenerateHarmonicSpinors$ can be chosen such that for every $(g,[\fl];\alpha) \in \HdR^*\ThickenedSpaceOfNonDegenerateHarmonicSpinors$ the $L^2$ orthogonal projection $\ObstructionBundle_{g,[\fl];\alpha} \to \sH^1(g,[\fl])$ is injective.
  By construction,
  if $(g,[\fl];\alpha) \in \HdR^*\ThickenedSpaceOfNonDegenerateHarmonicSpinors$,
  then
  \begin{equation*}
    \rd^*\alpha = 0 \qandq *\rd\alpha \in \ObstructionBundle_{g,[\fl];\alpha}.
  \end{equation*}
  However, the $L^2$ orthogonal projection of $*\rd\alpha$ onto $\sH^1(g,[\fl])$ vanishes by integration by parts, since $\alpha$ has vanishing residue.
  As a consequence the obstruction map $\HdR^*\ob$ vanishes.
  Therefore, $\HdR^*\ThickenedSpaceOfNonDegenerateHarmonicSpinors$ is an open neighbourhood of $(g_0,[\fl_0];\alpha)$ in $\SpaceOfNonDegenerateHarmonicOneForms$.
  This proves \autoref{Thm_ProjectiveSpaceOfNonDegenerateHarmonicOneForms_Submersion}.

  In light of \autoref{Rmk_TangentSpaces},
  \autoref{Thm_ProjectiveSpaceOfNonDegenerateHarmonicOneForms_VerticalTangenBundle} holds as a consequence of \autoref{Prop_KerDRD} and \autoref{Prop_SpinorResidueConditon=DirichletResidueCondition}.
\end{proof}

%%% Local Variables:
%%% mode: latex
%%% TeX-master: "UniversalModuliSpaceOfZ2ZHarmonicSpinors"
%%% ispell-local-dictionary: "british"
%%% End:

\subsection{The universal space of non-degenerate \texorpdfstring{$\Z/2\Z$}{Z/2Z} harmonic self-dual $2$--forms}
\label{Sec_Z2ZHarmonicSelfDualFormsInDimensionFour}

This subsection proves \autoref{Thm_ProjectiveSpaceOfNonDegenerateHarmonicSelfDualTwoAndOneForms}.
Throughout this subsection, $\dim X = 4$, $X$ is oriented, and $S = T^*X \oplus \underline \R \oplus \Lambda^+ T^*X$.
The map $\HdR \co \SpaceOfMetrics \to \SpaceOfDiracBundles$,
alluded to before the statement of \autoref{Thm_ProjectiveSpaceOfNonDegenerateHarmonicSelfDualTwoAndOneForms},
assigns to every Riemannian metric $g \in \SpaceOfMetrics$ the Dirac bundle structure on $S$ with respect to which the Clifford multiplication is given by
\begin{equation*}
  \gamma(v)\paren{\alpha;f,\beta}
  \coloneq
  \paren{ v^\flat f - \sqrt{2}i_v\beta; -i_v\alpha, \sqrt{2}\paren{v^\flat \wedge \alpha}^+ }
\end{equation*}
and the spin connection is induced by the Levi-Civita connection.
The pullbacks
\begin{equation*}
   \ProjectiveSpaceOfNonDegenerateHarmonicOneForms \coloneq \HdR^*\ProjectiveSpaceOfNonDegeneratePositiveHarmonicSpinors
   \qandq
   \ProjectiveSpaceOfNonDegenerateHarmonicSelfDualTwoForms \coloneq 
   \HdR^*\ProjectiveSpaceOfNonDegenerateNegativeHarmonicSpinors
\end{equation*}
are the \defined{universal moduli spaces of non-degenerate $\Z/2\Z$ harmonic $1$--forms} and \defined{self-dual $2$--forms} respectively.

\begin{definition}[{cf.~\cite[Example 3.48]{BeraWalpuski2025}}]
  \label{Def_OneFormAndSelfDualTwoFormDirichletResidueCondition}
  For every $(g,[\fl]) \in \SpaceOfMetrics \times \SpaceOfRamifiedLineBundles$ the chiral residue bundle $\ResidueBundle^\pm$ over $Z \coloneq \Br(\fl)$ contains the $\AlgebraBundle$--linear subbundle
  \begin{equation*}
    \ResidueBundle_D^+    
    \coloneq
    N^*Z \otimes_\AlgebraBundle NZ^{-1/2}
    \quad\text{or}\quad
    \ResidueBundle_D^-
    \coloneq
    (T^*Z \wedge N^*Z)^+ \otimes_\AlgebraBundle NZ^{-1/2}
  \end{equation*}
  respectively.
  These form a family of local chiral residue conditions $\paren{\SpaceOfMetrics \times \SpaceOfRamifiedLineBundles,\HdR \times \id_\SpaceOfRamifiedLineBundles,\UniversalResidueBundle_D^\pm}$.  
  Denote the associated family of chiral residue conditions by
  $\paren{\SpaceOfMetrics \times \SpaceOfRamifiedLineBundles,\HdR \times \id_\SpaceOfRamifiedLineBundles,\bR_D^\pm}$.
\end{definition}

$\ResidueBundle_D^+ \oplus \ResidueBundle_D^-$ is an elliptic Lagrangian family of local residue conditions and the associated kernels form vector bundles.

\begin{prop}
  \label{Prop_KerDRD+-}
  For every $(g,[\fl]) \in \SpaceOfMetrics \times \SpaceOfRamifiedLineBundles$ the following hold:
  \begin{enumerate}
  \item
    \label{Prop_KerDRD+-_L2Harmonic}
    The kernel of $D_{g,R_D^\pm}^\fl$ consists precisely of the $L^2$ harmonic $\fl$--twisted $1$--forms and self-dual $2$--forms respectively;
    that is:
    \begin{align*}
      \ker D_{g,R_D^+}^\fl
      &=
        \sH^1(g,\fl)
        \coloneq
        \set[\big]{
        \alpha \in L^2\Omega^1(X\setminus Z,\fl)
        :
        \rd \alpha = \rd^*\alpha = 0
        } \qand \\
      \ker D_{g,R_D^-}^\fl
      &=
        \sH^+(g,\fl)
        \coloneq
        \set[\big]{
        \beta \in L^2\Omega^+(X\setminus Z,\fl)
        :
        \rd \beta = \rd^*\beta = 0
        }.
    \end{align*}
  \item
    \label{Prop_KerDRD^+-_DeRham}
    Pulling back along $\pi \co \tilde X \to X$ induces isomorphisms
    \begin{equation*}
      \sH^1(g,\fl) \iso \rH_\dR^1(\tilde X)^-
      \qandq
      \sH^+(g,\fl) \iso \rH_\dR^+(\tilde X)^-.
    \end{equation*}
    Here $\rH_\dR^\bullet(\tilde X)^-$ denotes the subspace of $\rH_\dR^\bullet(\tilde X)$ which is anti-invariant under the sheet-swapping involution $\tau$ and $\rH_\dR^+(\tilde X)^- \subseteq \rH_{\dR}^2(\tilde X)^-$ denotes \emph{a} maximal subspace on which the intersection form is positive-definite.
  \end{enumerate}
\end{prop}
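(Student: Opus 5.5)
The plan is to follow the proof of \autoref{Prop_KerDRD} closely, treating the two chiralities separately but in parallel.

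For \autoref{Prop_KerDRD+-_L2Harmonic}, first compute what $D_{g}^{\fl,\pm}$ is on the two chiral summands. On the positive part, $S^+ = T^*X$, it is $\alpha \mapsto (\rd^*\alpha, \sqrt2\,\rd^+\alpha)$. On the negative part, $S^- = \underline\R \oplus \Lambda^+T^*X$, it is $(f,\beta) \mapsto \rd f + \sqrt2\,\rd^*\beta$.

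Take $\alpha \in \ker D^{\fl,+}_{g,R_D^+}$. Then $\rd^*\alpha = 0$ and $\rd^+\alpha = 0$. Integration by parts gives $\int \rd\alpha\wedge\rd\alpha = 0$, and the boundary term vanishes by the residue condition, since the residue lies in $N^*Z\otimes NZ^{-1/2}$ and so its pairing with the Green's form vanishes by Lagrangianity. Because $|\rd^+\alpha|^2 - |\rd^-\alpha|^2$ is a multiple of $\rd\alpha\wedge\rd\alpha$ and $\rd^+\alpha = 0$, this forces $\rd^-\alpha = 0$, hence $\rd\alpha = 0$.

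Now take $(f,\beta) \in \ker D^{\fl,-}_{g,R_D^-}$. Apply $\rd^*$ to $\rd f + \sqrt2\,\rd^*\beta = 0$ to get $\rd^*\rd f = 0$. Integration by parts, using the residue condition exactly as after \autoref{Eq_ZeroForm_IntegrationByParts}, gives $\rd f = 0$, so $f = 0$ because $\Br(\fl) \neq \emptyset$. Hence $\rd^*\beta = 0$, and since $\beta$ is self-dual, $\rd\beta = 0$ as well.

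For the reverse inclusions, invoke the local expansions of $L^2$ harmonic twisted forms near $Z$, as in \cite[(4.1)]{Donaldson2021}. An $L^2$ harmonic $1$--form has leading term $\Re(Az^{-1/2}\rd z)$, and an $L^2$ harmonic self-dual $2$--form has leading term of the form $(\Re(Bz^{-1/2}\,\rd z)\wedge\theta)^+$ with $\theta \in T^*Z$. From these:
\begin{enumerate}
\item the forms lie in $H_a^\infty$;
\item the residues land in $\ResidueBundle_D^\pm$;
\item for $\beta$, the real component $f = 0$ automatically.
\end{enumerate}
The one genuinely new ingredient is the $2$--form expansion. I would justify it by separation of variables for the model operator $\mathring D$: only the $z^{\pm1/2}$ Fourier modes can appear at leading order, $L^2$ rules out the more singular ones, and the normal components are then forced into $(T^*Z\wedge N^*Z)^+$.

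For \autoref{Prop_KerDRD^+-_DeRham}, repeat the Lipschitz argument. Use the $\tau$--invariant Lipschitz map $\sigma \co \tilde X \to X$ and the Lipschitz metric $\tilde g = \sigma^*g$, and apply \cite[Theorem~4.1]{Teleman1983:SignatureLipschitz}. This identifies the anti-invariant $L^2$ harmonic $1$-- and $2$--forms with $\rH^1_\dR(\tilde X)^-$ and $\rH^2_\dR(\tilde X)^-$, and the pullback of $\sH^1$ is exactly the first of these.

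For $2$--forms, note that Hodge $*$ for $\tilde g$ commutes with $\tau$, since $\tau$ is an orientation-preserving isometry; it preserves orientation because it is locally $z \mapsto -z$. So the anti-invariant harmonic $2$--forms split into self-dual and anti-self-dual parts, and $\sigma^*$ identifies $\sH^+(g,\fl)$ with the anti-invariant self-dual harmonic forms. On the self-dual part the intersection form equals $\int\beta\wedge\beta = \|\beta\|^2 > 0$, and the self-dual and anti-self-dual parts are orthogonal with respect to it. Hence this space maps isomorphically onto a maximal positive-definite subspace of $\rH^2_\dR(\tilde X)^-$.

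I expect the main obstacle to be the boundary-term control in the integrations by parts, especially $\int\rd\alpha\wedge\rd\alpha = 0$. The leading term of $\alpha$ behaves like $r^{1/2}$ after the residue is imposed, and the vanishing has to come from Lagrangianity of $\ResidueBundle_D^+\oplus\ResidueBundle_D^-$. I would handle this by cutting off at $r = \epsilon$ and checking that the flux integrals over the tube boundary are $O(\epsilon^{1/2})$ or better.
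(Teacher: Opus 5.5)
Your proposal is correct and follows the paper's proof. The paper simply declares the argument analogous to \autoref{Prop_KerDRD}, given the expansion $\beta=\Re(Az^{-1/2}\rd w\wedge\rd z)+\dots$ (equivalent to your $(\Re(Bz^{-1/2}\rd z)\wedge\theta)^+$) and Teleman's Lipschitz Hodge theorem; you usefully make explicit two steps it leaves implicit, namely the $\int\rd\alpha\wedge\rd\alpha$ argument forcing $\rd^-\alpha=0$ in positive chirality and the self-dual/anti-self-dual splitting of the $\tau$--anti-invariant harmonic $2$--forms (for maximality, also record that the intersection form is negative-definite on the anti-self-dual part).

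One correction: the boundary term $\lim_{\epsilon\to0}\int_{\{r=\epsilon\}}\alpha\wedge\rd\alpha$ is a quadratic expression in $\alpha$, not the Green's form of $D$, so Lagrangianity of $R_D$ is not what makes it vanish. Your cutoff computation is the right justification: for residue in $N^*Z\otimes NZ^{-1/2}$ the leading parts of $\alpha$ and $\rd\alpha$ both restrict to the tube with a $\rd\theta$ factor, and the remainder contributes $o(1)$, at least along a sequence $\epsilon\to0$, which suffices.
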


\begin{proof}%[Proof of \autoref{Prop_KerDRD+-}]
  The proof is analogous to that of \autoref{Prop_KerDRD} using the fact that a $\beta \in \sH^+(g,\fl)$ has an expansion of the form
  \begin{equation*}
    \beta = \Re\paren{A z^{-1/2} \rd w \wedge \rd z} + \text{higher order terms}.
  \end{equation*}
  Here $w$ is a local complex coordinate on $Z$ and $z$ is a local complex normal coordinate.  
\end{proof}

\begin{cor}
  \label{Cor_H1+Bundle}
  The vector spaces $\sH^1(g,\fl)$ and $\sH^+(g,\fl)$ form finite rank subbundles of $\paren{\HdR\times\id_\SpaceOfRamifiedLineBundles}^*\BundleOfAdmissibleSingularSpinors^\pm \to \SpaceOfMetrics \times \SpaceOfRamifiedLineBundles$ respectively.
\end{cor}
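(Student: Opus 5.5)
The statement to prove is \autoref{Cor_H1+Bundle}: the $\sH^1(g,\fl)$ and $\sH^+(g,\fl)$ form finite rank subbundles. I would argue exactly as for \autoref{Cor_H1Bundle}. Equivalently, I would exhibit $\sH^1$ and $\sH^+$ as kernels of a uniformly Fredholm family whose kernel dimension is locally constant.

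The first step is to record that $\paren{\SpaceOfMetrics \times \SpaceOfRamifiedLineBundles,\HdR \times \id_\SpaceOfRamifiedLineBundles,\UniversalResidueBundle_D^\pm}$ satisfies the hypothesis of \autoref{Prop_TotallyReal=>UniformlyFredholm_Chiral}. Since $\rk_\AlgebraBundle \ResidueBundle^\pm = 2$ and $\ResidueBundle_D^\pm$ is $\AlgebraBundle$--linear of $\AlgebraBundle$--rank one, the symbolic criterion $V^\pm \oplus J\gamma(v)V^\pm = \ResidueBundle^\pm$ holds, exactly as at the end of \autoref{Sec_OnceMoreWithChirality}. Applying \autoref{Prop_TotallyReal=>UniformlyFredholm_Chiral} with $\fa = 0$ then yields, near any $(g_0,[\fl_0])$, bundles $\DeformationBundle$, $\ObstructionBundle$ and maps $\pi,\iota$ such that the thickening of $\bD^\pm_{\bR_D^\pm}$ is an isomorphism of tame Fréchet space bundles.

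The second step is to deduce that the kernels form a bundle. By \autoref{Prop_KerDRD+-}~\autoref{Prop_KerDRD^+-_DeRham}, $\dim \ker D^\fl_{g,R_D^\pm}$ equals $\dim \rH_\dR^1(\tilde X)^-$ or $\dim \rH_\dR^+(\tilde X)^-$. These are topological quantities: $\dim \rH^+_\dR(\tilde X)^-$ is the positive index of inertia of the intersection form on $\rH^2_\dR(\tilde X)^-$. Moreover, $\tilde X$ varies only by diffeomorphism as $[\fl]$ varies in a connected neighbourhood, so the kernel dimension is locally constant. Let $\overline\bD$ denote the thickened operator and $\overline\bD^{-1}$ its inverse. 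Restricting $\overline\bD^{-1}$ to $0 \oplus \DeformationBundle$ gives an injective bundle map $\DeformationBundle \to \BundleOfAdmissibleSingularSpinors^\pm_{\bR_D^\pm}$. The kernel of $\bD^\pm_{\bR_D^\pm}$ is the preimage under this map of the kernel of the finite rank bundle map $\DeformationBundle \to \ObstructionBundle$ obtained by composing with the projection onto the $\ObstructionBundle$-component. A smooth map of finite rank bundles of constant rank has kernel a smooth subbundle. Because the kernel dimension is constant, the rank of $\DeformationBundle\to\ObstructionBundle$ is constant, so its image under $\overline\bD^{-1}$ is a finite rank tame subbundle of $(\HdR\times\id)^*\BundleOfAdmissibleSingularSpinors^\pm$. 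Finally, \autoref{Prop_KerDRD+-}~\autoref{Prop_KerDRD+-_L2Harmonic} identifies its fibres with $\sH^1(g,\fl)$ and $\sH^+(g,\fl)$.

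The main obstacle is the constancy of the kernel dimension. It rests entirely on the Hodge-theoretic identification in \autoref{Prop_KerDRD+-}. For $\sH^+$, one must also check that $\dim\rH^+_\dR(\tilde X)^-$ does not depend on the choice of maximal positive subspace, which is clear from Sylvester's law of inertia. Once this is granted, the rest is the routine finite-dimensional reduction above.
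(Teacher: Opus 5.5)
Your proposal is correct and is essentially the argument the paper leaves implicit: the $\AlgebraBundle$--rank one subbundles $\ResidueBundle_D^\pm$ satisfy the symbolic criterion, so \autoref{Prop_TotallyReal=>UniformlyFredholm_Chiral} makes $\bD^\pm_{\bR_D^\pm}$ uniformly Fredholm; \autoref{Prop_KerDRD+-} shows the kernel dimension is topological and hence locally constant; and the finite-dimensional reduction then produces the finite rank subbundle. One small slip: $\ker \bD^\pm_{\bR_D^\pm}$ is the \emph{image}, not the preimage, of $\ker\paren{\DeformationBundle \to \ObstructionBundle}$ under $\delta \mapsto$ (the $\BundleOfAdmissibleSingularSpinors^\pm_{\bR_D^\pm}$--component of $\overline\bD^{-1}(0,\delta)$), which is what your next sentence actually uses.
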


\begin{prop}
  \label{Prop_SpinorResidueConditon=DirichletResidueConditionTwoForms}
  If $(g,[\fl];\alpha) \in \SpaceOfNonDegenerateHarmonicOneForms$ (or $(g,[\fl];\beta) \in \SpaceOfNonDegenerateHarmonicSelfDualTwoForms$),
  then
  \begin{equation*}
    \im \bL_\alpha = \ResidueBundle_D^+
    \quad
    (\text{or}\quad\im \bL_\beta = \ResidueBundle_D^-).
  \end{equation*}
\end{prop}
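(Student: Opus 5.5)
The plan mirrors the proof of \autoref{Prop_SpinorResidueConditon=DirichletResidueCondition}: compute the leading-order behaviour of $\alpha$ (or $\beta$) near $Z$, identify it with $\bL_\alpha$ (or $\bL_\beta$), deduce the inclusion of images, and conclude by comparing ranks.

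First, I would establish the local expansions. Since $\alpha$ (or $\beta$) lies in $\BundleOfNonDegenerateAdmissibleZTwoZSpinors$, its residue vanishes, so near $Z$ the leading term is of order $r^{1/2}$. I would use the harmonicity equations $\rd\alpha = \rd^*\alpha = 0$ (or $\rd\beta = 0$ with $\beta$ self-dual) in the model $\R^2 \times \C$ (with $w$ a complex coordinate on $Z$ in dimension four) to expand
\begin{equation*}
  \alpha = \Re\paren{B z^{1/2}\rd z} + \text{higher order terms}
  \qandq
  \beta = \Re\paren{B z^{1/2} \rd w \wedge \rd z} + \text{higher order terms},
\end{equation*}
with $B$ a section of $\Hom_\AlgebraBundle\paren{\overline{NZ},N^*Z\otimes_\AlgebraBundle NZ^{-1/2}}$ or of $\Hom_\AlgebraBundle\paren{\overline{NZ},(T^*Z\wedge N^*Z)^+\otimes_\AlgebraBundle NZ^{-1/2}}$, respectively. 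For $1$--forms this is \cite[(4.1)]{Donaldson2021}, exactly as in dimension three. For self-dual $2$--forms, I would argue by separation of variables: the indicial roots of $\rd+\rd^*$ on $\Omega^+$ in the normal plane with the $\fl$--twist are half-integers, and the $z^{1/2}$ modes compatible with self-duality and closedness must be of the form $\rd w\wedge\rd z$ (together with its self-dual completion). The $\rd z\wedge\rd\bar z$ and $\rd w\wedge\rd\bar w$ directions are excluded at leading order because $\rd\beta=0$ forces the leading coefficient to be holomorphic in $z$.

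Second, I would identify $B$ with $\bL$ up to a constant factor. By \autoref{Prop_AdmissibleZ2ZSpinor_LeadingOrderTerm}, $\bL_\Phi(v) = \res(\sL^K_{\bar v}\Phi)$. The key fact is that $\nabla_{\bar v}$ applied to $z^{1/2}$ produces $\tfrac12 v z^{-1/2}$, the Kosmann correction $\kappa_{\bar v}$ being of lower order. Hence $\res(\sL^K_{\bar v}\alpha)$ is a constant multiple of $B(\bar v)$ regarded as a $z^{-1/2}$--coefficient, and it therefore takes values in the form-type component carrying $B$. This yields $\im\bL_\alpha\subseteq\ResidueBundle_D^+$ and $\im\bL_\beta\subseteq\ResidueBundle_D^-$.

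Finally, I would compare ranks. Non-degeneracy means $\bL$ is injective, so $\rk_\R\im\bL = \rk NZ = 2$. Both $\ResidueBundle_D^\pm$ have real rank $2$: $N^*Z$ and $(T^*Z\wedge N^*Z)^+$ each have real rank $2$, and are $\AlgebraBundle$--rank $1$ after tensoring with $NZ^{-1/2}$. The inclusions are therefore equalities.

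The main obstacle is the self-dual $2$--form expansion, specifically verifying that the leading coefficient lies in $(T^*Z\wedge N^*Z)^+$ rather than in the $\Lambda^2N^*Z$--component. I would first try to handle this via the chiral Clifford relations: the residue bundle $\ResidueBundle^-$ splits $\AlgebraBundle$--linearly under $J$, and $\bL$ is anti-linear, which should force its image into the appropriate $J$--eigen-type summand.
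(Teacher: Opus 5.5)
Your proposal is correct and is essentially the paper's proof: you use the expansion $\beta = \Re\paren{B z^{1/2}\,\rd w\wedge\rd z} + \text{higher order terms}$ (and the analogous expansion of $\alpha$ from the three-dimensional case), identify $B$ with $\bL$ up to a constant factor to get the inclusion, and conclude equality by comparing real ranks. Your closedness computation already resolves the ``main obstacle'': the component of the leading term along $(\Wedge^2 T^*Z \oplus \Wedge^2 N^*Z)^+$ must be $z$--independent, hence zero by homogeneity, so the fallback via $J$ is unnecessary. That fallback would not work in any case, because $J$ is $\AlgebraBundle$--anti-linear and does not single out any $\AlgebraBundle$--line in $\ResidueBundle^-$.
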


\begin{proof}%[Proof of \autoref{Prop_SpinorResidueConditon=DirichletResidueConditionTwoForms}]
  The proof is analogous to that of \autoref{Prop_SpinorResidueConditon=DirichletResidueCondition} using that
  \begin{equation*}
    \beta = \Re(B z^{1/2} \rd w \wedge \rd z) + \text{higher order terms}.
  \end{equation*}
  Here $w$ is a local complex coordinate on $Z$ and $z$ is a local complex normal coordinate and, globally, $B \in \Gamma\paren{Z,\Hom_\AlgebraBundle\paren{\overline{NZ},\paren{T^*Z \wedge NZ^*}^+ \otimes_\AlgebraBundle NZ^{-1/2}}}$.
\end{proof}

\begin{proof}[Proof of \autoref{Thm_ProjectiveSpaceOfNonDegenerateHarmonicSelfDualTwoAndOneForms}]
  The proof is analogous to that of \autoref{Thm_ProjectiveSpaceOfNonDegenerateHarmonicOneForms} using the above ingredients and cosmetic modifications to track the chirality.
\end{proof}

\begin{remark}
  \label{Rmk_IndexInSelfDualCase}
  If $(g,[\fl];\alpha) \in \SpaceOfNonDegenerateHarmonicOneForms$,
  then the index of $D_{g,R_\alpha^+}^\fl$ can be determined as follows.
  If $X$ is a closed oriented $4$--manifold,
  then
  \begin{equation*}
    b^1(X) - b^0(X) - b^+(X) = -\frac12\paren{\chi(X) + \sigma(X)}.
  \end{equation*}
  By \cite[Lemma 7.1.7]{GompfStipsicz1999:KirbyCalculus},
  \begin{equation*}
    \chi(\tilde X) = 2\chi(X) - \chi(Z) \qandq
    \sigma(\tilde X) = 2\sigma(X) - \frac12 [Z]\cdot[Z].
  \end{equation*}
  If $Z$ is not orientable, then $[Z]\cdot[Z]$ needs to be interpreted accordingly.
  The orientation local systems $\fo$ of $TZ$ and $NZ$ agree.
  Therefore, there is a well-defined Euler class $\fe(NZ) \in \rH^2(Z,\fo)$ and $[Z] \in \rH_2(Z,\fo)$ and both can be paired.

  As a consequence of \autoref{Prop_KerDRD+-}, \autoref{Prop_SpinorResidueConditon=DirichletResidueConditionTwoForms}, and the above,
  \begin{align*}
    \ind D_{g,R_\alpha^+}^{\fl,+}
    &=
      \dim \sH^1(g,\fl) - \dim \sH^+(g,\fl) \\
    &=
      -\frac12\paren{\chi(\tilde X) + \sigma(\tilde X)}
      +
      \frac12\paren{\chi(X) + \sigma(X)} \\
    &=    
      -\frac12\paren{\chi(X) + \sigma(X)}
      +
      \frac12\chi(Z) + \frac14 [Z]\cdot[Z].
  \end{align*}
  In fact, since $\alpha$ is non-degenerate, $[Z]\cdot[Z] = 0$.
\end{remark}

Finally,
here is a straightforward construction of non-degenerate $\Z/2\Z$ harmonic self-dual $2$--forms on Kähler surfaces.

\begin{example}
  Let $X$ be a Kähler surface and $q \in \rH^0\paren{X,K_X^{\otimes 2}}$ transverse to zero.
  The latter defines a holomorphic branched double cover $\tilde X \coloneq \set{ (x,v) \in K_X : v^2 = q(x) } \to X$.
  This induces a ramified Euclidean line bundle $\fl$ with $Z \coloneq \Br(\fl) = q^{-1}(0)$ and $\Re \sqrt{q} \in \Omega^+(X\setminus Z,\fl)$ is a $\Z/2\Z$ harmonic self-dual $2$--form.

  It is not difficult to find $X$ and $q$ as above.
  By Bertini's theorem,
  if $\abs{2K_X}$ is base-point free, then a general $q \in \rH^0\paren{X,K_X^{\otimes 2}}$ is transverse to zero.
  A concrete example is a smooth hypersurface $X_d \subseteq \CP^3$ of degree $d \geq 5$ and a general section of $K_{X_d}^{\otimes 2} = \sO_{X_d}(2d-8)$.  
\end{example}

By \autoref{Thm_ProjectiveSpaceOfNonDegenerateHarmonicSelfDualTwoAndOneForms},
the $\Z/2\Z$ harmonic self-dual $2$--forms obtained above persist under small deformations of the metric on $X$.

%%% Local Variables:
%%% mode: latex
%%% TeX-master: "UniversalModuliSpaceOfZ2ZHarmonicSpinors"
%%% ispell-local-dictionary: "british"
%%% End:

%%% Local Variables:
%%% mode: latex
%%% TeX-master: "UniversalModuliSpaceOfZ2ZHarmonicSpinors"
%%% ispell-local-dictionary: "british"
%%% End:

\appendix
\section{Smoothing operators}
\label{Sec_SmoothingOperators}

The following appendix supplies the missing ingredient in the proof of \autoref{Prop_K_Tame};
that is: it constructs a family of smoothing operators for the scale of norms $\paren{\Abs{-}_{\mathring K^k} : k \in \N_0}$ defined by
\begin{align*}
  \Abs{\phi}_{\mathring K^k}
  \coloneq
  \Abs{\phi}_{H_b^{k+2}} + \Abs{\mathring\nabla\phi}_{H_b^{k+1}}
  + \Abs{\log(r) \mathring D\phi}_{H_b^{k+1}} + \Abs{\mathring\nabla \log(r) \mathring D\phi}_{H_b^{k}}.
\end{align*}

\begin{prop}
  \label{Prop_SmoothingOperators}
  There is a family of smoothing operators $\paren{S_\epsilon : \epsilon \in (0,1) }$ on $L^2\Gamma\paren{NZ\setminus Z, \mathring S \otimes \mathring\fl}$ such that
  for every
  $\epsilon \in (0,1)$, \ 
  $\phi \in L^2\Gamma\paren{NZ\setminus Z, \mathring S \otimes \mathring\fl}$, and
  $k,\ell \in \N_0$ with $k \geq \ell$,
  \begin{align}
    \label{Eq_SmoothingOperators_Smoothing}
    \Abs{S_\epsilon\phi}_{\mathring K^k}
    &\lesssim_{k,\ell}
      \epsilon^{-k+\ell}
      \Abs{\phi}_{\mathring K^\ell}, \\
      %%% 
    \label{Eq_SmoothingOperators_Approximation}
    \Abs{(S_\epsilon-\one)\phi}_{\mathring K^\ell}
    &\lesssim_{k,\ell}
      \epsilon^{k-\ell}
      \Abs{\phi}_{\mathring K^k}, \\
      %%% 
    \label{Eq_SmoothingOperators_Derivative-}
    \epsilon\Abs{\del_\epsilon S_\epsilon\phi}_{\mathring K^k}
    &\lesssim_{k,\ell}
      \epsilon^{-k+\ell} \Abs{\phi}_{\mathring K^\ell}, \qand \\
      %%% 
    \label{Eq_SmoothingOperators_Derivative+}
    \epsilon\Abs{\del_\epsilon S_\epsilon\phi}_{\mathring K^\ell}
    &\lesssim_{k,\ell}
      \epsilon^{k-\ell} \Abs{\phi}_{\mathring K^k}.
  \end{align}  
\end{prop}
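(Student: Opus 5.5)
The plan is to construct $S_\epsilon$ on the model $NZ\setminus Z$ in two stages: a partition in the radial variable, and on each piece a smoothing built from the spectral calculus of the operators entering $\Abs{-}_{\mathring K^k}$. Use polar coordinates $(x,r,\theta)$ with $t \coloneq -\log r \in (0,\infty)$ near $Z$. In these coordinates the conormal vector fields are generated by $\del_x$, $r\del_r = -\del_t$ and $\del_\theta$, so $H_b^k$ becomes an ordinary Sobolev space on the cylinder $Z\times S^1\times(0,\infty)_t$ with measure $e^{-2t}\,\rd t$. Conjugating by $e^{-t}$ turns this into the unweighted space. The model operators $\mathring D$ and $\mathring\nabla$ become $r^{-1}$ times $b$-operators. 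Because they are $x$-translation invariant (locally, after trivialising $NZ$) and commute with $\del_\theta$, it is convenient to write $\mathring D = r^{-1}\paren{\gamma(\del_r)(r\del_r + \tfrac12 A_\theta) + r\, D_Z}$, following the model of \cite[§3.2]{BeraWalpuski2025}.

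First I would rewrite each summand of $\Abs{\phi}_{\mathring K^k}$ in these terms. $\Abs{\phi}_{H_b^{k+2}}$ is a standard Sobolev norm. $\Abs{\mathring\nabla\phi}_{H_b^{k+1}}$ is equivalent to $\Abs{\phi}_{H_b^{k+2}} + \Abs{r^{-1}\phi}_{H_b^{k+1}} + \Abs{\nabla_x\phi}_{H_b^{k+1}}$. Since the $-\tfrac12$ Fourier modes in $\theta$ are absent for $\phi$ with vanishing residue, this is controlled by the weighted norm $\Abs{e^{t}\phi}_{H^{k+1}}$ together with tangential derivatives. The other two terms are the same expressions applied to $\log(r)\mathring D\phi$, where the weight $\log r = -t$ is polynomial in $t$.

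Second, I would define the smoothing operator as a Fourier multiplier in all the variables $(x,\theta,t)$. Concretely, take $S_\epsilon \coloneq \chi(\epsilon^2\Delta_{\mathrm{cyl}})$, where $\chi$ is a Schwartz cutoff equal to $1$ near $0$ and $\Delta_{\mathrm{cyl}}$ is the flat Laplacian on $\R^{d-2}_x\times S^1_\theta\times\R_t$, acting on $\fl$-twisted sections via half-integer $\theta$-modes. Extend across $t\le 0$ by the usual extension-and-restriction trick, since only a neighbourhood of $Z$ matters; the cutoffs away from $Z$ are handled as in the cited proofs. This operator commutes with $\del_x$, $\del_\theta$ and $\del_t$. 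Hence it commutes with $\mathring D$ and $\mathring\nabla$ up to the weights $e^{t}$ and $t$. The weight $e^{t}$ conjugates to a shift of the Fourier variable into the complex plane. This requires $\chi$ to be analytic, for example $\chi(\zeta)=e^{-\zeta}$ composed with a Gaussian, so that the multiplier remains bounded on the strip $\abs{\Im \tau}\le 1$ in the $t$-frequency $\tau$. The weight $t$ produces commutators $[t,S_\epsilon] = \epsilon\cdot(\text{smoothing operator of the same type})$, which gain powers of $\epsilon$. With this, \autoref{Eq_SmoothingOperators_Smoothing}–\autoref{Eq_SmoothingOperators_Derivative+} reduce to the standard multiplier estimates $\abs{\xi}^{k-\ell}\abs{\chi(\epsilon^2\abs{\xi}^2)}\lesssim\epsilon^{\ell-k}$ and $\abs{\xi}^{\ell-k}\abs{1-\chi(\epsilon^2\abs{\xi}^2)}\lesssim\epsilon^{k-\ell}$, uniformly on the complex strip. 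The estimates for $\epsilon\del_\epsilon S_\epsilon$ are analogous.

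The main obstacle I expect is the commutator of $S_\epsilon$ with the combination $\log(r)\mathring D$ and its interaction with the residue condition. $S_\epsilon$ must preserve vanishing residue, meaning it must not create $r^{-1/2}$ terms. It must also keep $\log(r)\mathring D\phi$ with vanishing residue, which is exactly what the $\mathring K$ scale encodes. Working with the conjugated operator $e^{t}\mathring D e^{-t}$, which is translation invariant in $t$, I would verify that $S_\epsilon$ exactly commutes with it. The only error then comes from $[t,S_\epsilon]$, which is bounded by $\epsilon\,\Abs{\mathring D\phi}$ in one lower norm and is absorbed because $\epsilon<1$. If the analytic-strip argument for the $e^{t}$ weight fails because $\chi$ is not entire, my fallback is to smooth separately on dyadic annuli $r\sim 2^{-j}$ with a partition of unity. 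Rescaling each annulus to unit size, I would apply the standard smoothing operators there, with $\epsilon$ unchanged because $b$-norms are scale invariant. The commutators with the partition functions are $b$-operators of order zero.
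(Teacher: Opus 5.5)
The gap is in the smoothing in the radial variable $t=-\log r$. You correctly observe that moving the weights $r^{-1}=e^{t}$ and $r^{-2}=e^{2t}$ (from $\mathring\nabla$ and from $\mathring\nabla\log(r)\mathring D$) through $S_\epsilon$ requires the $t$--kernel to decay exponentially. Equivalently, the multiplier $m_\epsilon(\tau)$ must extend holomorphically and boundedly to a strip. On the other hand, \eqref{Eq_SmoothingOperators_Approximation} and \eqref{Eq_SmoothingOperators_Derivative+} for arbitrary $k-\ell$ force $\abs{1-m_\epsilon(\tau)}\lesssim_N(\epsilon\bracket{\tau})^N$ for every $N$, and the same for the shifted multipliers representing $e^{\nu t}S_\epsilon e^{-\nu t}$.

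For a scaling family $m_\epsilon(\tau)=\chi(\epsilon^2\tau^2)$ these two demands are incompatible. The function $\chi$ would be holomorphic near $0$ and agree with $1$ to infinite order there, hence $\chi\equiv 1$. Your two conditions on $\chi$ (``equal to $1$ near $0$'' and ``analytic'') therefore contradict each other. The Gaussian you suggest fails concretely: at $\tau\sim 1$ one has $1-e^{-\epsilon^2\tau^2}\approx\epsilon^2$ and $\epsilon\del_\epsilon e^{-\epsilon^2\tau^2}\approx-2\epsilon^2$. So \eqref{Eq_SmoothingOperators_Approximation} and \eqref{Eq_SmoothingOperators_Derivative+} hold only for $k-\ell\leq 2$. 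Likewise $[t,S_\epsilon]$ has multiplier proportional to $\epsilon^2\tau e^{-\epsilon^2\tau^2}$, which does not vanish at low frequencies and so does not gain arbitrary powers of $\epsilon$.

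The missing idea is to give up exact scaling. The paper uses Mellin convolution (convolution in $\log r$) with $\sigma_\epsilon=\sM^{-1}(\chi_\epsilon)\,\rho(\log r)$. Here $\chi\in C_c^\infty$ equals $1$ on $[-1,1]$, $\chi_\epsilon(\xi)=\chi(\epsilon\xi)$, and $\rho\in C_c^\infty$ is \emph{fixed} with $\rho(0)=1$. This buys two things at once:
\begin{itemize}
\item Because the kernel is compactly supported in $\log r$, $r^{-\nu}\sigma_\epsilon$ is the same construction with $\rho$ replaced by $e^{-\nu x}\rho(x)$. This gives exact intertwinings such as $(\del_r-\tfrac{\lambda}{r})(\sigma_\epsilon*_\sM\phi)=(r^{-1}\sigma_\epsilon)*_\sM(\del_r-\tfrac{\lambda}{r})\phi$.
\item Nevertheless $1-\sM(\sigma_\epsilon)=(1-\chi_\epsilon)*\hat\rho$ is $O(\epsilon^\infty)$ for $\abs{\xi}\leq\frac12\epsilon^{-1}$, because $\hat\rho$ is Schwartz. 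Similarly $\sM(\log(r)\sigma_\epsilon)=\frac{i}{2\pi}\chi_\epsilon'*\hat\rho$ is $O(\epsilon)$ and concentrated at $\abs{\xi}\sim\epsilon^{-1}$.
\end{itemize}
With this kernel in the $t$--direction, the rest of your architecture matches the paper's. That architecture is a cutoff in the angular/tangential spectrum composed with a one-dimensional smoothing in $\log r$, plus a Leibniz rule for the $\log r$ weight. The paper uses the decomposition into the $E_{\lambda,\mu}$ rather than local Fourier transforms in $x$.

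Two further points. First, your claim that $e^{t}\mathring D e^{-t}$ is translation invariant in $t$ is false. $\mathring D$ is $r^{-1}$ times a $b$--operator, while its tangential part carries no factor $r^{-1}$, so no conjugation by $e^{ct}$ removes the prefactor. What is needed is not commutation but term-by-term exact intertwining in the decomposed $\mathring K$--norm: $S_\epsilon$ commutes with $r\del_r$ and $\mu$, and $r^{-\nu}S_\epsilon=S_\epsilon^{(\nu)}r^{-\nu}$ with $S_\epsilon^{(\nu)}$ another family of the same type.

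Second, your dyadic fallback does not repair the problem as stated. The $\mathring K$--norms give second-order weighted control only of $\mathring D\phi$ (via $\mathring\nabla\log(r)\mathring D\phi$), not of $\phi$ itself. For instance, take $\phi=\beta(r)\,r^{1/2}\phi_0$ with $\beta$ a cut-off and $\phi_0$ in a mode with $\lambda=\frac12$, so that $(\del_r-\tfrac{1}{2r})r^{1/2}=0$. Then $\Abs{\phi}_{\mathring K^k}<\infty$ for all $k$, yet $r^{-2}\log(r)\phi\notin L^2$.

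The commutators $[\mathring D,\psi_j]=\gamma(\rd\psi_j)$ and $[\mathring D,S_\epsilon^{(j)}]$, for a flat smoothing operator on a rescaled annulus, are $r^{-1}$ times operators of order zero. They therefore have the same weight as $\mathring D$, not a lower one. Since the $S_\epsilon^{(j)}$ are not dilation invariant, these terms do not cancel when summed over $j$. For the $\phi$ above they generically leave a non-zero term of size $r^{-1/2}$ (with an $\epsilon$--dependent, log-periodic coefficient) in $\mathring D S_\epsilon\phi$, and then $\Abs{\mathring\nabla\log(r)\mathring D S_\epsilon\phi}_{L^2}=\infty$. The smoothing must intertwine $\del_r-\tfrac{\lambda}{r}$ exactly, which is what dilation-invariant Mellin convolution with the truncated kernel achieves.
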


As explained in \cite[§3.4]{BeraWalpuski2025},
Fubini's theorem and spectral theory imply that the Hilbert space $L^2\Gamma\paren{NZ\setminus Z, \mathring S \otimes \mathring\fl}$ decomposes as
\begin{equation*}
  L^2\Gamma\paren{NZ\setminus Z, \mathring S \otimes \mathring\fl}
  =
  L^2\paren{(0,\infty),r \rd r, L^2(F, \underline S \otimes\underline\fl)}
  =
  \bigoplus_{(\lambda,\mu) \in \Spectrum}
  L^2\paren{(0,\infty),r\rd r;E_{\lambda,\mu}}.
\end{equation*}
Here $\sigma \subseteq \R^2$ is a discrete subset and the $E_{\lambda,\mu}$ are finite-dimensional as in \cite[Proposition 3.24]{BeraWalpuski2025}.
Moreover, by \cite[Proposition 4.8]{BeraWalpuski2025},
with respect to this decomposition
\begin{equation*}
  \Abs{\phi}_{H_b^k}^2
  \asymp_k
  \sum_{(\lambda,\mu)\in\Spectrum}
  \sum_{\ell=0}^k
  \bracket{(\lambda,\mu)}^{2\ell}
  \Abs{
    (r\del_r)^{k-\ell}\phi_{\lambda,\mu}
  }_{r^{-1/2}L^2}^2
\end{equation*}
and
\begin{align*}
  \Abs{\mathring \nabla \phi}_{H_b^k}^2
  \asymp_k
  \sum_{(\lambda,\mu) \in \Spectrum}
  \left(
  \bracket{\mu}^2 \Abs{\phi_{\lambda,\mu}}_{H_b^k}^2
  + \Abs*{\tfrac{\lambda}{r}\phi_{\lambda,\mu}}_{H_b^k}^2
  + \Abs{\del_r\phi_{\lambda,\mu}}_{H_b^k}^2
  \right).
\end{align*}
The model Dirac operator does not quite preserve $E_{\lambda,\mu}$ but rather the direct sum $E_{\lambda,\mu} \oplus E_{-(\lambda+1),-\mu}$, on which it is given by
\begin{equation*}
  \mathring{D}^{\lambda,\mu}
  \coloneq      
  \begin{pmatrix}
    \mu & J\paren{\del_r  + \frac{\lambda+1}{r}} \\
    J\paren{\del_r - \frac{\lambda}{r}} & -\mu
  \end{pmatrix}.
\end{equation*}
(The case $(\lambda,\mu) = (-1/2,0)$ requires a bit more care as explained in \cite[Definition 3.25]{BeraWalpuski2025}, but this is inconsequential for the remaining discussion.)

The proof of \autoref{Prop_K_Estimate}, applied to the model operator,
gives
\begin{equation*}
  \Abs{\log(r)\mathring\nabla\phi}_{H_b^{k+1}}
  \lesssim_k
  \Abs{\phi}_{\mathring K^k}.
\end{equation*}
Together with \cite[Proposition~4.8]{BeraWalpuski2025} and the above formula for $\mathring D^{\lambda,\mu}$, this yields
\begin{align*}
  \Abs{\phi}_{\mathring K^k}^2
  \asymp_k  
  \sum_{(\lambda,\mu) \in \Spectrum}
  \Bigl(
  &
    \Abs{\phi_{\lambda,\mu}}_{H_b^{k+2}}^2
    + \Abs{\del_r \phi_{\lambda,\mu}}_{H_b^{k+1}}^2
    + \Abs{\tfrac{\lambda}{r} \phi_{\lambda,\mu}}_{H_b^{k+1}}^2
    + \Abs{\log(r)\paren{\del_r - \tfrac{\lambda}{r}}\phi_{\lambda,\mu}}_{H_b^{k+1}}^2 \\
  &
    + \Abs{\del_r \log(r)\paren{\del_r - \tfrac{\lambda}{r}}\phi_{\lambda,\mu}}_{H_b^k}^2
    + \Abs{\tfrac{\lambda}{r} \log(r)\paren{\del_r - \tfrac{\lambda}{r}} \phi_{\lambda,\mu}}_{H_b^k}^2
  \Bigr).
\end{align*}
Defining $(S_\epsilon)$ as the composition of a spectral cutoff at $\epsilon^{-1}$ in $\lambda$ and $\mu$,
as in \cites[Appendix B]{Parker2023:Deformation}[§4.5]{BeraWalpuski2025}, 
and the following smoothing operators in the radial direction establishes \autoref{Prop_SmoothingOperators}.

\begin{prop}
  \label{Prop_SmoothingOperators_1D}
  On $L^2((0,\infty),r \rd r;\C)$ define
  \begin{equation*}
    \Abs{\phi}_{H_b^k} \coloneq \sum_{\ell=0}^k \Abs{(r\del_r)^\ell \phi}_{r^{-1/2}L^2}
  \end{equation*}
  and for every $k \in \N_0$ and $\lambda \in \R$ set
  \begin{align*}
    \Abs{\phi}_{\mathring K_\lambda^k}
    &\coloneq
      \Abs{\phi}_{H_b^{k+2}}
      + \Abs{\del_r \phi}_{H_b^{k+1}}
      + \Abs{\tfrac{\lambda}{r} \phi}_{H_b^{k+1}} 
      + \Abs{\log(r)\paren{\del_r - \tfrac{\lambda}{r}}\phi}_{H_b^{k+1}} \\
    &\quad
      + \Abs{\del_r \log(r)\paren{\del_r - \tfrac{\lambda}{r}}\phi}_{H_b^k}
      + \Abs{\tfrac{\lambda}{r} \log(r)\paren{\del_r - \tfrac{\lambda}{r}} \phi}_{H_b^k}.
  \end{align*}
  There is a family of smoothing operators $(S_\epsilon : \epsilon \in (0,1))$ satisfying the analogues of \autoref{Eq_SmoothingOperators_Smoothing}, \autoref{Eq_SmoothingOperators_Approximation}, \autoref{Eq_SmoothingOperators_Derivative-}, and \autoref{Eq_SmoothingOperators_Derivative+}.
\end{prop}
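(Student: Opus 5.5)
The plan is to reduce to a one-dimensional radial problem with a standard Littlewood--Paley/Mellin-type smoothing, after straightening the twisted operator $\del_r - \tfrac{\lambda}{r}$ by conjugating with $r^{\lambda}$. Since $r\del_r$ is the generator of dilations, I pass to the logarithmic variable $t \coloneq \log r$, under which $r^{-1/2}L^2((0,\infty),r\,\rd r)$ becomes $L^2(\R,\rd t)$ (up to the weight $e^{t}$ absorbed by the $r^{-1/2}$), and $H_b^k$ becomes the ordinary Sobolev space $H^k(\R)$ in $t$ after conjugating by the appropriate power $e^{t/2}$. The candidate smoothing operator is
\begin{equation*}
  S_\epsilon \coloneq \chi\paren{\epsilon\, (r\del_r)}\circ\mathbf{1}_{\set{r \geq \epsilon^{N}}}\text{-type radial cutoff},
\end{equation*}
more concretely: a Fourier multiplier $\chi(\epsilon\tau)$ in the dual variable $\tau$ of $t$ (with $\chi$ Schwartz, $\chi = 1$ near $0$), composed with the conjugations $r^{\lambda}$ and $\log(r)$ needed below. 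Fourier multipliers commute with $r\del_r$, so the four estimates for the $H_b^{k}$-type summands follow verbatim from the classical smoothing operators on $H^k(\R)$.

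The key step is to express every summand of $\Abs{-}_{\mathring K_\lambda^k}$ through $H_b$-norms of quantities that $S_\epsilon$ nearly commutes with. I write $\del_r = r^{-1}(r\del_r)$ and $\del_r - \tfrac{\lambda}{r} = r^{-1}r^{\lambda}(r\del_r)r^{-\lambda}$, so that $\Abs{\del_r\phi}_{H_b^{k+1}}$ and $\Abs{\tfrac{\lambda}{r}\phi}_{H_b^{k+1}}$ are $H_b$ norms with weight shifted by $r^{-1}$, i.e.\ in the $t$-picture multiplication by $e^{-t}$, which corresponds to shifting the Fourier variable by an imaginary amount. Hence I will define $S_\epsilon$ by a multiplier $\chi(\epsilon\,\cdot)$ which is \emph{entire} (e.g.\ $\chi$ with compactly supported Fourier transform in $t$, i.e.\ convolution in $t$ with a compactly supported kernel $\epsilon^{-1}\rho(t/\epsilon)$ of unit mass and vanishing moments). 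Convolution in $t$ with a compactly supported kernel of width $\epsilon$ commutes with $r\del_r$ exactly and commutes with multiplication by $e^{-t}$ and $e^{\lambda t}$ up to factors $e^{O(\epsilon(1+\abs{\lambda}))}$, which are uniformly bounded after restricting the spectral cutoff to $\abs{\lambda}\lesssim\epsilon^{-1}$ (this is exactly why the spectral cutoff in $(\lambda,\mu)$ is composed with the radial one). The log-weighted terms $\log(r)(\del_r-\tfrac{\lambda}{r})\phi$ are handled the same way: $\log r = t$ and $[t,\rho_\epsilon *] = O(\epsilon)$ times a convolution with $t\rho$, a bounded operator of the same type, and its $H_b$-losses are absorbed into the $\epsilon^{k-\ell}$ bookkeeping.

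With these commutation relations, each estimate \eqref{Eq_SmoothingOperators_Smoothing}--\eqref{Eq_SmoothingOperators_Derivative+} reduces to the corresponding estimate for convolution smoothing on $H^k(\R)$ applied to the auxiliary functions $\psi_1\coloneq\phi$, $\psi_2\coloneq r(\del_r-\tfrac{\lambda}{r})\phi\cdot r^{-1}$, $\psi_3\coloneq\log(r)(\del_r-\tfrac{\lambda}{r})\phi$, with the vanishing moments of $\rho$ giving the approximation bound \eqref{Eq_SmoothingOperators_Approximation} and $\epsilon\del_\epsilon\rho_\epsilon$ having all moments zero giving \eqref{Eq_SmoothingOperators_Derivative+}. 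The main obstacle I expect is the uniformity in $\lambda$: the conjugation by $r^{\lambda}$ produces factors $e^{\lambda\epsilon}$ and the terms $\tfrac{\lambda}{r}\log(r)(\cdots)$ mix weights, so I would first verify carefully that after the spectral cutoff $\abs{\lambda}\leq\epsilon^{-1}$ all commutator constants are bounded independently of $\lambda$, and that on the complementary high-$\lambda$ modes (killed by $S_\epsilon$) the factors $\bracket{\lambda}^{k-\ell}\gtrsim\epsilon^{\ell-k}$ yield \eqref{Eq_SmoothingOperators_Approximation} directly.
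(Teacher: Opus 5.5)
Your architecture is the right one. You pass to $t=\log r$, you observe that the weights $r^{-1}$ and $r^{-2}$ hidden in $\del_r$ and $\tfrac{\lambda}{r}$ force the kernel to be compactly supported (or at least exponentially decaying) in $t$, and you track the commutator with $\log r$. The gap is that the family you pick cannot satisfy the estimates to all orders, and tameness demands a \emph{single} family that works for every $k,\ell$. You use the dilation family $\rho_\epsilon(t)=\epsilon^{-1}\rho(t/\epsilon)$ of one compactly supported kernel, whose symbol is $\hat\rho(\epsilon\tau)$. Because $\rho$ has compact support, $\hat\rho$ is entire, so your two requirements ``$\chi=1$ near $0$'' and ``$\chi$ entire'' are incompatible. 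Only finitely many moments $\int t^j\rho$, $j\geq 1$, can vanish, so $\hat\rho(s)-1\sim cs^m$ with $c\neq0$ and $m<\infty$. Fixing $\tau$ and letting $\epsilon\to0$ shows that $\abs{\hat\rho(\epsilon\tau)-1}\lesssim(\epsilon\bracket{\tau})^{k-\ell}$ fails once $k-\ell>m$. So the analogue of \autoref{Eq_SmoothingOperators_Approximation} fails already for the $H_b$ summands. For the same reason:
\begin{enumerate}
\item $\epsilon\del_\epsilon\rho_\epsilon$ does not have all moments zero (its $j$-th moment is $j\epsilon^j\int t^j\rho$), so \autoref{Eq_SmoothingOperators_Derivative+} fails.
\item $[\log r,\rho_\epsilon*]=\epsilon\,\tilde\rho_\epsilon*$ with $\tilde\rho(t)=t\rho(t)$ has symbol of size $\epsilon(\epsilon\abs{\tau})^{m-1}$ at low frequency rather than $O(\epsilon^\infty)$. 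It therefore cannot be absorbed into the $\epsilon^{k-\ell}$ bookkeeping of the approximation estimate.
\end{enumerate}
The classical remedy does not help either. A symbol equal to $1$ near $0$ yields a kernel with only Schwartz decay in $t$, since exponential decay would make the symbol analytic in a strip. Then multiplying the kernel by $e^{-t}$ destroys its integrability, so the weight $r^{-1}$ no longer intertwines.

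The missing idea, and what the paper does, is a family that is \emph{not} a dilation family: truncate the ideal kernel at a \emph{fixed} scale. Set $\sigma_\epsilon=\sM^{-1}(\chi_\epsilon)\,\rho(\log r)$ with $\chi_\epsilon=\chi(\epsilon\,\cdot)$, $\chi=1$ on $[-1,1]$, $\supp\chi\subseteq[-2,2]$, and $\rho\in C_c^\infty(\R)$ supported in $[-1,1]$ with $\rho(0)=1$. Because $\hat\rho$ is Schwartz, the Mellin symbol $\chi_\epsilon*\hat\rho$ differs from $1$ by $O(\epsilon^k)$ for every $k$ where $\epsilon\bracket{\xi}\leq 1/2$, and it is $O(\epsilon^{-k}\bracket{\xi}^{-k})$ for every $k$. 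Hence all four estimates hold to every order. Multiplying the kernel by $r^{-\nu}$ just replaces $\rho$ by $e^{-\nu t}\rho$, which still equals $1$ at $0$. This gives $\del_rS_\epsilon\phi=(r^{-1}\sigma_\epsilon)*_\sM\del_r\phi$ and $\tfrac{\lambda}{r}S_\epsilon\phi=(r^{-1}\sigma_\epsilon)*_\sM\tfrac{\lambda}{r}\phi$ directly, with no conjugation by $r^\lambda$. So nothing depends on $\lambda$ and no restriction $\abs{\lambda}\lesssim\epsilon^{-1}$ is needed; in fact, with a fixed-width kernel your conjugation would cost a factor $e^{\abs{\lambda}}$, so it has to be dropped. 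Finally, $\sM(\log(r)\sigma_\epsilon)=\tfrac{i}{2\pi}\chi_\epsilon'*\hat\rho$ is essentially concentrated on $\epsilon^{-1}\leq\abs{\xi}\leq 2\epsilon^{-1}$, so the log-commutator terms obey smoothing and approximation bounds of every order simultaneously. You would also still have to bound, by the $\mathring K_\lambda$ norms, the commutator remainders involving $\del_r(\del_r-\tfrac{\lambda}{r})\phi$ and $\tfrac{\lambda}{r}(\del_r-\tfrac{\lambda}{r})\phi$. These come from the two second-order summands of the norm, and your auxiliary functions $\psi_1,\psi_2,\psi_3$ do not cover them.
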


The family $(S_\epsilon)$ in \autoref{Prop_SmoothingOperators_1D} is constructed by \defined{Mellin convolution} with a family of smoothing kernels $\sigma_\epsilon$;
that is:
\begin{equation*}
  (S_\epsilon \phi)(r) \coloneq (\sigma_\epsilon *_\sM \phi)(r) \coloneq \int_0^\infty \sigma_\epsilon(r/s) \phi(s) \,\frac{\rd s}{s}.
\end{equation*}
The Mellin convolution $*_\sM$ on $(0,\infty)$ is related to the (usual) convolution $*$ on $\R$ via the isometry $T \co L^2\paren{(0,\infty), r \rd r; \C} \to L^2\paren{\R;\C}$ defined by
\begin{equation*}
  T(f)(x) \coloneq e^xf(e^x);
\end{equation*}
indeed
\begin{equation*}
  (f *_\sM g)(r)
  =
  T^{-1}(Tf * Tg)(r).
\end{equation*}
Denote by $\sF \co L^2\paren{\R;\C} \to L^2\paren{\R;\C}$ the Fourier transform.
The \defined{Mellin transform}
\begin{equation*}
  \sM \co L^2\paren{(0,\infty), r\rd r; \C}\to L^2\paren{\R;\C}
\end{equation*}
is defined by
\begin{equation*}
  \sM \coloneq \sF \circ T.
\end{equation*}
In particular,
\begin{equation*}
  \sM(f*_\sM g) = \sM(f)\sM(g).
\end{equation*}
Combining the identities $T(r\del_r)T^{-1} = \del_x - 1$ and $\sF \del_x \sF^{-1} = 2\pi i \xi$ shows that for every $\phi \in C^\infty((0,\infty),\C)$ the norms are expressed in terms of the Mellin transform by 
\begin{equation}
  \label{Eq_HbkMellin}
  \Abs{\phi}_{H_b^k}
  =
  \sum_{\ell=0}^k \Abs{(r\del_r)^\ell\phi}_{r^{-1/2}L^2}
  \asymp_k
  \Abs{\bracket{\xi}^k\sM(\phi)}_{L^2}.
\end{equation}

The following is the key to \autoref{Prop_SmoothingOperators_1D}.

\begin{lemma}
  \label{Lem_FamilyOfSmoothingKernels}
  Let
  $\chi \in C_c^\infty(\R,[0,1])$ and $\rho \in C_c^\infty(\R,[0,1])$ such that
  \begin{equation*}
    \chi|_{[-1,1]} = 1, \quad
    \supp(\chi) \subseteq [-2,2], \quad
    \rho(0) = 1, \qandq
    \supp(\rho) \subseteq [-1,1].
  \end{equation*}
  Define the \defined{family of smoothing kernels} $\paren{\sigma_\epsilon : \epsilon \in (0,1) }$ by 
  \begin{equation*}
    \sigma_\epsilon(r) \coloneq \sM^{-1}(\chi_\epsilon)(r) \rho(\log(r))
  \end{equation*}
  with $\chi_\epsilon(\xi) \coloneq \chi(\epsilon\xi)$.
  The following hold:
  \begin{enumerate}
  \item
    \label{Lem_FamilyOfSmoothingKernels_Estimates}
    For every $\epsilon \in (0,1)$, $k \in \N_0$, and $\xi \in \R$
    \begin{align}
      \label{Eq_FamilyOfSmoothingKernel_Smoothing}
      \abs{\sM(\sigma_\epsilon)(\xi)}
      &\lesssim_{\rho,k}
        \epsilon^{-k}\bracket{\xi}^{-k}, \\
        %%% 
      \label{Eq_FamilyOfSmoothingKernel_Approximation}
      \abs{\sM(\sigma_\epsilon)(\xi)-1}
      &\lesssim_{\rho,k}
        \epsilon^k\bracket{\xi}^k, \\
        %%% 
      \label{Eq_FamilyOfSmoothingKernel_Derivative-}
      \abs{\epsilon\del_\epsilon \sM(\sigma_\epsilon)(\xi)}
      &\lesssim_{\chi,\rho,k}
        \epsilon^{-k}\bracket{\xi}^{-k}, \qand \\
        %%% 
      \label{Eq_FamilyOfSmoothingKernel_Derivative+}
      \abs{\epsilon\del_\epsilon \sM(\sigma_\epsilon)(\xi)}
      &\lesssim_{\chi,\rho,k}
        \epsilon^{k}\bracket{\xi}^{k}.
    \end{align}
  \item
    \label{Lem_FamilyOfSmoothingKernels_WeightsLog}
    For every $\epsilon \in (0,1]$, $k \in \Z$, and $\xi \in \R$
    \begin{equation*}
      \abs{\epsilon^{-1}\sM(\log(r) \sigma_\epsilon)(\xi)}
      \lesssim_{\chi,\rho,k}
      \epsilon^k\bracket{\xi}^k \qandq
      \abs{\del_\epsilon \sM(\log(r) \sigma_\epsilon)(\xi)}
      \lesssim_{\chi,\rho,k}
      \epsilon^k\bracket{\xi}^k.
    \end{equation*}    
  \end{enumerate}
\end{lemma}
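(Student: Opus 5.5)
The plan is to compute $\sM(\sigma_\epsilon)$ explicitly as a Fourier-side convolution and then read off all four estimates from the structure of $\chi_\epsilon$. Since $\sM = \sF\circ T$ and $T(\sigma_\epsilon)(x) = e^x \sM^{-1}(\chi_\epsilon)(e^x)\rho(x) = \sF^{-1}(\chi_\epsilon)(x)\rho(x)$, we obtain
\begin{equation*}
  \sM(\sigma_\epsilon) = \chi_\epsilon * \hat\rho,
\end{equation*}
with $\hat\rho \coloneq \sF\rho \in \sS(\R)$ and $\int\hat\rho = \rho(0) = 1$. Every estimate is then an estimate for $\chi_\epsilon * \hat\rho$ and its $\epsilon$--derivative, using only that $\hat\rho$ is Schwartz. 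The factor $\rho(\log r)$ is what makes the kernel compactly supported in $\log r$; this is exactly why the weight $\log(r)$ in the $\mathring K$ norms can later be commuted past $S_\epsilon$.

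For \autoref{Eq_FamilyOfSmoothingKernel_Smoothing}, write $\chi_\epsilon*\hat\rho(\xi) = \int \chi(\epsilon\eta)\hat\rho(\xi-\eta)\,\rd\eta$. If $\abs{\xi} \leq 4/\epsilon$, the bound is trivial because $\abs{\chi_\epsilon*\hat\rho} \leq \Abs{\hat\rho}_{L^1}$ and $\epsilon\bracket{\xi} \lesssim 1$. If $\abs{\xi} > 4/\epsilon$, then $\abs{\eta} \leq 2/\epsilon$ forces $\abs{\xi-\eta} \geq \abs{\xi}/2$, so the rapid decay of $\hat\rho$ gives $\lesssim_k \bracket{\xi}^{-k} \leq \epsilon^{-k}\bracket{\xi}^{-k}$.

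For \autoref{Eq_FamilyOfSmoothingKernel_Approximation}, write $\chi_\epsilon*\hat\rho(\xi) - 1 = \int (\chi(\epsilon\eta)-1)\hat\rho(\xi-\eta)\,\rd\eta$. The integrand vanishes unless $\abs{\eta} \geq 1/\epsilon$. If $\abs{\xi} \geq 1/(2\epsilon)$, the bound is trivial. Otherwise $\abs{\xi-\eta} \geq \abs{\eta}/2 \gtrsim \epsilon^{-1}$, and Schwartz decay bounds the integral by $\epsilon^k \leq \epsilon^k\bracket{\xi}^k$.

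For the derivative estimates, $\epsilon\del_\epsilon\chi_\epsilon(\eta) = \epsilon\eta\chi'(\epsilon\eta) \eqcolon \tilde\chi(\epsilon\eta)$, where $\tilde\chi$ is smooth and supported in $1 \leq \abs{\cdot} \leq 2$. Consequently $\epsilon\del_\epsilon\sM(\sigma_\epsilon) = \tilde\chi_\epsilon*\hat\rho$. The argument for \autoref{Eq_FamilyOfSmoothingKernel_Smoothing} applies verbatim and yields \autoref{Eq_FamilyOfSmoothingKernel_Derivative-}. For \autoref{Eq_FamilyOfSmoothingKernel_Derivative+}, the argument for \autoref{Eq_FamilyOfSmoothingKernel_Approximation} applies with $\chi-1$ replaced by $\tilde\chi$, which also vanishes on $\abs{\cdot} < 1$.

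Part \autoref{Lem_FamilyOfSmoothingKernels_WeightsLog} is where the real content lies, and the plan is as follows. Since $T(\log(r)\sigma_\epsilon)(x) = x\rho(x)\sF^{-1}(\chi_\epsilon)(x)$, we get
\begin{equation*}
  \sM(\log(r)\sigma_\epsilon) = \chi_\epsilon * \widehat{x\rho}.
\end{equation*}
The key observation is that $\int\widehat{x\rho} = (x\rho)(0) = 0$, and more usefully $\widehat{x\rho} = c\,(\hat\rho)'$. Integrating by parts moves the derivative onto $\chi_\epsilon$:
\begin{equation*}
  \chi_\epsilon * \widehat{x\rho} = c\,\epsilon\,(\chi')_\epsilon * \hat\rho.
\end{equation*}
Since $\chi'$ is supported in $1 \leq \abs{\cdot} \leq 2$, both the high- and low-frequency arguments above apply to $(\chi')_\epsilon*\hat\rho$, giving the bound $\epsilon^{\pm k}\bracket{\xi}^{\pm k}$. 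Dividing by $\epsilon$ gives the first estimate for every $k \in \Z$, with negative $k$ coming from the high-frequency case and positive $k$ from the low-frequency case. For the second, differentiate: $\del_\epsilon\bigl(\epsilon(\chi')_\epsilon\bigr) = (\chi')_\epsilon + \bigl(\eta\chi''\bigr)_\epsilon$. Both pieces are again supported in $1 \leq \abs{\cdot} \leq 2$, so the same argument applies.

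The main obstacle I expect is the case analysis in part \autoref{Lem_FamilyOfSmoothingKernels_WeightsLog} for arbitrary $k \in \Z$. This is only valid because after integration by parts the profile vanishes near the origin; one has to check carefully that the boundary $\epsilon \to 1$ and $\bracket{\xi} \sim 1$ cause no trouble, which is harmless because both regimes are then trivially bounded.
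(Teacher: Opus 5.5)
Your proposal is correct and follows the paper's proof essentially step for step. Both rest on the identities $\sM(\sigma_\epsilon)=\chi_\epsilon*\hat\rho$ and $\sM(\log(r)\sigma_\epsilon)=\frac{i}{2\pi}\chi_\epsilon'*\hat\rho$ with $\chi_\epsilon'=\epsilon(\chi')_\epsilon$, followed by a low/high-frequency case split driven by the Schwartz decay of $\hat\rho$ and the supports of $\chi_\epsilon$, $\chi_\epsilon-1$, and $\chi'_\epsilon$; the only differences are cosmetic (you cut at $\abs{\xi}\sim\epsilon^{-1}$ instead of $\epsilon\bracket{\xi}\sim 1$, and you write the $\epsilon$--derivatives explicitly via the profiles $t\chi'(t)$ and $t\chi''(t)$).
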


\begin{proof}
  {\bfseries \autoref{Lem_FamilyOfSmoothingKernels_Estimates}}\quad  
  By direct computation, using $\int_\R \hat \rho = \rho(0) = 1$,  
  \begin{equation*}
    \sM(\sigma_\epsilon) = \chi_\epsilon * \hat\rho, \quad
    \sM(\sigma_\epsilon) -1 = (\chi_\epsilon-1) * \hat\rho, \qandq
    \del_\epsilon\sM(\sigma_\epsilon) = \del_\epsilon\chi_\epsilon * \hat\rho.
  \end{equation*}
  %%% 
  Since $\rho\in C_c^\infty(\R) \subseteq \sS(\R)$, $\hat\rho \in \sS(\R)$ and, therefore,
  \begin{equation*}
    \abs{\hat\rho(\xi)} \lesssim_{\rho,k} \bracket{\xi}^{-k}.
  \end{equation*}
  By Young's inequality and since $\Abs{\chi_\epsilon}_{L^\infty} \leq 1$, $\Abs{\epsilon\del_\epsilon\chi_\epsilon}_{L^\infty} \lesssim_\chi 1$, and $\Abs{\hat\rho}_{L^1} \lesssim_\rho 1$,
  \begin{equation*}
    \Abs{\sM(\sigma_\epsilon)}_{L^\infty} \lesssim_\rho 1, \quad
    \Abs{\sM(\sigma_\epsilon)-1}_{L^\infty} \lesssim_\rho 1 \qandq
    \Abs{\epsilon\del_\epsilon\sM(\sigma_\epsilon)}_{L^\infty} \lesssim_{\chi,\rho} 1.
  \end{equation*}
  This implies \autoref{Eq_FamilyOfSmoothingKernel_Smoothing} and \autoref{Eq_FamilyOfSmoothingKernel_Derivative-} assuming $\epsilon\bracket{\xi} \leq 4$, and
  \autoref{Eq_FamilyOfSmoothingKernel_Approximation} and \autoref{Eq_FamilyOfSmoothingKernel_Derivative+} assuming $\epsilon\bracket{\xi} \geq 1/2$.

  Since $\abs{\bracket{\xi}-\bracket{\eta}} \leq \abs{\xi-\eta}$,
  if $\epsilon\bracket{\xi} \geq 4$ and $\abs{\xi-\eta} \leq 2\epsilon^{-1}$,
  then
  \begin{equation*}
    \bracket{\eta} \geq \bracket{\xi} - \abs{\xi-\eta} \geq \frac12\bracket{\xi}.
  \end{equation*}
  Therefore and since $\supp(\chi) \subseteq [-2,2]$ and $\epsilon \leq 1$,
  if $\epsilon\bracket{\xi} \geq 4$,
  then
  \begin{equation*}
    \abs{\sM(\sigma_\epsilon)(\xi)}
    %%% 
    \leq
    \int_{\abs{\xi - \eta} \leq 2\epsilon^{-1}} \abs{\hat\rho(\eta)} \,\rd \eta
    %%% 
    \lesssim_{\rho,k}
    \epsilon^{-1}\bracket{\xi}^{-k-1}
    %%% 
    \lesssim_k
    \epsilon^{-k}\bracket{\xi}^{-k}.
  \end{equation*}
  Combined with the above observation,
  this proves \autoref{Eq_FamilyOfSmoothingKernel_Smoothing} unconditionally.
  The same argument also proves \autoref{Eq_FamilyOfSmoothingKernel_Derivative-} unconditionally.

  Since $\abs{\xi-\eta} \leq \abs{\xi} + \abs{\eta} \leq \bracket{\xi} + \bracket{\eta}$,
  if $\epsilon\bracket{\xi} \leq 1/2$ and $\abs{\xi-\eta} \geq \epsilon^{-1}$,
  then
  \begin{equation*}
    \bracket{\eta} \geq \abs{\xi-\eta} - \bracket{\xi} \geq \frac12 \epsilon^{-1}.
  \end{equation*}
  Therefore and since $\supp (\chi-1) \cap [-1,1] = \emptyset$ and $\bracket{\xi} \geq 1$,
  if $\epsilon\bracket{\xi} \leq 1/2$,
  then
  \begin{equation*}
    \abs{\sM(\sigma_\epsilon)(\xi)-1}
    %%% 
    \leq
    \int_{\abs{\xi - \eta} \geq \epsilon^{-1}} \abs{\hat\rho(\eta)} \,\rd \eta
    %%% 
    \lesssim_{\rho,k}
    \int_{\abs{\xi - \eta} \geq \epsilon^{-1}} \bracket{\eta}^{-k-2} \,\rd \eta
    %%% 
    \lesssim_k
    \epsilon^k
    %%% 
    \lesssim_k
    \epsilon^k\bracket{\xi}^k.
  \end{equation*}
  Combined with the above observation,
  this proves \autoref{Eq_FamilyOfSmoothingKernel_Approximation} unconditionally.
  The same argument also proves \autoref{Eq_FamilyOfSmoothingKernel_Derivative+} unconditionally.
  
  \medskip
  \noindent
  {\bfseries \autoref{Lem_FamilyOfSmoothingKernels_WeightsLog}}\quad
  By direct computation,
  \begin{equation*}
    \sM(\log(r)\sigma_\epsilon) = \frac{i}{2\pi} \chi_\epsilon' * \hat\rho.
  \end{equation*}
  Since $\Abs{\chi'_\epsilon}_{L^\infty} \lesssim_\chi \epsilon$ and $\Abs{\epsilon\del_\epsilon\chi'_{\epsilon}}_{L^\infty} \lesssim_\chi \epsilon$, and
  $\supp(\chi_\epsilon') \subseteq [-2\epsilon^{-1},2\epsilon^{-1}]$ and  $\supp(\chi_\epsilon') \cap [-\epsilon^{-1},\epsilon^{-1}] = \emptyset$,
  the above arguments prove the asserted estimates.  
\end{proof}

\begin{proof}[Proof of \autoref{Prop_SmoothingOperators_1D}]
  Choose $\chi$ and $\rho$ as in \autoref{Lem_FamilyOfSmoothingKernels} and construct $\sigma_\epsilon$ accordingly.
  Moreover, for every $\nu \in \R$ set $\sigma_\epsilon^{(\nu)} \coloneq r^{-\nu}\sigma_\epsilon$ and $\delta_\epsilon^{(\nu)} \coloneq \log(r) \sigma_\epsilon^{(\nu)}$.
  Observe that $\sigma_\epsilon^{(\nu)}$ arises from $\chi$ and $\rho^{(\nu)}$ defined by $\rho^{(\nu)}(x) \coloneq e^{-\nu x}\rho(x)$.
  In particular, \autoref{Lem_FamilyOfSmoothingKernels} applies to $\sigma_\epsilon^{(\nu)}$ mutatis mutandis.

  By \autoref{Eq_HbkMellin} and \autoref{Lem_FamilyOfSmoothingKernels}~\autoref{Lem_FamilyOfSmoothingKernels_Estimates},
  $\paren{ S_\epsilon \coloneq \sigma_\epsilon *_\sM - : \epsilon \in (0,1) }$
  satisfies the analogues of \autoref{Eq_SmoothingOperators_Smoothing}, \autoref{Eq_SmoothingOperators_Approximation}, \autoref{Eq_SmoothingOperators_Derivative-}, and \autoref{Eq_SmoothingOperators_Derivative+} with respect to $\Abs{-}_{H_b^k}$.
  
  Therefore, it remains to analyse the behaviour of the five differential operators appearing in $\Abs{-}_{\mathring K_\lambda^k}$ with respect to $S_\epsilon$.
  By direct computation,
  \begin{align*}
    r\del_r (\sigma *_\sM \phi)
    &=
      \sigma *_\sM r\del_r\phi, \\
      %%% 
    r^\nu (\sigma  *_\sM \phi)
    &=
      \paren{r^\nu\sigma} *_\sM \paren{r^\nu \phi}, \qand \\
    %%% 
    \log(r) (\sigma *_\sM\phi)
    &=
      \sigma *_\sM \log(r) \phi + \log(r)\sigma *_\sM \phi.
  \end{align*}
  Consequently,
  \begin{align*}
    \del_rS_\epsilon \phi
    &= \sigma_\epsilon^{(1)} *_\sM \del_r\phi, \\
    %%%
    \tfrac{\lambda}{r}S_\epsilon \phi
    &= \sigma_\epsilon^{(1)} *_\sM \tfrac{\lambda}{r}\phi, \\
    %%%
    \log(r)\paren{\del_r - \tfrac{\lambda}{r}}S_\epsilon \phi
    &=
      \sigma_\epsilon^{(1)} *_\sM \log(r)\paren{\del_r - \tfrac{\lambda}{r}}\phi
      + \delta_\epsilon^{(1)} *_\sM \paren{\del_r - \tfrac{\lambda}{r}}\phi, \\
      %%% 
    \del_r\log(r)\paren{\del_r - \tfrac{\lambda}{r}}S_\epsilon \phi
    &=
      \sigma_\epsilon^{(2)} *_\sM \del_r\log(r)\paren{\del_r - \tfrac{\lambda}{r}}\phi
      + \delta_\epsilon^{(2)} *_\sM \del_r \paren{\del_r - \tfrac{\lambda}{r}}\phi, \\
      %%% 
    \tfrac{\lambda}{r}\log(r)\paren{\del_r - \tfrac{\lambda}{r}}S_\epsilon \phi
    &=
      \sigma_\epsilon^{(2)} *_\sM \tfrac{\lambda}{r}\log(r)\paren{\del_r - \tfrac{\lambda}{r}}\phi +
      \delta_\epsilon^{(2)} *_\sM \tfrac{\lambda}{r}\paren{\del_r - \tfrac{\lambda}{r}}\phi.
  \end{align*}
  This combined with
  \begin{align*}
    \Abs{\paren{\del_r - \tfrac{\lambda}{r}}\phi}_{H_b^k}
    &\lesssim
      \Abs{\log(r)\paren{\del_r - \tfrac{\lambda}{r}}\phi}_{H_b^k}, \\
      %%% 
    \Abs{\del_r\paren{\del_r - \tfrac{\lambda}{r}}\phi}_{H_b^k}
    &\lesssim
      \Abs{\del_r\log(r)\paren{\del_r - \tfrac{\lambda}{r}}\phi}_{H_b^k}
      + \Abs{\tfrac{\lambda}{r}\paren{\del_r - \tfrac{\lambda}{r}}\phi}_{H_b^k}, \\
      %%% 
    \Abs{\tfrac{\lambda}{r}\paren{\del_r - \tfrac{\lambda}{r}}\phi}_{H_b^k}
    &\lesssim
      \Abs{\tfrac{\lambda}{r}\log(r)\paren{\del_r - \tfrac{\lambda}{r}}\phi}_{H_b^k},
  \end{align*}
  and \autoref{Lem_FamilyOfSmoothingKernels} proves the assertion.
\end{proof}

%%% Local Variables:
%%% mode: latex
%%% TeX-master: "UniversalModuliSpaceOfZ2ZHarmonicSpinors"
%%% ispell-local-dictionary: "british"
%%% End:

\section{The index formula}
\label{Sec_IndexFormula}

Throughout this appendix,
suppose that $X$ is oriented, $\dim X = 4$, and $\rk S = 8$.
Moreover, 
let $\bp = (g;\gamma,\nabla) \in \SpaceOfDiracBundles$ and $(Z,[\fl]) \in \SpaceOfRamifiedLineBundles$.

\begin{theorem}
  \label{Thm_IndexFormula}
  If $V^\pm \subseteq \ResidueBundle^\pm$ is an $\AlgebraBundle$--linear subbundle with $\rk_\AlgebraBundle V^\pm = 1$,
  then
  \begin{equation*}
    \ind D_{\ResidueCondition_{V^\pm}^\pm}^{\fl,\pm}
    =
    \mp\frac14 \Inner{p_1(S),[X]} \pm \sigma(X) \mp \frac14\Inner{e(NZ),[Z]} \pm \Inner{e(V^\pm),[Z]} + \frac12\chi(Z).
  \end{equation*}
\end{theorem}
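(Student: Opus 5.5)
Our plan is to reduce the computation to a known index, namely that of the $\fl$--twisted Hodge--de Rham-type operator of \autoref{Rmk_IndexInSelfDualCase}, by two deformation invariance steps and a relative index formula for changing the local residue condition.

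\textbf{Step 1 (dependence on $V^\pm$).} Fix $\bp$ and $(Z,[\fl])$. The index of $D^{\fl,\pm}_{R^\pm_{V}}$ is locally constant along continuous families of $\AlgebraBundle$--linear rank-one subbundles $V \subseteq \ResidueBundle^\pm$ (by \autoref{Prop_TotallyReal=>UniformlyFredholm_Chiral}). So it depends only on the homotopy class of $V$, which we encode by $e(V) \in \rH^2(Z,\fo)$. To compare two choices $V_0,V_1$, we write both as graphs over a common complement and compute the difference of indices as the index of a first-order elliptic operator on $Z$. Concretely, the difference of the two closed extensions is a Cauchy--Riemann type operator $\Hom_\AlgebraBundle(V_0,\ResidueBundle^\pm/V_0)$, built from $J D_{\ResidueBundle}$ compressed to these line bundles. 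Riemann--Roch then gives
\[
\ind R_{V_1} - \ind R_{V_0} = \pm\paren{\Inner{e(V_1),[Z]} - \Inner{e(V_0),[Z]}}.
\]
This accounts for the term $\pm\Inner{e(V^\pm),[Z]}$. The sign comes from the fact that $J\gamma(v)$ is anti-linear, which reverses orientation in the chirality $\mp$ sector. This relative formula is the step I expect to be the main obstacle: one has to identify the Calderón-type projection of $R_{\APS}$ modulo compacts and check the sign and normalisation carefully, for example by Bojarski/Scott splitting for the Gelfand--Robbin quotient.

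\textbf{Step 2 (dependence on $\bp$).} The index is invariant under deformations of $\bp$ within $\SpaceOfDiracBundles$ together with a continuous choice of $V$. Using \autoref{Rmk_OClS} and \autoref{Rmk_UniquenessOfChirality}, any $S$ of rank $8$ on an oriented $4$--manifold is locally $\bP^+\otimes_\H M$ with $M$ a quaternionic line bundle, after twisting. So it suffices to compare $S$ with the reference bundle $S_0 = T^*X\oplus\R\oplus\Wedge^+$. We handle the twist via the relative index theorem: twisting away from $Z$ changes the index by the usual Atiyah--Singer difference $\mp\frac14\Inner{p_1(S)-p_1(S_0),[X]}$ plus a correction. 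To obtain that, we localise, excising a tubular neighbourhood of $Z$ where both bundles can be identified, and use the excision property of elliptic boundary problems (which holds for residue conditions since $\res$ is local).

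\textbf{Step 3 (reference computation).} For $S_0$ with $V^+ = \ResidueBundle^+_D$, \autoref{Rmk_IndexInSelfDualCase} gives
\[
\ind = -\tfrac12(\chi(X)+\sigma(X)) + \tfrac12\chi(Z)+\tfrac14[Z]\cdot[Z].
\]
We compute $p_1(S_0)$ and $e(\ResidueBundle^+_D) = e(N^*Z\otimes_\AlgebraBundle NZ^{-1/2})$, which equals $-\frac32 e(NZ)$ up to sign conventions. Substituting these, and using $\chi(X)$, $\sigma(X)$, $p_1$ for the reference bundle, we verify the claimed formula. The negative chirality case $\ResidueBundle_D^-$ is handled analogously, or by adjointness: the Lagrangian complement $R^{\mp}$ pairs the indices with opposite signs. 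The constant $\frac12\chi(Z)$ is then fixed by this single example.
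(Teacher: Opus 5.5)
Your architecture is the paper's: a relative index formula in $V$, an excision argument identifying $S$ with the Hodge--de Rham Clifford module near $Z$, Atiyah--Singer for $\ind D^+=-\frac14\Inner{p_1(S),[X]}+\sigma(X)$, the reference computation of \autoref{Rmk_IndexInSelfDualCase}, and adjointness for the other chirality. However, Step~1, which carries the real content, is missing its key idea, and Step~3 contains an error that makes the final check fail.

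\textbf{Step 1.} An operator attached to $V_0$ alone, such as one acting on $\Hom_\AlgebraBundle(V_0,\ResidueBundle^\pm/V_0)$, cannot have index $\Inner{e(V_1)-e(V_0),[Z]}$. Worse, if $V_0$ and $V_1$ are both graphs over a common complement $C$, then $V_0\cong\ResidueBundle^\pm/C\cong V_1$. Hence $e(V_0)=e(V_1)$, which is exactly the case with nothing to prove.

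The relevant operator is the compression $B_V\colon\Gamma(Z,V)\to\Gamma(Z,V^\perp)$ of $A^+=-JD^+_{\ResidueBundle}$. After identifying $V^\perp\cong V\otimes_\C\Wedge^{0,1}T^*Z$ via Clifford multiplication, $B_V$ has the symbol of $\bar\partial_V$. Riemann--Roch then gives $\ind B_V=2\Inner{e(V),[Z]}+\chi(Z)$, which is \emph{twice} what you need.

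The missing step is the one producing the factor $\frac12$. The paper proceeds as follows.
\begin{itemize}
\item It compares each condition with APS:
\[
\ind D^{\fl,+}_{\ResidueCondition_V^+}-\ind D^{\fl,+}_{\ResidueCondition_\APS^+}=\ind\paren{\one_{(-\infty,0)}(A^+)\colon\ResidueCondition_V^+\to\ResidueCondition_\APS^+}.
\]
\item It replaces $A^+$ by its off-diagonal part $T$ with respect to $V\oplus V^\perp$. This $T$ has the same symbol as $A^+$ and satisfies $TI=-IT$.
\item The relation $TI=-IT$ gives the splitting $H^{1/2}=\ResidueCondition^+_{\APS,T}\oplus\ker T\oplus I\ResidueCondition^+_{\APS,T}$. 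From it, $\ind\paren{\one_{(-\infty,0)}(A^+)\colon\ResidueCondition^+_{\APS,T}\to\ResidueCondition^+_{\APS}}=\frac12(\dim\ker A^+-\dim\ker T)$.
\item The polar decomposition of $B$ shows that $\one_{(-\infty,0)}(T)\colon\ResidueCondition^+_V\to\ResidueCondition^+_{\APS,T}$ is onto with kernel $\ker B$.
\item Together these give $\frac12(\dim\ker A^++\ind B_V)$.
\item For non-orientable $Z$, one first passes to the orientation double cover, where the index of the relevant elliptic pseudo-differential operator doubles.
\end{itemize}

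\textbf{Step 3.} The correct value is $e(\ResidueBundle_D^+)=\frac12e(NZ)$, not $\pm\frac32e(NZ)$. On $S_0^+=T^*X$ one computes $\gamma(e_1)\gamma(e_2)e^1=e^2$, so $N^*Z\cong NZ$ (not $\overline{NZ}$) as $\AlgebraBundle$--modules. Meanwhile $NZ^{-1/2}\otimes_\AlgebraBundle NZ^{-1/2}\cong\overline{NZ}$, which gives $2e(\ResidueBundle_D^+)=e(NZ)$.

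Your three steps assemble to
\[
\ind D^{\fl,+}_{\ResidueCondition^+_{V^+}}=\ind D^++\tfrac12\chi(Z)+\tfrac14\Inner{e(NZ),[Z]}+\Inner{e(V^+),[Z]}-\Inner{e(\ResidueBundle_D^+),[Z]}.
\]
With your value, the coefficient of $\Inner{e(NZ),[Z]}$ comes out as $\frac74$ or $-\frac54$ instead of $-\frac14$, so the claimed verification does not go through.

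\textbf{Negative chirality.} Adjointness expresses $\ind D^{\fl,-}_{\ResidueCondition^-_{V^-}}$ through $e(V^+)$ with a $-\frac12\chi(Z)$. To reach the stated formula you also need $e(V^-)=e(V^+)+e(TZ)$ for the complementary subbundle $V^-=(JV^+)^\perp$.
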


\begin{remark}
  \label{Rmk_IndexFormula}
  In the situation of \autoref{Thm_IndexFormula},
  \begin{equation*}
    \frac12\chi(Z)\pm\frac14\Inner{e(NZ),[Z]}
  \end{equation*}
  is an integer.
  Indeed, by the formulae in \autoref{Rmk_IndexInSelfDualCase},
  \begin{equation*}
    \frac12\chi(Z)+\frac14\Inner{e(NZ),[Z]}
    =
    \chi(X)+\sigma(X)
    -\frac12\paren{\chi(\tilde X)+\sigma(\tilde X)}
  \end{equation*}
  and $\chi(X) + \sigma(X)$ is even for every closed oriented $4$--manifold $X$.
\end{remark}

The proof relies on the index computation in \autoref{Rmk_IndexInSelfDualCase} and the following two ingredients:
a relative index formula and a consequence of excision.

\begin{prop}
  \label{Prop_RelativeIndexFormula}
  If $V^+,W^+ \subseteq \ResidueBundle^+$ are $\AlgebraBundle$--linear subbundles with $\rk_\AlgebraBundle V^+ = \rk_\AlgebraBundle W^+ = 1$,
  then
  \begin{equation*}
    \ind D_{\ResidueCondition_{V^+}^+}^+ - \ind D_{\ResidueCondition_{W^+}^+}^+
    =
    \Inner{e(V^+),[Z]} - \Inner{e(W^+),[Z]}.
  \end{equation*}
\end{prop}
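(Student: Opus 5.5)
The plan is to reduce the relative index to a problem on the branching locus $Z$ alone. Choose a homotopy of local residue conditions and compute the spectral flow, or more directly compare the two closed extensions through a common larger one. Set $R_V \coloneq R^+_{V^+}$ and $R_W \coloneq R^+_{W^+}$. Both lie in $\check H\Gamma(Z,\ResidueBundle^+)$, and they differ only by the choice of $\AlgebraBundle$--line subbundle of the rank two $\AlgebraBundle$--module $\ResidueBundle^+$.

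\textbf{Step 1: locality.} The difference $\ind D^+_{R_V} - \ind D^+_{R_W}$ depends only on data over $Z$. I would show it equals a Fredholm index defined purely on $Z$. By the standard Calderón/Bär--Ballmann style argument in the framework of \cite{BeraWalpuski2025}, for elliptic residue conditions $R, R'$ one has
\begin{equation*}
  \ind D_R - \ind D_{R'} = \ind\paren{R,\, R'^{\perp}\text{-relative}},
\end{equation*}
that is, the index of the pair of projections $P_{R'}|_{R} \co R \to R'$ (equivalently, the index of the Fredholm pair $(R, R'^{G})$ modulo the Calderón projection). Concretely, both $R_V$ and $R_W$ are complementary, modulo compacts, to the image of the Calderón projection. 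The relative index is therefore $\ind\paren{\pr_{R_W}\co R_V \to R_W}$, computed in $H^{1/2}$ after using $\infty$--regularity.

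\textbf{Step 2: reduction to an operator on $Z$.} On $Z$, use the symbol $J\gamma(v)$. Since $V^+ \oplus J\gamma(v)V^+ = \ResidueBundle^+$, the negative spectral projection $\one_{(-\infty,0)}(A^+)$ restricted to $\Gamma(V^+)$ is an elliptic pseudodifferential identification. The relative index then becomes the index of a first-order elliptic operator on $Z$ of the form $\Gamma(Z,V^+)\to\Gamma(Z,\Hom_\AlgebraBundle(\ldots))$. More simply, it is a zeroth-order elliptic operator $\Gamma(V^+) \to \Gamma(\ResidueBundle^+/W^+)$ twisted by the Cauchy--Riemann-type symbol. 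After identifying $\ResidueBundle^+/W^+ \iso J\gamma(TZ)W^+$ and using that $\rk_\AlgebraBundle = 1$, this is a $\bar\partial$--operator on a real surface $Z$ (possibly non-orientable, with $\AlgebraBundle$ providing the twisted complex structure) from $V^+$ to $W^+\otimes$(a fixed line). Riemann--Roch gives $\deg V^+ - \deg W^+$ plus a term independent of $V^+,W^+$. That term cancels because the expression is antisymmetric, being zero when $V^+=W^+$.

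\textbf{Step 3: a cleaner alternative and the main obstacle.} To avoid the pseudodifferential analysis, I would instead use additivity under homotopy together with a local modification. Any two $\AlgebraBundle$--line subbundles of a rank two module over a surface are homotopic away from finitely many points. Alternatively, $V^+$ can be changed to $W^+$ by a sequence of elementary modifications, each altering the degree by one near a point, so that $\Inner{e(V^+),[Z]}$ changes by $\pm1$. By deformation invariance of the index of uniformly Fredholm families (\autoref{Prop_TotallyReal=>UniformlyFredholm_Chiral}), the index depends only on the homotopy class of $V^+$. Homotopy classes of $\AlgebraBundle$--line subbundles of $\ResidueBundle^+$ are classified by the degree (Euler number) in the twisted sense. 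The main obstacle is the single elementary step: showing that changing the Euler number by one changes the index by exactly one with the correct sign. I would do this with an excision/gluing argument localized near a point of $Z$, reducing to a model on $\C\times\C$ with a flat residue bundle. There one computes explicitly, using separation of variables as in \cite[§3.4]{BeraWalpuski2025}, that twisting $V^+$ by $w^{-1}$ near a point adds exactly one kernel element (or removes one cokernel element). Sign conventions for non-orientable $Z$, via the local system $\fo$, need care but are formal.
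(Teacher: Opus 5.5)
There is a genuine gap: the index of the boundary operator on $Z$, which is the heart of the proposition, is never actually computed.

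\textbf{Step~1.} If $\pr_{R_W}$ denotes the $L^2$-- or $H^{1/2}$--orthogonal projection, then modulo compact operators it is the bundle map $\pr_{W^+}|_{V^+}\co V^+\to W^+$. That map is Fredholm only if it is a bundle isomorphism, in which case it has index $0$. Hence it is never Fredholm when $\Inner{e(V^+),[Z]}\neq\Inner{e(W^+),[Z]}$. You must instead project along $\one_{[0,\infty)}(A^+)$, as in \autoref{Prop_ComparisonWithAPS}, or $\check H$--orthogonally.

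\textbf{Step~2.} The identification with ``a $\bar\partial$--operator from $V^+$ to $W^+\otimes$(a fixed line)'' cannot be right. A Cauchy--Riemann operator acts $\Gamma(Z,E)\to\Omega^{0,1}(Z,E)$ and has real index $2\deg E+\chi(Z)$ (complex index $\deg E+\frac12\chi(Z)$). So whatever $E$ you build from $V^+$ and $W^+$, you get a constant term $\chi(Z)$ or $\frac12\chi(Z)$. That term does not vanish at $V^+=W^+$, whereas the relative index does. Discarding the constant by ``antisymmetry'' only hides this inconsistency.

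\textbf{Step~3.} This is a reasonable framework: homotopy invariance through $\AlgebraBundle$--line subbundles, all of which satisfy the ellipticity criterion, reduces everything to one elementary modification. But you leave the decisive input to a model computation that is never performed, namely that a unit change of Euler number changes the index by exactly $+1$. Its size and sign are precisely what has to be proved.

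The missing idea is that each boundary index equals \emph{half} of a Cauchy--Riemann index plus terms independent of $V^+$. The paper proceeds as follows.
\begin{enumerate}
\item It passes to the orientation cover of $Z$. This is allowed because the left-hand side is the index of the elliptic $\Psi$DO $G_{W^+}\one_{(-\infty,0)}(A^+)\co H^{1/2}\Gamma(Z,V^+)\to H^{1/2}\Gamma(Z,W^+)$, which is multiplicative under covers.
\item It writes $A^+$ in blocks along $V^+\oplus(V^+)^\perp$ with off-diagonal entry $B$, and sets $T\coloneq\begin{pmatrix}0&B^*\\B&0\end{pmatrix}$. This $T$ has the same symbol as $A^+$ and, like $A^+$, anticommutes with $I$.
\item Since $I$ exchanges the positive and negative spectral subspaces, $\ind\paren{\one_{(-\infty,0)}(A^+)\co\ResidueCondition_{\APS,T}^+\to\ResidueCondition_\APS^+}=\frac12(\dim\ker A^+-\dim\ker T)$.
\item The polar decomposition of $B$ shows that $\one_{(-\infty,0)}(T)\co\ResidueCondition_{V^+}^+\to\ResidueCondition_{\APS,T}^+$ is onto with kernel $\ker B$.
\item Together these give $\ind\paren{\one_{(-\infty,0)}(A^+)\co\ResidueCondition_{V^+}^+\to\ResidueCondition_\APS^+}=\frac12(\dim\ker A^++\ind B)$. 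Here $B\co\Gamma(Z,V^+)\to\Omega^{0,1}(Z,V^+)$ is a Cauchy--Riemann operator, so $\ind B=2\Inner{e(V^+),[Z]}+\chi(Z)$.
\end{enumerate}
The factor $\frac12$ turns the $2$ into the coefficient $1$. The terms $\frac12\chi(Z)$ and $\frac12\dim\ker A^+$ cancel when you subtract the corresponding expression for $W^+$.
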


\begin{prop}
  \label{Prop_Excision}
  There is a $\AlgebraBundle$--linear subbundle $V_{D}^+ \subseteq \ResidueBundle^+$ with $\rk_\AlgebraBundle V_D^+ = 1$ satisfying
  \begin{equation*}
    e(V_D^+) = e(\ResidueBundle_D^+)
    \qandq
    \ind D_{\ResidueCondition_{V_D^+}^+}^{\fl,+} - \ind    D_{\mathrm{HdR},\ResidueCondition_{\ResidueBundle_D^+}^+}^{\fl,+}
    =
    \ind D^+ - \ind D_{\mathrm{HdR}}^+.
  \end{equation*}
  Here $D_{\mathrm{HdR}}$ denotes the Dirac operator associated with the Dirac bundle structure $\HdR(g) \in \SpaceOfDiracBundles$ and $\ResidueBundle_D^+$ is as in \autoref{Def_OneFormAndSelfDualTwoFormDirichletResidueCondition}.
\end{prop}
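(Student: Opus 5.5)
The plan is to prove \autoref{Prop_Excision} by realising $V_D^+$ through a deformation of the Dirac bundle $S$ to the Hodge--de Rham Dirac bundle $S_{\mathrm{HdR}} = T^*X \oplus \R \oplus \Wedge^+T^*X$ near $Z$. Then we use excision for the index of operators with local residue conditions. The point is that the residue bundle, the almost complex structure $J$ and the $\AlgebraBundle$--module structure on $\ResidueBundle$ depend only on the Dirac bundle structure along $Z$. The boundary condition $\ResidueCondition_{\ResidueBundle_D^+}^+$ is therefore defined entirely in terms of data on a tubular neighbourhood $U$ of $Z$.

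\textbf{Step 1: comparing the bundles near $Z$.} Both $S$ and $S_{\mathrm{HdR}}$ have rank $8$ in dimension $4$. By \autoref{Rmk_OClS} (for $d = 4$) each is $M \otimes_\H \bP$ over a spin neighbourhood, with $M$ a quaternionic line bundle. After shrinking $U$ to a tubular neighbourhood, $U$ retracts onto $Z$. We then want a Clifford-module isomorphism $\Psi \co S|_U \iso S_{\mathrm{HdR}}|_U$.

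If such a $\Psi$ does not exist globally on $U$ (the twisting bundles may differ), we work instead with the $\AlgebraBundle$--line $\ResidueBundle_D^+$ transported abstractly. Concretely, $\ResidueBundle^+$ is a rank $2$ $\AlgebraBundle$--module over the surface $Z$. Rank $1$ $\AlgebraBundle$--linear subbundles are sections of its projectivisation. Over a $2$--complex such a subbundle with any prescribed Euler class exists, since the obstruction lives in the top cell and can be adjusted. So choose $V_D^+$ with $e(V_D^+) = e(\ResidueBundle_D^+)$.

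By \autoref{Prop_RelativeIndexFormula}, the left-hand side of the claim is then independent of the particular choice of $V_D^+$ with this Euler class. Hence we may choose $V_D^+$ adapted to a convenient local identification.

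\textbf{Step 2: excision.} Fix a bundle isomorphism $\Psi$ of $S^\pm|_U$ with $S_{\mathrm{HdR}}^\pm|_U$ intertwining Clifford multiplication, after homotoping $\bp$ within $\SpaceOfDiracBundles$. Homotopies of Dirac bundle structures preserving ellipticity of the residue condition do not change the index, by the uniform Fredholm property of \autoref{Prop_TotallyReal=>UniformlyFredholm_Chiral}. Set $V_D^+ \coloneq \Psi^{-1}\ResidueBundle_D^+$.

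Now consider four operators:
\begin{itemize}
\item $D^+$ on $X$ and $D^+_{\mathrm{HdR}}$ on $X$, both with no branching;
\item $D_{\ResidueCondition_{V_D^+}^+}^{\fl,+}$ and $D_{\mathrm{HdR},\ResidueCondition_{\ResidueBundle_D^+}^+}^{\fl,+}$, both twisted by $\fl$.
\end{itemize}
Away from a neighbourhood of $Z$ the line bundle $\fl$ can be trivialised on the complement of a slightly larger set. The standard excision/relative index argument then applies: cut along a hypersurface, glue, and use additivity of the index under gluing, as for elliptic boundary problems. It shows that the difference of the twisted indices equals the difference of the untwisted ones, because the two pairs agree near $Z$ (via $\Psi$) and agree away from $Z$ up to the twist by $\fl$ (which is trivial there, or locally constant). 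The gluing/excision principle for the adapted operators uses the elliptic estimates and regularity of \cite[Theorem 4.17]{BeraWalpuski2025}, together with the fact that all residue conditions are local.

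\textbf{Main obstacle.} The expected main obstacle is Step 2. Making excision rigorous needs two things: a parametrix construction for $D_R$ patched from a local parametrix near $Z$ and an interior one, and a check that the twisting by $\fl$ away from $Z$ is compatible on both sides. The intended route is to write index differences as
\[
  \ind P_1 - \ind P_2 = \ind P_3 - \ind P_4
\]
via the commutation identity of parametrices, with cutoff functions $\chi_Z,1-\chi_Z$ chosen so that $P_1 = P_3$ near $Z$ after conjugating by $\Psi$.

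A secondary difficulty is the existence of $\Psi$ on $U$, since $S$ and $S_{\mathrm{HdR}}$ may have different quaternionic twisting lines. If this fails, we would first pass to $S \oplus S_{\mathrm{HdR}}$ and compare both with the sum, so that the two twisting lines become isomorphic, and use additivity of the index under direct sums.
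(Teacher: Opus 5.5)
\textbf{Gap: the excision in your Step 2 is localised around the wrong set.} You only produce a Clifford isomorphism $\Psi$ on a tubular neighbourhood $U$ of $Z$, and you dispose of $X\setminus U$ by saying that $\fl$ is ``trivial there, or locally constant''. But $\fl$ is never trivial on $X\setminus U$.
\begin{itemize}
\item By the minimality condition in the definition of a ramified Euclidean line bundle, $\fl$ has monodromy $-1$ around every meridian circle of $Z$, and such circles lie in $X\setminus U$.
\item In addition, $w_1(\fl)$ may have a part coming from $\rH^1(X;\set{\pm 1})$.
\item Flatness does not help, because excision only removes regions on which the operators being compared literally coincide.
\end{itemize}
Near $Z$ the only identifications available via $\Psi$ are twisted-with-twisted and untwisted-with-untwisted. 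A twisted operator carrying a residue condition at $Z$ can never agree near $Z$ with the smooth untwisted $D^+$, so the arrangement ``$P_1 = P_3$ near $Z$'' in your last paragraph cannot be realised. With $\Psi$ only on $U$, the identity $\ind D^{\fl,+}_{\ResidueCondition^+_{V_D^+}} - \ind D^{\fl,+}_{\mathrm{HdR},\ResidueCondition^+_{\ResidueBundle_D^+}} = \ind D^+ - \ind D^+_{\mathrm{HdR}}$ would require identifying each twisted operator with its untwisted counterpart on a neighbourhood of $X\setminus U$. That is impossible, because $\fl$ is non-trivial there.

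\textbf{The missing idea.} Make the locus where $S$ and $S_{\mathrm{HdR}}$ fail to be isomorphic small and disjoint from $Z$, rather than making the locus where they are isomorphic a neighbourhood of $Z$.
\begin{itemize}
\item Since $\dim X = 4$ and $\rk S = 8$, the bundle $\Hom_{\Cl}(S,S_{\mathrm{HdR}})$ has real rank $4$ (the commuting algebra of the pinor representation is $\H$). Every non-zero element is, after normalisation, a Clifford isometry.
\item A generic section has finitely many zeros, and since $\dim Z = 2 < 4$ they avoid $Z$. This gives $\Psi$ on $X\setminus F$ with $F$ finite and $F\cap Z=\emptyset$.
\item Set $V_D^+ \coloneq \Psi^{-1}\ResidueBundle_D^+$. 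The Euler class condition is then automatic.
\item Outside $F$, the two twisted operators agree up to zeroth-order terms, and so do the two untwisted ones; such terms do not change the index.
\item Near $F$, which is a union of small balls off $Z$, $\fl$ is trivial. So each twisted operator coincides there with its untwisted counterpart, and the Atiyah--Singer excision argument applies directly.
\end{itemize}

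\textbf{Side remarks.} Your worry about existence of $\Psi$ on $U$ is unfounded: a real rank-$4$ bundle over a $2$--complex has a nowhere-vanishing section. Hence the detour through the relative index formula is unnecessary. The passage to $S\oplus S_{\mathrm{HdR}}$ is also unnecessary, and it would in fact leave the rank-$8$ setting in which a rank-one $V^+$ gives an elliptic residue condition.
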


\begin{proof}[Proof of \autoref{Thm_IndexFormula} assuming \autoref{Prop_RelativeIndexFormula} and \autoref{Prop_Excision}]
  It suffices to prove the formula in positive chirality,
  because the complementary $V^\pm \subseteq \ResidueBundle^\pm$,
  related by $V^- = (JV^+)^\perp$,
  satisfy
  \begin{equation*}
    e(V^-) = e(V^+) + e(TZ),
  \end{equation*}
  because $J$ is $\AlgebraBundle$--anti-linear and $\overline{\paren{V^+}^\perp} \iso V^+ \otimes_\AlgebraBundle TZ$ via Clifford multiplication.

  By \autoref{Prop_RelativeIndexFormula}, \autoref{Prop_Excision}, and \autoref{Rmk_IndexInSelfDualCase},
  \begin{align*}
    \ind D_{\ResidueCondition_{V^+}^+}^{\fl,+}
    &=
      \ind D_{\ResidueCondition_{V_D^+}^+}^{\fl,+}
      + \Inner{e(V^+),[Z]}
      -  \Inner{e(\ResidueBundle_D^+),[Z]} \\
    &=
      \ind D^+
      + \ind D_{\mathrm{HdR},\ResidueCondition_{\ResidueBundle_D^+}^+}^{\fl,+}
      - \ind D_{\mathrm{HdR}}^+
      - \Inner{e(\ResidueBundle_D^+),[Z]}
      + \Inner{e(V^+),[Z]} \\
    &=
      -\frac14 \Inner{p_1(S),[X]} + \sigma(X) \\
    &\quad
      + \frac12\chi(Z)
      + \frac14 \Inner{e(NZ),[Z]}
      - \Inner{e(\ResidueBundle_D^+),[Z]}
      + \Inner{e(V^+),[Z]}.
  \end{align*}
  Since $\ResidueBundle_D^+ = N^*Z \otimes_\AlgebraBundle NZ^{-1/2}$, and $N^*Z \iso NZ$ and $NZ^{-1/2} \otimes_\bA NZ^{-1/2} \iso \overline{NZ}$ as $\bA$--bundles,
  \begin{equation*}
    2e(\ResidueBundle_D^+) = e(NZ).
  \end{equation*}

  It remains to prove that
  \begin{equation*}
    \ind D^+= -\frac14 \Inner{p_1(S),[X]} + \sigma(X).
  \end{equation*}
  As in \cite[Proof of Theorem 6.1]{Atiyah1978:SelfDuality},
  for the purpose of determining the index formula,
  there is no loss in assuming that $X$ is spin.
  Denote by $\slS$ the \emph{real} spinor bundle.
  There is some quaternionic line bundle $L$ such that $S = \slS \otimes_\H L$.
  By the Atiyah--Singer index formula \cites[Theorem 5.3]{AtiyahSinger1968:Index3}[Part III Theorem 13.10]{Lawson1989},
  \begin{equation*}
    \ind D^+
    = -\Inner{\hat A(X)\ch(L),[X]}
    = \frac14\sigma(X) + \Inner{c_2(L),[X]}.
  \end{equation*}
  The sign is a consequence of the real and complex chirality operators differing by a factor of $-1$.
  It remains to determine $\Inner{c_2(L),[X]}$.
  Since
  $p_1(S^\pm) = -c_2\paren{\slS^\pm \otimes_\C L} = -2\paren{c_2(\slS^\pm) + c_2(L)}$
  and
  $c_2(\slS^\pm) = -\frac14 p_1(TX) \pm \frac12 e(TX)$,
  \begin{align*}
    \Inner{c_2(L),[X]}
    &=
      -\frac12\Inner{p_1(S^\pm),[X]}
      -\Inner{c_2(\slS^\pm),[X]} \\
    &=
      -\frac12\Inner{p_1(S^\pm),[X]}
      +\frac14\Inner{p_1(TX),[X]}
      \mp\frac12\Inner{e(TX),[X]} \\
    &=
      -\frac12\Inner{p_1(S^\pm),[X]}
      +\frac34\sigma(X)
      \mp\frac12\Inner{e(TX),[X]}.
  \end{align*}
  The above combine to the stated contribution.  
\end{proof}

\begin{proof}[Proof of \autoref{Prop_Excision}]
  Denote by $S_{\mathrm{HdR}}$ the Clifford module underlying $D_{\mathrm{HdR}}$.
  The bundle $\Hom_{\Cl}(S,S_{\mathrm{HdR}})$ has rank four and every non-zero homomorphism is an isomorphism, in fact, an isometry after normalisation.
  A simple transversality argument shows that $S$ and $S_{\mathrm{HdR}}$ are isomorphic as Clifford modules outside a finite subset avoiding $Z$.
  Denote by $V_D^+$ the image of $\ResidueBundle_D^+$ under such a choice of isomorphism.
  Up to a lower order perturbation that does not affect the index,
  this also identifies $D_{\ResidueCondition_{V_D^+}^+}^{\fl,+}$ and $D_{\mathrm{HdR},\ResidueCondition_{\ResidueBundle_D^+}^+}^{\fl,+}$ outside the same finite subset.
  The assertion now follows by the usual excision argument \cite[§8]{AtiyahSinger1968:Index1}.
\end{proof}

The remainder of this appendix is devoted to the proof of \autoref{Prop_RelativeIndexFormula}.
The following localises the proof of this relative index formula to $Z$.

\begin{prop}
  \label{Prop_ComparisonWithAPS}
  Denote by $\ResidueCondition_\APS^+ \coloneq \one_{(-\infty,0)}(A^+)H^{1/2}\Gamma\paren{Z,\ResidueBundle^+}$ the positive APS residue condition.
  If $V^+ \subseteq \ResidueBundle^+$ is an $\AlgebraBundle$--linear subbundle with $\rk_\AlgebraBundle V^+ = 1$,
  then
  \begin{equation*}
    \ind D_{\ResidueCondition_{V^+}^+}^{\fl,+} - \ind D_{\ResidueCondition_\APS^+}^{\fl,+}
    =
    \ind\paren{ \one_{(-\infty,0)}(A^+) \co \ResidueCondition_{V^+}^+ \to \ResidueCondition_\APS^+ }.
  \end{equation*}
\end{prop}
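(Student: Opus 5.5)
The plan is to realise both operators as restrictions of the maximal extension $\Dmax^+$ to closed subspaces of $\dom(\Dmax^+)$ that differ only through their residues. We then compare indices by the standard relative index formula for Fredholm operators on a common domain.

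\emph{Setting up the comparison.} By \autoref{Sec_ResidueMap} and \autoref{Sec_OnceMoreWithChirality}, $\res^+$ induces $\GelfandRobbinQuotient^+ \iso \check H\Gamma\paren{Z,\ResidueBundle^+}$. The domains are $\dom(D_{R}^+) = (\res^+)^{-1}(R)$ for $R = \ResidueCondition_{V^+}^+$ and $R = \ResidueCondition_\APS^+$; both residue conditions are elliptic, so both operators are Fredholm. Consider the auxiliary closed extension
\begin{equation*}
  D_{R_{\APS}\oplus \one_{[0,\infty)}}^+ \coloneq \Dmax^+|_{(\res^+)^{-1}(\check H)}
\end{equation*}
and view $D_{R}^+$, for $R$ either condition, as the restriction of $\Dmax^+$ to $(\res^+)^{-1}(R)$.

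\emph{Graph parametrisation.} Ellipticity of $\ResidueCondition_{V^+}^+$, via the $\infty$--regularity estimate $\Abs{\phi}_{H^{1/2}} \lesssim \Abs{\one_{(-\infty,0)}(A^+)\phi}_{H^{1/2}} + \Abs{\phi}_{\check H}$ applied to $\phi \in R_{V^+}$ and to its symplectic complement, shows that $P \coloneq \one_{(-\infty,0)}(A^+)|_{R_{V^+}} \co R_{V^+} \to R_\APS$ is Fredholm. Indeed, it has closed range with finite-dimensional kernel, and its cokernel is controlled by the kernel of the analogous map for $R_{V^+}^G$. This is the standard Calderón-projector argument for comparing elliptic boundary conditions, cf.\ \cite{BaerBallmann2012:BVP}. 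Next, use $\ext$ to write $(\res^+)^{-1}(R) = \dom(\Dmin^+) \oplus \ext(R)$. The difference of indices is then the index of the ``transition'' between the two subspaces, which is computed by the following exact-sequence argument.

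\emph{Exact sequence.} Set $E \coloneq (\res^+)^{-1}(R_{V^+}) + (\res^+)^{-1}(R_\APS)$, and consider
\begin{equation*}
  \Phi \co \dom(D_{R_{V^+}}^+) \to \dom(D_{R_\APS}^+) \oplus \frac{\check H\Gamma(Z,\ResidueBundle^+)}{R_\APS}.
\end{equation*}
Alternatively, and more cleanly, apply the general lemma for Fredholm operators $L_i = L|_{\dom_i}$ with $\dom_1,\dom_2$ closed subspaces of a common domain containing $\dom_0 = \dom(\Dmin^+)$ with $\dom_i/\dom_0 \iso R_i$. The lemma states
\begin{equation*}
  \ind L_1 - \ind L_2 = \ind\paren{R_1 \to \GelfandRobbinQuotient^+/R_2^{\prime}}
\end{equation*}
computed via the projection along a complement of $R_2$. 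With $R_2 = R_\APS$ and complement $\one_{[0,\infty)}(A^+)\check H$, the projection is exactly $\one_{(-\infty,0)}(A^+)$. Concretely, the index difference equals $\ind(p_1 \co R_1 \to R_2)$ whenever $R_2 \oplus C = \GelfandRobbinQuotient^+$ and $L$ restricted to $\dom_0 \oplus \ext(C)$ is invertible modulo finite rank. To prove this, one introduces the Fredholm operator $L_1 \oplus 0$ versus $L_2 \oplus p_1$ on the fibre product and uses additivity of the index under composition and finite-rank perturbation.

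\emph{Main obstacle.} The hard step is justifying this lemma in the present setting, specifically the requirement on the complement $C = \one_{[0,\infty)}(A^+)\check H$. Since $\ext(C)$ is infinite-dimensional and $\Dmax^+$ has infinite-dimensional kernel, one must show that the compression of $\Dmax^+$ relative to the splitting $R_\APS \oplus C$ behaves like the model Calderón projector: by the model computation of \cite[§3.4]{BeraWalpuski2025}, kernel elements of the model $\mathring D$ have residues in $C$ up to smoothing error. I would therefore first verify the formula for the model operator on $NZ$ with a cut-off, using the explicit form of $\mathring D^{\lambda,\mu}$, and then transfer it via the compact-difference arguments of \cite[§4]{BeraWalpuski2025}.
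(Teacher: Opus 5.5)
\textbf{Gap: the general lemma.} The content of the proposition sits in the step you call the general lemma, and as formulated that step is false. Let $\Lambda^+ \subseteq \GelfandRobbinQuotient^+$ be the image of $\ker\Dmax^{\fl,+}$, and let $\delta_{R^+}\colon R^+ \to \GelfandRobbinQuotient^+/\Lambda^+$ be the projection. By \cite[Proposition 2.24 and Lemma 2.25]{BeraWalpuski2025}, $\ind D_{R^+}^{\fl,+} = \dim\ker\Dmin^{\fl,+} - \dim\coker\Dmax^{\fl,+} + \ind\delta_{R^+}$. Set $C = \one_{[0,\infty)}(A^+)\check H\Gamma\paren{Z,\ResidueBundle^+}$. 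On $\ResidueCondition_{V^+}^+$,
\[
\delta_{\ResidueCondition_{V^+}^+} = \delta_{\ResidueCondition_{\APS}^+}\circ\one_{(-\infty,0)}(A^+) + \pr_{\GelfandRobbinQuotient^+/\Lambda^+}\circ\one_{[0,\infty)}(A^+),
\]
so what must be shown is that the second term is compact.

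Your hypothesis is that $\Dmax^{\fl,+}$ restricted to $\dom(\Dmin^{\fl,+})\oplus\ext(C)$ is invertible modulo finite rank, i.e.\ that $(C,\Lambda^+)$ is a Fredholm pair. This hypothesis does not do the job, for two reasons.
\begin{itemize}
\item It is not sufficient. In $\ell^2\oplus\ell^2$ take $\Lambda$ the diagonal, $R_2=\ell^2\oplus 0$, $C=0\oplus\ell^2$, and $R_1$ the graph of $1-S$ with $S$ the unilateral shift. Then $R_2$ and $C$ are both complementary to $\Lambda$, and $p_1|_{R_1}$ is an isomorphism. Yet the projections $R_i\to(\ell^2\oplus\ell^2)/\Lambda$ differ in index by $\ind S=-1$.
\item It fails here. $C$ is not $\infty$--regular: on $C$ the estimate would read $\Abs{\phi}_{H^{1/2}}\lesssim\Abs{\phi}_{H^{-1/2}}$. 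Moreover, your own heuristic in the last paragraph (residues of kernel elements lie in $C$ up to smoothing error) says $\Lambda^+$ is a compact perturbation of $C$. In that case $(C,\Lambda^+)$ cannot be a Fredholm pair, so $\Dmax^{\fl,+}$ on $(\res^+)^{-1}(C)$ is not Fredholm.
\end{itemize}
The Calder\'on-projector analysis you defer could at best show that $C\to\GelfandRobbinQuotient^+/\Lambda^+$ is compact. That would rescue a corrected lemma, but it is a substantial result you have not proved.

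\textbf{The missing idea.} Compactness comes for free from the $\ResidueCondition_{V^+}^+$ side, with no information about $\Lambda^+$.
\begin{itemize}
\item By $\infty$--regularity at $k=0$, $\ResidueCondition_{V^+}^+=H^{1/2}\Gamma(Z,V^+)$, and on it the $\check H$--norm is equivalent to the $H^{1/2}$--norm. By contrast, $\check H$ carries the $H^{-1/2}$--norm on $C$.
\item Hence $\one_{[0,\infty)}(A^+)|_{\ResidueCondition_{V^+}^+}\colon\ResidueCondition_{V^+}^+\to C$ factors through the Rellich embedding $\one_{[0,\infty)}(A^+)H^{1/2}\hookrightarrow\one_{[0,\infty)}(A^+)H^{-1/2}$, which is compact. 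So $\delta_{\ResidueCondition_{V^+}^+}$ is a compact perturbation of $\delta_{\ResidueCondition_{\APS}^+}\circ\one_{(-\infty,0)}(A^+)|_{\ResidueCondition_{V^+}^+}$.
\item Additivity of the index under composition then gives the claim.
\end{itemize}
This is the paper's argument. You already wrote down the relevant estimate when showing that $P$ is Fredholm, but did not use it where it matters. Two smaller points: the exact-sequence paragraph is left unfinished, and the auxiliary extension you introduce is just $\Dmax^{\fl,+}$ itself.
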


\begin{proof}%[Proof of \autoref{Prop_ComparisonWithAPS}]
  By the chiral version of \cite[Proposition 2.24 and Lemma 2.25]{BeraWalpuski2025},
  for every positive Fredholm residue condition $\ResidueCondition^+ \subseteq \GelfandRobbinQuotient^+$,
  \begin{equation*}
    \ind D_{\ResidueCondition^+}^{\fl,+}
    =
    \dim\ker\Dmin^{\fl,+} - \dim\coker\Dmax^{\fl,+}+ \ind  \delta_{\ResidueCondition^+}
  \end{equation*}
  with
  \begin{equation*}
    \delta_{\ResidueCondition^+} \coloneq \pr_{\GelfandRobbinQuotient^+/\Lambda^+}|_{\ResidueCondition^+} \co {\ResidueCondition^+} \to \GelfandRobbinQuotient^+/\Lambda^+
  \end{equation*}
  denoting the restriction of the projection map.
  
  The difference
  \begin{equation*}
    \delta_{\ResidueCondition_{V^+}^+} - \delta_{\ResidueCondition_\APS^+} \one_{(-\infty,0)}(A^+)|_{\ResidueCondition_{V^+}^+}
    =
    \pr_{\GelfandRobbinQuotient^+/\Lambda^+}
    \one_{[0,\infty)}(A^+)|_{\ResidueCondition_{V^+}^+}
  \end{equation*}
  is compact because the right-hand side factors through the compact inclusion
  \begin{equation*}
    \one_{[0,\infty)}(A^+)H^{1/2}\Gamma(Z,\ResidueBundle^+) \into \one_{[0,\infty)}(A^+)H^{-1/2}\Gamma(Z,\ResidueBundle^+).
  \end{equation*}
  Therefore,
  \begin{equation*}
    \ind D_{\ResidueCondition_{V^+}^+}^{\fl,+} - \ind D_{\ResidueCondition_\APS^+}^{\fl,+}
    =
    \ind \delta_{\ResidueCondition_{V^+}^+} - \ind \delta_{\ResidueCondition_\APS^+}
    =
    \ind \one_{(-\infty,0)}(A^+)|_{\ResidueCondition_{V^+}^+}.
    \qedhere
  \end{equation*}
\end{proof}

The above reduces the proof of \autoref{Prop_RelativeIndexFormula} to the following relative index formula.

\begin{prop}
  \label{Prop_SpectralProjectionRelativeIndex}
  If $V^+,W^+ \subseteq \ResidueBundle^+$ are $\AlgebraBundle$--linear subbundles with
  $\rk_\AlgebraBundle V^+ = \rk_\AlgebraBundle W^+ = 1$,
  then
  \begin{multline*}
    \ind\paren*{\one_{(-\infty,0)}(A^+) \co \ResidueCondition_{V^+}^+ \to \ResidueCondition_\APS^+ }
    - \ind\paren*{\one_{(-\infty,0)}(A^+) \co \ResidueCondition_{W^+}^+ \to \ResidueCondition_\APS^+ } \\
    =
    \Inner{e(V^+),[Z]}-\Inner{e(W^+),[Z]}.
  \end{multline*}
\end{prop}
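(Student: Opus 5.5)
The plan is to reduce the difference of the two indices to the index of a zeroth-order pseudodifferential operator on $Z$, and then compute that index by a Toeplitz-type index theorem on the surface $Z$.

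\textbf{Reduction to a Toeplitz operator.} The first step uses the logarithmic property of Fredholm indices. Set $P \coloneq \one_{(-\infty,0)}(A^+)$. Both $P|_{\ResidueCondition_{V^+}^+}$ and $P|_{\ResidueCondition_{W^+}^+}$ are Fredholm by ellipticity, since the criterion in \autoref{Sec_OnceMoreWithChirality} holds for both. The residue conditions are local, so they can be compared through a bundle map. Because $V^+$ and $W^+$ are $\AlgebraBundle$--lines in the rank two $\AlgebraBundle$--bundle $\ResidueBundle^+$, there is an $\AlgebraBundle$--linear bundle map $T \co W^+ \to V^+$. Take $T$ to be the composition of $W^+ \into \ResidueBundle^+$ with the orthogonal projection onto $V^+$. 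Generically it is an isomorphism away from finitely many points of $Z$.

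This suggests a different route. Rather than comparing $\ResidueCondition_{V^+}$ and $\ResidueCondition_{W^+}$ directly, I would realise both inside the common space $\Gamma(Z,\ResidueBundle^+)$ and use the following homotopy argument.

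\textbf{Homotopy invariance.} First, if $V^+_t$ is a continuous family of $\AlgebraBundle$--lines, then $P|_{\ResidueCondition_{V_t^+}^+}$ is a continuous family of Fredholm operators, after trivialising the subspaces via unitary gauge transformations $u_t$ of $\ResidueBundle^+$ with $u_tV_0 = V_t$. Hence the index is constant along such families. The index difference therefore depends only on the isomorphism class of the complex line bundles $V^+$ and $W^+$, whose first Chern number is $\Inner{e(\cdot),[Z]}$.

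\textbf{Local twisting.} Next I would change $V^+$ by a local twist: replace $V^+$ by $V^+ \otimes \sO(p)$, which near a point $p \in Z$ rotates the line by the winding map $z/\abs{z}$ in a local chart. I must show that this changes the index by exactly $+1$. Writing $\ResidueBundle^+ \iso V^+ \oplus JV^{+\perp}$, the restriction $P|_{\ResidueCondition_{V^+}}$ is, modulo compact operators, a Toeplitz-type operator whose principal symbol at $\zeta \in T^*Z\setminus 0$ is the restriction of the symbol projection of $A^+$ to $V^+$. This symbol projection is the projection onto the negative eigenspace of $-J\gamma(\zeta)$, which is an $\AlgebraBundle$--anti-linear graph over $V^+$.

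The index of $P|_{\ResidueCondition_{V^+}}$ relative to $P|_{\ResidueCondition_{W^+}}$ then equals the index of the elliptic operator $P\,\mathcal{T}\co \Gamma(W^+) \to \Gamma(V^+)$ composed appropriately. By the Atiyah--Singer theorem on $Z$, this is a first-order-type elliptic problem whose symbol class in $K(T^*Z)$ is the class of the Cauchy--Riemann operator twisted by $\Hom(W^+,V^+)$ minus the untwisted one. By Riemann--Roch, that index is $\deg(V^+)-\deg(W^+)$.

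\textbf{Main obstacle.} The hardest step is identifying this symbol class correctly, and in particular its sign: $+\Inner{e(V^+),[Z]}$ rather than $-$, or anti-linear conjugate degrees. I would try first to pin it down on a model case. Take $Z = S^2$ or a torus, with $\ResidueBundle^+ = L \oplus \bar{L}\otimes K$-like data, where $A^+$ is explicitly $\begin{pmatrix}0&\bar\partial^*\\ \bar\partial&0\end{pmatrix}$ up to $J$. Then compute $\ind P|_{\ResidueCondition_{V}}$ via Riemann--Roch as the index of $\bar\partial$ on $V$ (kernel/cokernel of the APS projection restricted to sections of $V$). Non-orientable $Z$ is handled by passing to the orientation double cover, or by working with twisted Euler classes throughout.
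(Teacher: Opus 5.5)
\textbf{There is a genuine gap: the one substantive step is asserted, not proved.} You say so yourself under ``Main obstacle''. The claim is that the index difference is that of a Cauchy--Riemann operator twisted by $\Hom(W^+,V^+)$ minus the untwisted one. Equivalently, a local degree-one twist of $V^+$ should change $\ind\paren{\one_{(-\infty,0)}(A^+)|_{\ResidueCondition_{V^+}^+}}$ by exactly $+1$. Everything else in your sketch is bookkeeping around this claim. It is not a formality. Suppose you realise the difference as the index of a zeroth-order operator $H^{1/2}\Gamma(Z,V^+)\to H^{1/2}\Gamma(Z,W^+)$, e.g.\ $G_{W^+}\one_{(-\infty,0)}(A^+)$ with $G_{W^+}$ a parametrix of $\one_{(-\infty,0)}(A^+)|_{\ResidueCondition_{W^+}^+}$. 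Its principal symbol passes through the negative eigenspace of $\sigma_{A^+}(\zeta)$. Because $J\gamma(\zeta)$ anti-commutes with $I$, this symbol does not respect the $\AlgebraBundle$--structures of $V^+$ and $W^+$; on complexifications it mixes $V^+$ with $\overline{V^+}$. So it is not visibly a twisted $\bar\partial$--symbol, and the sign, a possible conjugation and a possible factor of two are exactly what must be computed. Your homotopy-plus-local-twist reduction is sound. It needs one added fact: over oriented $Z$, homotopy classes of $\AlgebraBundle$--line subbundles of $\ResidueBundle^+$ (sections of a $\CP^1$--bundle over a surface) are detected by the degree. But this reduction only relocates the same missing local computation. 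Likewise, passing to the orientation cover first requires exhibiting the index difference as the index of a genuine elliptic pseudo-differential operator on $Z$, which you only gesture at.

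\textbf{The missing idea is to compute each index absolutely, against an APS-type condition adapted to $V^+$.}
\begin{itemize}
\item Let $B\co \Gamma(Z,V^+)\to\Gamma(Z,(V^+)^\perp)$ be the off-diagonal block of $A^+$ with respect to $\ResidueBundle^+=V^+\oplus(V^+)^\perp$, and set $T\coloneq\begin{pmatrix}0&B^*\\B&0\end{pmatrix}$. Then $T$ has the same symbol as $A^+$ and $TI=-IT$.
\item For any two such operators $T_1,T_2$, use $\one_{(0,\infty)}(T)=-I\one_{(-\infty,0)}(T)I$. This splits the index-zero compact perturbation of the identity $\one_{(-\infty,0)}(T_2)\one_{(-\infty,0)}(T_1)+\one_{\set{0}}(T_2)\one_{\set{0}}(T_1)+\one_{(0,\infty)}(T_2)\one_{(0,\infty)}(T_1)$ into two equal pieces plus kernels. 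The result is $\ind\paren{\one_{(-\infty,0)}(T_2)\co\ResidueCondition_{\APS,T_1}^+\to\ResidueCondition_{\APS,T_2}^+}=\frac12\paren{\dim\ker T_2-\dim\ker T_1}$. This $I$--symmetry is what fixes the normalisation you were worried about.
\item For the adapted $T$, the polar decomposition $B=U\abs{B}$ gives $\ResidueCondition_{\APS,T}^+=\set{(a,-Ua) : a\in(\ker B)^\perp}$. Hence $\one_{(-\infty,0)}(T)\co\ResidueCondition_{V^+}^+\to\ResidueCondition_{\APS,T}^+$ is surjective with kernel $\ker B$.
\item Combining the two previous items gives $\ind\paren{\one_{(-\infty,0)}(A^+)|_{\ResidueCondition_{V^+}^+}}=\frac12\paren{\dim\ker A^+ +\ind B}$.
\item Clifford multiplication identifies $(V^+)^\perp\iso V^+\otimes_\C\Wedge^{0,1}T^*Z$ anti-linearly, which exhibits $B$ as a Cauchy--Riemann operator on $V^+$. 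Riemann--Roch then gives $\ind B=2\Inner{e(V^+),[Z]}+\chi(Z)$.
\end{itemize}
The $\frac12\dim\ker A^+$ and $\frac12\chi(Z)$ terms cancel in the difference. The twisted $\bar\partial$ you were looking for is the block $B$ itself, not something to be read off from the symbol of a Toeplitz operator.
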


\begin{proof}%[Proof of \autoref{Prop_SpectralProjectionRelativeIndex}]
  The proof consists of four steps.
  
  \begin{step}
    \label{St_ReductionToOrientable}
    Reduction to orientable $Z$.
  \end{step}

  The right-hand side is multiplicative under covers.
  The upcoming argument shows that the same is true for the left-hand side by exhibiting it as the index of a pseudo-differential operator.
  Denote the orthogonal projection onto $W^+$ by $\pr_{W^+}$.
  As explained in \cite[Proof of Theorem 4.40]{BeraWalpuski2025},
  \begin{equation*}
    \pr_{W^+}-\one_{[0,\infty)}(A^+)
  \end{equation*}
  is an elliptic pseudo-differential operator.
  Since
  \begin{equation*}
    \pr_{W^+}-\one_{[0,\infty)}(A^+)
    =
    \one_{(-\infty,0)}(A^+)\pr_{W^+}
    -\one_{[0,\infty)}(A^+)\paren{\one -\pr_{W^+}},
  \end{equation*}
  a pseudo-differential parametrix for $\pr_{W^+}-\one_{[0,\infty)}(A^+)$ induces a pseudo-differential parametrix
  \begin{equation*}
    G_{W^+}\co \ResidueCondition_\APS^+\to \ResidueCondition_{W^+}^+ = H^{1/2}\Gamma\paren{Z,W^+}
  \end{equation*}
  of order zero.
  Therefore,
  \begin{equation*}
    G_{W^+}\one_{(-\infty,0)}(A^+)
    \co H^{1/2}\Gamma(Z,V^+)\to H^{1/2}\Gamma(Z,W^+)
  \end{equation*}
  is an elliptic pseudo-differential operator with
  \begin{multline*}
    \ind\paren{
      G_{W^+}\one_{(-\infty,0)}(A^+)
      \co H^{1/2}\Gamma(Z,V^+)\to H^{1/2}\Gamma(Z,W^+)
    } \\
    =
    \ind\paren{\one_{(-\infty,0)}(A^+) \co \ResidueCondition_{V^+}^+\to \ResidueCondition_\APS^+}
    -
    \ind\paren{\one_{(-\infty,0)}(A^+) \co \ResidueCondition_{W^+}^+\to \ResidueCondition_\APS^+}.
  \end{multline*}
  
  Let $\pi \co \tilde Z\to Z$ be the orientation cover.
  The above setup pulls back along $\pi$.
  By \cite[Theorem 2.12]{AtiyahSinger1968:Index3},
  the index doubles after pulling back.
  Moreover
  \begin{equation*}
    \Inner{e(\pi^*V^+)-e(\pi^*W^+),[\tilde Z]}
    =
    2\Inner{e(V^+)-e(W^+),[Z]}.
  \end{equation*}
  Therefore, it suffices to verify the asserted index formula for oriented $Z$.

  \medskip

  Henceforth, $Z$ is assumed to be oriented.
  As a consequence $\AlgebraBundle=\C$ and $I$ is a complex structure on $\ResidueBundle$.
  The remaining steps prove that,
  in this situation,
  \begin{equation}
    \label{Eq_SpectralProjectionIndex}
    \ind\paren*{\one_{(-\infty,0)}(A^+) \co \ResidueCondition_{V^+}^+ \to \ResidueCondition_\APS^+ }
    =
    \frac12\dim\ker A^+ + \Inner{e(V^+),[Z]} + \frac12\chi(Z),
  \end{equation}
  and analogously for $W^+$.  
  
  \begin{step}
    \label{Step_VariationOnAPS}
    Variations on the APS residue condition.
  \end{step}

  Given a self-adjoint first-order elliptic operator $T$ on $\ResidueBundle^+$ with symbol $\sigma_{T}(\xi) = \sigma_{A^+}(\xi) = -J\gamma(\xi)$ and $TI = -IT$,
  set
  \begin{equation*}
    \ResidueCondition_{\APS,T}^+ \coloneq \one_{(-\infty,0)}(T)H^{1/2}\Gamma\paren{Z,\ResidueBundle^+}.
  \end{equation*}
  Of course, $T = A^+$ recovers the usual positive APS residue condition.  
  Since $TI=-IT$,
  \begin{equation}
    \label{Eq_ConjugationSwapsSign}
    \one_{(0,\infty)}(T)
    =
    -I\one_{(-\infty,0)}(T)I
  \end{equation}
  and, in particular,
  \begin{equation*}    
    H^{1/2}\Gamma\paren{Z,\ResidueBundle^+}
    =
    \ResidueCondition_{\APS,T}^+
    \oplus
    \ker T
    \oplus
    I\ResidueCondition_{\APS,T}^+.
  \end{equation*}
  
  If $T_1,T_2$ are two such operators, then
  \begin{equation}
    \label{Eq_APSRelativeIndexFormula}
    \ind\paren[\big]{
      \one_{(-\infty,0)}(T_2) \co \ResidueCondition_{\APS,T_1}^+ \to \ResidueCondition_{\APS,T_2}^+
    }
    =
    \frac12\paren{\dim\ker T_2-\dim\ker T_1}.
  \end{equation}
  This holds for the following reason.
  Since $T_1$ and $T_2$ have the same principal symbol,
  \begin{equation*}
    \one_{(-\infty,0)}(T_1)-\one_{(-\infty,0)}(T_2)
    \qandq
    \one_{(0,\infty)}(T_1)-\one_{(0,\infty)}(T_2)
  \end{equation*}
  are of order $-1$ and, therefore, are compact on $H^{1/2}\Gamma\paren{Z,\ResidueBundle^+}$.
  Therefore,
  \begin{equation*}
    \one_{(-\infty,0)}(T_2)\one_{(-\infty,0)}(T_1)
    + \one_{\set{0}}(T_2)\one_{\set{0}}(T_1)
    + \one_{(0,\infty)}(T_2)\one_{(0,\infty)}(T_1)
  \end{equation*}
  is a compact perturbation of the identity and thus has index zero.
  By \autoref{Eq_ConjugationSwapsSign},
  With respect to the decompositions
  $H^{1/2}\Gamma\paren{Z,\ResidueBundle^+} = \ResidueCondition_{\APS,T_1}^+ \oplus \ker T_1 \oplus I\ResidueCondition_{\APS,T_1}^+$ $    H^{1/2}\Gamma\paren{Z,\ResidueBundle^+} = \ResidueCondition_{\APS,T_2}^+ \oplus \ker T_2 \oplus I\ResidueCondition_{\APS,T_2}^+$,
  this index zero operator is of the form
  \begin{equation*}
    \begin{pmatrix}
      \one_{(-\infty,0)}(T_2) & 0 & 0 \\
      0 & \one_{\set{0}}(T_2) & 0 \\
      0 & 0 & I\one_{(0,\infty)}(T_2)I^{-1}
    \end{pmatrix}.
  \end{equation*}
  Therefore,
  \begin{align*}
    0
    &=
      \ind\paren{\one_{(-\infty,0)}(T_2) \co \ResidueCondition_{\APS,T_1}^+ \to \ResidueCondition_{\APS,T_2}^+} 
      + \dim \ker T_1 - \dim\ker T_2 \\
    &\quad
      + \ind\paren{I\one_{(-\infty,0)}(T_2)I^{-1} \co I\ResidueCondition_{\APS,T_1}^+ \to I\ResidueCondition_{\APS,T_2}^+} \\
    &=
      2\ind\paren{\one_{(-\infty,0)}(T_2) \co \ResidueCondition_{\APS,T_1}^+ \to \ResidueCondition_{\APS,T_2}^+} 
      + \dim \ker T_1 - \dim\ker T_2.
  \end{align*}

  \begin{step}
    A variation of the APS residue condition adapted to $V^+$.
  \end{step}
  
  Decompose $A^+$ with respect to the decomposition $\ResidueBundle^+ = V^+ \oplus {V^+}^\perp$ as
  \begin{equation*}
    A^+
    =
    \begin{pmatrix}
      \ast & B^*\\
      B & \ast
    \end{pmatrix}.
  \end{equation*}
  A moment's thought shows that the operator
  \begin{equation*}
    T
    \coloneq
    \begin{pmatrix}
      0 & B^*\\
      B & 0
    \end{pmatrix}
  \end{equation*}
  is of the type considered above.
  
  The composition
  \begin{equation*}
    \ResidueCondition_{V^+}^+
    \xrightarrow{\one_{(-\infty,0)}(T)}
    \ResidueCondition_{\APS,T}^+
    \xrightarrow{\one_{(-\infty,0)}(A^+)}
    \ResidueCondition_\APS^+
  \end{equation*}
  differs from the direct projection onto $\ResidueCondition_\APS^+$ by
  \begin{equation*}
    \paren{\one_{(-\infty,0)}(A^+)-\one_{(-\infty,0)}(T)}
    \paren{\one_{(-\infty,0)}(T)-\one}|_{\ResidueCondition_{V^+}^+}.
  \end{equation*}
  The latter is compact because the first factor is of order $-1$.
  Therefore, by \autoref{Eq_APSRelativeIndexFormula},
  \begin{align*}
    \ind\paren[\big]{\one_{(-\infty,0)}(A^+) \co \ResidueCondition_{V^+}^+\to \ResidueCondition_\APS^+}
    &=
      \ind\paren[\big]{\one_{(-\infty,0)}(A^+) \co \ResidueCondition_{\APS,T}^+\to \ResidueCondition_\APS^+} \\
    &\quad
      + \ind\paren[\big]{\one_{(-\infty,0)}(T) \co \ResidueCondition_{V^+}^+ \to  \ResidueCondition_{\APS,T}^+} \\
    &=
      \frac12\paren[\big]{\dim\ker A^+-\dim\ker B - \dim\ker B^*} \\
    &\quad
      + \ind\paren[\big]{\one_{(-\infty,0)}(T) \co \ResidueCondition_{V^+}^+ \to  \ResidueCondition_{\APS,T}^+}.
  \end{align*}

  To compute the index of $\one_{(-\infty,0)}(T) \co \ResidueCondition_{V^+}^+ \to  \ResidueCondition_{\APS,T}^+$,
  consider the polar decomposition $B=U|B|$ with $U$ vanishing on $\ker B$.
  Since  
  \begin{equation*}
    \one_{(0,\infty)}(T)-\one_{(-\infty,0)}(T)
    =
    \begin{pmatrix}
      0 & U^*\\
      U & 0
    \end{pmatrix},
  \end{equation*}
  $U$ is a pseudo-differential operator of order zero.
  Moreover,
  \begin{equation*}
    \ResidueCondition_{\APS,T}^+
    =
    \set[\big]{
      (a,-Ua):
      a\in H^{1/2}\Gamma(Z,V^+) \cap \paren{\ker B}^\perp
    }.
  \end{equation*}
  In particular,
  with $\pi$ denoting the orthogonal projection onto $\ker B$,
  for every $b \in H^{1/2}\Gamma(Z,V^+)$
  \begin{equation*}
    \one_{(-\infty,0)}(T)(b,0)
    =
    \frac12\paren{b - \pi b,-U(b-\pi b)}.
  \end{equation*}
  Consequently,
  \begin{equation*}
    \one_{(-\infty,0)}(T)
    \co \ResidueCondition_{V^+}^+\to \ResidueCondition_{\APS,T}^+
  \end{equation*}
  is surjective and its kernel is $\ker B$.  
  The above, therefore, simplifies to
  \begin{equation*}
    \ind\paren[\big]{\one_{(-\infty,0)}(A^+) \co \ResidueCondition_{V^+}^+\to \ResidueCondition_\APS^+}
    =
    \frac12\paren[\big]{\dim\ker A^+ + \ind B}.
  \end{equation*}  
  
  \begin{step}
    Computation of $\ind B$.
  \end{step}

  The symbol of $B$ is $\sigma_B(\xi) = -J\gamma(\xi) \co V^+ \to \paren{V^+}^\perp$.
  The Clifford multiplication defines a complex anti-linear isomorphism $\paren{V^+}^\perp \iso V^+ \otimes_\C \Wedge^{0,1} T^*Z$.
  Treating this as an identification exhibits $B$ as a complex linear differential operator $\Gamma\paren{Z,V^+} \to \Omega^{0,1}\paren{Z,V^+}$ with the symbol of a Cauchy--Riemann operator $\bar\partial_{V^+}$ (up to a constant factor).
  Therefore,
  by Riemann--Roch, 
  \begin{equation*}
    \ind B
    =
    2\ind_\C\bar\partial_{V^+}
    =
    2\Inner{e(V^+),[Z]}+\chi(Z).
  \end{equation*}
  This combined with the previous step yields the desired \autoref{Eq_SpectralProjectionIndex}.
\end{proof}

\begin{proof}[Proof of \autoref{Prop_RelativeIndexFormula}]
  This is an immediate consequence of \autoref{Prop_ComparisonWithAPS} and \autoref{Prop_SpectralProjectionRelativeIndex}.
\end{proof}

%%% Local Variables:
%%% mode: latex
%%% TeX-master: "UniversalModuliSpaceOfZ2ZHarmonicSpinors"
%%% ispell-local-dictionary: "british"
%%% End:

\printreferences

\end{document}

%%% Local Variables:
%%% mode: latex
%%% TeX-master: t
%%% ispell-local-dictionary: "british"
%%% End: